\crefname{lem}{Lemma}{Lemmas}
\crefname{thm}{Theorem}{Theorems}
\crefname{cor}{Corollary}{Corollaries}
\crefname{prop}{Proposition}{Propositions}
\crefname{conj}{Conjecture}{Conjectures}
\crefname{openproblem}{Open Problem}{Open Problems}
\setlist[itemize]{topsep=0ex,itemsep=0ex,parsep=0.25ex}
\setlist[enumerate]{topsep=0ex,itemsep=0ex,parsep=0.25ex}
\newcommand{\defn}[1]{\textcolor{Maroon}{\emph{#1}}}
\newcommand{\WW}{\mathcal{W}}
\newcommand{\TT}{\mathcal{T}}
\newcommand{\GG}{\mathcal{G}}
\newcommand{\KK}{\mathcal{K}}
\newcommand{\II}{\mathcal{I}}
\newcommand{\XX}{\mathcal{X}}
\newcommand{\RR}{\mathcal{R}}
\newcommand{\HH}{\mathcal{H}}
\newcommand{\FF}{\mathcal{F}}
\newcommand{\JJ}{\mathcal{J}}
\newcommand{\PP}{\mathcal{P}}
\newcommand{\NN}{\mathbb{N}}
\newcommand{\REALS}{\mathbb{R}}
\newcommand{\JournalArxiv}[2]{#1}
\def\NAT@spacechar{~}
\DeclarePairedDelimiter{\abs}{\lvert}{\rvert}
\DeclarePairedDelimiter{\floor}{\lfloor}{\rfloor}
\renewcommand{\geq}{\geqslant}
\renewcommand{\leq}{\leqslant}
\renewcommand{\emptyset}{\varnothing}
\DeclareMathOperator{\dist}{dist}
\DeclareMathOperator{\diam}{diam}
\DeclareMathOperator{\tw}{tw}
\DeclareMathOperator{\pw}{pw}
\DeclareMathOperator{\bw}{bw}
\DeclareMathOperator{\tpw}{tpw}
\DeclareMathOperator{\td}{td}
\renewcommand{\thefootnote}{\fnsymbol{footnote}}
\theoremstyle{plain}
\newtheorem{thm}{Theorem}
\newtheorem{lem}[thm]{Lemma}
\newtheorem*{claim}{Claim}
\newtheorem{cor}[thm]{Corollary}
\newtheorem{prop}[thm]{Proposition}
\newtheorem{obs}[thm]{Observation}
\theoremstyle{definition}
\newenvironment{proofofclaim}[1][Proof.]{%
    \begin{proof}[{#1}]%
        }{%
    \end{proof}}
\begin{document}

\author{
Rutger~Campbell\,\footnotemark[3] 
\qquad Katie~Clinch\,\footnotemark[6]
\qquad Marc~Distel\,\footnotemark[2]
\qquad J.~Pascal Gollin\,\footnotemark[3]\\
Kevin~Hendrey\,\footnotemark[3]
\qquad Robert~Hickingbotham\,\footnotemark[2]
\qquad Tony Huynh\,\footnotemark[2] \\
Freddie Illingworth\,\footnotemark[5] 
\qquad Youri Tamitegama\,\footnotemark[5]
\qquad Jane Tan\,\footnotemark[5]
\qquad David~R.~Wood\,\footnotemark[2]}

\footnotetext[3]{Discrete Mathematics Group, Institute for Basic Science (IBS), Daejeon, Republic~of~Korea (\texttt{\{rutger, kevinhendrey, pascalgollin\}@ibs.re.kr}). Research of R.C.\ and K.H.\ supported by the Institute for Basic Science (IBS-R029-C1). Research of J.P.G.\ supported by the Institute for Basic Science (IBS-R029-Y3).}
\footnotetext[6]{Department of Electrical and Electronic Engineering, University of Melbourne, Australia (\texttt{katie.clinch\allowbreak@unimelb.edu.au}).}
\footnotetext[2]{School of Mathematics, Monash University, Melbourne, Australia  (\texttt{\{robert.hickingbotham, tony.huynh2, david.wood\}@monash.edu, mdistel24@gmail.com}). Research of D.W.\ supported by the Australian Research Council and a Visiting Research  Fellowship of Merton College, University of Oxford. Research of R.H.\ supported by an Australian Government Research Training Program Scholarship.}
\footnotetext[5]{Mathematical Institute, University of Oxford, United Kingdom (\texttt{\{illingworth, tamitegama, jane.tan\}\allowbreak@maths.ox.ac.uk}). Research of F.I.\ supported by EPSRC grant EP/V007327/1.}

\sloppy
	
\title{\textbf{Product Structure of Graph Classes\\ with Bounded Treewidth}}
	
\maketitle

\begin{abstract}
    We show that many graphs with bounded treewidth can be described as subgraphs of the strong product of a graph with smaller treewidth and a bounded-size complete graph. 
    To this end, define the \defn{underlying treewidth} of a graph class~$\GG$ to be the minimum non-negative integer~$c$ such that, for some function~$f$, for every graph~${G \in \GG}$ there is a graph~$H$ with~${\tw(H) \leq c}$ such that~$G$ is isomorphic to a subgraph of~${H \boxtimes K_{f(\tw(G))}}$. 
    We introduce disjointed coverings of graphs and show they determine the underlying treewidth of any graph class. 
    Using this result, we prove that the class of planar graphs has underlying treewidth~$3$; 
    the class of $K_{s,t}$-minor-free graphs has underlying treewidth~$s$ (for~${t \geq \max\{s,3\}}$); 
    and the class of $K_t$-minor-free graphs has underlying treewidth~${t-2}$. 
    In general, we prove that a monotone class has bounded underlying treewidth if and only if it excludes some fixed topological minor. 
    We also study the underlying treewidth of graph classes defined by an excluded subgraph or excluded induced subgraph. 
    We show that the class of graphs with no~$H$ subgraph has bounded underlying treewidth if and only if every component of~$H$ is a subdivided star, and that the class of graphs with no induced~$H$ subgraph has bounded underlying treewidth if and only if every component of~$H$ is a star.
\end{abstract}


\renewcommand{\thefootnote}{\arabic{footnote}}

\newpage
\section{Introduction}
\label{Introduction}

Graph product structure theory describes complicated graphs as subgraphs of strong products\footnote{The \defn{strong product} of graphs~$A$ and~$B$, denoted by~${A \boxtimes B}$, is the graph with vertex-set~${V(A) \times V(B)}$, where distinct vertices ${(v,x),(w,y) \in V(A) \times V(B)}$ are adjacent if
	${v=w}$ and ${xy \in E(B)}$, or
	${x=y}$ and ${vw \in E(A)}$, or
	${vw \in E(A)}$ and~${xy \in E(B)}$.
} of simpler building blocks. 
The building blocks typically have bounded treewidth, which is the standard measure of how similar a graph is to a tree. 
Examples of graphs classes that can be described this way include planar graphs \citep{DJMMUW20,UWY22}, graphs of bounded Euler genus~\citep{DJMMUW20,DHHW}, graphs excluding a fixed minor~\citep{DJMMUW20}, and various non-minor-closed classes~\citep{DMW,HW21b}. 
These results have been the key to solving several open problems regarding queue layouts~\citep{DJMMUW20}, nonrepetitive colouring~\citep{DEJWW20}, $p$-centered colouring~\citep{DFMS21}, adjacency labelling~\citep{EJM,DEJGMM21},  twin-width~\citep{BKW,BDHK}, and 
comparable box dimension~\citep{DGLTU22}. 

This paper shows that graph product structure theory can even be used to describe graphs of bounded treewidth in terms of simpler graphs. 
Here the building blocks are graphs of smaller treewidth and complete graphs of bounded size. 
For example, a classical theorem by the referee of \citep{DO95} can be interpreted as saying that every graph~$G$ of treewidth~$k$ and maximum degree~$\Delta$ is contained\footnote{A graph~$G$ is \defn{contained} in a graph~$X$ if~$G$ is isomorphic to a subgraph of~$X$.} in ${T \boxtimes K_{O(k\Delta)}}$ for some tree~$T$. 

This result motivates the following definition. 
The \defn{underlying treewidth} of a graph class~$\GG$ is the minimum~${c \in \NN_0}$ such that, for some function~$f$, for every graph~${G \in \GG}$ there is a graph~$H$ with~${\tw(H) \leq c}$ such that~$G$ is contained in~${H \boxtimes K_{f(\tw(G))}}$. 
If there is no such~$c$, then~$\GG$ has \defn{unbounded} underlying treewidth. 
We call~$f$ the \defn{treewidth-binding function}. 
For example, the above-mentioned result in \citep{DO95} says that any graph class with bounded degree has underlying treewidth at most~1 with treewidth-binding function~${O(k)}$. 

This paper introduces disjointed coverings of graphs and shows that they are intimately related to underlying treewidth; see \cref{DisjointedCoverings}. 
Indeed, we show that disjointed coverings characterise the underlying treewidth of any graph class (\cref{UnderlyingTreewidthDisjointedPartition}). 
The remainder of the paper uses disjointed coverings to determine the underlying treewidth of several graph classes of interest, with a small treewidth-binding function as a secondary goal. 

\textbf{Minor-closed classes:} 
We prove that every planar graph of treewidth~$k$ is contained in~${H \boxtimes K_{O(k^2)}}$ where~$H$ is a graph of treewidth~3. Moreover, this bound on the treewidth of~$H$ is best possible. 
Thus the class of planar graphs has underlying treewidth~$3$ (\cref{planarunderlying}). 
We prove the following generalisations of this result: 
the class of graphs embeddable on any fixed surface has underlying treewidth~$3$ (\cref{SurfaceProduct}); 
the class of $K_{t}$-minor-free graphs has underlying treewidth~${t-2}$ (\cref{NoKtMinorUnderlyingTreewidth}); 
and for~${t \geq \max\{s,3\}}$ the class of $K_{s,t}$-minor-free graphs has underlying treewidth~$s$ (\cref{NoKstMinorUnderlyingTreewidth}). 
In all these results, the treewidth-binding function is~${O(k^2)}$ for fixed~$s$ and~$t$. 

\textbf{Monotone Classes:} 
We characterise the monotone graph classes with bounded underlying treewidth. 
We show that a monotone graph class~$\GG$ has bounded underlying treewidth if and only if~$\GG$ excludes some fixed topological minor (\cref{MonotoneUnderlyingTreewidth}). 
In particular, we show that for~${t \geq 5}$ the class of $K_t$-topological-minor-free graphs has underlying treewidth~$t$ (\cref{TopoMinorUnderlyingTreewidth}). 
The characterisation for monotone classes has immediate consequences. For example, it implies that the class of 1-planar graphs has unbounded underlying treewidth. 
On the other hand, for any~${k \in \NN}$, we show that the class of outer $k$-planar graphs has underlying treewidth~2 (\cref{OuterkPlanarthm}), which generalises the well-known fact that outerplanar graphs have treewidth~2. 

We use our result for disjointed coverings to characterise the graphs~$H$ for which the class of $H$-free graphs has bounded underlying treewidth. 
In particular, the class of $H$-free graphs has bounded underlying treewidth if and only if every component of~$H$ is a subdivided star (\cref{SummaryTheorem}). 
For specific graphs~$H$, including paths and disjoint unions of paths, we precisely determine the underlying treewidth of the class of $H$-free graphs. 

\textbf{Hereditary Classes:} 
We characterise the graphs~$H$ for which the class of graphs with no induced subgraph isomorphic to~$H$ has bounded underlying treewidth. 
The answer is precisely when every component of~$H$ is a star, in which case the underlying treewidth is at most~2. 
Moreover, we characterise the graphs~$H$ for which the class of graphs with no induced subgraph isomorphic to~$H$ has underlying treewidth~0, 1 or~2 (\cref{InducedSubgraphUnderlyingTreewidth}). 

\textbf{Universal Graphs:} 
A graph~$U$ is \defn{universal} for a graph class~$\GG$ if~${U \in \GG}$ and~$U$ contains every graph in~$\GG$. 
This definition is only interesting when considering infinite graphs. 
For each~${k \in \NN}$ there is a universal graph~$\TT_k$ for the class of countable graphs of treewidth~$k$. 
\citet{HMSTW} gave an explicit construction for~$\TT_k$, and showed how product structure theorems for finite graphs lead to universal graphs. 
Their results imply that for any hereditary class~$\GG$ of countable graphs, if the class of finite graphs in~$\GG$ has underlying treewidth~$c$ with treewidth-binding function~$f$, then every graph in~$\GG$ of treewidth at most~$k$ is contained in~${\TT_c \boxtimes K_{f(k)}}$. 
This result is applicable to all the classes above. 
For example, every countable $K_t$-minor free graph of treewidth~$k$ is contained in~${\TT_{t-2} \boxtimes K_{O(k^2)}}$.

\JournalArxiv{We omit straightforward proofs from the main body of the paper; full details can be found in \cref{Omitted}, which is not intended for publication in the journal.}{\textbf{Other Underlying Parameters:} The definition of underlying treewidth suggests an underlying version of any graph parameter. \cref{UnderlyingParameters} explores this idea, focusing on underlying chromatic number.}

\section{Preliminaries}

\subsection{Basic Definitions}
\label{Basics}

See \citep{Diestel5} for graph-theoretic definitions not given here. 
We consider simple, finite, undirected graphs~$G$ with vertex-set~${V(G)}$ and edge-set~${E(G)}$. 
A \defn{graph class} is a collection of graphs closed under isomorphism. 
A graph class is \defn{hereditary} if it is closed under taking induced subgraphs. 
A graph class is \defn{monotone} if it is closed under taking subgraphs. 
A graph~$H$ is a \defn{minor} of a graph~$G$ if~$H$ is isomorphic to a graph obtained from a subgraph of~$G$ by contracting edges. 
A graph~$G$ is \defn{$H$-minor-free} if~$H$ is not a minor of~$G$. 
A graph class~$\GG$ is \defn{minor-closed} if every minor of each graph in~$\GG$ is also in~$\GG$. 

The class of planar graphs is minor-closed. 
More generally, the class of graphs embeddable on a given surface (that is, a closed  compact 2-manifold) is minor-closed. 
The \defn{Euler genus} of a surface with~$h$ handles and~$c$ cross-caps is~${2h+c}$. 
The \defn{Euler genus} of a graph~$G$ is the minimum~${g \in \NN_0}$ such that there is an embedding of~$G$ in a surface of Euler genus~$g$; 
see \cite{MoharThom} for more about graph embeddings in surfaces. 
A graph is \defn{linklessly embeddable} if it has an embedding in~$\REALS^3$ with no two linked cycles; 
see \citep{RST93a} for a survey and precise definitions. 
The class of linklessly embeddable graphs is also minor-closed. 

A graph~$\tilde{G}$ is a \defn{subdivision} of a graph~$G$ if~$\tilde{G}$ can be obtained from~$G$ by replacing each edge~${vw}$ by a path~$P_{vw}$ with endpoints~$v$ and~$w$ (internally disjoint from the rest of~$\tilde{G}$). 
If each~$P_{vw}$ has~$t$ internal vertices, then~$\tilde{G}$ is the \defn{$t$-subdivision} of~$G$. 
If each~$P_{vw}$ has at most~$t$ internal vertices, then~$\tilde{G}$ is a \defn{$(\leq t)$-subdivision} of~$G$. 
A graph~$H$ is a \defn{topological minor} of~$G$ if a subgraph of~$G$ is isomorphic to a subdivision of~$H$. 
A graph~$G$ is \defn{$H$-topological-minor-free} if~$H$ is not a topological minor of~$G$. 

A \defn{clique} in a graph is a set of pairwise adjacent vertices. 
Let~\defn{$\omega(G)$} be the size of the largest clique in a graph~$G$. 
An \defn{independent set} in a graph is a set of pairwise non-adjacent vertices. 
Let~\defn{$\alpha(G)$} be the size of the largest independent set in a graph~$G$. 
Let~\defn{$\chi(G)$} be the chromatic number of~$G$. 
Note that~${\abs{V(G)} \leq \chi(G)\alpha(G)}$. 
A graph~$G$ is \defn{$d$-degenerate} if every non-empty subgraph of~$G$ has a vertex of degree at most~$d$. 
A greedy algorithm shows that~${\chi(G) \leq d+1}$ for every $d$-degenerate graph~$G$.

Let~\defn{$P_n$} be the $n$-vertex path. 
For a graph~$G$ and~${\ell \in \NN}$, let~\defn{$\ell\,G$} be the union of~$\ell$ vertex-disjoint copies of~$G$. 
Let \defn{$\widehat{G}$} be the graph obtained from~$G$ by adding one dominant vertex. 

Let~${\NN \coloneqq \{1,2,\dots\}}$ and~${\NN_0 \coloneqq \{0,1,\dots\}}$. 
All logarithms in this paper are binary.

\subsection{Tree-Decompositions}

For a tree~$T$, a \defn{$T$-decomposition} of a graph~$G$ is a collection~${\WW = (W_x \colon x \in V(T))}$ of subsets of~${V(G)}$ indexed by the nodes of~$T$ such that
(i) for every edge~${vw \in E(G)}$, there exists a node~${x \in V(T)}$ with~${v,w \in W_x}$; and 
(ii) for every vertex~${v \in V(G)}$, the set~${\{ x \in V(T) \colon v \in W_x \}}$ induces a (connected) subtree of~$T$. 
Each set~$W_x$ in~$\WW$ is called a \defn{bag}. 
The \defn{width} of~$\WW$ is~${\max\{ \abs{W_x} \colon x \in V(T) \}-1}$. 
A \defn{tree-decomposition} is a $T$-decomposition for any tree~$T$. 
The \defn{treewidth}~$\tw(G)$ of a graph~$G$ is the minimum width of a tree-decomposition of~$G$. 
Treewidth is the standard measure of how similar a graph is to a tree. 
Indeed, a connected graph has treewidth 1 if and only if it is a tree. 
Treewidth is of fundamental importance in structural and algorithmic graph theory; see \citep{Reed03,HW17,Bodlaender-TCS98} for surveys. 

We use the following well-known facts about treewidth. 
Every minor~$H$ of a graph~$G$ satisfies~${\tw(H) \leq \tw(G)}$. 
In every tree-decomposition of a graph~$G$, each clique of~$G$ appears in some bag. 
Thus~${\tw(G) \geq \omega(G)-1}$
and~${\tw(K_n) = n-1}$. 
If~${\{v_1,\dots,v_k\}}$ is a clique in a graph~$G_1$ and~${\{w_1,\dots,w_k\}}$ is a clique in a graph~$G_2$, and~$G$ is the graph obtained from the disjoint union of~$G_1$ and~$G_2$ by identifying~$v_i$ and~$w_i$ for each~${i \in \{1,\dots,k\}}$, then~${\tw(G) = \max\{\tw(G_1),\tw(G_2)\}}$. 
For any graph~$G$, we have~${\tw(\widehat{G}) = \tw(G) + 1}$ and ${\tw(\ell\, G) = \tw(G)}$ for any~${\ell \in \NN}$, implying~${\tw(\widehat{\ell\, G}) = \tw(G)+1}$. 
Finally, every graph~$G$ is $\tw(G)$-degenerate, implying~${\chi(G) \leq \tw(G)+1}$.

\subsection{Partitions}
\label{Partitions}

To describe our main results in \cref{Introduction}, it is convenient to use the language of graph products. 
However, to prove our results, it is convenient to work with the equivalent notion of graph partitions, which we now introduce. 

For graphs~$G$ and~$H$, an \defn{$H$-partition} of~$G$ is a partition~${(V_x \colon x\in V(H))}$ of~${V(G)}$ indexed by the nodes of~$H$, such that for every edge~${vw}$ of~$G$, if~${v \in V_x}$ and~${w \in V_y}$, then~${x = y}$ or~${xy \in E(H)}$. 
We say that~$H$ is the \defn{quotient} of such a partition. 
The \defn{width} of such an $H$-partition is~${\max\{ \abs{V_x} \colon x \in V(H)\}}$. 
For~${c \in \NN_0}$, an $H$-partition where~${\tw(H) \leq c}$ is called a \defn{$c$-tree-partition}. 
The \defn{$c$-tree-partition-width} of a graph~$G$, denoted \defn{$\tpw_c(G)$}, is the minimum width of a $c$-tree-partition of~$G$. 

It follows from the definitions that a graph~$G$ has an $H$-partition of width at most~$\ell$ if and only if~$G$ is contained in~${H \boxtimes K_\ell}$. 
Thus, $\tpw_c(G)$ equals the minimum~${\ell \in \NN_0}$ such that~$G$ is contained in~${H \boxtimes K_{\ell}}$ for some graph~$H$ with~${\tw(H) \leq c}$. 
Hence, the underlying treewidth of a graph class~$\GG$ equals the minimum~${c \in \NN_0}$ such that, for some function~$f$, every graph~${G \in \GG}$ has $c$-tree-partition-width at most~${f(\tw(G))}$. 
We henceforth use this as our working definition of underlying treewidth. 

If a graph~$G$ has an $H$-partition for some graph~$H$ of treewidth~$c$, then we may assume that~$H$ is edge-maximal of treewidth~$c$. 
So~$H$ is a $c$-tree (which justifies the `$c$-tree-partition' terminology). 
Such graphs~$H$ are chordal. 
Chordal partitions are well studied with several applications~\citep{vdHW18,SSW19,HOQRS17,RS98,HMSTW}. 
For example, \citet{vdHW18} proved that every $K_t$-minor-free graph has a ${(t-2)}$-tree-partition in which each part induces a connected subgraph with maximum degree at most~${t-2}$ (amongst other properties). 
Our results give chordal partitions with bounded-size parts (for graphs of bounded treewidth). 

Before continuing, we review work on the~${c = 1}$ case.
A \defn{tree-partition} is a $T$-partition for some tree~$T$. 
The \defn{tree-partition-width}\footnote{Tree-partition-width has also been called \defn{strong treewidth} \citep{BodEng-JAlg97, Seese85}.} of~$G$, denoted by~${\tpw(G)}$, is the minimum width of a tree-partition of~$G$. 
Thus~${\tpw(G) = \tpw_1(G)}$, which equals the minimum~${\ell \in \NN_0}$ for which~$G$ is contained in~${T \boxtimes K_\ell}$ for some tree~$T$. 
Tree-partitions were independently introduced by \citet{Seese85} and \citet{Halin91}, and have since been widely investigated \citep{Bodlaender-DMTCS99,BodEng-JAlg97, DO95,DO96,Edenbrandt86, Wood06,Wood09}. 
Applications of tree-partitions include 
graph drawing~\citep{CDMW08,GLM05,DMW05,DSW07,WT07}, 
nonrepetitive graph colouring~\citep{BW08}, 
clustered graph colouring~\citep{ADOV03}, 
monadic second-order logic~\citep{KuskeLohrey05}, 
network emulations~\citep{Bodlaender-IPL88, Bodlaender-IC90, BvL-IC86, FF82}, statistical learning theory~\citep{ZA22}, and the edge-{E}rd{\H{o}}s-{P}{\'o}sa property~\citep {RT17,GKRT16,CRST18}. 
Planar-partitions and other more general structures have also been studied~\citep{DK05,RS98,WT07,DOSV00,DOSV98}.

Bounded tree-partition-width implies bounded treewidth, as noted by \citet{Seese85}. This fact easily generalises for $c$-tree-partition-width; 
see \cref{Omitted} for a proof. 

\begin{restatable}{obs}{cTreePartitionWidthTreewidth}
    \label{cTreePartitionWidthTreewidth}
    For every graph~$G$ and~${c\in\NN_0}$, we have~${\tw(G) \leq (c+1)\tpw_c(G)-1}$.
\end{restatable}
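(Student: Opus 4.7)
The plan is to blow up a tree-decomposition of the quotient by the parts, which is the standard construction for relating partition-width to treewidth.

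First I would fix an optimal $c$-tree-partition of $G$: a partition $(V_x \colon x \in V(H))$ of width $w \coloneqq \tpw_c(G)$ whose quotient $H$ satisfies $\tw(H) \leq c$. Then I would take a tree-decomposition $(B_t \colon t \in V(T))$ of $H$ of width $c$, so every bag $B_t$ contains at most $c+1$ nodes of $H$.

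Next I would define a candidate tree-decomposition of $G$ by setting, for each node $t \in V(T)$,
\[
    W_t \coloneqq \bigcup_{x \in B_t} V_x.
\]
Clearly $\abs{W_t} \leq (c+1) w$. To verify this is a tree-decomposition, I would check the two defining properties. For the edge condition, any edge $vw \in E(G)$ lies between parts $V_x$ and $V_y$ with either $x = y$ or $xy \in E(H)$; in either case some bag $B_t$ of $H$ contains both $x$ and $y$ (using that each clique, including each vertex and each edge of $H$, appears in some bag), so $W_t$ contains both $v$ and $w$. For the subtree condition, fix $v \in V(G)$ with $v \in V_x$; the set $\{t \in V(T) \colon v \in W_t\}$ equals $\{t \in V(T) \colon x \in B_t\}$, which induces a subtree of $T$ by the tree-decomposition property applied to~$H$.

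This gives a tree-decomposition of $G$ of width at most $(c+1)w - 1$, yielding $\tw(G) \leq (c+1)\tpw_c(G) - 1$. There is no real obstacle here: the construction is entirely routine, and the only thing to be careful about is that an edge of $G$ within a single part $V_x$ still needs a bag containing it, which is handled because $\{x\}$ itself appears in some $B_t$.
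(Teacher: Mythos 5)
Your construction is correct and is essentially identical to the paper's proof: both blow up a width-$c$ tree-decomposition of the quotient $H$ by replacing each node of $H$ with its part, verify the edge and subtree conditions in the same way, and obtain width at most $(c+1)\tpw_c(G)-1$. No gaps.
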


Of course, ${\tw(T) = \tpw(T) = 1}$ for every tree~$T$. 
But in general, $\tpw(G)$ can be much larger than~$\tw(G)$. 
For example, fan graphs on~$n$ vertices have treewidth~2 and tree-partition-width~$\Omega(\sqrt{n})$; 
see \cref{StandardExamples} below. 
On the other hand, the referee of \citep{DO95} showed that if the maximum degree and treewidth are both bounded, then so is the tree-partition-width, which is one of the most useful results about tree-partitions. 
 
\begin{lem}[\citep{DO95}]
    \label{TreePartitionWidthDegree}
    For~${k,\Delta\in\NN}$, every graph of treewidth less than~$k$ and maximum degree at most~$\Delta$ has tree-partition-width at most~${24k\Delta}$.  
\end{lem}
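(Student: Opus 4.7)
The plan is to induct on $|V(G)|$ using a strengthened statement that allows a designated ``root'' set. Fix a tree-decomposition $(B_x : x \in V(T))$ of $G$ of width at most $k-1$. A standard centroid argument on $T$ produces a node $x_0$ whose bag $B_{x_0}$ has at most $k$ vertices and whose removal from $G$ leaves components each with at most $|V(G)|/2$ vertices. The idea is to place $B_{x_0}$, together with any prescribed boundary vertices inherited from the parent call, into a single ``root'' part of the tree-partition, recurse into each component of $G \setminus B_{x_0}$, and glue the resulting tree-partitions as children of the root part.

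To make this induction go through, the statement must be strengthened: for any graph $H$ with $\tw(H) < k$, $\Delta(H) \le \Delta$, and any designated set $X \subseteq V(H)$ of size at most some bound $C = C(k,\Delta)$, there exists a tree-partition of $H$ of width at most $24 k \Delta$ in which one part contains $X$. In the inductive step, the root part of the produced tree-partition is $X \cup B_{x_0}$ (its size being at most $C + k$, which must not exceed $24 k \Delta$); for each component $C_i$ of $H \setminus (X \cup B_{x_0})$, the designated set fed into the recursive call is the set of vertices of $C_i$ adjacent to the new root part. Since $B_{x_0}$ separates $C_i$ from the rest of $G$, only vertices in $X \cup B_{x_0}$ contribute to this new boundary.

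The main obstacle is controlling the size of the inherited boundary as the recursion deepens: naively, the boundary of $C_i$ can have size as large as $\Delta \cdot |X \cup B_{x_0}|$, which could easily exceed the budget $C$ and snowball through repeated recursive calls. Ding and Oporowski circumvent this by choosing the separator more cleverly than a raw centroid bag, tuning its choice against the existing set $X$ so that only a bounded portion of $X$'s neighbours lie in each component, and exploiting the fact that in a graph of maximum degree at most $\Delta$ the boundary cannot concentrate near any bounded-size separator. The specific constant $24$ reflects this delicate accounting, balancing the growth of the root part against the growth of the boundaries passed to the children. With the strengthened inductive statement and the refined separator in hand, the remainder is a routine verification that the glued tree-partition is valid and has width at most $24 k \Delta$.
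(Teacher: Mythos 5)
There is a genuine gap, and you have in fact pointed at it yourself: the entire difficulty of the lemma is to stop the inherited boundary from snowballing, and at exactly that point your argument says that Ding and Oporowski ``choose the separator more cleverly'' and that the rest is routine. No mechanism is given, so the proof is not complete. Moreover, the strengthened statement you propose is the wrong invariant and cannot be made to close. If the promise is ``any designated set $X$ with $\abs{X}\leq C$ for a fixed budget $C=C(k,\Delta)$ fits into the root part,'' then after you form the root part $X\cup B_{x_0}$ (size up to $C+k$) the boundary handed to a component can have size up to $\Delta(C+k)>C$, and no single ``refined separator'' tuned against $X$ repairs this: in a $\Delta$-regular expander-like piece every bounded-size separator has a neighbourhood of size $\Theta(\Delta\cdot k)$ spread over few components, so the budget is exceeded on the very next level. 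Also, taking a centroid bag of a fixed tree-decomposition balances the \emph{vertex count} of the components, which is not what is needed; the recursion has to balance the current boundary set.

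The way the accounting actually closes (and the way this paper proves it, as a special case of \cref{Main,MainLemma,MainCorollary} via the observation that the singleton partition is $(1,\Delta)$-disjointed) is with a different invariant: carry a set $R$ whose size is allowed to range over $4k\leq\abs{R}\leq 12k\Delta$, and promise a tree-partition of width at most $24k\Delta$ in which $R$ is contained in a part of size at most $2(\abs{R}-2k)$ -- a bound that \emph{scales} with $\abs{R}$ rather than a fixed budget. Two regimes alternate. While $\abs{R}>12k$, apply the Robertson--Seymour balanced separator lemma \emph{weighted on $R$} to split $G$ by a set $C$ of at most $k$ vertices into pieces with $4k\leq\abs{R_j}\leq\tfrac23\abs{R}+k<\abs{R}$, recurse on both sides, and glue the two root parts; the slack $-2k$ in the bound $2(\abs{R}-2k)$ is exactly what absorbs the $+\abs{C}\leq k$ added to each side, since $2(\abs{R_1}-2k)+2(\abs{R_2}-2k)\leq 2(\abs{R}+2k-4k)$. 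Once $\abs{R}\leq 12k$, seal $R$ off as its own part; the boundary passed to each component of $G-R$ has size at most $\Delta\abs{R}\leq 12k\Delta$, which is again within the permitted range for the new $R$, so nothing snowballs, and the resulting parts have size at most $2(12k\Delta-2k)\leq 24k\Delta$. Your proposal would need to be rewritten around this scaling invariant and the weighted separator; as it stands, the step you defer to ``delicate accounting'' is the theorem.
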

 
This bound is best possible up to the multiplicative constant~\citep{Wood09}. 
Note that bounded maximum degree is not necessary for bounded tree-partition-width (for example, stars). 
\citet{DO96} characterised graph classes with bounded tree-partition-width in terms of excluded topological minors. 
We give an alternative characterisation, which says that graph classes with bounded tree-partition-width are exactly those that have bounded treewidth and satisfy a further `disjointedness' condition. 
Furthermore, this result naturally generalises for $c$-tree-partition-width and thus for underlying treewidth.

\section{Disjointed Coverings}
\label{DisjointedCoverings}

This section introduces disjointed coverings and shows that they can be used to characterise bounded $c$-tree-partition-width and underlying treewidth. 

An \defn{$\ell$-covering} of a graph~$G$ is a set~${\beta \subseteq 2^{V(G)}}$ such that~${\abs{B} \leq \ell}$ for every~${B \in \beta}$, 
and~${\cup\{B \colon B\in\beta\} = V(G)}$.\footnote{Our definition of $\ell$-covering differs from the standard usage where it refers to a covering in which each element of the ground set is covered $\ell$ times.} 
If~${B_1 \cap B_2 = \emptyset}$ for all distinct~${B_1,B_2 \in \beta}$, then~$\beta$ is an \defn{$\ell$-partition}. 
As illustrated in \cref{DisjointedCovering1}, an $\ell$-covering~$\beta$ of a graph~$G$ is \defn{$(c,d)$-disjointed} if for every \mbox{$c$-tuple} ${(B_1,\dots,B_c) \in \beta^c}$ and every component~$X$ of~${G-(B_1\cup\dots\cup B_c)}$ there exists~${Q \subseteq V(X)}$ with~${\abs{Q} \leq d}$ such that for each component~$Y$ of~${X-Q}$, for some~${i \in\{1,\dots,c\}}$ we have~${V(Y)\cap N_G(B'_i) = \emptyset}$, where~${B_i' \coloneqq B_i\setminus(B_1 \cup \dotsb \cup B_{i-1})}$. 
Note that we can take~${Q = \emptyset}$ if some~${B_i' = \emptyset}$, since~${N_G(\emptyset) = \emptyset}$. 

\begin{figure}[ht]
    \centering
    \includegraphics{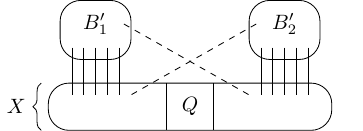}
\caption{A disjointed partition with $c=2$, where non-edges are dashed.}
    \label{DisjointedCovering1}
\end{figure}

Let~$\beta$ be an $\ell$-covering of a graph~$G$. 
For~${t \in \NN}$, let~\defn{${\beta[t]}$}~${\coloneqq \{\bigcup\mathcal{B} \colon \mathcal{B}\subseteq \beta, \abs{\mathcal{B}}\leq t\}}$. So~${\beta[t]}$ is a ${t\ell}$-covering of~$G$. 
For a function~${f \colon \NN \to \REALS^+}$ we say that~$\beta$ is \defn{$(c,f)$-disjointed} if~${\beta[t]}$ is ${(c,f(t))}$-disjointed for every~${t \in \NN}$. 

While ${(c,d)}$-disjointed coverings are conceptually simpler than ${(c,f)}$-disjointed coverings, we show  they are roughly equivalent (\cref{cTreePartitionWidthDisjointedCovering}). Moreover, ${(c,f)}$-disjointed coverings are essential for the main proof (\cref{Main}) and give better bounds on the $c$-tree-partition-width, leading to smaller treewidth-binding functions when determining the underlying treewidth of several graph classes of interest (for $K_t$-minor-free graphs for example).

Note that we often consider the singleton partition~${\beta \coloneqq \{\{v\} \colon v \in V(G)\}}$ of a graph~$G$, which is ${(c,f)}$-disjointed if and only if, for every~${t \in \NN}$, every $t$-partition of~$G$ is ${(c,f(t))}$-disjointed. 

This section characterises $c$-tree-partition-width in terms of ${(c,d)}$-disjointed coverings (or partitions) and ${(c,f)}$-disjointed coverings (or partitions). The following observation deals with the~${c = 0}$ case.

\begin{obs}
    \label{c0}
    The following are equivalent for any graph~$G$ and~${d \in \NN}$:
    \begin{itemize}
        \item $G$ has a ${(0,d)}$-disjointed covering\textnormal{;}
        \item every covering of~$G$ is ${(0,d)}$-disjointed\textnormal{;}
        \item each component of~$G$ has at most~$d$ vertices\textnormal{;} 
        \item $G$ has 0-tree-partition-width at most~$d$.
    \end{itemize}
\end{obs}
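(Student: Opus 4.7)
The plan is to verify that each of the four listed conditions reduces to the single condition ``every component of $G$ has at most $d$ vertices.'' The substance lies entirely in unpacking the $c=0$ case of the $(c,d)$-disjointed definition. Now $\beta^0$ contains exactly one element, the empty tuple, and $B_1\cup\dotsb\cup B_c = \emptyset$, so the condition must be checked against every component $X$ of $G$ itself. Concretely, we need $Q\subseteq V(X)$ with $|Q|\leq d$ such that every component $Y$ of $X-Q$ satisfies a property indexed by some $i\in\{1,\dotsc,0\}=\emptyset$. Since no such $i$ exists, the property fails for every nonempty $Y$, so $X-Q$ must have no components; equivalently $V(X)\subseteq Q$ and hence $|V(X)|\leq d$.

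With this observation the first three bullets collapse. If every component has at most $d$ vertices, then for any covering $\beta$, any $t\in\NN$, and any component $X$ of $G$, setting $Q := V(X)$ verifies the condition, so every covering is $(0,d)$-disjointed; this gives the second bullet. The second bullet trivially implies the first since $\{V(G)\}$ is a covering. Finally, if even one covering is $(0,d)$-disjointed, then the unpacking above (applied to the empty tuple) forces $|V(X)|\leq d$ for every component $X$ of $G$, yielding the third bullet.

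For the equivalence with the fourth bullet, recall that a graph $H$ with $\tw(H)\leq 0$ has no edges. In an $H$-partition of $G$ with $H$ edgeless, every edge $vw$ of $G$ must have both endpoints in the same bag, so each component of $G$ lies entirely inside some bag. Hence the width of such a partition is at least the maximum component size of $G$, and equality is attained by taking $H$ to have one isolated node per component of $G$ and placing each component in its own bag. Thus $\tpw_0(G)\leq d$ if and only if every component of $G$ has at most $d$ vertices.

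The only potential pitfall is the correct reading of the vacuous existential ``for some $i\in\{1,\dotsc,c\}$'' when $c=0$; once that is settled, the proof is entirely mechanical.
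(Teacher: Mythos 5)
Your proof is correct: the key point is the right reading of the vacuous existential when $c=0$, which forces $V(X)\subseteq Q$ and hence collapses the first three bullets to the bounded-component condition, and your treatment of the fourth bullet via edgeless quotients $H$ is exactly the intended argument. The paper states this as an observation and omits the proof as straightforward, and your write-up supplies precisely the details it leaves out, with no gaps.
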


\cref{c0} implies that a graph class~$\GG$ has underlying treewidth~0 if and only if there is a function~$f$ such that every component of every graph~${G \in \GG}$ has at most~${f(\tw(G))}$ vertices. 

We prove the following characterisation of bounded $c$-tree-partition-width (which is new even in the~${c = 1}$ case). 

\begin{thm}
    \label{cTreePartitionWidthDisjointedCovering} 
    For fixed~${c \in \NN_0}$, the following are equivalent for a graph class~$\GG$ with bounded treewidth: 
    \begin{enumerate}[label=\upshape{(\alph*)}]
        \item \label{item:cRPWDC-cTPW} $\GG$ has bounded $c$-tree-partition-width\textnormal{;}
        \item \label{item:cRPWDC-cdDP} for some~${d,\ell \in \NN}$, every graph in~$\GG$ has a ${(c,d)}$-disjointed $\ell$-partition\textnormal{;}
        \item \label{item:cRPWDC-cdDC} for some~${d,\ell \in \NN}$, every graph in~$\GG$ has a ${(c,d)}$-disjointed $\ell$-covering\textnormal{;}
        \item \label{item:cRPWDC-cfDP} for some~${\ell \in \NN}$ and function~$f$, every graph in~$\GG$ has a ${(c,f)}$-disjointed $\ell$-partition\textnormal{;}
        \item \label{item:cRPWDC-cfDC} for some~${\ell \in \NN}$ and function~$f$, every graph in~$\GG$ has a ${(c,f)}$-disjointed $\ell$-covering.
    \end{enumerate}
\end{thm}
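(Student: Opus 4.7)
The plan is to establish the five conditions equivalent by the cycle (a)$\Rightarrow$(b)$\Rightarrow$(c)$\Rightarrow$(a) together with (a)$\Rightarrow$(d)$\Rightarrow$(e)$\Rightarrow$(c). Four of these are easy: (b)$\Rightarrow$(c) and (d)$\Rightarrow$(e) since any partition is a covering; (d)$\Rightarrow$(b) and (e)$\Rightarrow$(c) by taking $d \coloneqq f(1)$ and using $\beta[1] = \beta$.

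For (a)$\Rightarrow$(b) and (a)$\Rightarrow$(d): given a $c$-tree-partition $(V_x \colon x \in V(H))$ of $G$ with $\tw(H) \leq c$ and width $\ell$, I will show that $\beta \coloneqq \{V_x \colon x \in V(H)\}$ is itself $(c,f)$-disjointed for an explicit function $f(t)$ depending on $c$, $\ell$, $t$. For a $c$-tuple $(C_1,\ldots,C_c) \in \beta[t]^c$ the union corresponds to at most $ct$ vertices of $H$; for each component $X$ of $G - \bigcup_i C_i$ I exploit the "for some $i$" clause in the definition of disjointedness by choosing $Q$ to cover $N_G(B_{i^*}') \cap X$ for the index $i^*$ minimising this quantity, so that every sub-component of $X - Q$ misses $N_G(B_{i^*}')$. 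Using a tree-decomposition of $H$ of width~$c$ (or equivalently viewing $H$ as a subgraph of a $c$-tree), I bound $|Q|$ by a function of $c$, $t$, $\ell$ even though the total boundary of $X$ in $H$ may be much larger. Setting $t=1$ yields (b).

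The main technical implication is (c)$\Rightarrow$(a). Given $G \in \GG$ with $\tw(G) \leq k$ and a $(c,d)$-disjointed $\ell$-covering $\beta$, I will construct a $c$-tree-partition of $G$ of width at most $\max\{\ell, d\}$ by recursion on $|V(G)|$. Pick $c$ bags $B_1,\ldots,B_c \in \beta$ meeting $V(G)$ non-trivially and apply $(c,d)$-disjointedness to obtain a separator $Q$ with $|Q| \leq d$. Form the root of the quotient as a $(c+1)$-clique on the partition classes $B_1', \ldots, B_c', Q$, where $B_i' \coloneqq B_i \setminus \bigcup_{j<i} B_j$. For each component $Y$ of $G - (B_1 \cup \cdots \cup B_c \cup Q)$, the disjointedness supplies an index $i_Y$ with $V(Y) \cap N_G(B_{i_Y}') = \emptyset$; I apply recursion to $Y$ with the restricted covering $\{B \cap V(Y) \colon B \in \beta\}$, which inherits $(c,d)$-disjointedness because a separator for $G$ also separates $Y$, and I glue the resulting $c$-tree-partition into the quotient by identifying the attachment $c$-clique $\{B_1', \ldots, B_c', Q\} \setminus \{B_{i_Y}'\}$. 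The quotient remains a $c$-tree throughout, giving the required treewidth bound.

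The main obstacle will be setting up the recursion in (c)$\Rightarrow$(a) cleanly: verifying that the restricted covering genuinely inherits $(c,d)$-disjointedness with the same parameters, arranging the gluing of recursive quotients along shared $c$-cliques so that the union is still a $c$-tree (which requires passing a prescribed attachment clique into each recursive call), and tracking the partition width uniformly across levels of the recursion. A secondary subtlety arises in (a)$\Rightarrow$(d), where the "for some $i$" clause is essential to avoid paying for the full boundary of each component in $H$; without it, the bound on $|Q|$ would blow up since even in a $c$-tree for $c \geq 2$ a single component of $H - S$ can have arbitrarily large boundary.
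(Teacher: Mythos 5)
The central problem is your (c)$\Rightarrow$(a) recursion: as described it never uses the hypothesis that $\GG$ has bounded treewidth and promises width $\max\{\ell,d\}$, and that conclusion is simply false. Take $c=1$: the singleton partition of any cubic graph is $(1,3)$-disjointed (for $B=\{v\}$ and a component $X$ of $G-v$, take $Q:=N(v)\cap V(X)$), so your recursion would give every cubic graph tree-partition-width at most $3$, hence treewidth at most $7$ by \cref{cTreePartitionWidthTreewidth}, which cubic expanders violate. The step that breaks is the carried separator: the set $Q$ you place in the attachment clique is \emph{not} an element of $\beta$, so at the next level of the recursion you cannot apply $(c,d)$-disjointedness to a $c$-tuple containing it. You can only cover it by up to $d$ elements of $\beta$, i.e.\ pass to $\beta[d]$, where the guaranteed separator size is $f(d)=d\cdot d^{c}$ (\cref{dtofdisjointed}); iterating, the carried sets (and hence the parts) grow without bound. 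This is exactly what the paper's machinery is built to stop: \cref{MainLemma} works with $(c,f)$-disjointedness of the coverings $\beta[t]$, carries sets $S_i\in\beta[12k]$, and in Case~2 invokes the Robertson--Seymour balanced-separator lemma for treewidth ${<}\,k$ to re-split the carried set $R$ whenever it exceeds $12k$, so that it can always be absorbed into a single element of $\beta[12k]$ and the width stays at $\max\{12\ell k,\,2c\ell f(12k)\}$ --- a bound that necessarily depends on $k$. A related flaw is that you recurse on $Y$ alone with the restricted covering; this discards the edges from $Y$ to the carried sets $B_j'$ and $Q$, which the quotient must respect without making one node adjacent to everything (that would raise its treewidth). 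The paper instead recurses on subgraphs $F_i$ that \emph{contain} the carried sets and forces them, via the induction hypothesis, into prescribed parts of the attachment clique.

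There is also a gap in your (a)$\Rightarrow$(b)/(d) sketch: taking $Q:=N_G(B_{i^*}')\cap V(X)$ for the index minimising this quantity does not give a bounded set. With $c=2$, $\ell=1$, glue two large fans along an edge of their outer paths (treewidth $2$, by the clique-sum fact); for the pair of apexes $u,v$, the single path component $X$ has $\abs{N(u)\cap V(X)}$ and $\abs{N(v)\cap V(X)}$ both arbitrarily large, yet a single vertex of $X$ (the gluing edge's endpoint) is a valid $Q$. The point of the definition is that \emph{different} components of $X-Q$ may avoid \emph{different} $B_i'$, so the right $Q$ is a small separator extracted from a bag of a width-$c$ tree-decomposition of the quotient (via the Helly property of subtrees), as in \cref{disjointed}, not a cover of any one neighbourhood.
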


\begin{proof}
    \cref{c0} handles the~${c = 0}$ case. 
    Now assume that~${c \geq 1}$. 
    \cref{disjointed} below says that \ref{item:cRPWDC-cTPW} implies~\ref{item:cRPWDC-cdDP}. 
    Since every $\ell$-partition is an $\ell$-covering, \ref{item:cRPWDC-cdDP} implies~\ref{item:cRPWDC-cdDC}, and \ref{item:cRPWDC-cfDP} implies~\ref{item:cRPWDC-cfDC}. 
    \cref{dtofdisjointed} below says that \ref{item:cRPWDC-cdDC} implies~\ref{item:cRPWDC-cfDP}. Finally, \cref{Main} below says that \ref{item:cRPWDC-cfDC} implies~\ref{item:cRPWDC-cTPW}. 
\end{proof}

By definition, every ${(c,f)}$-disjointed $\ell$-covering is ${(c,f(1))}$-disjointed. 
The next lemma gives a qualitative converse to this.

\begin{lem}
    \label{dtofdisjointed}
    Let~${\ell,c,d\in \NN}$, and let~$\beta$ be a ${(c,d)}$-disjointed $\ell$-covering of a graph~$G$. 
    Then~$\beta$ is $(c,f)$-disjointed, where~${f(t) \coloneqq d t^c}$ for each~${t \in \NN}$.
\end{lem}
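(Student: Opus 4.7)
Fix $t \in \NN$, a tuple $(B_1,\dots,B_c) \in \beta[t]^c$, and a component $X$ of $G - (B_1 \cup \dotsb \cup B_c)$. The strategy is to decompose each $B_i = C_{i,1} \cup \dotsb \cup C_{i,t_i}$ as a union of at most $t$ sets from $\beta$, and then pool at most $t_1 \cdots t_c \leq t^c$ separate invocations of the $(c,d)$-disjointedness of $\beta$ into a single witness $Q \subseteq V(X)$ of size at most $dt^c$.

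Concretely, for each index tuple $\mathbf{j} = (j_1,\dots,j_c) \in \{1,\dots,t_1\} \times \dotsb \times \{1,\dots,t_c\}$, let $X'_{\mathbf{j}}$ be the component of $G - \bigcup_i C_{i,j_i}$ that contains $X$; such a component exists because $\bigcup_i C_{i,j_i} \subseteq \bigcup_i B_i$ and $X$ is connected. Applying the $(c,d)$-disjointedness of $\beta$ to the tuple $(C_{1,j_1},\dots,C_{c,j_c})$ and the component $X'_{\mathbf{j}}$ yields a set $Q_{\mathbf{j}} \subseteq V(X'_{\mathbf{j}})$ of size at most $d$. I then set $Q \coloneqq \bigcup_{\mathbf{j}} (Q_{\mathbf{j}} \cap V(X))$, which gives $|Q| \leq d \, t_1 \cdots t_c \leq dt^c$. (The degenerate case in which some $B_i$ is empty is trivial: take $Q = \emptyset$ and use $B'_i = \emptyset$.) To verify $Q$, let $Y$ be a component of $X - Q$ and suppose for contradiction that $Y$ has a neighbour in every $B'_i$. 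For each $i$ choose $j_i$ so that $Y$ has a neighbour in $C_{i,j_i} \setminus (B_1 \cup \dotsb \cup B_{i-1})$; since $C_{k,j_k} \subseteq B_k$ for every $k<i$, this neighbour also lies in $C'_i \coloneqq C_{i,j_i} \setminus (C_{1,j_1} \cup \dotsb \cup C_{i-1,j_{i-1}})$. Because $Y$ is connected and disjoint from $Q_{\mathbf{j}}$ (as $Q \supseteq Q_{\mathbf{j}} \cap V(X)$), it sits inside some component $Y'$ of $X'_{\mathbf{j}} - Q_{\mathbf{j}}$. Applying the disjointedness guarantee at $Y'$ then yields some index $i^*$ with $V(Y') \cap N_G(C'_{i^*}) = \emptyset$, contradicting the chosen neighbour of $Y$ in $C'_{i^*}$.

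The main subtlety to watch for is the interplay between the two ``primed'' operations: $B'_i$ removes the larger union $B_1 \cup \dotsb \cup B_{i-1}$, whereas $C'_i$ only removes the chosen pieces $C_{k,j_k}$ for $k<i$. The containment $\bigcup_{k<i} C_{k,j_k} \subseteq \bigcup_{k<i} B_k$ points in the right direction, so a neighbour of $Y$ in $B'_i$ automatically certifies a neighbour in the corresponding $C'_i$; this is exactly what legitimises the reduction to a single decomposition from $\beta$. Once this observation is isolated, the size bound $|Q| \leq dt^c$ and the contradiction argument are both essentially bookkeeping.
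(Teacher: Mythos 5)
Your proposal is correct and follows essentially the same argument as the paper: decompose each $B_i$ into at most $t$ members of $\beta$, invoke $(c,d)$-disjointedness once per $c$-tuple of pieces (the paper's $\mathcal{F} = \mathcal{B}_1 \times \dotsb \times \mathcal{B}_c$ is your index set of tuples $\mathbf{j}$), pool the resulting sets into $Q$ of size at most $dt^c$, and derive the contradiction by selecting the tuple containing a chosen neighbour from each $B_i'$. The key containment you isolate, that $\bigcup_{k<i} C_{k,j_k} \subseteq \bigcup_{k<i} B_k$ transfers a neighbour in $B_i'$ to one in $C_i'$, is exactly the step the paper uses implicitly, so no further changes are needed.
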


\begin{proof}
    Fix~${t \in \NN}$. 
    Let~${B_1,\dots, B_c \in \beta[t]}$. 
    Let~$X$ be a component of~${G-(B_1\cup\dots\cup B_c)}$. 
    For each $i\in \{1,\dots ,c\}$, let $\mathcal{B}_i$ be a set of at most $t$ elements of $\beta$ whose union is~$B_i$. 
    Let~${\mathcal{F} \coloneqq \mathcal{B}_1 \times \dots \times \mathcal{B}_c}$, and for each~${y = (A_1, \dots , A_c) \in \mathcal{F}}$, define~$Q_y$ as follows. 
    Let~$X_y$ the component of~${G - (A_1 \cup \dots \cup A_c)}$ containing~$X$. 
    Since~$\beta$ is $(c,d)$-disjointed, there exists~${Q_y \subseteq V(X_y)}$ of size at most~$d$ such that for every component~$Y$ of~${X_y-Q_y}$ there is some~${i \in \{1, \dots, c\}}$ such that~${V(Y) \cap N_G(A_i \setminus (A_1 \cup \dots \cup A_{i-1})) = \emptyset}$. 
    Now let~${Q \coloneqq \bigcup_{y \in \mathcal{F}} Q_y}$, and note that~${\abs{Q}\leq d\abs{\mathcal{F}} \leq dt^c}$. 
    
    Suppose for contradiction that for some component~$Y$ of~${X - Q}$ and each~${i \in \{1,\dots, c\}}$, there is a vertex~${b_i \in N_G(Y) \cap B'_i}$, where~${B'_i \coloneqq B_i \setminus (B_1 \cup \dots \cup B_{i-1})}$. 
    Let~${y = (A_1, \dots, A_c) \in \mathcal{F}}$ be such that~${(b_1, \dots, b_c) \in A_1 \times \dots \times A_c}$, and consider that component $Y'$ of~${X_y - Q_y}$ containing~$Y$. 
    By the definition of~$Q_y$, there is some~${i \in \{1,\dots, c\}}$ such that $Y'$ contains no neighbour of a vertex in $A_i \setminus (A_1 \cup \dots \cup A_{i-1})$. 
    In particular, all neighbours of vertices of~$Y$ are either vertices of~$Y'$ or neighbours of vertices of~${Y'}$, so~${b_i}$ is not a neighbour of any vertex of~$Y$, a contradiction.
\end{proof}

Now we prove that having a $(c,d)$-disjointed partition is necessary for bounded $c$-tree-partition-width.

\begin{lem}\label{disjointed}
    For all ${c,\ell\in\NN_0}$, every graph~$G$ with $c$-tree-partition-width~$\ell$ has a $(c,c\ell)$-disjointed $\ell$-partition. 
\end{lem}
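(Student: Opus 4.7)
I plan to prove the lemma by showing that the $H$-partition $\beta := \{V_y : y \in V(H)\}$ underlying $G$'s $c$-tree-partition of width $\ell$ is itself $(c, c\ell)$-disjointed. Consider any $c$-tuple $(V_{x_1}, \ldots, V_{x_c}) \in \beta^c$; since repeated $x_i$'s give $B_i' = \emptyset$ and trivially satisfy the disjointedness condition via that index, we may assume the $x_i$'s are distinct, and set $S := \{x_1, \ldots, x_c\}$. For each component $X$ of $G - \bigcup_i V_{x_i}$, the set $V(X)$ lies in $\bigcup_{z \in V(C)} V_z$ for a unique component $C$ of $H - S$.

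The plan is to take $Q := V(X) \cap \bigcup_{i=1}^c V_{y_i}$ for carefully chosen $y_1, \ldots, y_c \in V(C)$; the bound $|Q| \leq c\ell$ is then automatic from $|V_{y_i}| \leq \ell$. To pick the $y_i$'s, fix a tree-decomposition $(T, W)$ of $H$ of width $c$ and root $T$. Let $T_z := \{t \in V(T) : z \in W_t\}$ and $T_C := \bigcup_{z \in V(C)} T_z$, a subtree of $T$. For each $i$ with $N_H(x_i) \cap V(C) \neq \emptyset$, the subtree $T_{x_i} \cap T_C$ is non-empty; let $\tau_i$ be its topmost (closest-to-root) node. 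Then $\tau_i \in T_C$, so $W_{\tau_i} \cap V(C) \neq \emptyset$; pick $y_i$ from this intersection. Indices $i$ with $N_H(x_i) \cap V(C) = \emptyset$ satisfy the disjointedness condition trivially for that $i$, since no vertex of $V(X)$ then has any $H$-neighbour in $V_{x_i}$.

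To verify the disjointedness property, argue by contradiction: suppose a component $Y'$ of $X - Q$ meets $N_G(V_{x_i})$ for every $i$. Then the trace $Y'_H := \{z \in V(C) : V_z \cap V(Y') \neq \emptyset\}$ is a connected subgraph of $H[V(C)]$ contained in $V(C) \setminus \{y_1, \ldots, y_c\}$ and meeting $N_H(x_i) \cap V(C)$ for every $i$. Lifting to $T$, the subtree $T_{Y'} := \bigcup_{z \in Y'_H} T_z$ is connected, lies inside $T_C$, and meets each $T_{x_i} \cap T_C$. A tree-structural argument using the rooted order of the $\tau_i$'s then forces $T_{Y'}$ to pass through some $\tau_i$, producing a vertex of $W_{\tau_i} \cap V(C)$ inside $Y'_H$; coordinating the choice of the $y_i$'s ensures this forced vertex equals $y_i$, contradicting $y_i \notin Y'_H$.

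The main obstacle is the coordinated selection of the $y_i$'s: picking them independently from $W_{\tau_i} \cap V(C)$ need not suffice (for example, with $H = K_4 - e$ and $V(C)$ two vertices both lying in every $N_H(x_i)$, independent choices can produce $|\{y_1, y_2\}| = 1$ when both vertices are required for separation). The $y_i$'s must therefore be chosen so that $\{y_1, \ldots, y_c\}$ is, in a suitable sense, a separator in $H[V(C)]$ untangling the sets $N_H(x_i) \cap V(C)$; here one exploits $|W_{\tau_i} \cap V(C)| \leq c$ (since $|W_{\tau_i}| \leq c+1$ and $x_i \in W_{\tau_i}$) together with the rooted tree structure to argue that a suitable choice exists within the budget $|Y| \leq c$, which is tight in general.
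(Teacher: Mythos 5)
Your overall reduction is sound and matches the paper's: pass to the quotient $H$, find a set of at most $c$ vertices of $H$ inside the relevant component of $H-\{x_1,\dots,x_c\}$, and blow it up through the parts to get $Q$ with $\abs{Q}\leq c\ell$; the handling of repeated indices and of indices $i$ with $N_H(x_i)\cap V(C)=\emptyset$ is also fine. But the heart of the lemma --- proving that the singleton partition of $H$ is $(c,c)$-disjointed, i.e.\ actually exhibiting the at-most-$c$ vertices of the component $C$ --- is not carried out. Your plan (pick the topmost node $\tau_i$ of $T_{x_i}\cap T_C$ and one vertex $y_i\in W_{\tau_i}\cap V(C)$ per index) is exactly the step you then concede is broken: independent choices can fail (your $K_4-e$ example), and the ``coordinated selection'' that is supposed to fix this is only asserted to exist, not constructed or verified. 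Since that selection is the entire content of the lemma, this is a genuine gap, not a presentational one.

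The paper closes this step with a different case analysis that you may find instructive, partly because it shows the separator need not have the shape ``one vertex per index.'' Let $T_i$ be the subtree of bags containing $x_i$ and $T_X$ the subtree of bags meeting the component. If some $T_i$ and $T_j$ are disjoint, take the node $z$ of $T_i$ closest to $T_j$; then $W_z$ separates $N_H(x_i)$ from $N_H(x_j)$, and $Q\coloneqq W_z\cap V(X)$ has size at most $c$ (as $x_i\in W_z$), so every component of $X-Q$ misses $N_H(x_i)$ or $N_H(x_j)$. If instead all the $T_i$ pairwise intersect and each meets $T_X$, the Helly property for subtrees forces $T_1\cap\dotsb\cap T_c\cap T_X$ to be a single node $z$ with $W_z=\{x_1,\dots,x_c,u\}$, and the single vertex $Q\coloneqq\{u\}$ works: any component $Y$ of $X-\{u\}$ has a connected trace in $T$ avoiding $z$, so (again by Helly) it must miss some $T_i$ entirely, hence miss $N_H(x_i)$. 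Note also that the disjointedness condition only requires each component of $X-Q$ to avoid the neighbourhood of \emph{some one} $B_i'$, which is strictly weaker than the pairwise ``untangling'' separator you set out to build; exploiting that weaker requirement is what lets the paper get away with a single bag or a single vertex, and is the idea missing from your proposal.
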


\begin{proof}
    By assumption, $G$ has an $H$-partition ${\beta = (V_h \colon h \in V(H))}$ where~$H$ is a graph of treewidth at most~$c$ and~${\abs{V_h} \leq \ell}$ for all~$h$. 
    We first show that the singleton partition of~$H$ is $(c,c)$-disjointed. 
    Let~${v_1, \dotsc, v_c \in V(H)}$ and let~$X$ be a component of~${H-\{v_1,\dots, v_c\}}$. 
    Let ${(W_x \colon x \in V(T))}$ be a tree-decomposition of~$H$ where~${\abs{W_x} \leq c + 1}$ for all~${x \in V(T)}$. 
    We may assume that~${W_x \neq W_y}$ whenever~${x \neq y}$. 
    For each~${i \in \{1,\dots,c\}}$, let~$T_i$ be the subtree of~$T$ induced by~${\{x \in V(T) \colon v_i \in W_x\}}$. 
    
    First suppose that ${V(T_i) \cap V(T_j) = \emptyset}$ for some~${i, j \in \{1,\dots,c\}}$. 
    Let ${z \in V(T_i)}$ be the closest node (in~$T$) to~$T_j$. 
    Let~${Q \coloneqq W_z \cap X}$. 
    Note that~${Q \subseteq W_z \setminus \{v_i\}}$ so~${\abs{Q} \leq c}$. 
    Any path from~$v_i$ to~$v_j$ in~$H$ passes through~$W_z$, so each component of~${X - Q}$ is disjoint from~$N_H(v_i)$ or~$N_H(v_j)$. 
        
    Now assume that~${V(T_i) \cap V(T_j) \neq \emptyset}$ for all~${i, j\in \{1,\dots, c\}}$. 
    Let~$T_X$ be the subgraph of~$T$ induced by~${\{x \in V(T) \colon V(X) \cap W_x \neq \emptyset\}}$. 
    Since~$X$ is connected, $T_X$ is a subtree of~$T$. 
    Suppose that ${V(T_i) \cap V(T_X) = \emptyset}$ for some~$i$. 
    Since~${N_H(v_i) \subseteq \bigcup(W_x \colon x \in V(T_i))}$, it follows that~${N_H(v_i) \cap V(X) = \emptyset}$ and so we may take~${Q \coloneqq \emptyset}$ in this case. 
    Now assume that~${V(T_i) \cap V(T_X) \neq \emptyset}$ for all~${i \in \{1,\dots,c\}}$. 
    By the Helly property,
    ${\tilde{T} \coloneqq T_1 \cap \dotsb \cap T_c \cap T_X}$ is a non-empty subtree of~$T$. 
    For~${x \in V(\tilde{T})}$, we have~${\abs{W_x} \leq c + 1}$ and so~${W_x = \{v_1, \dotsc, v_c, u\}}$ for some~${u \in V(X)}$.
    First suppose that~${\abs{V(\tilde{T})} \geq 2}$. 
    Then there are adjacent ${x, y \in V(\tilde{T})}$ with ${W_x = \{v_1, \dotsc, v_c, u\}}$ and ${W_y = \{v_1, \dotsc, v_c, v\}}$ for~${u, v \in V(X)}$. 
    Since ${W_x \neq W_y}$, we have~${u \neq v}$ and thus there is no $(u,v)$-path in~${H - \{v_1, \dotsc, v_c\}}$, contradicting the connectedness of~$X$. 
    Hence~$\tilde{T}$ consists of a single vertex~$z$; thus ${W_z = \{v_1, \dotsc, v_c, u\}}$ for some~${u \in V(X)}$. 
    Let~${Q \coloneqq \{u\}}$ and consider a component~$Y$ of~${X - Q}$. 
    Let~$T_{Y}$ be the subtree of~$T$ induced by~${\{y\in V(T) \colon V(Y) \cap W_y \neq \emptyset\}}$. 
    Since~$T_{Y}$ is connected and does not contain~$z$, it is disjoint from some~$T_i$. 
    As above, ${N_H(v_i) \cap V(Y) = \emptyset}$, as required. 
        
    We have shown that the singleton partition of~$H$ is $(c,c)$-disjointed. 
    Now focus on~$G$. 
    By assumption, $\beta$ is an $\ell$-partition of~$G$. 
    Let~${V_{v_1}, \dotsc, V_{v_c}}$ be parts in~$\beta$, and let~$X$ be a component of~${G - (V_{v_1} \cup \dotsb \cup V_{v_c})}$. 
    Then~${X \subseteq \bigcup \{V_h \colon h \in X'\}}$ where~$X'$ is a component of~${H - \{v_1, \dotsc, v_c\}}$. 
    Since~$H$ is $(c,c)$-disjointed, there exists~${Q' \subseteq V(X')}$ of size at most~$c$ such that each component~${X' - Q'}$ is disjoint from some~${N_H(v_i)}$. 
    Let~${Q \coloneqq \bigcup \{V_h \colon h \in Q'\}}$, which has size at most~${c\ell}$. 
    Each component of~${X - Q}$ is disjoint from some~$N_G(V_{v_i})$.
\end{proof}

Note that $(c, f)$-disjointedness is preserved when restricting to a subgraph.

\begin{lem}
    \label{CoveringSubgraph}
    If~$\beta$ is a $(c,f)$-disjointed $\ell$-covering of a graph~$G$, then for every subgraph~$\tilde{G}$ of~$G$, the restriction ${\tilde{\beta} \coloneqq \{ B \cap V(\tilde{G}) \colon B \in \beta\}}$
    is a ${(c,f)}$-disjointed $\ell$-covering of~$\tilde{G}$.
\end{lem}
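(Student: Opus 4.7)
The plan is to verify the two requirements (being an $\ell$-covering and being $(c,f)$-disjointed) directly, transferring the certifying sets from $G$ down to $\tilde{G}$ by intersection. The key structural observation is that every element of $\tilde{\beta}[t]$ arises by intersecting a corresponding element of $\beta[t]$ with $V(\tilde{G})$, since
\[
\bigcup_{B \in \mathcal{B}}\bigl(B \cap V(\tilde{G})\bigr) = \Bigl(\bigcup_{B \in \mathcal{B}} B\Bigr) \cap V(\tilde{G})
\]
for any $\mathcal{B} \subseteq \beta$. Once this is in hand, the proof reduces to pulling the disjointedness data up to $G$ and pushing it back down.

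First, $\tilde{\beta}$ is an $\ell$-covering of $\tilde{G}$: each set $B \cap V(\tilde{G})$ has size at most $\abs{B} \leq \ell$, and $\bigcup \tilde{\beta} = V(\tilde{G}) \cap \bigcup \beta = V(\tilde{G})$. Now fix $t \in \NN$ and sets $\tilde{B}_1, \dots, \tilde{B}_c \in \tilde{\beta}[t]$; by the observation above, choose $B_1, \dots, B_c \in \beta[t]$ with $\tilde{B}_i = B_i \cap V(\tilde{G})$. Let $\tilde{X}$ be a component of $\tilde{G} - (\tilde{B}_1 \cup \dots \cup \tilde{B}_c)$. Since $V(\tilde{X}) \subseteq V(G) \setminus (B_1 \cup \dots \cup B_c)$ and $\tilde{X}$ is connected (in $\tilde{G}$, hence in $G$), there is a unique component $X$ of $G - (B_1 \cup \dots \cup B_c)$ with $V(\tilde{X}) \subseteq V(X)$.

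Since $\beta$ is $(c,f)$-disjointed, $\beta[t]$ is $(c, f(t))$-disjointed, so there exists $Q \subseteq V(X)$ with $\abs{Q} \leq f(t)$ such that each component of $X - Q$ avoids $N_G(B'_i)$ for some $i$, where $B'_i \coloneqq B_i \setminus (B_1 \cup \dots \cup B_{i-1})$. Set $\tilde{Q} \coloneqq Q \cap V(\tilde{X})$, so $\abs{\tilde{Q}} \leq f(t)$. A direct calculation gives $\tilde{B}'_i = B'_i \cap V(\tilde{G})$, and since $\tilde{G}$ is a subgraph of $G$, we have $N_{\tilde{G}}(\tilde{B}'_i) \subseteq N_G(B'_i)$ (any vertex $v$ in the left-hand side lies in $V(\tilde{G}) \setminus \tilde{B}'_i$, has a neighbour in $\tilde{B}'_i \subseteq B'_i$ via an edge of $G$, and cannot belong to $B'_i$ without contradicting $v \notin \tilde{B}'_i$). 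For any component $\tilde{Y}$ of $\tilde{X} - \tilde{Q}$, the vertex set $V(\tilde{Y})$ lies in some component $Y$ of $X - Q$, and choosing the corresponding $i$ yields
\[
V(\tilde{Y}) \cap N_{\tilde{G}}(\tilde{B}'_i) \subseteq V(Y) \cap N_G(B'_i) = \emptyset,
\]
as required.

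There is no substantive obstacle; the only point requiring care is the bookkeeping between $\beta[t]$ and $\tilde{\beta}[t]$ and the verification that the ``primed'' sets and neighbourhoods behave well under restriction, which is exactly what the displayed inclusions above record.
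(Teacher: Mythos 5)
Your proof is correct and follows essentially the same route as the paper's: lift each $\tilde{B}_i \in \tilde{\beta}[t]$ to a corresponding $B_i \in \beta[t]$, apply the $(c,f(t))$-disjointedness of $\beta[t]$ in $G$ to the component $X$ containing $\tilde{X}$, take $\tilde{Q} \coloneqq Q \cap V(\tilde{X})$, and conclude via $N_{\tilde{G}}(\tilde{B}'_i) \subseteq N_G(B'_i)$ together with the fact that components of $\tilde{X}-\tilde{Q}$ sit inside components of $X-Q$. Your explicit verification that $\tilde{B}'_i = B'_i \cap V(\tilde{G})$ is a nice touch that the paper leaves implicit, but there is no substantive difference in the argument.
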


\begin{proof}
    Fix ${t \in \NN}$. 
    Let ${\tilde{B}_1, \dots, \tilde{B}_c \in \tilde{\beta}[t]}$ and let~$\tilde{X}$ be a component of~${\tilde{G}-(\tilde{B}_1\cup\dots \cup \tilde{B}_c)}$. 
    For each ${i \in \{1,\dots, c\}}$, there is a subset~${S_i \subseteq \beta}$ of size at most~$t$ such that~${\tilde{B}_i = \bigcup_{B \in S_i} (B\cap V(\tilde{G}))}$. 
    Let ${(B_1,\dots,B_c)\coloneqq (\bigcup S_1,\dots ,\bigcup S_c)}$, and let $\beta''$ be the $t\ell$-covering of $G$ given by $\beta\cup \{B_1,\dots,B_c\}$. 
    Let $X$ be the component of~${G-(B_1\cup \dots \cup B_c})$ which contains~$\tilde{X}$, and for each~${i \in \{1,\dots, c\}}$ let ${B'_i\coloneqq B_i\setminus (B_1\cup\dots \cup B_{i-1})}$.
    Since~$\beta$ is $(c,f)$-disjointed, there is a subset~$Q$ of~${V(X)}$ of size at most~${f(t)}$ such that each component of~${X-Q}$ disjoint from ${N_G(B'_i)}$ for some~${i \in \{1,\dots,c\}}$. 
    Let~${\tilde{Q}\coloneqq Q\cap V(\tilde{X})}$, and note that~${\abs{\tilde{Q}}\leq \abs{Q}\leq f(t)}$. 
    Each component of~${\tilde{X}-\tilde{Q}}$ is contained in a component of~${X - Q}$, and hence is disjoint from~${N_{\tilde{G}}(\tilde{B}_i\setminus (\tilde{B}_1\cup \dots \cup \tilde{B}_{i-1}))\subseteq N_G(B'_i)}$ for some~${i \in \{1,\dots, c\}}$. 
    Hence~$\tilde{\beta}$ is $(c,f)$-disjointed.
\end{proof}

The next lemma lies at the heart of the paper. 

\begin{lem}
    \label{Main}
    Let~${k,c,\ell \in \NN}$ and~${f \colon \NN \to \REALS^+}$. 
    For any graph $G$, if $\tw(G)<k$ and $G$ has a $(c,f)$-disjointed $\ell$-covering, then~$G$ has $c$-tree-partition-width ${\tpw_c(G) \leq \max\{ 12\ell k , 2 c \ell f(12 k)\}}$. 
\end{lem}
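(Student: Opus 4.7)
The plan is an induction on $|V(G)|$ with a strengthened statement that tracks an ``active boundary'' $\mathcal{B}$ of at most $c$ vertex subsets, each a union of at most $t \coloneqq 12k$ elements of $\beta$, and constructs the $c$-tree-partition incrementally. The quotient graph $H$ is built in parallel: at each recursion node, the tree-decomposition bag of $H$ consists of the current part together with the quotient-nodes corresponding to the active boundary sets, giving a bag size of at most $c + 1$, hence $\tw(H) \leq c$. The parts come in two kinds matching the two terms of the $\max$: ``separator parts'' of size at most $12\ell k$, obtained by lifting balanced separators of $G$ to unions of $\beta$-elements, and ``cleanup parts'' of size at most $2c\ell f(12k)$, absorbing the sets $Q$ provided by the disjointedness property.

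For the inductive step, with active boundary $\mathcal{B}$ of size $m \leq c - 1$, I would use $\tw(G) < k$ to extract a balanced separator $S \subseteq V(G)$ of size at most $k$ (each component of $G-S$ having at most half the vertices, under a suitable weighting on $V(G)$). Lift $S$ to a union $S^+$ of at most $k$ elements of $\beta$, pad if desired to at most $t = 12k$ elements, declare $S^+$ a separator part (size at most $12\ell k$), and append $S^+$ to $\mathcal{B}$ as a new boundary set. Now $|\mathcal{B}| \leq c$, so the $(c,f)$-disjointedness hypothesis applies at level $t$: for each component $X$ of $G - \bigcup \mathcal{B}$ there is $Q_X \subseteq V(X)$ with $\abs{Q_X} \leq f(12k)$ such that every component $Y$ of $X - Q_X$ avoids $N_G(B'_{i_Y})$ for some index $i_Y$. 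Lift $Q_X$ to $Q_X^+$ (a union of at most $f(12k)$ elements of $\beta$, size at most $\ell f(12k)$), and recurse on each sub-component $Y$ with the \emph{reduced} active boundary $\mathcal{B} \setminus \{B_{i_Y}\}$ of size at most $c - 1$.

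The quotient-graph invariant works because no edge of $G$ leaves the subcomponent $Y$ to the dropped boundary $B_{i_Y}$—this is exactly what disjointedness buys us—so the $B_{i_Y}$-node of $H$ need not appear in any bag for $Y$'s subtree. Between successive separator introductions, however, cleanup-only chains can be up to $c$ deep before the boundary is exhausted and a new separator is forced; the $Q_X^+$ pieces along such a chain must be amalgamated into a single part of the tree-partition so that they correspond to one node of $H$ (otherwise the quotient would not be $c$-tree-structured). This amalgamation is the origin of the factor $c$ in the bound $2c\ell f(12k)$, with the factor $2$ absorbing the overhead from boundary resets between chains.

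The principal obstacle, I expect, is exactly this cleanup-merging bookkeeping: one must verify that the amalgamated part is attached to the correct neighbours in $H$ and that its size stays within $2c\ell f(12k)$, while simultaneously maintaining the invariant that the active boundary always has size at most $c$. A secondary technical point is pinning down the precise constant $12$ in the separator lift; this will come from a standard balanced-separator refinement for graphs of bounded treewidth (applied with weights that count $\beta$-elements rather than vertices), chosen so that any separator of size at most $k$ in $G$ can be inflated to a union of at most $12k$ elements of $\beta$ without losing balance. Once these pieces are in place, the construction yields $\tpw_c(G) \leq \max\{12\ell k,\, 2c\ell f(12k)\}$, as required.
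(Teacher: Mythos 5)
Your high-level plan (induction on $|V(G)|$, an active boundary of at most $c$ sets built from $\beta$, disjointedness used to drop one boundary set per component) is in the right spirit, but there is a genuine gap at exactly the point you flag as the ``principal obstacle'', and as described the construction does not give a quotient of treewidth at most $c$. The problem is the cleanup set $Q_X$. You recurse on each component $Y$ of $X-Q_X^+$ with the reduced boundary $\mathcal{B}\setminus\{B_{i_Y}\}$, so the node of $H$ carrying $Q_X^+$ is \emph{not} in the active boundary of that sub-recursion. But vertices of $Y$ adjacent to $Q_X$ get placed into parts created arbitrarily deep inside $Y$'s recursion, so the $Q_X^+$-node is adjacent in $H$ to parts at unbounded depth below it; none of your proposed bags (current part plus active boundary) contains both endpoints of those edges, and in fact the quotient can genuinely have treewidth larger than $c$ (already for $c=1$ it need not be a forest). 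Amalgamating cleanup parts ``along chains'' does not repair this, since the bad adjacencies are not between consecutive cleanups but between one cleanup part and everything below it; amalgamating across all depths would make the part unbounded. The alternative, keeping $Q_X^+$ in the boundary, also fails as stated: either the boundary exceeds $c$ once you add the next separator part, or you must feed a set that is a union of up to $f(12k)$ elements of $\beta$ (not $12k$) into the next disjointedness application, which escalates the level and replaces the bound $f(12k)$ on later cleanups by $f(c\,f(12k))$, etc. Your use of the separator is also misdirected: balancing on $V(G)$ and lifting a $k$-vertex separator into $\beta$-elements is trivial (any $k$ vertices are covered by $\leq k$ elements) and does not address the real difficulty, which is that $|Q_X|$ can be as large as $f(12k)\gg 12k$.

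The paper's proof resolves precisely this tension, and differently from your sketch. The cleanup set is carried as a special ``working'' boundary element $R$ with $4k\leq |R|\leq f(12k)$ (so the boundary is always $c-1$ sets from $\beta[12k]$ plus $R$, i.e.\ effectively $c$ elements, and $R$'s node $y$ is part of the maintained $c$-clique of $H$). The Robertson--Seymour separator is balanced \emph{with respect to $R$}, is never a part of its own, and is absorbed into both copies of $R$ when the graph is split; the potential bound $|V_y|\leq 2\ell(|R|-2k)$ is exactly what lets the two sides be glued after duplicating the separator. Once the surviving piece of $R$ has size at most $12k$ it is covered by at most $12k$ elements of $\beta$ and becomes a legitimate boundary set $S_c$; only then is disjointedness applied (always at level $12k$), one boundary set $S_i$ is dropped in each of the $c$ branches $F_i$, the new cleanup set $Q_j$ becomes the next working set, and the $c$ branch quotients are glued along a $(c+1)$-clique with their $y$-parts merged. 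That merge over the $c$ \emph{parallel} branches sharing $Q_j$ --- not a chain of $c$ successive cleanups --- is where the factor $c$ in $2c\ell f(12k)$ comes from, and the constant $12$ is the threshold in this $R$-splitting recursion rather than anything about lifting separators. Your proposal is missing this mechanism for shrinking a size-$f(12k)$ cleanup set into a $\beta[12k]$-type boundary set, and without it the bound (and the treewidth-$c$ quotient) does not follow.
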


We prove \cref{Main} via the following induction hypothesis. 

\begin{lem}
    \label{MainLemma}
    Let ${k,c,\ell\in\NN}$ and let~${f \colon \NN \to \REALS^+}$. 
    Let~$G$ be a graph of treewidth less than~$k$ and let~${\beta \subseteq 2^{V(G)}}$ be a $(c,f)$-disjointed $\ell$-covering of~$G$. 
    Let ${S_1, \dotsc, S_{c - 1}, R \subseteq V(G)}$, where ${S_i \in \beta[12k]}$ for each~${i \in \{1, \dotsc, c - 1\}}$ and~${4 k\leq \abs{R} \leq f(12 k)}$. 
    Then there exists a $c$-tree-partition ${(V_x \colon x \in V(H))}$ of~$G$ of width at most~${W \coloneqq \max\{ 12\ell k , 2 c \ell f(12 k) \}}$, and there exists a $c$-clique ${\{x_1,\dots,x_{c-1},y\}}$ of~$H$ such that~${V_{x_i} = S_i\setminus(S_1\cup\dots\cup S_{i-1})}$ for each~${i \in \{1,\dots,c-1\}}$, and~${R \setminus (S_1 \cup \dots \cup S_{c - 1}) \subseteq V_y}$ with~${\abs{V_y} \leq 2 \ell (\abs{R} - 2 k)}$. 
\end{lem}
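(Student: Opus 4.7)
My plan is to prove the statement by induction on $|V(G)|$. The base case handles graphs where $V(G) \setminus (S_1 \cup \dotsb \cup S_{c-1})$ is small enough to be fully absorbed into $V_y$: I would take $H$ to be the complete graph on $\{x_1, \dotsc, x_{c-1}, y\}$, set $V_{x_i} := S_i \setminus (S_1 \cup \dotsb \cup S_{i-1})$, and let $V_y$ contain the remaining vertices of $G$, verifying $|V_y| \le 2\ell(|R|-2k)$ directly from the surplus $|R|-2k \ge 2k$.

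For the inductive step, I would combine a balanced separator from treewidth with the $(c,f)$-disjointedness hypothesis. Using $\tw(G) < k$, I would find a bag $X$ of a tree-decomposition of width less than $k$ with $|X| \le k$ such that each component of $G - X$ contains at most $\lceil |R|/2 \rceil$ vertices of $R$. Covering $X$ minimally by at most $k$ parts of $\beta$ yields a union $B_c \in \beta[k] \subseteq \beta[12k]$. Applying $(c,f)$-disjointedness to the tuple $(S_1, \dotsc, S_{c-1}, B_c)$ then produces, for each component $Y$ of $G - (S_1 \cup \dotsb \cup S_{c-1} \cup B_c)$, a trimming set $Q_Y$ of at most $f(12k)$ vertices whose removal breaks $Y$ into pieces each avoiding $N_G(B_i')$ for some $i \in \{1,\dotsc,c\}$, where $B_i' = B_i \setminus (B_1 \cup \dotsb \cup B_{i-1})$.

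I would then construct $V_y$ from $R \setminus (S_1 \cup \dotsb \cup S_{c-1})$, the remainder $B_c \setminus (S_1 \cup \dotsb \cup S_{c-1})$, and, for each component $Y$ whose natural boundary into $V_y \cup \bigcup V_{x_i}$ is smaller than $4k$, the $\beta$-closure of $Q_Y$ together with the offending pieces themselves; these components must be absorbed rather than recursed upon. For each remaining component $C$, I would extract a new boundary set $R_C$ of size between $4k$ and $f(12k)$, expressible as a union of at most $12k$ parts of $\beta$, and apply the induction hypothesis to $G[V(C) \cup R_C]$ with a new $c$-tuple of $S$-sets drawn from the previously constructed $V_y$ and $V_{x_i}$'s. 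The resulting tree-partitions are glued along the appropriate clique of $H$, using that the new $y$-clique of each subproblem is precisely the face of $H$ lying on the boundary of $C$.

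The main obstacle will be the quantitative bookkeeping. The bound $|V_y| \le 2\ell(|R|-2k)$ is tight and requires a careful charging argument: each vertex added to $V_y$ must be accounted for by a distinct $R$-vertex beyond the first $2k$, with the factor $2$ absorbing both the $\beta$-parts covering $R$ itself and those covering the balanced separator $X$. Simultaneously the recursive invariants $4k \le |R_C| \le f(12k)$ and membership of the new $S$-sets in $\beta[12k]$ must be preserved at every step: the balanced separator roughly halves the surplus $|R|-2k$, which both terminates the recursion and keeps the boundary sizes in range, while the $(c,f)$-disjointedness is what guarantees that the boundaries of the recursive components can be realised as small unions of $\beta$-parts. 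The $2\ell$ slack per $R$-vertex is also precisely what makes absorption of the small-boundary components feasible without exceeding the size bound on $V_y$.
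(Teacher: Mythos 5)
Your high-level ingredients match the paper's (induction on $\abs{V(G)}$, a Robertson--Seymour balanced separator with respect to $R$, disjointedness applied to the $S$-sets together with one extra covering set, trimming sets becoming the boundaries of recursive calls, gluing along $c$-cliques), but the way you merge them into a single inductive step has genuine gaps. The paper deliberately separates two cases: when $\abs{R}>12k$ it \emph{only} splits $R$ with the separator and recurses (no disjointedness at all), and when $4k\le\abs{R}\le 12k$ it covers $R$ itself by a set $S_c\in\beta[12k]$ with $\abs{S_c}\le\ell\abs{R}\le 2\ell(\abs{R}-2k)$, applies disjointedness to the tuple $(S_1,\dots,S_c)$, and returns $V_y=S_c\setminus S$. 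Your merged step instead covers only the separator $X$ by $B_c\in\beta[k]$ and puts $R\setminus S$ into $V_y$ without it being part of the disjointedness tuple. This breaks the mechanism that controls $\tw(H)$: a piece of a trimmed component that avoids $N_G(B_c')$ may still have neighbours among the $R$-vertices you placed in $V_y$, so you cannot drop the clique vertex $y$ for that piece, and the quotient is forced to contain cliques of size $c+2$ (the $c$ old vertices plus the piece's own new boundary part), i.e.\ treewidth above $c$. The requirement $R\subseteq S_c$ with $S_c\in\beta[12k]$ is exactly why the paper needs $\abs{R}\le 12k$ before it ever invokes disjointedness, and hence why the separator case exists as a separate phase; it also caps the argument of $f$ at $12k$ forever. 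Relatedly, your plan to pass ``new $S$-sets drawn from the previously constructed $V_y$'' is not admissible: the induction hypothesis needs $S$-sets in $\beta[12k]$, but your $V_y$ contains $R\setminus S$ (with $\abs{R}$ up to $f(12k)$) plus $\beta$-closures of trimming sets, and is in general not a union of at most $12k$ elements of $\beta$. You name this invariant as something to be preserved but give no mechanism, and your construction violates it; in the paper every set passed down is a restriction of an original $S_i$, a set $S_j\setminus S_i'$, or the covering set $S_c$, all certifiably in $\beta[12k]$ via \cref{CoveringSubgraph}.

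The second gap is your treatment of components that cannot support the invariant $4k\le\abs{R_C}$: you propose to absorb them (their trimming closures ``together with the offending pieces themselves'') into $V_y$. There can be arbitrarily many such components, each contributing up to $\ell f(12k)$ vertices (or more, if whole pieces are absorbed), so this destroys both the bound $\abs{V_y}\le 2\ell(\abs{R}-2k)$ and the global width bound $W$. The paper's handling is different and is what makes the bookkeeping work: inside a component $G_j$ with at least $4k$ vertices the trimming set $Q_j'$ is padded arbitrarily up to size between $4k$ and $f(12k)$ and becomes the new $R$ of the recursive calls (one call per index $i$, on the graph $F_i$ from which $S_i'$ is entirely absent), and the $c$ resulting $y$-parts are merged into a single internal part of size at most $2c\ell f(12k)$ --- not into the returned $V_y$; a component with fewer than $4k$ vertices simply becomes its own part adjacent to the clique. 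Finally, a small symptom of the same confusion: you recurse with ``a new $c$-tuple of $S$-sets'', but the lemma takes $c-1$ of them --- the paper's recursion always drops exactly one of the $c$ sets (the one whose neighbourhood the piece avoids), and that drop is the whole point of disjointedness.
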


\begin{proof}
    We proceed by induction on~${\abs{V(G)}}$. 
    Let~${S \coloneqq S_1 \cup \dotsb \cup S_{c - 1}}$.
    
    \textbf{Case 0.}~${V(G) = R \cup S}$: 
    Let~$H$ be the complete graph on vertices~${x_1, \dotsc, x_{c - 1}, y}$. 
    Let ${V_{x_i} \coloneqq S_i \setminus (S_1 \cup \dotsb \cup S_{i - 1})}$ for each~$i$ and let~${V_y \coloneqq R}$. 
    Then~${(V_x \colon x \in V(H))}$ is a $c$-tree-partition of~$G$ with width at most~${W}$ and~${\abs{V_y} = \abs{R} \leq 2(\abs{R} - 2k) \leq 2 \ell(\abs{R} - 2k)}$. 
    From now on assume that~${G - (R \cup S)}$ is non-empty.
    
    \textbf{Case 1.}~${4 k \leq \abs{R}\leq 12 k}$:
    Since~$\beta$ is an $\ell$-covering, and~${\abs{R} \leq 12k}$, we can pick~${S_c \in \beta[12k]}$ such that~${R \subseteq S_c}$ and~${\abs{S_c} \leq \ell \abs{R} \leq 2 \ell (\abs{R} - 2k)}$. 
    
    
    Let~${G_1, \dotsc, G_a}$ be the connected components of~${G - (S \cup S_c)}$. 
    For each~${i \in \{1,\dots,c\}}$, let~${S'_i \coloneqq S_i \setminus (S_1 \cup \dotsb \cup S_{i - 1})}$. To complete this case, we first prove the following.
    
    \begin{claim}
        For each~${j \in \{1,\dots,a\}}$, the subgraph~${G[ V(G_j) \cup S \cup S_c ]}$ has a $c$-tree-partition ${(V_h^j \colon h \in V(H_j))}$ of width at most~$W$ such that there is a $c$-clique ${K = \{x_1, \dotsc, x_c\}}$ in~$H_j$, where~${S_i' = V^j_{x_i}}$ for each~${i \in \{1,\dots,c\}}$.
    \end{claim}

    \begin{proofofclaim}
        If~${\abs{V(G_j)} < 4 k}$, then take~$H_j$ to be the complete graph on vertices~${x_1, \dotsc, x_{c}, z}$ with the partition~${V^j_{x_i} \coloneqq S'_i}$ for~${i \in\{1,\dots,c\}}$ and~${V^j_{z} \coloneqq V(G_j)}$. 
        Then this gives us the desired $c$-tree-partition of~${G[V(G_j) \cup S \cup S_c]}$. 
        So assume~${\abs{V(G_j)} \geq 4k}$. 
        Note that $\beta[12k]$ is a $(c,f(12 k))$-disjointed $12k\ell$-covering containing~${S_1, \dots, S_c}$, so there is a subset~${Q'_j \subseteq V(G_j)}$ of size at most~${f(12 k)}$ and there is a partition~${\{A_1, \dotsc, A_c\}}$ of~${V(G_j - Q'_j)}$ such that each~$A_i$ is a union of vertex sets of components of~${G_j - Q'_j}$ that do not intersect~${N_{G}(S'_i)}$.
        Let~$Q_j$ be a set such that~${Q'_j \subseteq Q_j \subseteq V(G_j)}$ and~${4k \leq \abs{Q_j} \leq f(12 k)}$.
        As illustrated in \cref{DisjointedCovering2}, consider the subgraph 
        \begin{align*}
            F_i \coloneqq &\ G[A_i \cup Q_j \cup S_1 \cup \dotsb \cup S_{i - 1} \cup (S_{i + 1} \setminus S'_i) \cup \dotsb \cup (S_c \setminus S'_i)] \\
            = &\ G[A_{i} \cup Q_{j} \cup S'_{1} \cup \dotsb \cup S'_{i - 1} \cup S'_{i + 1} \cup \dotsb \cup S'_{c}].
        \end{align*}

        
        
        \begin{figure}[ht]
            \centering
            \includegraphics{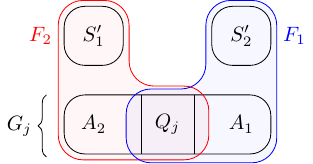}
\caption{The graphs $F_1$ and $F_2$ in the case $c=2$.}
            \label{DisjointedCovering2}
        \end{figure}
        
        By \cref{CoveringSubgraph}, the restriction of~$\beta$ to~${V(F_i)}$ is a $(c,f)$-disjointed $\ell$-covering of~$F_i$. 
        Apply induction to~$F_i$ with the sets~${S_1,\dotsc,S_{i - 1},S_{i + 1} \setminus S'_i,\dotsc,S_c\setminus S'_i}$ in place of the sets~${S_1,\dotsc,S_{c-1}}$ and the set~$Q_{j}$ in the place of~$R$.
        For each~${i \in \{1,\dotsc,c\}}$, this gives a graph~$L_i$ of treewidth at most~$c$ containing a $c$-clique~${\{x_1, \dotsc, x_{i-1}, x_{i+1},\dotsc, x_c, y\}}$ such that~$F_i$ has an $L_i$-partition~${( V^{j,i}_x \colon x \in V(L_i) )}$ with~${V^{j,i}_{x_m} = S'_m}$ for all~${m \in \{1,\dots, i-1,i+1,\dots, c\}}$ and~${Q_j \setminus ((S\cup S_c) \setminus S'_i)) \subseteq V_y^{j,i}}$ where 
        \begin{equation*}
            \abs{V^{j,i}_y} \leq 2 \ell (\abs{Q_j} - 2k) \leq 2 \ell f(12 k).
        \end{equation*}
        Let~$L_i^+$ be~$L_i$ together with a vertex~$x_i$ adjacent to the clique ${\{x_1, \dotsc, x_{i - 1}, x_{i + 1}, \dotsc, x_c, y\}}$. 
        So~${\tw(L^+_i)\leq c}$. 
        Set~${V^{j,i}_{x_i} \coloneqq S'_i}$. 
        Then~$L^+_i$ contains the $(c + 1)$-clique ${K^+ \coloneqq \{x_1, \dotsc, x_c, y\}}$ and ${(V^{j,i}_h \colon h \in V(L^+_i))}$ is an $L^+_i$-partition of~${G[A_i \cup Q_j \cup S \cup S_c]}$. 
        Now we may assume that~${V(L^+_1), \dotsc, V(L^+_c)}$ pairwise intersect in exactly the clique~$K^+$.
        Let~${H_j \coloneqq L_1^+\cup \dots\cup L_c^+}$. 
        Since each~$L_i^+$ has treewidth at most~$c$, so does~$H_j$. 
        For~${x \notin K^+}$, set~$V^j_x \coloneqq V^{j,i}_x$ for the unique~$i$ for which ${x \in V(L_i^+)}$, for ${i \in \{1, \dotsc, c \}}$ set ${V^j_{x_i} \coloneqq S'_i}$, and set ${V^j_y \coloneqq \bigcup_{i \in \{1, \dotsc, c\}} V^{j,i}_y}$. 
        Since~${\abs{V^{j}_y} \leq 2c \ell  f(12 k)}$, the partition ${( V^j_x \colon x \in V(H_j) )}$ has width at most~${W}$. 
        Setting~${K \coloneqq \{ x_1 \dotsc, x_c \}}$, the claim follows. 
    \end{proofofclaim}
    We may assume that ${V(H_1), \dotsc, V(H_a)}$ pairwise intersect in exactly the clique~$K$. 
    Let~$H \coloneqq {H_1\cup \dotsc \cup H_a}$. For~${x \in V(H)}$, setting~$V_x \coloneqq V^j_x$ if~${x \in V(H_j)}$ is well defined, and yields an $H$-partition $(V_h \colon h \in V(H))$ of~$G$. 
    Since each $H_i$ has treewidth at most~$c$, so does~$H$. 
    Let~${y \coloneqq x_c}$. 
    Then~${R \setminus S \subseteq S'_c = V_{y}}$, and, as noted at the start of this case, $\abs{V_{y}} = \abs{S'_c} \leq \abs{S_c} \leq 2 \ell (\abs{R} - 2k)$. 
    Hence, the width of this partition is at most~${W}$, as required. 
    
    \textbf{Case 2.}~${12 k < \abs{R} \leq f(12 k)}$: 
    Since~${\tw(G) < k}$, by the separator lemma of \citet[(2.6)]{RS-II},
    there is a partition~${(A, B, C)}$
    of~$V(G)$ with no edges between~$A$ and~$B$, where~${\abs{C} \leq k}$ and~${\abs{A \cap R}, \abs{B \cap R} \leq \tfrac{2}{3} \abs{R \setminus C}}$. 
    Let~${G_1 \coloneqq G[A \cup C]}$ and~${G_2 \coloneqq G[B \cup C]}$.
    Let~${R_1 \coloneqq (R \cap A) \cup C}$ and~${R_2 \coloneqq (R \cap B) \cup C}$. 
    Since~${\abs{R} \geq 12 k}$, 
    \begin{align*}
        \abs{R_1} & = \abs{A \cap R} + \abs{C} \leq \tfrac{2}{3} \abs{R} + k < \abs{R} \quad \textnormal{and}\\
        \abs{R_1} & \geq \abs{R} - \abs{B \cap R} \geq \abs{R} - \tfrac{2}{3} \abs{R \setminus C} \geq \tfrac{1}{3}\abs{R}\geq 4k.
    \end{align*}

    Hence, $4 k \leq \abs{R_1} \leq f(12k)$ and similarly $4 k \leq \abs{R_2} \leq f(12k)$. Also $\abs{V(G) - V(G_1)} = \abs{B} \geq \abs{R_2} - \abs{C} \geq 4k - k > 0$, so ${\abs{V(G_1)} < \abs{V(G)}}$ and likewise for~$G_2$.
    Fix~${j \in \{1, 2\}}$.
    Let~$\beta_j$ be the restriction of~$\beta$ to~${V(G_j)}$. 
    By \cref{CoveringSubgraph}, $\beta_j$ is a $(c, f)$-disjointed $\ell$-covering of~$G_j$. 
    Let~${S^j_i \coloneqq S_i \cap V(G_j)}$ for each~${i \in \{1,\dots,c-1\}}$; 
    note that each set~$S^j_i$ is a union of at most~$12k$ elements of~$\beta_j$. 
    
    Apply induction to~$G_j$ with~$S_i^j$ in place of~$S_i$ and~$R_j$ in place of~$R$. 
    Thus, there is a graph~$H_j$ of treewidth at most~$c$, a $c$-clique ${\{x_1, \dotsc, x_{c - 1}, y\}}$ of~$H_j$, and an $H_j$-partition ${(V^j_x \colon x \in V(H_j))}$ of~$G_j$ of width at most~$W$ such that~${V^j_{x_i} = S^j_i \setminus (S^j_1 \cup \dotsb \cup S^j_{i - 1})}$ for each~$i$ and ${R_j \setminus (S_{1}^j \cup \dotsb \cup S_{c - 1}^j) \subseteq V^j_{y}}$ with~${\abs{V_{y}} \leq 2 \ell (\abs{R_j} - 2k)}$. 
    We may assume that the intersection of~$V(H_1)$ and~$V(H_2)$ is equal to the clique~${K \coloneqq \{x_1, \dotsc, x_{c - 1}, y\}}$.
    Let~$H$ be the union of~$H_1$ and~$H_2$ and consider the $H$-partition~${(V_x \colon x \in V(H))}$ of~$G$ given by~${V_x \coloneqq V^j_x}$ for~${x \in V(H_j) \setminus V(H_{3-j})}$ and~${V_x \coloneqq V^1_x \cup V^2_x}$ for~${x \in K}$. 
    As~$H_1$ and~$H_2$ both have treewidth at most~$c$, so does~$H$. 
    
    Now ${\{x_1, \dotsc, x_{c - 1}, y\}}$ is a $c$-clique, ${V_{x_i} = S_{i} \setminus (S_1 \cup \dotsb \cup S_{i - 1})}$ and~${R \setminus S \subseteq V_y}$ with
    \begin{align*}
        \abs{V_y} 
        \;\leq\; \abs{V_{y}^1} + \abs{V_{y}^2} 
        \;\leq\; 2 \ell (\abs{R_1} + \abs{R_2} - 4k) 
         \;\leq\; 2 \ell (\abs{R} + 2 \abs{C} - 4k) 
         \;\leq\; 2 \ell (\abs{R} - 2k).
    \end{align*}
    The other parts do not change and so we have the desired $H$-partition of~$G$.
\end{proof}

We are now ready to prove \cref{Main}.

\begin{proof}[Proof of \cref{Main}]
    Let~$\beta$ be a $(c,f)$-disjointed $\ell$-covering of~$G$. 
    If~${\abs{V(G)} < 4k}$, then the trivial partition $\{V(G)\}$ satisfies the claim. 
    Otherwise~${\abs{V(G)} \geq 4k}$. 
    Let~${R \subseteq V(G)}$ with~${\abs{R} = 4k}$. 
    Let~${S_1,\dots,S_{c-1}}$ be arbitrary elements of~$\beta$. 
    Since~$\beta$ is an $\ell$-covering, ${\abs{S_i} \leq \ell \leq 12 \ell k}$ for each~$i$. 
    Now~\cref{Main} follows from \cref{MainLemma}. 
\end{proof}

\cref{Main,dtofdisjointed} imply the following result:

\begin{cor}
    \label{MainCorollary}
    Let~${k,c,d,\ell \in \NN}$. For any graph $G$, if $\tw(G)<k$ and~$G$ has a $(c,d)$-disjointed $\ell$-covering, then~$G$ has $c$-tree-partition-width ${\tpw_c(G) \leq 2 c d \ell (12 k)^c}$. 
\end{cor}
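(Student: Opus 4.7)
The plan is to combine \cref{dtofdisjointed} with \cref{Main} in a one-line deduction, then simplify the resulting bound. There is no real obstacle here; the content lies entirely in the two results already established, and what remains is a routine substitution.

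More precisely, I would argue as follows. Let~$\beta$ be a $(c,d)$-disjointed $\ell$-covering of~$G$. By \cref{dtofdisjointed}, the covering~$\beta$ is in fact $(c,f)$-disjointed for the function~${f \colon \NN \to \REALS^+}$ defined by~${f(t) \coloneqq d\,t^c}$. Since also~${\tw(G) < k}$, I can apply \cref{Main} to~$G$ with this~$f$ and conclude that
\begin{equation*}
    \tpw_c(G) \;\leq\; \max\bigl\{12\ell k,\; 2c\ell\,f(12k)\bigr\} \;=\; \max\bigl\{12\ell k,\; 2cd\ell(12k)^c\bigr\}.
\end{equation*}

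It then suffices to observe that the second term in the maximum always dominates. Indeed, since~${c,d \geq 1}$, we have~${(12k)^c \geq 12k}$ and hence~${2cd\ell(12k)^c \geq 2\ell\cdot 12k = 24\ell k \geq 12\ell k}$. Therefore~${\tpw_c(G) \leq 2cd\ell(12k)^c}$, as required. The proof is essentially immediate once \cref{Main} and \cref{dtofdisjointed} are in hand; the only thing to be careful about is verifying that the first term of the maximum is genuinely absorbed under the stated hypotheses ${c,d,\ell,k \in \NN}$, which is the one place small-value edge cases could in principle matter.
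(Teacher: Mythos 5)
Your proposal is correct and matches the paper's own derivation: the paper obtains \cref{MainCorollary} precisely by applying \cref{dtofdisjointed} to upgrade the $(c,d)$-disjointed $\ell$-covering to a $(c,f)$-disjointed one with~${f(t) = d\,t^c}$ and then invoking \cref{Main}. Your extra check that~${2cd\ell(12k)^c \geq 12\ell k}$ absorbs the first term of the maximum is a correct (and welcome) detail the paper leaves implicit.
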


Note that the singleton partition of any graph with maximum degree~$\Delta$ is $(1,\Delta)$-disjointed.  So \cref{MainCorollary} with ${c=\ell=1}$ and ${d=\Delta}$ implies \cref{TreePartitionWidthDegree} (even with the same constant 24). Indeed, the proof of \cref{Main} in the case of graphs with bounded degree is equivalent to the proof of \cref{TreePartitionWidthDegree}. 

To conclude this section, \cref{disjointed,Main} imply the following characterisation of underlying treewidth. 

\begin{thm}
\label{UnderlyingTreewidthDisjointedPartition}
The underlying treewidth of a graph class~$\GG$ equals the minimum~${c \in \NN_0}$ such that, for some function~${g \colon \NN \to \NN}$, every graph~${G \in \GG}$ has a $(c,g(\tw(G)))$-disjointed $g(\tw(G))$-partition. 
\end{thm}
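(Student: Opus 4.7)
The plan is to establish the equality as two matching inequalities. Write $c^{\star} := \operatorname{utw}(\GG)$ for the underlying treewidth and $c^{\dagger}$ for the minimum $c$ appearing on the right-hand side. Both \cref{disjointed} and \cref{Main} (via \cref{MainCorollary}) have already been proved, and the equality will essentially fall out from combining them, so the work is to package the bounds as a single function $g$ of $\tw(G)$.

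For $c^{\dagger} \leq c^{\star}$: suppose $\GG$ has underlying treewidth $c$ with treewidth-binding function $f$, so every $G \in \GG$ satisfies $\tpw_c(G) \leq f(\tw(G))$. By \cref{disjointed}, each such $G$ admits a $(c, c f(\tw(G)))$-disjointed $f(\tw(G))$-partition. Setting $g(k) := \max\{f(k),\, c f(k)\}$ (and $g = f$ if $c = 0$) turns this into a $(c, g(\tw(G)))$-disjointed $g(\tw(G))$-partition, as required.

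For $c^{\star} \leq c^{\dagger}$: assume every $G \in \GG$ has a $(c, g(\tw(G)))$-disjointed $g(\tw(G))$-partition for some $g$. If $c = 0$, then \cref{c0} gives $\tpw_0(G) \leq g(\tw(G))$ directly, so take the treewidth-binding function to be $g$ itself. If $c \geq 1$, invoke \cref{MainCorollary} with $\ell = d = g(\tw(G))$ and $k = \tw(G) + 1$ to obtain
\[
\tpw_c(G) \;\leq\; 2 c \, g(\tw(G))^2 \, \bigl(12(\tw(G)+1)\bigr)^{c},
\]
which is a function of $\tw(G)$ alone. Hence $\GG$ has underlying treewidth at most $c$.

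These two inequalities give $c^{\star} = c^{\dagger}$. I don't expect any genuine obstacle: the statement is essentially a repackaging of \cref{cTreePartitionWidthDisjointedCovering} in a form that makes the function-of-$\tw(G)$ dependence explicit on both sides. The only mild subtlety is making a single function $g$ serve simultaneously as the covering size and the disjointedness parameter, which is handled by taking a maximum, and treating the $c=0$ case via \cref{c0} rather than \cref{MainCorollary}.
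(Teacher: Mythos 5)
Your proposal is correct and follows exactly the route the paper intends: the paper proves this theorem in one line by citing \cref{disjointed} for one inequality and \cref{Main} (in the form of \cref{MainCorollary}, since the hypothesis here is a $(c,d)$-disjointed partition rather than a $(c,f)$-disjointed covering) for the other, which is precisely your two directions. Your extra care with the $c=0$ case via \cref{c0} and with packaging the bounds into a single function $g$ is sound and just makes explicit what the paper leaves implicit.
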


\section{Lower Bounds}
\label{LowerBounds}

We now define two graphs that provide lower bounds on the underlying treewidth of various graph classes. Recall that $\widehat{\ell G}$ is the graph obtained from $\ell$ disjoint copies of a graph $G$ by adding one dominant vertex. For $c,\ell\in\NN$, define graphs $G_{c, \ell}$ and $C_{c, \ell}$ recursively as follows. First, $G_{1, \ell} \coloneqq P_{\ell + 1}$ is the path on $\ell + 1$ vertices, and $C_{1, \ell} \coloneqq K_{1, \ell}$ is the star with $\ell$ leaves. Further, for $c \geq 2$, let $G_{c, \ell} \coloneqq \widehat{\ell\, G_{c - 1, \ell}}$ and $C_{c, \ell} \coloneqq \widehat{\ell\, C_{c - 1, \ell}}$. Note that $C_{c, \ell}$ is the closure of the rooted complete $\ell$-ary tree of height $c$. Here, the \defn{closure} of rooted tree $T$ is the graph $G$ with $V(G)=V(T)$ and $vw\in E(G)$ whenever $v$ is an ancestor of $w$ or $w$ is an ancestor of~$v$. 


The next lemma collects together some useful and well-known properties of $G_{c, \ell}$ and $C_{c, \ell}$.

\begin{lem}\label{StandardExamples}
    For all $c,\ell\in\NN$, 
    \begin{enumerate}
        [label=\textnormal{(\roman*)}]
        \item \label{item:SE-1} $\tw(G_{c, \ell}) = \tw(C_{c, \ell}) = c$\textnormal{;}
        \item \label{item:SE-2} for any $\ell$-partition of $G \in \{ G_{c, \ell}, C_{c, \ell} \}$, there is a $(c + 1)$-clique in $G$ whose vertices are in distinct parts\textnormal{;}
        \item \label{item:SE-3} $G_{c, \ell}$ and $C_{c, \ell}$ both have $(c - 1)$-tree-partition-width greater than $\ell$\textnormal{;}
        \item \label{item:SE-4} $G_{2, \ell}$ is outerplanar, $G_{3, \ell}$ is planar, $G_{4, \ell}$ is linklessly embeddable\textnormal{;}
        \item \label{item:SE-5} $G_{c, \ell}$ is $K_{c, \max\{c, 3\}}$-minor-free\textnormal{;}
        \item \label{item:SE-6} $C_{c, \ell}$ does not contain $P_{4}$ as an induced subgraph\textnormal{;}
        \item \label{item:SE-7} $C_{c,\ell}$ does not contain $P_{n}$ as a subgraph for $n\geq 2^{c+1}$. 
    \end{enumerate}
\end{lem}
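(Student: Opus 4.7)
The plan is to prove all seven parts by induction on $c$, exploiting the recursive definitions $G_{c,\ell} = \widehat{\ell\, G_{c-1,\ell}}$ and $C_{c,\ell} = \widehat{\ell\, C_{c-1,\ell}}$. Part (i) is immediate from the stated identities $\tw(\widehat{G}) = \tw(G) + 1$ and $\tw(\ell\,G) = \tw(G)$, applied to the base case $\tw(P_{\ell+1}) = \tw(K_{1,\ell}) = 1$. For (ii), given any $\ell$-partition of $G \in \{G_{c,\ell}, C_{c,\ell}\}$ with apex $v$, the part containing $v$ has size at most $\ell$ and so meets at most $\ell - 1$ of the $\ell$ disjoint copies of the smaller graph; I would pick a copy disjoint from $v$'s part, apply induction to obtain a $c$-clique in distinct parts there, and add $v$. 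Part (iii) then follows from (ii), because an $H$-partition of width $\ell$ with $\tw(H) \le c - 1$ would force $\omega(H) \le c$, contradicting the $(c+1)$-clique in distinct parts.

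For (iv), I would first observe that the ``multi-fan'' $G_{2,\ell}$ is outerplanar by drawing the apex above $\ell$ disjoint horizontal paths so that the apex's edges form non-crossing rays and all vertices lie on the outer face. For $G_{3,\ell}$, I would place the new apex at the center and arrange the $\ell$ outerplanar copies in disjoint sectors around it, routing each apex-edge into its sector's outer face; this yields a plane embedding. Finally, $G_{4,\ell}$ is planar-plus-an-apex and so is linklessly embeddable by the classical result that apex-planar graphs are linklessly embeddable.

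For part (v), suppose $G_{c,\ell}$ contains a $K_{c,t}$-minor with $t = \max\{c,3\}$, and let $A_1,\dots,A_c$ and $B_1,\dots,B_t$ be the branch sets. If the apex $v$ lies in no branch set, then all branch sets live in $\ell\,G_{c-1,\ell}$ and are pairwise linked by non-$v$ edges, so they all fit inside a single copy of $G_{c-1,\ell}$, contradicting induction (since $\max\{c-1,3\} \le t$). If $v \in A_1$, then each $A_i$ with $i\ge 2$ and each $B_j$ is a connected subgraph of $\ell\,G_{c-1,\ell}$, and their witnessing adjacencies force $A_2,\dots,A_c,B_1,\dots,B_t$ into a common copy, producing a $K_{c-1,t}$-minor there. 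The case $v \in B_1$ similarly yields a $K_{c,t-1}$-minor in one copy; this completes the step for $c \ge 4$ (where $t - 1 = c - 1 = \max\{c-1,3\}$), while the edge cases $c \in \{2,3\}$ follow from the symmetry $K_{c,t-1} = K_{t-1,c}$ together with the base observation that $P_{\ell+1}$ has no $C_4$-minor.

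Parts (vi) and (vii) are short. For (vi), an induced $P_4$ in $C_{c,\ell}$ cannot contain the apex $v$, since $v$ is adjacent to every other vertex while the two interior vertices of $P_4$ are non-adjacent; hence any induced $P_4$ sits in one copy $C_{c-1,\ell}$, contradicting induction with base $C_{1,\ell} = K_{1,\ell}$. For (vii), any path in $C_{c,\ell}$ uses $v$ at most once, so deleting $v$ cuts it into at most two subpaths, each contained in a single copy of $C_{c-1,\ell}$. Writing $p(c)$ for the maximum $n$ with $P_n \subseteq C_{c,\ell}$, this gives $p(c) \le 2\,p(c-1) + 1$, and $p(1) = 3$ yields $p(c) \le 2^{c+1}-1$. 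The main subtlety is in (v), where the reduction through the apex produces $K_{c-1,t}$- or $K_{c,t-1}$-minors and the threshold $t = \max\{c,3\}$ does not decrease uniformly with $c$, so the small-$c$ boundary cases must be handled separately using bipartite symmetry.
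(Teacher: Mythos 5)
Your proposal is correct and, for parts (i)--(vi), follows essentially the same route as the paper: induction on $c$ through the dominant (apex) vertex, with (ii) yielding (iii) via a forced $(c+1)$-clique in the quotient, and (iv) built up from outerplanarity by adding apices. Two points of comparison. In (v), the paper compresses your three-way case analysis into one line: since $K_{c,c}$ is $2$-connected (for $c \ge 3$), deleting the at most one branch set containing the apex leaves a connected pattern $K_{c-1,c}$ whose branch sets must all lie in a single copy of $G_{c-1,\ell}$; your explicit cases (apex in no branch set, in the small side, in the large side, plus bipartite symmetry and the path facts for $c \in \{2,3\}$) amount to the same argument spelled out. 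In (vii), the paper simply cites \citet{Sparsity}, whereas your recurrence $p(c) \le 2p(c-1)+1$ with $p(1)=3$ gives a short self-contained proof, which is a modest gain. One cosmetic slip: in (vi) you justify that the apex cannot lie in an induced $P_4$ because ``the two interior vertices of $P_4$ are non-adjacent''; the interior vertices are in fact adjacent --- the fact you need (and the one the paper uses) is that $P_4$ has no dominant vertex (its endpoints are non-adjacent). This does not affect correctness, and the base cases you leave implicit in (ii) and (v) are the trivial ones the paper states explicitly.
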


\begin{proof}
    Since~${\tw(\widehat{\ell\, G}) = \tw(G)+1}$ for any graph~$G$ and~${\ell \in \NN}$, part~\ref{item:SE-1} follows by induction. 
    
    We establish~\ref{item:SE-2} by induction on~$c$. 
    In the case~${c = 1}$, every $\ell$-partition of~$P_{\ell + 1}$ or~$K_{1, \ell}$ contains an edge whose endpoints are in different parts, and we are done. 
    Now assume the claim for~${c - 1}$ (${c \geq 2}$) and let~${G \in \{ G_{c - 1, \ell}, C_{c - 1, \ell}\}}$. 
    Consider an $\ell$-partition of~${\widehat{\ell\, G}}$. 
    At most~${\ell - 1}$ copies of~$G$ contain a vertex in the same part as the dominant vertex~$v$. 
    Thus, some copy~$G_0$ of~$G$ contains no vertices in the same part as~$v$. 
    By induction, $G_0$ contains a $c$-clique~$K$ whose vertices are in distinct parts. 
    Since~$v$ is dominant, ${K \cup \{v\}}$ satisfies the induction hypothesis. 
    
    Let~${G \in \{G_{c, \ell}, C_{c, \ell} \}}$. 
    Consider an $H$-partition of~$G$ of width at most~$\ell$. By~\ref{item:SE-2}, $G$ contains a ${(c+1)}$-clique whose vertices are in distinct parts. 
    So~${\omega(H) \geq c+1}$, implying~${\tw(H) \geq c}$. 
    This establishes~\ref{item:SE-3}. 
    
    Observe that~$G_{2, \ell}$ is outerplanar (called a \defn{fan} graph). 
    The disjoint union of outerplanar graphs is outerplanar and the graph obtained from any outerplanar graph by adding a dominant vertex is planar; 
    thus~$G_{3, \ell}$ is planar. 
    The disjoint union of planar graphs is planar, and the graph obtained from any planar graph by adding a dominant vertex is linklessly embeddable; 
    thus~$G_{4, \ell}$ is linklessly embeddable. T
    his proves~\ref{item:SE-4}. 
    
    We next show that $G_{c, \ell}$ is $K_{c, \max\{c, 3\}}$-minor-free. 
    $G_{1, \ell}$ is a path and so has no $K_{1, 3}$-minor. 
    $G_{2, \ell}$ is outerplanar and so has no $K_{2, 3}$-minor. 
    Let~${c \geq 3}$ and assume the result holds for smaller~$c$. 
    Suppose that~$G_{c, \ell}$ contains a $K_{c,c}$-minor. 
    Since~$K_{c,c}$ is 2-connected, some copy of~$G_{c-1, \ell}$ in~$G_{c, \ell}$ contains a $K_{c-1, c}$-minor. 
    This contradiction establishes~\ref{item:SE-5}. 
    
    We show that~$C_{c, \ell}$ contains no induced~$P_{4}$ by induction on~$c$. 
    First, ${C_{1, \ell} = K_{1, \ell}}$ does not contain~$P_{4}$. 
    Next, suppose that~$C_{c-1,\ell}$ does not contain an induced~$P_{4}$. 
    $P_{4}$ does not have a dominant vertex and so any induced~$P_{4}$ in~$C_{c, \ell}$ must lie entirely within one copy of~$C_{c-1,\ell}$. 
    In particular, $C_{c, \ell}$ does not contain an induced~$P_{4}$. This proves~\ref{item:SE-6}. 
    
    Finally, \citet{Sparsity} proved~\ref{item:SE-7}. 
\end{proof}

The underlying treewidth of the class of graphs of treewidth at most $k$ is obviously at most $k$.
\Cref{StandardExamples}~\ref{item:SE-1} and~\ref{item:SE-3} imply the following.

\begin{cor}
    \label{TreewidthUnderlyingTreewidth}
    The underlying treewidth of the class of graphs of treewidth at most~$k$ equals~$k$.
\end{cor}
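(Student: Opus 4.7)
The plan is to split into a routine upper bound and a lower bound witnessed by the family $G_{k,\ell}$ from \cref{StandardExamples}.

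For the upper bound, I would observe that if $\GG$ is the class of graphs of treewidth at most $k$, then any $G\in\GG$ is trivially contained in $G\boxtimes K_1$, so choosing $H\coloneqq G$ with $\tw(H)=\tw(G)\leq k$ and treewidth-binding function $f\equiv 1$ shows that the underlying treewidth of $\GG$ is at most $k$.

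For the lower bound, I would argue by contradiction. Suppose the underlying treewidth of $\GG$ is at most $k-1$. Then there is a function $f$ such that every graph in $\GG$ of treewidth at most $k$ has $(k-1)$-tree-partition-width at most $f(k)$. Now consider the graph $G_{k,\ell}$ with $\ell\coloneqq f(k)$. By \cref{StandardExamples}\ref{item:SE-1}, $\tw(G_{k,\ell})=k$, so $G_{k,\ell}\in\GG$. By \cref{StandardExamples}\ref{item:SE-3}, however, $G_{k,\ell}$ has $(k-1)$-tree-partition-width strictly greater than $\ell=f(k)$, a contradiction. Hence the underlying treewidth of $\GG$ is at least $k$.

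There is no substantive obstacle here: both bounds are immediate once \cref{StandardExamples} is in hand. The only mild subtlety is being explicit that the adversarial choice of $\ell$ (depending on the hypothesised $f$) produces a single graph in the class that defeats $f$, so no uniform bound $f(k)$ on the $(k-1)$-tree-partition-width can exist.
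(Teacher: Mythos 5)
Your proposal is correct and matches the paper's (implicit) argument exactly: the upper bound is the trivial partition into singletons via $G\boxtimes K_1$, and the lower bound follows from \cref{StandardExamples}\ref{item:SE-1} and \ref{item:SE-3} applied to $G_{k,\ell}$ with $\ell$ chosen adversarially as $f(k)$, just as in the paper's proof of \cref{StandardExampleUnderlyingTreewidth}. No issues.
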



\begin{cor}
\label{StandardExampleUnderlyingTreewidth}
    The classes $\{G_{c,\ell} \colon c,\ell \in \NN\}$ and $\{C_{c, \ell} \colon c, \ell \in \NN\}$ both have unbounded underlying treewidth. 
\end{cor}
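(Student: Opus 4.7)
The plan is to show that for every fixed~${c' \in \NN_0}$, there is no function~${f \colon \NN \to \NN}$ such that every graph~$G$ in the class satisfies~${\tpw_{c'}(G) \leq f(\tw(G))}$. Since both classes are treated identically, I would handle them in parallel, exploiting the two properties of~$G_{c,\ell}$ and~$C_{c,\ell}$ that have already been established.

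The key observation is that \cref{StandardExamples}\ref{item:SE-1} pins down the treewidth of~$G_{c,\ell}$ and~$C_{c,\ell}$ as exactly~$c$ (independent of~$\ell$), while \cref{StandardExamples}\ref{item:SE-3} says that their ${(c-1)}$-tree-partition-widths exceed~$\ell$. The idea is therefore to hold the treewidth constant and let~$\ell$ grow: fix~${c \coloneqq c' + 1}$ and consider the subfamily~${\{G_{c'+1, \ell} \colon \ell \in \NN\}}$ (respectively~${\{C_{c'+1, \ell} \colon \ell \in \NN\}}$). Each graph in this subfamily has treewidth~${c' + 1}$, yet its $c'$-tree-partition-width is strictly greater than~$\ell$, which tends to infinity. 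Hence~${\tpw_{c'}}$ cannot be bounded by any function of~$\tw$ on this subfamily, and so the underlying treewidth of the whole class exceeds~$c'$. As~$c'$ was arbitrary, the underlying treewidth is unbounded.

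There is no real obstacle here: both ingredients are already in \cref{StandardExamples}, and the argument is a direct witness-sequence argument against the existence of a treewidth-binding function. The only care needed is to invoke the working definition of underlying treewidth from \cref{Partitions} (namely that~$\GG$ has underlying treewidth at most~$c'$ iff there is~$f$ with ${\tpw_{c'}(G) \leq f(\tw(G))}$ for all~${G \in \GG}$), and to observe that membership of~$G_{c'+1,\ell}$ and~$C_{c'+1, \ell}$ in the respective classes is immediate from the definitions.
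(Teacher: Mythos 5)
Your proposal is correct and follows essentially the same route as the paper: both fix the treewidth at $c'+1$ via \cref{StandardExamples}~(i) and invoke \cref{StandardExamples}~(iii) to see that $\tpw_{c'}(G_{c'+1,\ell})>\ell$, the paper simply instantiating $\ell\coloneqq f(c'+1)$ to get an explicit contradiction rather than letting $\ell$ tend to infinity. No gaps.
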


\begin{proof}
    Suppose that ${\{G_{c,\ell} \colon c,\ell \in \NN\}}$ has underlying treewidth~$b$. 
    Thus, for some function~$f$, for all~${c, \ell \in \NN}$, we have~${\tpw_b(G_{c, \ell}) \leq f(\tw(G_{c,\ell})) = f(c)}$. 
    In particular, with~${c \coloneqq b + 1}$ and~${\ell \coloneqq f(c)}$, we have~${\tpw_{c-1}(G_{c,\ell}) \leq \ell}$, which contradicts \cref{StandardExamples}~\ref{item:SE-3}. 
    The proof for~${\{C_{c,\ell} \colon c,\ell \in \NN\}}$ is analogous.
\end{proof}

The graphs~$G_{c, \ell}$ and~$C_{c, \ell}$ are common in the graph-theory literature, and are particularly important for clustered colouring \citep{NSSW19,vdHW18,KMRV97,EJ14}, as we now explain. 
In an (improperly) vertex-coloured graph, a \defn{monochromatic component} is a connected component of the subgraph induced by all the vertices of one colour. 
A graph~$G$ is \defn{$c$-colourable with clustering~$\ell$} if each vertex can be assigned one of~$c$ colours such that each monochromatic component has at most~$\ell$ vertices. 
The \defn{clustered chromatic number} of a graph class~$\GG$
is the minimum~${c \in \NN}$ such that, for some~${\ell \in \NN}$, every graph in~$\GG$ has a $c$-colouring with clustering~$\ell$. 
See \citep{WoodSurvey} for a survey on clustered graph colouring. 
Note that a graph~$G$ is $c$-colourable with clustering~$\ell$ if and only if~$G$ is contained in~${H \boxtimes K_\ell}$ for some graph~$H$ with~${\chi(H) \leq c}$. 

Consider a graph class~$\GG$ with underlying treewidth~$c$, where every graph in~$\GG$ has treewidth at most~$k$. 
Thus every graph~${G \in \GG}$ is contained in~${H \boxtimes K_\ell}$ for some graph~$H$ with~${\tw(H) \leq c}$, where~${\ell \coloneqq \max\{f(0),\dots,f(k)\}}$ and~$f$ is from the definition of underlying treewidth. 
Since~${\chi(H) \leq \tw(H)+1}$ for every graph~$H$, it follows that~$\GG$ has clustered chromatic number at most~${c+1}$. 
This means that lower bounds on the clustered chromatic number of a graph class with bounded treewidth provide lower bounds on the underlying treewidth of the class. 
In particular, it is known that~$G_{c,\ell}$ and~$C_{c,\ell}$ are not $c$-colourable with clustering~$\ell$ (see \citep{WoodSurvey}), which implies \cref{StandardExamples}~\ref{item:SE-3} by the above connection. 
See \citep{NSSW19} for more examples of graphs~$G$ that are not $c$-colourable with clustering~$\ell$, implying~${\tpw_{c-1}(G) > \ell}$.

\section{Excluding a Minor}
\label{ExcludedMinor}

This section uses disjointed partitions to determine the underlying treewidth of several minor-closed classes of interest.

The next definition enables $K_t$-minor-free graphs and $K_{s,t}$-minor-free graphs to be handled simultaneously. For~${s,t\in\NN}$, let \defn{$\KK_{s,t}$} be the class of graphs~$G$ for which there is a partition~${\{A,B\}}$ of~${V(G)}$ such that~${\abs{A} = s}$ and~${\abs{B} = t}$; 
${vw \in E(G)}$ for all~${v \in A}$ and~${w \in B}$; and~${G[B]}$ is connected. 
Obviously, every graph in~$\KK_{s,t}$ contains~$K_{s,t}$. 
Similarly, we obtain~$K_t$ as a minor of any~${G \in \KK_{t-2,t}}$ by contracting a matching between~$A$ and~$B$ of size~${t-2}$ whose end-vertices are distinct from the end-vertices of some edge of~${G[B]}$. 

\begin{lem}
    \label{AssignTrick}
    Let~$G$ be a graph with no minor in~$\KK_{s,t}$. 
    Assume~${\{A,B\}}$ is a partition of~${V(G)}$ such that~${G[B]}$ is connected and every vertex in~$B$ has at least~$s$ neighbours in~$A$. 
    Then~${\abs{B} \leq \delta \abs{A}}$ for some~${\delta = \delta(s,t)}$. 
\end{lem}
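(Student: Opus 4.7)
I would argue the contrapositive by induction on~$s$: if $|B| > \delta(s,t) \abs{A}$ for a suitable~$\delta$, construct a $\KK_{s,t}$-minor in~$G$, contradicting the hypothesis.

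For the base case~$s = 1$, each $v \in B$ has a neighbour in $A$, so averaging gives some $a \in A$ with $\abs{N_B(a)} \geq \abs{B}/\abs{A}$. Once $\abs{B} \geq t\abs{A}$, this yields $\abs{N_B(a)} \geq t$, so pick distinct $v_1, \dots, v_t \in N_B(a)$. Since $G[B]$ is connected, take a Steiner tree $T^*$ in $G[B]$ spanning $\{v_1, \dots, v_t\}$, and partition $V(T^*)$ into $t$ connected subtrees $Y_1, \dots, Y_t$ with $v_i \in Y_i$ (possible in any tree with $t$ designated vertices by assigning each Steiner vertex to the subtree of its closest designated vertex). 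Then $\{a\}, Y_1, \dots, Y_t$ are pairwise disjoint connected subgraphs: each $Y_i$ is adjacent to $a$ via $v_i$, and the quotient on the $Y_i$'s inherits connectedness from $T^*$. This is a $\KK_{1,t}$-minor, contradicting the hypothesis; hence $\abs{B} < t\abs{A}$.

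For the inductive step $s \geq 2$, assume the result for $s-1$. Choose $a \in A$ with $\abs{N_B(a)} \geq s\abs{B}/\abs{A}$ by averaging and set $B_a := N_B(a)$. In a spanning tree~$T$ of~$G[B]$, let~$T^*$ be the minimal subtree containing~$B_a$ and put $C := V(T^*)$. Then $G[C]$ is connected, $\abs{C} \geq \abs{B_a}$, and every $v \in C$ has at least $s-1$ neighbours in $A \setminus \{a\}$ (since removing~$a$ loses at most one from $N_A(v)$). I would then apply the inductive hypothesis to $\tilde G := G[C \cup (A \setminus \{a\})]$ with partition $(A \setminus \{a\}, C)$: either $\tilde G$ is $\KK_{s-1,t}$-minor-free (giving a bound on $\abs{C}$ and thence on $\abs{B}$), or $\tilde G$ contains a $\KK_{s-1,t}$-minor $(X_1, \dots, X_{s-1}, Y_1, \dots, Y_t)$ to which we append $X_s := \{a\}$ to obtain a $\KK_{s,t}$-minor of~$G$.

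The hard part is that appending~$\{a\}$ succeeds only if each $Y_j$ meets $B_a = N_G(a) \cap V(\tilde G)$; otherwise $\{a\}$ is not adjacent to~$Y_j$ in $G$. The naive induction does not guarantee this ``anchor'' property, nor does the bound coming from the other case close to give a linear (rather than polynomial) dependence on $\abs{A}$. To overcome this, I would simultaneously prove a strengthened version of the lemma: that under analogous hypotheses, $G$ has no $\KK_{s,t}$-minor whose $B$-branch sets all meet a designated set $R \subseteq B$, still implying $\abs{B} \leq \delta(s,t)\abs{A}$. In the recursion, $R$ becomes $B_a$, so the $\KK_{s-1,t}$-minor produced by induction is constrained to meet $B_a$, and the lifting step goes through. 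Carefully formulating the anchor condition, tracking~$R$ through the recursion, and closing the recursive inequality to obtain a linear bound is the main technical delicacy of the proof.
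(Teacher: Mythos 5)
Your argument does not close, and the two places where it fails are exactly the ones you flag yourself; flagging them does not repair them. First, the ``anchored'' strengthening you invoke is never formulated in a usable way, and as stated it is false: if the designated set $R$ is empty (or just small), the hypothesis ``$G$ has no $\KK_{s,t}$-minor whose $B$-side branch sets all meet $R$'' is vacuously satisfied, yet the conclusion $\abs{B}\leq \delta(s,t)\abs{A}$ certainly need not hold. Any correct version would have to bound $\abs{R}$ (or the part of $B$ reachable/anchored through $R$) rather than $\abs{B}$, and then the statement you actually need at the top level is no longer what the anchored induction delivers. Second, even granting a correct anchored lemma, your recursion only controls $\abs{N_B(a)}$ for a single averaged vertex $a\in A$ at each level: the bound $\abs{N_B(a)}\geq s\abs{B}/\abs{A}$ combined with $\abs{C}\leq \delta(s-1,t)\abs{A}$ yields $\abs{B}=O(\abs{A}^2)$, and summing the anchored bound over all $a\in A$ again costs a factor $\abs{A}$ per level, so iterating gives a polynomial of degree growing with $s$, not the linear bound the lemma asserts. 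You acknowledge both problems but offer no mechanism that closes either; since the linearity in $\abs{A}$ is the entire content of the lemma (it is what makes the disjointedness function in \cref{XstMinorFreeTreePartition} come out as $O(k^2\log k)$), this is a genuine gap, not a technical delicacy to be deferred.

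For comparison, the paper does not induct on $s$ at all. It greedily assigns each vertex of $B$ to a pair of its neighbours in $A$, at most one vertex of $B$ per pair, and contracts each assigned vertex into one endpoint of its pair. This produces a minor $G'$ of $G$ with $V(G')=A$ and $\abs{E(G')}\geq\abs{B_1}$, where $B_1$ is the set of assigned vertices; since $G'$ has no minor in $\KK_{s,t}$ (in particular it is $K_{s+t}$-minor-free), $\abs{E(G')}\leq\delta\abs{A}$ and $G'$ is $2\delta$-degenerate. Every unassigned vertex of $B$ sees, among its $A$-neighbours, an $s$-clique of $G'$ (otherwise it could have been assigned), each such $s$-clique serves at most $t-1$ unassigned vertices (a $t$-th one, together with the connectivity of $G[B]$, would yield a minor in $\KK_{s,t}$), and a $2\delta$-degenerate graph on $\abs{A}$ vertices has at most $\binom{2\delta}{s-1}\abs{A}$ cliques of order $s$. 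Everything is charged to edges or cliques of a single bounded-degeneracy minor on vertex set $A$, which is what produces the linear dependence on $\abs{A}$ in one step; if you want to salvage your approach, you need an analogous global charging argument rather than a per-vertex averaging recursion.
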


\begin{proof}
    The following proof technique is well-known~\citep{OOW19,LW1}. 
    Assign vertices in~$B$ to pairs of vertices in~$A$ as follows. Initially, no vertices in~$B$ are assigned. 
    If there is an unassigned vertex~${v \in B}$ adjacent to distinct vertices~${x,y \in A}$ and no vertex in~$B$ is already assigned to~${\{x, y\}}$, then assign~$v$ to~${\{x,y\}}$. 
    Repeat this operation until no more vertices in~$B$ can be assigned. 
    Let~$B_1$ and~$B_2$ be the sets of assigned and unassigned vertices in~$B$, respectively. 
    Let~$G'$ be the graph obtained from~${G[A\cup B_1]}$ by contracting~$vx$ into~$x$, for each vertex~${v \in B_1}$ assigned to~${\{x, y\}}$. 
    Thus~$G'$ is a minor of~$G$ with~${V(G') = A}$ and~${\abs{E(G')} \geq \abs{B_1}}$. 
    For each vertex~${v \in B_2}$, the set~${N_G(v) \cap A}$ is a clique in~$G'$ (otherwise~$v$ would have been assigned to a pair of non-adjacent neighbours) of size at least~$s$. 
    For each $s$-clique~$C$ in~$G'$, there are at most~${t-1}$ vertices~${v \in B_2}$ with~${C \subseteq N_G(v)}$, otherwise~$G$ has a minor in~$\KK_{s,t}$ (since~${G[B]}$ is connected). 
    Since~$G'$ has no minor in~$\KK_{s,t}$, we have~${\abs{E(G')} \leq \delta \abs{A}}$ for some~${\delta = \delta(s,t)}$; see \citep{Thomason84,Thomason01,Kostochka84,Kostochka82,KP12,KP10,KP08,KO05,ReedWood16} for explicit bounds on~$\delta$. 
    Thus~${\abs{B_1} \leq \delta \abs{A}}$. 
    Moreover, $G'$ is ${2 \delta}$-degenerate. 
    Every $d$-degenerate graph on~$n$ vertices has at most~${\binom{d}{s - 1} n}$ cliques of order~$s$~\citep[Lemma 18]{Wood16}. 
    Thus~${\abs{B_2} \leq (t - 1)\binom{2\delta}{s - 1}\abs{A}}$. 
    Hence~${\abs{B} = \abs{B_1} + \abs{B_2} \leq (\delta + (t - 1) \binom{2\delta}{s - 1})\abs{A}}$.
\end{proof}

The following Erd\H{o}s--P\'{o}sa type result is useful for showing disjointedness. It follows from well-known results in the literature; see \cref{Omitted} for details.

\begin{restatable}{lem}{EPtreewidth}\label{EP-treewidth}
    Let $\HH$ be a set of connected subgraphs of a graph $G$. Then, for every non-negative integer $\ell$, either there are $\ell + 1$ vertex-disjoint graphs in $\HH$ or there is a set $Q \subseteq V(G)$ of size at most $\ell(\tw(G) + 1)$ such that $G - Q$ contains no graph of $\HH$.
\end{restatable}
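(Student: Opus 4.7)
The strategy is to transfer the problem to an Erdős--Pósa question about subtrees of a tree, which has a clean Gallai-type solution.

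First, fix a tree-decomposition $(W_t \colon t \in V(T))$ of $G$ of width less than $k \coloneqq \tw(G) + 1$, so $\abs{W_t} \leq k$ for every $t \in V(T)$. For each $H \in \HH$, define $T_H \coloneqq \{t \in V(T) \colon V(H) \cap W_t \neq \emptyset\}$. Using the standard fact that for each $v \in V(G)$ the set $\{t \colon v \in W_t\}$ induces a subtree of $T$, together with connectedness of $H$ (any two adjacent vertices of $H$ lie in a common bag, so their one-vertex subtrees of $T$ overlap), it follows that $T_H$ induces a non-empty subtree of $T$.

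Next, observe the key transfer: if $T_{H_1}, \dotsc, T_{H_r}$ are pairwise vertex-disjoint subtrees of $T$, then $V(H_1), \dotsc, V(H_r)$ are pairwise vertex-disjoint in $G$, because a shared vertex $v \in V(H_i) \cap V(H_j)$ would give a node $t \in V(T)$ with $v \in W_t$, lying in both $T_{H_i}$ and $T_{H_j}$. Hence, if $\{T_H \colon H \in \HH\}$ contains more than $\ell$ pairwise disjoint members, then so does $\HH$ in $G$ and we are done.

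Otherwise, the subtree family has matching number $\nu^* \leq \ell$. Here I would invoke the Gallai-type identity for subtrees of a tree: the minimum number of nodes hitting every subtree in the family equals the maximum number of pairwise disjoint subtrees. This follows from the Helly property of subtrees (any pairwise intersecting family shares a common node) together with the fact that intersection graphs of subtrees of a tree are chordal, hence perfect, so the maximum independent set (the matching number $\nu^*$) equals the minimum clique cover; picking a common node per clique yields a transversal of size $\nu^*$. Hence there is $S \subseteq V(T)$ with $\abs{S} \leq \ell$ meeting every $T_H$.

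Finally, set $Q \coloneqq \bigcup_{t \in S} W_t$. Then $\abs{Q} \leq k \abs{S} \leq k\ell \leq (\tw(G) + 1)\ell \log(\ell + 1)$ for $\ell \geq 1$; for $\ell = 0$ the subtree family is empty, so $\HH$ has no element in $G$ and $Q = \emptyset$ works. Every $H \in \HH$ meets some $W_t$ with $t \in S \cap T_H$, so $V(H) \cap Q \neq \emptyset$ and $G - Q$ contains no graph of $\HH$. The main obstacle is the Gallai-type identity for subtree families; it can either be cited from the theory of chordal/perfect graphs, or proved directly by a short induction on $\abs{V(T)}$ by pruning a leaf $u$ and dealing with the (pairwise intersecting) subtrees that contain $u$ collectively.
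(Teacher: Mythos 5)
Your proof is correct, and it takes a genuinely different route from the paper's. The paper argues by induction on $\ell$: it orients each edge $e = uv$ of the decomposition tree towards a side whose corresponding piece of $G$ has packing number at most $\ell/2$, takes a sink node $s$, deletes the bag $W_s$, and recurses on the components of $G - W_s$; the halving of $\ell$ at each level of the recursion is exactly what produces the $\log(\ell+1)$ factor. You instead pass to the subtrees $T_H$ of the decomposition tree and invoke the exact packing--covering duality for subtrees of a tree ($\nu = \tau$), obtained either from the Helly property together with chordality/perfection of the intersection graph (a clique there is a pairwise-intersecting family, hence has a common node, and $\alpha$ equals the clique cover number since complements of perfect graphs are perfect) or from the short leaf-pruning induction you mention. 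All the individual steps check out: $T_H$ is a non-empty subtree because $H$ is connected and adjacent vertices share a bag, pairwise disjoint subtrees yield pairwise vertex-disjoint members of $\HH$, and the transversal $S \subseteq V(T)$ of size at most $\ell$ gives $\abs{Q} \leq (\tw(G)+1)\ell$, which for $\ell \geq 1$ is at most the stated $(\tw(G)+1)\ell\log(\ell+1)$ (binary logarithm), with $\ell = 0$ handled as you do. So your argument proves the lemma a fortiori and in fact removes the logarithmic factor; what the paper's separator-style recursion buys is complete self-containedness with no appeal to the subtree min-max theorem, while your route buys a sharper bound and a cleaner structural explanation, at the cost of citing (or proving via your induction) that duality.
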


\begin{lem}
    \label{XstMinorFreeTreePartition}
    For fixed~${s,t \in \NN}$, every graph~$G$ with no minor in~$\KK_{s,t}$ and of treewidth~$k$ has $s$-tree-partition-width~${O(k^2)}$. 
\end{lem}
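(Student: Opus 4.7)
The plan is to apply \cref{Main} to the singleton $1$-covering $\beta \coloneqq \{\{v\} : v \in V(G)\}$. Showing that $\beta$ is $(s,f)$-disjointed for some $f(\tau) = O_{s,t}(k\tau\log\tau)$ then gives $\tpw_s(G) \leq \max\{12k, 2sf(12k)\} = O_{s,t}(k^2\log k)$ directly from \cref{Main} with $\ell = 1$.

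Fix $\tau \in \NN$ and $B_1, \dotsc, B_s \in \beta[\tau]$, each of size at most $\tau$. Set $A \coloneqq B_1 \cup \dotsb \cup B_s$ (so $\abs{A} \leq s\tau$) and $B_i' \coloneqq B_i \setminus (B_1 \cup \dotsb \cup B_{i-1})$, and let $X$ be a component of $G - A$. Let $\HH$ be the family of connected subgraphs of $X$ that meet $N_G(B_i')$ for every $i \in \{1,\dots,s\}$. Any component $Y$ of $X - Q$ that does not belong to $\HH$ misses some $N_G(B_i')$, so it suffices to find $Q \subseteq V(X)$ of size $O_{s,t}(k\tau\log\tau)$ hitting every member of $\HH$. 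I will obtain such a $Q$ from \cref{EP-treewidth} applied with parameter $\ell \coloneqq \delta s\tau$, where $\delta = \delta(s,t)$ is the constant from \cref{AssignTrick}, provided I rule out its first alternative by showing that $\HH$ contains at most $\delta s\tau$ pairwise vertex-disjoint members (the $Q$ supplied by \cref{EP-treewidth} is a subset of $V(G)$, but its intersection with $V(X)$ still hits every $H \in \HH$ since $V(H) \subseteq V(X)$).

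Suppose then that $Y_1, \dotsc, Y_N$ are vertex-disjoint members of $\HH$. Since $X$ is connected and each $Y_j$ is connected, I can partition $V(X)$ into connected sets $Y_1', \dotsc, Y_N'$ with $Y_j \subseteq Y_j'$: take a spanning tree $\tilde T$ of the quotient graph $X / \{Y_1, \dotsc, Y_N\}$, root $\tilde T$ at $y_1$, and assign each remaining vertex to its nearest $y_j$-ancestor in $\tilde T$ (lifting back to $X$ uses that each $Y_j$ is connected). Contracting each $Y_j'$ to a single vertex $y_j'$ in $G[A \cup V(X)]$ yields a minor $G^{**}$ of $G$ in which $B \coloneqq \{y_1', \dotsc, y_N'\}$ induces a connected subgraph (the quotient of the connected graph $X$ by connected parts) and every $y_j'$ has at least $s$ neighbours in $A$, namely one in each $B_i'$. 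Since $G^{**}$ inherits $\KK_{s,t}$-minor-freeness from $G$, \cref{AssignTrick} gives $N = \abs{B} \leq \delta\abs{A} \leq \delta s\tau$, as needed.

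The main obstacle is exactly the connectedness hypothesis in \cref{AssignTrick}: naively contracting the vertex-disjoint $Y_j$'s produces a minor whose $B$-side is disconnected, so \cref{AssignTrick} is unavailable. The absorption step above, which distributes the rest of $X$ among the $Y_j$'s while preserving the $s$-neighbour condition into $A$, is what makes \cref{AssignTrick} applicable and keeps the bound on $N$ linear in $\tau$, giving $f(\tau)$ of the stated order.
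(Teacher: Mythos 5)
Your proposal is correct and follows essentially the same route as the paper: show the singleton partition is $(s,f)$-disjointed with $f(\tau)=O_{s,t}(k\tau\log\tau)$ by bounding the number of disjoint members of $\HH$ via \cref{AssignTrick} applied to a contraction minor, then apply \cref{EP-treewidth} and \cref{Main}. Your explicit spanning-tree absorption step is exactly what the paper compresses into ``we may assume $\bigcup\{V(R)\colon R\in\RR\}=V(X)$'' before contracting, so the two arguments coincide in substance.
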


\begin{proof}
    By \cref{Main} it suffices to show that the singleton partition of~$G$ is $(s,f)$-disjointed, where ${f(n) \coloneqq \delta sn (k + 1)}$ and $\delta \coloneqq \delta(s,t)$ from \cref{AssignTrick}. 
    Let~${S_1,\dots,S_s}$ be subsets of~${V(G)}$ of size at most~$n$, let~${S \coloneqq S_1 \cup \dots \cup S_s}$, and for each~${i \in \{1,\dots, s\}}$ let~${S'_i \coloneqq S_i \setminus (S_1 \cup \dots \cup S_{i-1})}$.
    Let~$X$ be a connected component of~${G-S}$. 
    Let~$\HH$ be the set of connected subgraphs~$H$ of~$X$ such that~${H \cap N(S'_i) \neq \emptyset}$ for each~${i \in \{1,\dots,s\}}$. 
    Say~$\RR$ is a maximum-sized set of pairwise disjoint subgraphs in~$\HH$. 
    We may assume that~${\bigcup \{ V(R) \colon R \in \RR \} = V(X)}$. 
    Let~$X'$ be the graph obtained from~${G[S \cup V(X)]}$ by contracting each subgraph~${R \in \RR}$ into a vertex~$v_R$. 
    So~${V(X') = S\cup\{v_R \colon R\in\RR\}}$. 
    Since~$X$ is connected, ${\{v_R \colon R \in \RR\}}$ induces a connected subgraph of~$X'$. 
    By construction, in~$X'$, each vertex~$v_R$ has at least~$s$ neighbours in~$S$. 
    By \cref{AssignTrick}, ${\abs{\RR} \leq \delta \abs{S}}$. 
    By \cref{EP-treewidth}, there is a set~${Q \subseteq V(X)}$ of size at most~${\delta \abs{S} (k + 1) \leq f(n)}$ such that~${X-Q}$ contains no graph in~$\HH$. 
    Thus each component~$Y$ of~${X-Q}$ satisfies~${V(Y) \cap N_G(S'_i) = \emptyset}$ for some~${i \in \{1,\dots,s\}}$. 
    Hence, the singleton partition of~$G$ is $(s, f)$-disjointed. 
\end{proof}

We now determine the underlying treewidth of $K_t$- and $K_{s,t}$-minor-free graphs. 

\begin{thm}
\label{NoKtMinorUnderlyingTreewidth}
For fixed $t\in\NN$ with $t\geq 2$, the underlying treewidth of the class of $K_t$-minor-free graphs equals $t-2$. In particular, every $K_t$-minor-free graph of treewidth $k$ has $(t-2)$-tree-partition-width $O(k^2)$. 
\end{thm}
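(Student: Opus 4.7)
My plan is to prove matching upper and lower bounds, and both reduce cleanly to results already established earlier in the paper.

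For the upper bound, I would reduce to \cref{XstMinorFreeTreePartition} via the minor relationship between $K_t$ and $\KK_{t-2,t}$ already observed in the paper: contracting a matching between the $(t-2)$-side and the $t$-side of any $G \in \KK_{t-2,t}$ (avoiding the endpoints of one edge of $G[B]$) produces $K_t$ as a minor. Therefore, if a graph $G$ contains some $H \in \KK_{t-2,t}$ as a minor, then $G$ contains $K_t$ as a minor. Contrapositively, every $K_t$-minor-free graph excludes all minors in $\KK_{t-2,t}$. Applying \cref{XstMinorFreeTreePartition} with $s = t-2$ (and the same $t$) then gives every $K_t$-minor-free graph of treewidth $k$ a $(t-2)$-tree-partition-width of $O(k^2\log k)$, which establishes both the quantitative bound and the fact that the underlying treewidth is at most $t-2$.

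For the matching lower bound, the case $t = 2$ is vacuous since underlying treewidth is always non-negative, so I would assume $t \geq 3$ and use the family $\{G_{t-2,\ell} \colon \ell \in \NN\}$. By \cref{StandardExamples}\ref{item:SE-1} these graphs have treewidth exactly $t-2$; since treewidth is minor-monotone and $\tw(K_t) = t-1 > t-2$, each $G_{t-2,\ell}$ is $K_t$-minor-free and thus lies in our class. If the underlying treewidth of the class of $K_t$-minor-free graphs were at most $t-3$, there would exist a function $f$ with $\tpw_{t-3}(G_{t-2,\ell}) \leq f(\tw(G_{t-2,\ell})) = f(t-2)$ for every $\ell$. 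Choosing $\ell \coloneqq f(t-2)$ contradicts \cref{StandardExamples}\ref{item:SE-3}, which says $\tpw_{t-3}(G_{t-2,\ell}) > \ell$. Hence the underlying treewidth is at least $t-2$.

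There is no genuine obstacle: both bounds fall out of the disjointed-covering machinery and the standard-examples lemma developed earlier. The only point that requires mild care is verifying that $G_{t-2,\ell}$ is $K_t$-minor-free, which follows immediately from the minor-monotonicity of treewidth rather than from \cref{StandardExamples}\ref{item:SE-5} (the latter gives a stronger statement that is tailored for the $K_{s,t}$-minor case). Combining the two bounds yields the theorem.
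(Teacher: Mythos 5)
Your proposal is correct and follows essentially the same route as the paper: the upper bound via \cref{XstMinorFreeTreePartition} using the observation that every graph in $\KK_{t-2,t}$ has a $K_t$ minor, and the lower bound via $G_{t-2,\ell}$ with \cref{StandardExamples}\ref{item:SE-1} and \ref{item:SE-3}, taking $\ell = f(t-2)$. Your extra remarks (handling $t=2$ trivially, and deducing $K_t$-minor-freeness of $G_{t-2,\ell}$ from minor-monotonicity of treewidth) match the paper's implicit reasoning.
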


\begin{proof}
Since $K_t$ is a minor of every graph in $\KK_{t-2,t}$, \cref{XstMinorFreeTreePartition} implies that every $K_t$-minor-free graph of treewidth $k$ has $(t-2)$-tree-partition-width $O(k^2)$. Thus the underlying treewidth of the class of $K_t$-minor-free graphs is at most $t-2$. Suppose that it is at most $t-3$. Thus, for some function $f$, every $K_t$-minor-free graph $G$ has $(t-3)$-tree-partition-width at most $f(\tw(G))$. Let $\ell\coloneqq f(t-2)$. The graph $G_{t - 2, \ell}$ in \cref{StandardExamples} has treewidth $t-2$ and is thus $K_t$-minor-free. However, by \cref{StandardExamples}, 
$\tpw_{t-3}(G_{t - 2, \ell})> \ell = f(t - 2) = f(\tw(G_{t - 2, \ell}))$, which is the required contradiction.
\end{proof}

\begin{thm}
    \label{NoKstMinorUnderlyingTreewidth}
    For fixed~${s,t \in \NN}$ with~${t \geq \max\{s,3\}}$, the underlying treewidth of the class of $K_{s,t}$-minor-free graphs equals~$s$. 
    In particular, every $K_{s,t}$-minor-free graph~$G$ of treewidth~$k$ has $s$-tree-partition-width~${O(k^2)}$.
\end{thm}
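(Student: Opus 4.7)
The plan is to mirror the structure of the proof of \cref{NoKtMinorUnderlyingTreewidth}: establish the upper bound via \cref{XstMinorFreeTreePartition} and the lower bound via the standard examples~$G_{s,\ell}$ from \cref{StandardExamples}.

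For the upper bound, I would first observe that every graph in $\KK_{s,t}$ contains $K_{s,t}$ as a subgraph (by the definition of $\KK_{s,t}$), so any $K_{s,t}$-minor-free graph has no minor in $\KK_{s,t}$. Applying \cref{XstMinorFreeTreePartition} directly then yields that every $K_{s,t}$-minor-free graph~$G$ of treewidth~$k$ satisfies $\tpw_s(G) \leq O(k^2 \log k)$. In particular, the underlying treewidth is at most~$s$, with treewidth-binding function $f(k) \coloneqq O(k^2 \log k)$.

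For the lower bound, suppose for a contradiction that the underlying treewidth of the class is at most~${s - 1}$, so there is a function~$f$ for which every $K_{s,t}$-minor-free graph~$G$ has $\tpw_{s - 1}(G) \leq f(\tw(G))$. Set~${\ell \coloneqq f(s)}$ and consider the graph~$G_{s, \ell}$ from \cref{StandardExamples}. By part~\ref{item:SE-1}, ${\tw(G_{s, \ell}) = s}$, and by part~\ref{item:SE-5}, $G_{s,\ell}$ is $K_{s, \max\{s, 3\}}$-minor-free. Since ${t \geq \max\{s, 3\}}$, the graph~$K_{s, \max\{s, 3\}}$ is a subgraph, and hence a minor, of~$K_{s, t}$; any $K_{s, t}$-minor of~$G_{s, \ell}$ would therefore contain a $K_{s, \max\{s, 3\}}$-minor, a contradiction. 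Thus~$G_{s, \ell}$ lies in our class. But then part~\ref{item:SE-3} gives ${\tpw_{s - 1}(G_{s, \ell}) > \ell = f(s) = f(\tw(G_{s, \ell}))}$, contradicting our assumption.

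No step of this argument is genuinely hard: the upper bound is a direct application of \cref{XstMinorFreeTreePartition}, and the lower bound is a verbatim adaptation of the proof of \cref{NoKtMinorUnderlyingTreewidth}. The only point that needs care is the minor-closure deduction for~$G_{s, \ell}$, namely checking that $K_{s, \max\{s, 3\}}$-minor-freeness implies $K_{s, t}$-minor-freeness when ${t \geq \max\{s, 3\}}$, which is immediate from the containment of complete bipartite graphs.
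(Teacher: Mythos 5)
Your proposal is correct and follows essentially the same route as the paper: the upper bound comes from \cref{XstMinorFreeTreePartition} via the observation that $K_{s,t}$ is a subgraph of every graph in $\KK_{s,t}$, and the lower bound uses $G_{s,\ell}$ with \cref{StandardExamples}\,\ref{item:SE-1}, \ref{item:SE-3} and \ref{item:SE-5} exactly as in the paper's proof. The only difference is that you spell out the deduction that $K_{s,\max\{s,3\}}$-minor-freeness implies $K_{s,t}$-minor-freeness, which the paper states in one clause.
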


\begin{proof}
    Since~$K_{s,t}$ is a subgraph of every graph in~$\KK_{s,t}$,
    \cref{XstMinorFreeTreePartition} implies that every $K_{s,t}$-minor-free graph~$G$ of treewidth~$k$ has $s$-tree-partition-width~${O(k^2)}$. 
    Thus the underlying treewidth of the class of $K_{s,t}$-minor-free graphs is at most~$s$. 
    Suppose that it is at most~${s - 1}$. 
    Thus for some function~$f$, every $K_{s,t}$-minor-free graph~$G$ satisfies~${\tpw_{s-1}(G) \leq f(\tw(G))}$. 
    Let~${\ell \coloneqq f(s)}$. 
    The graph~$G_{s,\ell}$ given by \cref{StandardExamples} has treewidth~$s$ and is $K_{s,\max\{s,3\}}$-minor-free, implying it is $K_{s,t}$-minor-free (since~${t \geq \max\{s,3\}}$). 
    However, by \cref{StandardExamples}, ${\tpw_{s - 1}(G_{s, \ell}) > \ell = f(s) = f(\tw(G_{s, \ell}))}$, which is the required contradiction. 
\end{proof}

For~${t \leq 2}$, \cref{NoKstMinorUnderlyingTreewidth} is improved as follows:

\begin{prop}
    For~${s,t \in \NN}$ with~${s \leq t \leq 2}$, the underlying treewidth of the class of $K_{s,t}$-minor-free graphs equals~${s-1}$. 
\end{prop}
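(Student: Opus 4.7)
My plan is to split into three cases based on $(s,t) \in \{(1,1),(1,2),(2,2)\}$ and establish matching bounds in each. The cases with $s=1$ are essentially immediate: $K_{1,1} = K_2$, so $K_{1,1}$-minor-free graphs are edgeless and every component has a single vertex, while $K_{1,2} = P_3$, so a graph is $K_{1,2}$-minor-free iff every component has at most two vertices. In both cases \cref{c0} gives that the class has bounded $0$-tree-partition-width, yielding underlying treewidth $0 = s-1$.

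The substantive case is $(s,t) = (2,2)$, where $K_{2,2} = C_4$. I would first observe the structural fact that a graph $G$ is $C_4$-minor-free if and only if every block of $G$ is $K_2$ or $K_3$: any 2-connected subgraph on at least four vertices contains a cycle of length at least $4$, which contracts to $C_4$. In particular every such $G$ has treewidth at most $2$. For the lower bound, the path $P_n$ is $C_4$-minor-free, has treewidth $1$, and forms a single component of arbitrary size, so by \cref{c0} the class has unbounded $0$-tree-partition-width, forcing underlying treewidth at least $1$.

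For the upper bound, my plan is to verify that the singleton partition of any $K_{2,2}$-minor-free graph $G$ is $(1,f)$-disjointed with $f(t) \coloneqq 2t$, and then invoke \cref{Main}. The key lemma is: in a $K_{2,2}$-minor-free graph, every vertex $v$ has at most two neighbours in any single component of $G - v$. This follows from block decomposition, since distinct blocks at $v$ meet only at $v$ (a cycle crossing two blocks at $v$ would contradict block maximality), so each component of $G - v$ corresponds to a unique block at $v$, which is $K_2$ or $K_3$ and therefore contributes at most two neighbours of $v$. Now given any $B \subseteq V(G)$ with $|B| \leq t$ and any component $X$ of $G - B$: for each $v \in B$, $X$ is connected in $G - v$, so $N_G(v) \cap V(X)$ lies in a single component of $G - v$ and therefore has size at most $2$; summing, $|N_G(B) \cap V(X)| \leq 2t$. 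Taking $Q \coloneqq N_G(B) \cap V(X)$ leaves every component of $X - Q$ with no neighbour in $B$, verifying the $(1,f)$-disjointed property. Since $\tw(G) < 3$, applying \cref{Main} with $c = \ell = 1$ and $k = 3$ yields $\tpw_1(G) \leq \max\{36,\,144\} = 144$, so the underlying treewidth is at most $1 = s - 1$.

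The main obstacle is the structural characterization of $K_{2,2}$-minor-free graphs through their block decomposition, and in particular the bounded-neighbourhood lemma for individual vertices; once this is in hand, producing the disjointed covering and feeding it into \cref{Main} is routine.
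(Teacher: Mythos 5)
Your proof is correct, and the case analysis matches the paper's: the $s=1$ cases are handled identically, and for $(s,t)=(2,2)$ you rely on the same key structural fact as the paper, namely that every block of a $K_{2,2}$-minor-free graph is an edge, a triangle, or an isolated vertex. Where you diverge is in how that fact is converted into a bound: the paper reads off a $1$-tree-partition of width $2$ directly from the block structure (rooting the block-cut tree and bagging each block minus its parent cut vertex), whereas you certify that the singleton partition is $(1,f)$-disjointed with $f(t)=2t$ --- via the correct observation that a vertex has at most two neighbours in any component of its deletion --- and then invoke \cref{Main} with $k=3$, $c=\ell=1$. Both routes are valid; the direct construction buys the optimal constant ($\tpw_1(G)\le 2$, which is best possible by \cref{cTreePartitionWidthTreewidth}), while your route buys uniformity with the rest of the paper's machinery at the cost of a weaker constant ($144$). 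You also spell out the lower bounds (underlying treewidth $0$ is trivially tight for $s=1$, and paths rule out underlying treewidth $0$ for $s=2$ via \cref{c0}), which the paper leaves implicit; your argument there is exactly the intended one.
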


\begin{proof}
    Every $K_{1,1}$-minor-free graph~$G$ has no edges, and so~${\tpw_0(G) \leq 1}$. 
    Every $K_{1,2}$-minor-free graph~$G$ has at most one edge in each component, and so~${\tpw_0(G) \leq 2}$. 
    Each block of any $K_{2,2}$-minor-free graph~$G$ is a triangle, an edge or an isolated vertex; so~${\tpw_1(G) \leq 2}$. 
\end{proof}

Since planar graphs are $K_5$- and $K_{3,3}$-minor-free, 
\cref{NoKtMinorUnderlyingTreewidth} or \cref{NoKstMinorUnderlyingTreewidth} imply the next result (where the lower bound holds since the graph~$G_{3,\ell}$ in \cref{StandardExamples} is planar).

\begin{thm}
    \label{planarunderlying}
	The underlying treewidth of the class of planar graphs equals~$3$. In particular, every planar graph of treewidth~$k$ has $3$-tree-partition-width $O(k^2)$. 
\end{thm}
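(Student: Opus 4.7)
The plan is to derive this result directly from the general minor-closed bound already established in \cref{NoKtMinorUnderlyingTreewidth} (or, equivalently, \cref{NoKstMinorUnderlyingTreewidth}), together with the lower bound graphs constructed in \cref{StandardExamples}. The proof is essentially a specialisation, since the two pillars---an upper bound via excluded-minor structure and a lower bound via the fan-type examples~$G_{c,\ell}$---are both already in hand.

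For the upper bound, I would invoke Wagner's theorem: every planar graph is $K_5$-minor-free. Applying \cref{NoKtMinorUnderlyingTreewidth} with~${t = 5}$, we conclude that every planar graph~$G$ of treewidth~$k$ satisfies~${\tpw_{3}(G) \leq O(k^2 \log k)}$. In particular, the underlying treewidth of the class of planar graphs is at most~$3$.

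For the matching lower bound, I would exhibit a family of planar graphs witnessing that underlying treewidth at most~$2$ is impossible. The family~${\{G_{3,\ell} \colon \ell \in \NN\}}$ from \cref{StandardExamples} serves this purpose: by part~\ref{item:SE-4} each~$G_{3,\ell}$ is planar, by part~\ref{item:SE-1} each has treewidth exactly~$3$, and by part~\ref{item:SE-3} we have~${\tpw_{2}(G_{3,\ell}) > \ell}$. If the class of planar graphs had underlying treewidth at most~$2$ with treewidth-binding function~$f$, then setting~${\ell \coloneqq f(3)}$ would give~${\tpw_{2}(G_{3,\ell}) \leq f(\tw(G_{3,\ell})) = f(3) = \ell}$, contradicting \cref{StandardExamples}~\ref{item:SE-3}.

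Combining these two bounds yields that the underlying treewidth of the class of planar graphs equals~$3$, with treewidth-binding function~${O(k^2 \log k)}$. No step is genuinely difficult here---the hard work is entirely packaged inside \cref{NoKtMinorUnderlyingTreewidth} (which in turn relies on \cref{XstMinorFreeTreePartition} and the disjointed-covering machinery) and inside the lower-bound analysis of the graphs~$G_{c,\ell}$. The only thing to double-check is that the lower-bound argument quantifies correctly over the treewidth-binding function, which is the same template used in the proofs of \cref{NoKtMinorUnderlyingTreewidth} and \cref{NoKstMinorUnderlyingTreewidth}.
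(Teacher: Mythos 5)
Your proposal is correct and matches the paper's own proof: the upper bound is obtained by specialising the excluded-minor theorem (planar graphs being $K_5$- and $K_{3,3}$-minor-free), and the lower bound comes from the planar graphs $G_{3,\ell}$ of \cref{StandardExamples} exactly as you argue. Nothing further is needed.
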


It follows from Euler's formula that every graph with Euler genus at most~$g$ is $K_{3, 2g + 3}$-minor-free. 
Thus \cref{StandardExamples,NoKstMinorUnderlyingTreewidth} imply the following. 

\begin{thm}
    \label{SurfaceProduct}
    The underlying treewidth of the class of graphs embeddable on any fixed surface~$\Sigma$ equals~$3$. 
    In particular, every graph embeddable in~$\Sigma$ and of treewidth~$k$ has $3$-tree-partition-width~${O(k^2)}$. 
\end{thm}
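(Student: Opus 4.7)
The plan is to combine two ingredients already laid out in the excerpt: the Euler-formula observation stated just before the theorem, and the $K_{s,t}$-minor-free result of \cref{NoKstMinorUnderlyingTreewidth}. Let $g$ denote the Euler genus of $\Sigma$. Since every graph embeddable in $\Sigma$ has Euler genus at most $g$ and is therefore $K_{3,2g+3}$-minor-free, and since $2g+3 \geq 3 = \max\{3,3\}$, \cref{NoKstMinorUnderlyingTreewidth} applies with $s \coloneqq 3$ and $t \coloneqq 2g+3$. This immediately gives the upper bound: the class of graphs embeddable in $\Sigma$ has underlying treewidth at most $3$, with treewidth-binding function $O(k^2\log k)$ (the implicit constant depends on $g$, i.e., on $\Sigma$).

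For the matching lower bound, I would invoke \cref{StandardExamples}\ref{item:SE-4}, which states that $G_{3,\ell}$ is planar, and hence embeddable in $\Sigma$ for any surface $\Sigma$. By \cref{StandardExamples}\ref{item:SE-1}, $\tw(G_{3,\ell}) = 3$. Suppose for contradiction that the underlying treewidth of the class of graphs embeddable in $\Sigma$ were at most $2$, witnessed by some function $f$. Setting $\ell \coloneqq f(3)$, the graph $G_{3,\ell}$ would satisfy $\tpw_2(G_{3,\ell}) \leq f(\tw(G_{3,\ell})) = \ell$, contradicting \cref{StandardExamples}\ref{item:SE-3}. Hence the underlying treewidth is exactly $3$.

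Neither half of the argument presents a genuine obstacle, since both bounds follow directly from results already established in the excerpt. The only thing to be slightly careful about is the appeal to the Euler-genus bound $K_{3,2g+3}$-minor-freeness: it is quoted (without proof) in the sentence preceding the theorem, so I would simply cite that observation and move on. In summary, the proof is a two-line deduction once \cref{NoKstMinorUnderlyingTreewidth} and \cref{StandardExamples} are in hand.
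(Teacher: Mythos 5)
Your proposal is correct and matches the paper's argument: the upper bound comes from the Euler-formula fact that Euler-genus-$g$ graphs are $K_{3,2g+3}$-minor-free combined with \cref{NoKstMinorUnderlyingTreewidth} (with $s=3$, $t=2g+3$), and the lower bound from the planarity of $G_{3,\ell}$ together with \cref{StandardExamples}. The only difference is that you spell out the lower-bound contradiction explicitly, which the paper leaves implicit in its citation of \cref{StandardExamples}.
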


Since every linklessly embeddable graph is $K_6$-minor-free and $K_{4,4}$-minor-free~\citep{RST95},
\cref{NoKtMinorUnderlyingTreewidth} or \cref{NoKstMinorUnderlyingTreewidth} imply the next result, where the lower bound follows from \cref{StandardExamples}.

\begin{thm}
    \label{LinklessUnderlying}
    The underlying treewidth of the class of linklessly embeddable graphs equals~$4$. 
    In particular, every linklessly embeddable graph of treewidth~$k$ has $4$-tree-partition-width~${O(k^2)}$.
\end{thm}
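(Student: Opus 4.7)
The plan is to obtain the upper and lower bounds separately, exactly mirroring the strategy used for \cref{NoKtMinorUnderlyingTreewidth,NoKstMinorUnderlyingTreewidth,planarunderlying} and then invoking the standard examples of \cref{StandardExamples}.

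For the upper bound, I would appeal to the result of \citet{RST95} that every linklessly embeddable graph is $K_6$-minor-free (equivalently, one can use $K_{4,4}$-minor-freeness). Applying \cref{NoKtMinorUnderlyingTreewidth} with $t = 6$ immediately gives that every linklessly embeddable graph $G$ satisfies $\tpw_{4}(G) \le O(k^2 \log k)$ where $k \coloneqq \tw(G)$, so the underlying treewidth is at most~$4$. (Alternatively, applying \cref{NoKstMinorUnderlyingTreewidth} with $s = t = 4$ yields the same bound.)

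For the lower bound, I would consider the sequence of graphs $G_{4, \ell}$ from \cref{StandardExamples}. By part~\ref{item:SE-4} each $G_{4, \ell}$ is linklessly embeddable, by part~\ref{item:SE-1} it has treewidth exactly~$4$, and by part~\ref{item:SE-3} it satisfies $\tpw_{3}(G_{4, \ell}) > \ell$. Suppose for contradiction the underlying treewidth is at most~$3$, witnessed by some function $f$. Then for $\ell \coloneqq f(4)$ we would have $\tpw_{3}(G_{4, \ell}) \le f(\tw(G_{4, \ell})) = f(4) = \ell$, contradicting the strict inequality above. Hence the underlying treewidth is exactly~$4$.

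There is no real obstacle here: both directions are essentially one-line consequences of results already established in the paper together with the classical minor-characterisation of linkless embeddings. The only subtlety worth noting is that one must ensure $G_{4,\ell}$ is genuinely linklessly embeddable for \emph{all} $\ell$, which is precisely the content of \cref{StandardExamples}\ref{item:SE-4}, so the argument goes through uniformly.
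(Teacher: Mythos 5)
Your proposal is correct and follows exactly the paper's argument: the upper bound comes from the Robertson--Seymour--Thomas characterisation (linklessly embeddable graphs are $K_6$-minor-free and $K_{4,4}$-minor-free) combined with \cref{NoKtMinorUnderlyingTreewidth} or \cref{NoKstMinorUnderlyingTreewidth}, and the lower bound comes from the linklessly embeddable graphs $G_{4,\ell}$ of \cref{StandardExamples}. Nothing is missing.
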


It is an open problem to determine the underlying treewidth of a given minor-closed class~$\GG$. 
It is possible that the clustered chromatic number of $\GG$ equals the underlying treewidth of~$\GG$ plus~1. 
This is true for any minor-closed class with underlying treewidth at most~1 by results of \citet{NSSW19} and \citet{DO96}; see \cref{MinorClosedClass0,MinorClosedClass1}. 
See \citep{NSSW19} for a conjectured value of the clustered chromatic number of~$\GG$. 
It follows from a result of \citet{DDOSRSV04} that every minor-closed graph class~$\GG$ with underlying treewidth~$c$ has clustered chromatic number at most~${2(c+1)}$; see \cref{ClusteredApplication}.

\section{Excluding a Topological Minor}
\label{TopoMinors}

This section studies the underlying treewidth of graphs classes defined by an excluded topological minor. We conclude that a monotone graph class has bounded underlying treewidth if and only if it excludes some fixed topological minor.

\begin{thm}
    \label{TopoMinorPartition}
    For every fixed multigraph~$X$ with~$p$ vertices, every $X$-topological minor-free graph~$G$ of treewidth~$k$ has $p$-tree-partition-width~${O(k^2)}$. 
\end{thm}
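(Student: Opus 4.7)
The plan is to follow the template of \cref{XstMinorFreeTreePartition} and reduce to \cref{Main} applied with the singleton partition ($\ell = 1$) and $c = p$. It suffices to exhibit a function $f(n) = O_X(nk\log(nk))$ for which the singleton partition of~$G$ is $(p,f)$-disjointed; then \cref{Main} gives $\tpw_p(G) \le \max\{12k,\,2p\,f(12k)\} = O_X(k^2 \log k)$.

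Fix $n \in \NN$ and subsets $S_1, \dotsc, S_p \subseteq V(G)$ with $|S_i| \le n$; set $S \coloneqq \bigcup_i S_i$ and $S'_i \coloneqq S_i \setminus (S_1 \cup \dotsb \cup S_{i-1})$. Let $D$ be a component of $G - S$ and let $\HH$ be the family of connected subgraphs $R$ of $D$ with $V(R) \cap N_G(S'_i) \neq \emptyset$ for every~$i$. Take $\RR \subseteq \HH$ to be a maximum pairwise-disjoint subfamily. The crux of the proof will be the Erd\H{o}s--P\'osa-style bound $|\RR| \le C_X n$, where $C_X$ is a constant depending only on~$X$. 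Granted this, \cref{EP-treewidth} applied inside $D$ yields a set $Q \subseteq V(D)$ of size $(k+1)C_X n\log(C_X n + 1) = O_X(nk \log(nk))$ meeting every element of~$\HH$, so each component of $D - Q$ fails to meet $N_G(S'_i)$ for some~$i$ --- exactly the required $(p, f)$-disjointedness.

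To prove the key bound I would build an auxiliary multigraph $M$ on $V(M) = S$: for each $R \in \RR$ pick one neighbour $s_i^R \in S'_i$ of $V(R)$ for each $i$, and add the $\binom{p}{2}$ edges of the clique on $\{s_1^R, \dotsc, s_p^R\}$, each tagged with the label $R$. Then $|V(M)| \le pn$ and $|E(M)| \ge |\RR|\binom{p}{2}$ counted with multiplicity. By a Mader-type density theorem for multigraphs --- sufficient average multi-degree forces any fixed multigraph to appear as a topological minor --- there exists a constant $C_X$ depending only on $X$ such that if $|\RR| > C_X n$ then $M$ contains $X$ as a topological minor. Translating back, each $M$-edge $st$ tagged by $R$ is realised in $G$ by a path $s \to V(R) \to t$ through the connected subgraph $R$, and by arranging the realisation so that each $R$-tag appears on at most one of the internally disjoint $M$-paths, the resulting $G$-paths are internally vertex-disjoint. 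This produces $X$ as a topological minor of $G$, contradicting the hypothesis.

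The main obstacle is this final lift step: two $M$-edges that share the same $R$-tag but lie on different internally disjoint $M$-paths of the topological-minor realisation would force the corresponding $G$-paths to share internal vertices of $R$, and a connected $R$ need not admit internally disjoint paths between arbitrary pairs of entry/exit vertices. I would handle this either by retaining only a single edge per $R$ in $M$ (turning $M$ into a bipartite multigraph between $S'_1$ and $S'_2$ that is still dense enough for a bipartite Mader-type theorem) or by exploiting the $\binom{p}{2}$-fold redundancy within each $R$-tagged clique to reshuffle tags and secure an $R$-disjoint realisation. Either route keeps $C_X$ purely an $X$-dependent constant, which is all that is needed for the final $\tpw_p(G) = O_X(k^2 \log k)$ bound.
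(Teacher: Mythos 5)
Your top-level reduction is exactly the paper's: singleton partition, show $(p,f)$-disjointedness with $f(n)=O(kn\log n)$ by bounding a maximum family $\RR$ of pairwise disjoint connected subgraphs meeting every $N_G(S_i')$, then apply \cref{EP-treewidth} and \cref{Main}. The gap is in your central counting step, $\abs{\RR}\le C_X n$. The ``Mader-type density theorem for multigraphs'' you invoke does not exist in the form you need: in a multigraph, edges counted with multiplicity can concentrate on very few vertex pairs (for instance, every $R\in\RR$ could choose the same transversal $\{s_1,\dots,s_p\}$, or with your fix (i), every $R$ could attach to the same pair $s_1\in S_1'$, $s_2\in S_2'$), so a lower bound on $\abs{E(M)}$ with multiplicity gives no lower bound on the number of edges of the underlying simple graph and hence no topological minor. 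Your fix (i) is actually incorrect: discarding all but one edge per $R$ leaves a bipartite multigraph whose underlying simple graph may have a single edge however large $\abs{\RR}$ is, and even when the lift to $G$ succeeds it only produces many internally disjoint $(s_1,s_2)$-paths, i.e.\ a subdivided fat edge, which contains no $X$-topological minor for general $X$; so no contradiction, and no bound on $\abs{\RR}$, can be extracted this way. Your fix (ii) (``reshuffle tags'') gestures at the right phenomenon but is not an argument; making it precise is exactly the work the proof has to do, and it is also where the lift problem you flag (two $M$-edges with the same tag cannot be realised internally disjointly through a connected $R$ with cut vertices) must be solved rather than postponed.

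The paper resolves both issues simultaneously with an assignment-plus-charging argument (the same device as in \cref{AssignTrick}/\cref{XstMinorFreeTreePartition}): greedily assign each member of $\JJ\coloneqq\RR$ to a pair $\{x,y\}$ with $x\in S_i'$, $y\in S_j'$, $i\ne j$, using each pair at most once. The assigned subgraphs $\JJ_1$ give \emph{distinct simple} edges of an auxiliary graph $G'$ on $S$ that is genuinely a topological minor of $G$ (one path per subgraph, so internal disjointness is automatic), whence $\abs{\JJ_1}\le\gamma\abs{S}$ by the density theorem for simple $X$-topological-minor-free graphs. Each unassigned subgraph sees a $p$-clique of $G'$ transversal to $S_1',\dots,S_p'$, and at most $\abs{E(X)}-1$ of them can be charged to any one clique: given $\abs{E(X)}$ pairwise disjoint subgraphs attached to the same transversal $(v_1,\dots,v_p)$, one routes each edge $ij$ of $X$ through its \emph{own} subgraph, and disjointness of the subgraphs—not any property of $M$—guarantees the paths are internally disjoint, yielding an $X$-subdivision in $G$ with branch vertices $v_1,\dots,v_p$. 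Since $G'$ is $2\gamma$-degenerate it has $O(\abs{S})$ $p$-cliques, so $\abs{\JJ_2}=O(\abs{S})$ as well. This is the concrete content missing from your proposal; without it (or an equivalent treatment of multiplicity concentration and of the lift), the bound $\abs{\RR}\le C_X n$ is unproven.
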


\begin{proof}
    By \cref{Main} it suffices to show that the singleton partition of~$G$ is $(p,f)$-disjointed, where $f(n) \in O(kn)$. To this end, 
    let~${S_1,\dots,S_p}$ be subsets of~${V(G)}$ of size at most~$n$, let~${S \coloneqq S_1 \cup \dots \cup S_p}$, and for each~${i \in \{1, \dots, p\}}$ let~${S'_i \coloneqq S_i \setminus (S_1 \cup \dots \cup S_{i-1})}$.  
    We may assume that~${V(X) = \{1,\dots,p\}}$. 
    Let~$\HH$ be the set of connected subgraphs~$H$ of~${G-S}$ such that~${V(H) \cap N_G(S'_i) \neq \emptyset}$ for each~${i \in \{1,\dots,p\}}$. 
    
    Consider any set~$\JJ$ of pairwise vertex-disjoint graphs in~$\HH$ of maximum size. 
    Assign subgraphs in~$\JJ$ to pairs of vertices in~$S$ as follows. 
    Initially, no subgraphs in~$\JJ$ are assigned. 
    If there is an unassigned subgraph~${H \in \JJ}$ adjacent to vertices~${x \in S'_i}$ and~${y \in S'_j}$, for some distinct~${i,j \in \{1,\dots,p\}}$, and no subgraph in~$\JJ$ is already assigned to~${\{x,y\}}$, then assign~$H$ to~${\{x,y\}}$. 
    Repeat this operation until no more subgraphs in~$\JJ$ can be assigned. 
    
    Let~$\JJ_1$ and~$\JJ_2$ be the sets of assigned and unassigned subgraphs in~$\JJ$ respectively. 
    Let~$G'$ be the graph obtained from~$G$ as follows: 
    for each~${H \in \JJ_1}$ assigned to~${\{x,y\}}$, contract an $(x,y)$-path through~$H$ down to a single edge~$xy$. 
    Delete any remaining vertices not in~$S$. 
    Thus~$G'$ is a topological minor of~$G$ with~${V(G') = S}$ and~${\abs{E(G')} \geq \abs{\JJ_1}}$. 
    Consider a subgraph~${H \in \JJ_2}$. 
    Since~${H \in \HH}$ there are neighbours~${v_1,\dots,v_p}$ of~$H$ with~${v_i \in S'_i}$ for each~${i \in \{1,\dots,p\}}$. 
    Since~${H \in \JJ_2}$, the set~${\{v_1,\dots,v_p\}}$ is a clique in~$G'$ (otherwise~$H$ could have been assigned to some non-adjacent~$v_i$ and~$v_j$). 
    Charge~$H$ to~${(v_1,\dots,v_p)}$. 
    
    Suppose that there is a set~$\JJ'$ of at least~$\abs{E(X)}$ subgraphs in~$\JJ_2$ charged to some clique~${(v_1,\dots,v_p)}$ in~$G'$ with~${v_i \in S'_i}$. 
    Label~${\JJ' = \{H_e \colon e \in E(X)\}}$. 
    For~${e = ij \in E(X)}$, let~$P_e$ be a $(v_i,v_j)$-path through~$H_e$. 
    Thus~${\{v_i P_e v_j \colon e = ij \in E(X)\}}$ defines an $X$-topological minor in~$G$ with branch vertices~${v_1, \dotsc, v_p}$. 
    This contradiction shows that there are fewer than~$\abs{E(X)}$ subgraphs in~$\JJ_2$ charged to each clique ${(v_1, \dotsc, v_p)}$ in~$G'$ with~${v_i \in S'_i}$.
    
    Since~$G'$ is $X$-topological-minor-free, ${\abs{E(G')} \leq \gamma \abs{S}}$ for some~${\gamma = \gamma(X)}$; see \citep{BT98,TW05} for explicit bounds on~$\gamma$. 
    Thus~${\abs{\JJ_1} \leq \gamma \abs{S}}$. 
    Moreover, $G'$ is $2\gamma$-degenerate. 
    Every $d$-degenerate graph on~$n$ vertices has at most~${\binom{d}{p-1}n}$ cliques of order~$p$ \citep[Lemma 18]{Wood16}. 
    Thus~${\abs{\JJ_2} < \abs{E(X)} \binom{2\gamma}{p-1}\abs{S}}$. 
    Hence~${\abs{\JJ} = \abs{\JJ_1} + \abs{\JJ_2} < (\gamma + \abs{E(X)} \binom{2\gamma}{p-1}) \abs{S}}$. 
    Define
    \begin{equation*}
        f(n) \coloneqq (k + 1) \left(\gamma + \abs{E(X)} \tbinom{2\gamma}{p - 1}\right)np.
    \end{equation*}
    Since~${\abs{S} \leq pn}$, by \cref{EP-treewidth}, there is a set~${Q \subseteq V(G-S)}$ of size at most~${f(n)}$ such that no subgraph of~${G-(S\cup Q)}$ is in~$\HH$. 
    So every component~$Y$ of~${G-(S\cup Q)}$ satisfies~${Y \cap N_{G}(S'_i) = \emptyset}$ for some~${i \in \{1, \dots, p\}}$. 
    Hence the singleton partition of~$G$ is $(p,f)$-disjointed.
\end{proof}

\begin{thm}
    \label{TopoMinorUnderlyingTreewidth}
    The underlying treewidth of the class of $K_t$-topological-minor-free graphs equals~${t-2}$ if~${t \in \{2,3,4\}}$ and equals~$t$ if~${t \geq 5}$. 
\end{thm}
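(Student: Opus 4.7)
The two ranges $t\in\{2,3,4\}$ and $t\geq 5$ are handled separately.

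For $t\in\{2,3,4\}$, since $\Delta(K_t)=t-1\leq 3$, a classical fact (whenever $H$ has $\Delta(H)\leq 3$, $H$ is a topological minor of $G$ if and only if $H$ is a minor of $G$, as the branch sets of any minor model can be contracted to a Steiner tree with at most three attachment points, which is a subdivision) implies that the $K_t$-topological-minor-free and $K_t$-minor-free classes coincide. Hence \cref{NoKtMinorUnderlyingTreewidth} directly gives underlying treewidth $t-2$.

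For $t\geq 5$, the upper bound of $t$ follows immediately by applying \cref{TopoMinorPartition} with $X=K_t$ (so $p=t$), yielding $\tpw_t(G)=O(k^2\log k)$ for every $K_t$-topological-minor-free graph $G$ with $\tw(G)=k$. The matching lower bound requires constructing, for every function $f\colon\NN\to\NN$, a $K_t$-topological-minor-free graph $G$ with $\tpw_{t-1}(G)>f(\tw(G))$. The natural standard candidates $G_{t,\ell}$ and $C_{t,\ell}$ of \cref{StandardExamples} witness the required $\tpw_{t-1}>\ell$ growth with bounded treewidth $t$, but they contain $K_{t+1}$ as a subgraph and so are not $K_t$-topological-minor-free. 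I would therefore construct a bespoke family $\{H_\ell\}$ of $K_t$-topological-minor-free graphs with bounded treewidth by modifying the iterated ``dominant-vertex on disjoint copies'' recursion of $G_{t,\ell}$: at each recursive level I replace the single dominant vertex by an \emph{apex gadget} of at most $t-1$ high-degree vertices, arranged so that no vertex outside the gadget attains degree at least $t-1$. This caps the number of potential branch vertices of any would-be $K_t$-topological minor below $t$, ruling out such minors, while a Helly-type argument analogous to \cref{StandardExamples}\ref{item:SE-2} (but using bipartite or ``subdivided clique'' obstructions in place of true $(t+1)$-cliques in the quotient) forces the quotient of any $(t-1)$-tree-partition of $H_\ell$ to have treewidth at least $t$, and hence $\tpw_{t-1}(H_\ell)>\ell$.

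The principal obstacle will be reconciling the three simultaneous requirements on $\{H_\ell\}$: $K_t$-topological-minor-freeness (which caps high-degree vertices), unbounded $(t-1)$-tree-partition-width as $\ell\to\infty$ (which forces complex quotients), and bounded treewidth (so that the lower bound is nontrivial in the sense of underlying treewidth). The classical ``clique-spread'' argument underlying every previous lower bound on $\tpw_{t-1}$ tends to plant $(t+1)$-cliques in the host graph, and such cliques immediately give $K_t$ as a topological minor; side-stepping this tension requires a careful combinatorial construction that substitutes those cliques with non-clique high-treewidth obstructions while keeping the degree profile tight.
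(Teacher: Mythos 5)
Your treatment of $t\in\{2,3,4\}$ is correct (using that a graph of maximum degree at most $3$ is a minor if and only if it is a topological minor, then invoking \cref{NoKtMinorUnderlyingTreewidth}; the paper instead notes that $K_t$-topological-minor-free means treewidth at most $t-2$ and applies \cref{TreewidthUnderlyingTreewidth}, which is equivalent in substance), and your upper bound of $t$ for $t\geq 5$ via \cref{TopoMinorPartition} is exactly the paper's argument. You also correctly diagnose why $G_{t,\ell}$ and $C_{t,\ell}$ cannot serve as lower-bound witnesses.

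However, the lower bound for $t\geq 5$ -- which is the entire content of the theorem beyond what \cref{TopoMinorPartition} already gives -- is not proved. What you offer is a plan for a ``bespoke family'' whose two key properties (being $K_t$-topological-minor-free, and forcing every quotient of a width-$\ell$ $(t-1)$-tree-partition to have treewidth at least $t$) are exactly the claims that need proof, and your sketch of how to get them does not hold up as stated. Replacing the dominant vertex by an apex gadget of at most $t-1$ high-degree vertices \emph{at every recursive level} does not ``cap the number of potential branch vertices below $t$'': the high-degree vertices accumulate across levels and across the $\ell$ parallel copies, so one would still need a separation/connectivity argument showing no $t$ of them can simultaneously serve as branch vertices of a $K_t$-subdivision, and you give none. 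Likewise, the promised ``Helly-type argument'' that a non-clique obstruction forces quotient treewidth at least $t$ is precisely the hard step and is left entirely open; \cref{StandardExamples}\ref{item:SE-2} works by planting a $(c+1)$-clique whose vertices land in distinct parts, and once you forbid such cliques you need a genuinely different mechanism. The paper's construction shows what such a mechanism looks like: take $G_1=G_{t-2,\ell}$ (treewidth $t-2$, hence trivially $K_t$-topological-minor-free), and onto each $(t-1)$-clique of $G_1$ paste a single gadget $G_2$ consisting of that clique together with a long path $x_1,\dots,x_n$ in which each clique vertex has exactly one neighbour in every ``clump'' of $t-1$ consecutive path vertices. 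Then every vertex outside the $(t-1)$-clique has degree at most $3$, so for $t\geq 5$ there are fewer than $t$ candidate branch vertices and $G$ is $K_t$-topological-minor-free, while $\tw(G)\leq t$. Given a $(t-1)$-tree-partition of width $\ell$, \cref{StandardExamples}\ref{item:SE-2} supplies a $(t-1)$-clique $C$ of $G_1$ in distinct parts; a counting argument along the long path then finds two clumps $A,B$ whose part-sets are disjoint from each other and from the parts meeting $C$, and contracting $H[A']$, $H[B']$ and a connecting path in the quotient $H$ produces a $K_{t+1}$ minor in $H$, contradicting $\tw(H)\leq t-1$. Note that the crucial clique growth ($t-1$ in $G$ becoming $t+1$ in $H$) comes from one carefully designed path gadget attached at the top level, not from modifying every level of the recursion; without an argument of this kind your proposal establishes only the upper bound.
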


\begin{proof}
    For~${t \in \{2,3,4\}}$, a graph is $K_t$-topological-minor-free if and only if it has treewidth at most~${t-2}$. 
    So the result follows from \cref{TreewidthUnderlyingTreewidth}. 
    Now assume that~${t \geq 5}$. 
    
    \cref{TopoMinorPartition} implies that the underlying treewidth of the class of $K_t$-topological-minor-free graphs is at most~$t$. Suppose for the sake of contradiction that it is at most~${t-1}$. 
    That is, there is a function~$f$ such that every $K_t$-topological-minor-free graph~$G$ has $(t-1)$-tree-partition-width at most~${f(\tw(G))}$. 
    Let~${\ell \coloneqq f(t)}$. 
    Let~$G_1$ be the graph~$G_{t-2,\ell}$ from \cref{StandardExamples}. So~${\tw(G_1) = t-2}$, implying~$G_1$ is $K_t$-topological-minor-free, and for every $H$-partition of~$G_{1}$ with width~$\ell$, there is a ${(t - 1)}$-clique in~$G_1$ whose vertices are in distinct parts. 
    
    Let~${n \gg t,\ell}$ be a sufficiently large integer as detailed below. 
    Let~$G_2$ be the graph with vertex-set ${\{v_1,\dots,v_{t-1}\}\cup\{x_1,\dots,x_{n}\}}$
    where~${\{v_1,\dots,v_{t-1}\}}$ is a clique, ${x_1, \dots, x_{n}}$ is a path, and~${v_i x_j}$ is an edge whenever~${j = a(t-1)+i}$ for some~${a \in \NN_0}$. 
    Each subpath ${x_{a(t-1)+1},\dots,x_{a(t-1)+(t-1)}}$ is called a \defn{clump}. 
    By construction, each vertex~$v_i$ has one neighbour in every clump. 
    Since each vertex~$x_j$ has degree at most~3, the graph~$G_2$ has exactly~${t-1}$ vertices of degree at least~4. 
    Thus~$G_2$ contains no $K_t$-topological minor (since~${t \geq 5}$). 
    Let~${X_j \coloneqq \{v_1,\dots,v_{t-1},x_j,x_{j+1}\}}$. 
    Then ${(X_1,X_2,\dots,X_{n-1})}$ is a path-decomposition of~$G_2$ with width~$t$; thus~${\tw(G_2) \leq t}$. 
    
    Let~$G$ be obtained by pasting a copy of~$G_2$ onto each ${(t-1)}$-clique~$C$ of~$G_1$, where the vertices~${v_1,\dots,v_{t-1}}$ in~$G_2$ are identified with~$C$. 
    Since each of~$G_1$ and~$G_2$ have treewidth at most~$t$, so too does~$G$. 
    Since each of~$G_1$ and~$G_2$ contain no $K_t$-topological-minor, $G$ contains no $K_t$-topological minor. 
    Thus~$G$ has a ${(t-1)}$-tree-partition~${\PP = (V_h \colon h \in V(H))}$ of width~$\ell$.
    
    Since~$G_1$ is a subgraph of~$G$, there is a ${(t-1)}$-clique~$C$ in~$G_1$ whose vertices are in distinct parts of~$\PP$. 
    Let~$C'$ be the set of parts in~$\PP$ intersecting~$C$. 
    Thus~${\abs{C'} = \abs{C} = t-1}$. 
    Let~$P_0$ be the path in the copy of~$G_2$ pasted onto~$C$ in~$G$. 
    At most~${(t-1)(\ell-1)}$ of the vertices in~$P_0$ are in the parts of~$\PP$ in~$C'$. 
    Thus, for~${n \gg t, \ell}$, there is a subpath~$P_1$ of~$P_0$ that contains at least~${(t-1)\ell+2}$ clumps, and no vertex of~$P_1$ is in the parts of~$\PP$ in~$C'$. 
    Let~$A$ be the first clump in~$P_1$. 
    Let~$A'$ be the parts of~$\PP$ that intersect~$A$. 
    Since~${G[A]}$ is connected, ${H[A']}$ is connected. 
    Since ${\abs{A'} \leq \abs{A} = t-1}$, there are at most~${(t-1)\ell}$ vertices in the parts in~$A'$. 
    Thus at most~${(t-1)\ell}$ clumps in~$P_1$ intersect parts in~$A'$. 
    Hence~$P_1$ contains a clump~$B$ that intersects no part in~$A'$. 
    Let~$B'$ be the parts in~$\PP$ that intersect~$B$. 
    Thus~${A' \cap B' = \emptyset}$. 
    Since~${G[B]}$ is connected, ${H[B']}$ is connected. 
    Since~$P_1$ is connected and no vertex of~$P_1$ is in a part in~$C'$, there is a path~$Q$ in~${H-C'}$ from~$A'$ to~$B'$, with no internal vertex of~$Q$ in~${A'\cup B'}$. 
    Let~$H'$ be obtained from~$H$ by contracting~${H[A']}$ to a vertex~$a$, contracting~${H[B']}$ to a vertex~$b$, and contracting~$Q$ down to an edge~$ab$. 
    Each vertex in~$C'$ has a neighbour in~$A$ and in~$B$. 
    Thus~${C'\cup\{a,b\}}$ is a ${(t+1)}$-clique in~$H'$. 
    Thus~$H$ contains~$K_{t+1}$ as a minor, contradicting that~${\tw(H) \leq t-1}$. 
    Therefore the underlying treewidth of the class of $K_t$-topological-minor-free graphs equals~$t$.
\end{proof}

The proof of \cref{NoKstMinorUnderlyingTreewidth} is easily adapted to show that for~${s \leq 3}$ and fixed~$t$, every $K_{s,t}$-topological-minor-free graph~$G$ of treewidth~$k$ has $s$-tree-partition-width~${O(k^2)}$; see \cref{K3tTopoMinorPartition}. 
So the underlying treewidth of the class of $K_{s,t}$-topological minor-free graphs equals~$s$ if~${s \leq 3}$ and~${t \geq \max\{s,3\}}$. 
But the proof does not generalise for~${s \geq 4}$. 
Determining the underlying treewidth of the class of $K_{s,t}$-topological-minor-free graphs is an interesting open problem. 
Also, in the~${s = 2}$ case we can use Menger's Theorem instead of \cref{EP-treewidth} to show that every $K_{2,t}$-topological-minor-free graph of treewidth~$k$ has 2-tree-partition-width~${O(t^2k)}$; see \cref{K2tTopoMinorPartition}. 
A result of \citet[(4.4)]{LeafSeymour15} implies that every $K_{2,t}$-minor-free graph has treewidth at most~$3t/2$. 
Thus every $K_{2,t}$-minor-free graph has 2-tree-partition-width~${O(t^3)}$. 
It is open whether every $K_{2,t}$-minor-free graph has 2-tree-partition-width~${O(t)}$, which would be best possible by \cref{cTreePartitionWidthTreewidth}. 
It is easily seen that every $K_{1,t}$-minor-free graph has tree-partition-width~${O(t)}$; see \cref{K1tMinorPartition}. 


We now show that $c$-tree-partition-width is well-behaved under subdivisions (for~${c \geq 1}$).

\begin{lem}
    \label{monotonesubdivision} 
    For~${c,t \in \NN}$, if~$\tilde{G}$ is a subdivision of a graph~$G$ with~${\tpw_c(G)\leq t}$, then ${\tpw_c(\tilde{G}) \leq t^2+t}$ if~${c = 1}$ and~${\tpw_c(\tilde{G}) \leq t}$ if~${c \geq 2}$.
\end{lem}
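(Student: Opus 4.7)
The plan is to construct an $H'$-partition of $\tilde{G}$ from the given $H$-partition $(V_x : x \in V(H))$ of $G$ witnessing $\tpw_c(G) \leq t$. For each edge $e = uv$ of $G$ with $u \in V_x$, $v \in V_y$ (so $x = y$ or $xy \in E(H)$), let $u, w_1^e, \ldots, w_{k_e}^e, v$ be the corresponding subdivision path in $\tilde G$; the task is to place each $w_i^e$ into a bag of some $H'$-partition while controlling both the width and $\tw(H')$.

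For $c \geq 2$, I would build $H'$ directly by inserting, for each such $e$ with $k_e \geq 1$, a new path $z_1^e, \ldots, z_{k_e}^e$ of singleton nodes, with $z_1^e$ adjacent to $x$, $z_{k_e}^e$ adjacent to $y$, and $z_i^e z_{i+1}^e \in E(H')$. Set $V_{z_i^e} := \{w_i^e\}$. The new bags are singletons, so the partition has width at most $t$. To certify $\tw(H') \leq c$, start from a tree-decomposition of $H$ of width at most $c$. Since $xy \in E(H)$ or $x = y$, some bag $B$ contains both $x$ and $y$. For each added path, attach at $B$ a chain of bags of the form $\{x, y, z_i^e\}$; each such bag has size at most $3 \leq c + 1$ because $c \geq 2$. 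This yields a valid tree-decomposition of $H'$ of width at most $c$.

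For $c = 1$, $H = T$ is a forest, and the path-insertion construction above would create cycles (the new path from $x$ to $y$ together with the edge $xy$ of $T$). The plan instead is to keep $T' = T$ and absorb subdivision vertices into the existing bags $V_x$. For each edge $e = uv$ of $G$ with $u \in V_x$, $v \in V_y$, distribute $w_1^e, \ldots, w_{k_e}^e$ between $V_x$ and $V_y$ (or all within $V_x$ when $x = y$); the partition property is preserved because consecutive subdivision vertices lie in the same bag or in the two adjacent bags $V_x$, $V_y$. The target bound $t^2 + t$ is meant to reflect the $t$ original vertices of $V_x$ plus at most $t^2$ absorbed subdivision vertices, where the $t^2$ comes from the observation that $|E_{xy}| \leq |V_x| \cdot |V_y| \leq t^2$ for each neighbor $y$ of $x$.

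The main obstacle is turning the $c = 1$ distribution scheme into one that actually achieves the $t^2 + t$ bound: the naive alternating placement allows each bag $V_x$ to receive many subdivision vertices per incident edge, which is not uniformly bounded. The fix must allocate at most a bounded number of subdivision vertices to $V_x$ per ordered pair in $V_x \times V_y$, and handle longer subdivision paths by introducing a controlled number of auxiliary singleton nodes attached to $T$ (while keeping $T'$ a forest); a charging argument over the at most $t^2$ pairs in $V_x \times V_y$ should then yield the stated bound.
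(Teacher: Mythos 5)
Your $c \geq 2$ construction (a path of singleton nodes $z_1^e,\dots,z_{k_e}^e$ joined to both $x$ and $y$) is sound in outline and keeps the width at $t$, but the treewidth certificate you give is faulty: in the chain of bags of the form $\{x,y,z_i^e\}$ no bag contains both $z_i^e$ and $z_{i+1}^e$, so the edges $z_i^e z_{i+1}^e$ of $H'$ are not covered. This is repairable at the same width, e.g.\ with bags $\{x,y,z_1^e\},\{y,z_1^e,z_2^e\},\{y,z_2^e,z_3^e\},\dots$, or as the paper does by folding the subdivided edge into size-two parts $\{v_1,v_s\},\{v_2,v_{s-1}\},\dots$ whose first node is adjacent to both $x$ and $y$; so this is a fixable slip rather than a fatal one.

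The genuine gap is in the $c=1$ case, which is the substantive part of the lemma, and your proposal stops exactly at the crucial point (you flag it yourself as the ``main obstacle''). Absorbing the interior vertices into $V_x\cup V_y$ fails for long subdivision paths, and the fix you gesture at (auxiliary singleton nodes attached to $T$ plus a charging over $V_x\times V_y$) does not resolve the two real difficulties. First, any chain of new nodes hosting the interior of a subdivided edge must end adjacent to (a bag containing a neighbour of) both endpoints, since $v_1$ is adjacent to $u\in V_x$ and $v_s$ to $w\in V_y$; but joining new nodes to both $x$ and $y$ closes a cycle with the tree edge $xy$, so some further idea is needed to keep $T'$ acyclic. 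Second, charging one absorbed vertex per pair in $V_x\times V_y$ still lets a bag $V_x$ receive up to $t^2$ vertices \emph{for each} neighbouring part, i.e.\ up to $t^2\deg_T(x)$ in total, which is not bounded in terms of $t$. The paper's proof supplies precisely the two missing ideas: it first orients $E(T)$ so every node has in-degree at most $1$; for an edge directed $x\to y$ it places $v_1$ into $V_y$ (the edge $v_0v_1$ is then covered by the tree edge $xy$) and folds the remaining interior vertices into a path of size-two bags $\{v_2,v_s\},\{v_3,v_{s-1}\},\dots$ hanging off $y$ alone, so $T'$ stays a tree; and since each $y$ has a unique in-neighbour $x$, the bag $V_y$ absorbs at most $\abs{E(V_x,V_y)}\leq t^2$ new vertices, giving width at most $t^2+t$. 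Without the folding trick and the in-degree-one orientation (or equivalents), your sketched charging argument does not reach the stated bound.
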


\begin{proof}
    We first prove the~${c = 1}$ case. 
    If~${t = 1}$, then~$G$ is a forest. 
    So~$\tilde{G}$ is a forest and the claim follows. 
    Now assume~${t \geq 2}$. 
    Let ${(V_x \colon x \in V(T))}$ be a tree-partition of~$G$ with width at most~$t$. 
    We construct a tree~$T'$ from~$T$ and a $T'$-partition of~$\tilde{G}$ iteratively as follows. 
    Orient the edges of~$T$ so that each vertex has in-degree at most~$1$. 
    Say an edge~$vw$ of~$G$ is replaced by the ${(v,w)}$-path~${v_0,v_1,\dots,v_s,v_{s+1}}$ in~$\tilde{G}$. 
    If~$v$ and~$w$ are in the same part~$V_x$ (where~${x \in V(H)}$), then add a path of bags ${V_x,\{v_1,v_{s}\},\{v_2,v_{s-1}\},\ldots}$ to~$T'$. 
    If~${v \in V_x}$ and~${w \in V_y}$ then~${xy \in E(T)}$. 
    Assume~$xy$ is directed from~$x$ to~$y$. 
    Add~$v_1$ to~$V_y$ and add a path of bags ${V_y,\{v_2,v_{s}\},\{v_3,v_{s-2}\},\ldots}$ to~$T'$. 
    Since~${\abs{\{uw\in E(G) \colon u \in V_x, w \in V_y\}} \leq t^2}$, we have~${\abs{V_x}\leq t^2+t}$ for all~${x \in V(T')}$. 
    This defines a tree-partition of~$\tilde{G}$ with width at most~${t^2+t}$. 
    
    Now consider the~${c \geq 2}$ case. 
    If~${t = 1}$, then~${\tw(G) \leq c}$. 
    Since subdividing edges does not increase treewidth, ${\tw(\tilde{G}) \leq c}$, and the claim follows. 
    Now assume~${t \geq 2}$. 
    Let~${(V_x \colon x\in V(H))}$ be an $H$-partition of~$G$ with width at most~$t$ where~${\tw(H) \leq c}$. 
    We construct a graph~$H'$ of treewidth at most~$c$ from~$H$ and a $H'$-partition of~$\tilde{G}$ iteratively as follows. 
    Say an edge~$vw$ of~$G$ is replaced by the $(v,w)$-path~${v_0,v_1,\dots,v_s,v_{s+1}}$ in~$\tilde{G}$. 
    If~$v$ and~$w$ are in the same part~$V_x$ (where~${x \in V(H)}$), then add a path of bags ${V_x,\{v_1,v_{s}\},\{v_2,v_{s-1}\},\dotsc}$ to~$H'$. 
    If~${v \in V_x}$ and~${w \in V_y}$ then~${xy \in E(H)}$; add a path of bags ${\{v_1,v_{s}\},\{v_2,v_{s-1}\},\dotsc}$ to~$H'$, where~${\{v_1,v_{s}\}}$ is adjacent to~$x$ and~$y$ in~$H'$. 
    This defines a $c$-tree-partition of~$\tilde{G}$ with width at most~$t$, as desired. 
\end{proof}

\begin{lem}
    \label{TopologicalMinor}
    For all~${c \in \NN}$, for every graph~$G$ and every subdivision~$G'$ of~$G$, 
    \begin{equation*}
        \tpw_c(G)\leq 4c^2 12^c (c \tpw_c(G')+1) \tpw_c(G')^{2} (\tw(G')+1)^c.
    \end{equation*}
\end{lem}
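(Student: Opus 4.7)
The strategy is to apply \cref{MainCorollary} to $G$ using a disjointed covering derived from the hypothesised $c$-tree-partition of $G'$. Let $\beta' = (V'_x : x \in V(H))$ be an optimal $c$-tree-partition of $G'$ of width $t \coloneqq \tpw_c(G')$, so $\tw(H) \leq c$; by \cref{disjointed}, $\beta'$ is a $(c, ct)$-disjointed $t$-partition of $G'$. Define $\beta \coloneqq \{V'_x \cap V(G) : x \in V(H)\}$, the restriction to $V(G)$, which is a $t$-partition of $V(G)$. I would prove that $\beta$ is $(c, D)$-disjointed for some $D = O(c^2 t^2)$ fitting within $2c(ct+1)t$. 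Feeding this into \cref{MainCorollary} with $\ell = t$ and $k = \tw(G')+1 \geq \tw(G)+1$ then yields
\begin{equation*}
\tpw_c(G) \leq 2c D t \bigl(12(\tw(G')+1)\bigr)^c \leq 4c^2 (ct+1) t^2 \cdot 12^c (\tw(G')+1)^c,
\end{equation*}
as required.

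For the disjointedness claim, take any $B_1, \dotsc, B_c \in \beta$ with $B_i = V'_{x_i} \cap V(G)$, and set $A_i \coloneqq V'_{x_i}$ and $A \coloneqq A_1 \cup \dotsb \cup A_c$, so $|A| \leq ct$ and $B_i = A_i \cap V(G)$. A component $X$ of $G - (B_1 \cup \dotsb \cup B_c)$ satisfies $V(X) \subseteq V(G) \setminus A$. Let $Y_1, Y_2, \dotsc$ be the components of $G' - A$. A $G$-edge $uv \in E(X)$ with endpoints in distinct $Y_j$'s (a ``linking edge'') has its subdivision path $P_{uv}$ in $G'$ traversing $A$ at some internal vertex, which must be a subdivision vertex (since $u, v \notin A$); as distinct subdivision paths are internally disjoint, the number of linking edges is at most the number of subdivision vertices in $A$, hence at most $|A| \leq ct$. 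Thus $X$ meets at most $1 + ct$ of the $Y_j$'s.

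I would construct $Q$ as the union of: (i) $Q'_{Y_j} \cap V(G)$ for each $Y_j$ meeting $V(X)$, where $Q'_{Y_j} \subseteq V(Y_j)$ of size $\leq ct$ is supplied by applying the $(c,ct)$-disjointedness of $\beta'$ to $A_1, \dotsc, A_c$; and (ii) one $V(X)$-endpoint of each $G$-edge whose $G'$-subdivision path passes through $A$ or through a subdivision vertex of some $Q'_{Y_j}$. Since distinct subdivision paths are vertex-disjoint, the number of edges in (ii) is bounded by the total number of subdivision vertices in $A \cup \bigcup_j Q'_{Y_j}$, and a careful count of (i) plus (ii) yields $|Q| \leq D$. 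After removing $Q$, each component $Z$ of $X - Q$ has $V(Z)$ contained in a single component $W$ of some $Y_{j(Z)} - Q'_{Y_{j(Z)}}$, because every remaining $G$-edge of $Z$ has a subdivision path avoiding both $A$ and the subdivision vertices of $Q'_{Y_{j(Z)}}$.

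The main obstacle is transferring the $G'$-avoidance of $N_{G'}(A'_i)$ by $W$ to the required $G$-avoidance of $N_G(B'_i)$ by $V(Z)$, since a $G$-neighbour $v \in B'_i$ of a vertex $w \in V(Z)$ need not be a $G'$-neighbour: $P_{wv}$ can meander before reaching $A$. The resolution is to trace the last $G'$-edge of $P_{wv}$: its endpoint $s$ preceding $v$ lies in $N_{G'}(A'_i)$, and by the construction of $Q$ (together with the fact that $P_{wv}$ avoids $A$ and the separator subdivision vertices of $Q'_{Y_{j(Z)}}$ on the $w$-side), $s$ cannot lie in $W$, yielding a contradiction and producing a common avoidance index $i$ for $Z$. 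This is the delicate technical step, and it rests on the careful choice of $Q$ ensuring that every $G$-edge from $V(Z)$ to $B$ has its subdivision path fully ``trapped'' within $W \cup A$ near $v$, forcing the preceding $G'$-vertex of $v$ to lie in $W$ itself.
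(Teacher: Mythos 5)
Your proposal is correct and follows essentially the same route as the paper's proof: restrict the $(c,c\ell)$-disjointed $\ell$-partition of $G'$ given by \cref{disjointed} to $V(G)$, verify disjointedness of the restriction by applying the disjointedness of $\beta'$ within each component of $G'-A$, lift the separators back to $G$ via endpoints of subdivided edges whose paths hit critical subdivision vertices, bound the number of relevant components of $G'-A$ using the internally disjoint subdivision paths through $A$, and finish with \cref{MainCorollary} using $\tw(G)=\tw(G')$. The only differences are minor bookkeeping (the paper adds both endpoints of each critical subdivided edge and bounds the number of components by $c\ell$ via representative paths, while you add one endpoint and get $ct+1$ via linking edges), and your slightly garbled phrase ``$s$ cannot lie in $W$'' is corrected by your final sentence, so these do not affect the argument.
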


\begin{proof}
    Let~${\ell \coloneqq \tpw_c(G')}$. 
    By \cref{disjointed}, $G'$ has a ${(c,c\ell)}$-disjointed $\ell$-partition~$\beta'$. 
    Let ${\beta \coloneqq \{B' \cap V(G) \colon B' \in \beta'\}}$ be the corresponding $\ell$-partition of~$G$. 
    We show that~$\beta$ is ${(c,2c\ell(c\ell+1))}$-disjointed. 
    The result then follows from \cref{MainCorollary} as~${\tw(G') = \tw(G)}$. 
    
    Let~${B_1, \dotsc, B_c \in \beta}$ be distinct non-empty parts and let ${B'_1, \dotsc, B'_c}$ be the corresponding parts in~$\beta'$. 
    Let~${B \coloneqq \bigcup \{B_{i} \colon i \in \{1,\dots,c\}\}}$  and~${B' \coloneqq \bigcup \{B'_{i} \colon i \in \{1,\dots,c\}\}}$. 
    Observe that every vertex in $B' - B$ is an internal vertex of some subdivided edge in~$G$. 
    Let~${\tilde{Q} \subseteq V(G)}$ be the set of end-vertices of these subdivided edges, so~${\abs{\tilde{Q}} \leq 2 \abs{B' - B} < 2c \ell}$. 
    
    Let~$X$ be a component of~${G - B}$ and let~${Y'_1, \dotsc, Y'_m}$ be the components of~${G' - B'}$ that intersect~$X$. 
    Since~$\beta'$ is ${(c, c\ell)}$-disjointed, for each~$j$ there is some~${Q'_j \subseteq V(Y'_j)}$ of size at most~$c\ell$ such that every component of~${Y'_j - Q'_j}$ is disjoint from some~${N_{G'}(B'_i)}$. 
    Define~$Q_j$ as follows. 
    For each~${w \in Q'_j}$ either~${w \in V(G)}$ or~$w$ is an internal vertex of some subdivided edge~$uv$ in~$G$. 
    In the first case add~$w$ to~$Q_j$, and in the second case add~$u$ and~$v$ to~$Q_j$. 
    Thus~${\abs{Q_j}\leq 2c\ell}$. 
    
    Let~${Q \coloneqq (\bigcup \{Q_j \colon j \in \{1,\dots,m\} \} \cup \tilde{Q}) \cap V(X)}$. 
    Then~${\abs{Q} \leq 2m c \ell + 2c \ell = 2c \ell (m + 1)}$. 
    Let~$Z$ be a component of~${X - Q}$. 
    Consider any path~$P$ within~$Z$: 
    in~$G'$ this is subdivided to give a path~$P'$. 
    As~$P$ avoids~$\tilde{Q}$, the path~$P'$ avoids~$B'$. 
    Hence~${V(Z)}$ is contained in some component of~${G' - B'}$ and so within some~$Y'_j$. 
    Further, $P$ avoids~$Q_j$, so~$P'$ avoids~$Q'_j$. 
    Hence~${V(Z)}$ is contained in some component~$Z'$ of~${Y'_j - Q'_j}$. 
    By the definition of~$Q'_j$, there is some~$i$ with~${N_{G'}(B'_i) \cap V(Z') = \emptyset}$.
    
    We claim that~${N_{G}(B_i) \cap V(Z) = \emptyset}$. 
    Indeed suppose~${u \in B_i}$ and~${v \in V(Z)}$ are adjacent and consider the corresponding ${(u,v)}$-path~$P'$ in~$G'$. 
    Since~${v \not \in \tilde{Q}}$, the path~${P' - u}$ avoids~$B'$ and so is within~$Y'_j$ (as~${v \in V(Z) \subseteq V(Y_j)}$). 
    As~${v \not \in Q_j}$, the path~${P' - u}$ avoids~$Q'_j$. 
    Hence~${P' - u}$ is a connected subgraph of~${Y_j - Q'_j}$ so is within~$Z'$ (as~${v \in V(Z) \subseteq V(Z')}$). 
    But then~${N_{G'}(u) \cap V(Z') \neq \emptyset}$, a contradiction. 
    Hence~$\beta$ is ${(c, 2c \ell (m + 1))}$-disjointed. 
    We are left to show that~${m \leq c \ell}$. 
    
    For each~${i \in \{1,\dots,m\}}$, let~${v_i \in V(Y'_i) \cap V(X)}$. 
    For distinct~${i,j \in \{1,\dots,m\}}$, let~$P_{i,j}$ be a ${(v_i,v_j)}$-path in~$X$. 
    Since~$Y'_i$ and~$Y'_j$ are distinct components in~${G' - B'}$, there exist a vertex~${w \in B' - B}$ that is an internal vertex of a subdivided edge in~$P_{i,j}$. 
    Therefore, ${m - 1 \leq \abs{B' - B} < c \ell}$. 
\end{proof}

\begin{thm}
    \label{MonotoneUnderlyingTreewidth}
    A monotone graph class~$\GG$ has bounded underlying treewidth if and only if~$\GG$ excludes some fixed topological minor.
\end{thm}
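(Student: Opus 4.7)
The plan is to prove both directions of the equivalence separately. The backward direction (``excludes a topological minor implies bounded underlying treewidth'') is immediate from \cref{TopoMinorPartition}: if $\GG$ excludes a fixed topological minor $X$ on $p$ vertices, then every $G \in \GG$ of treewidth $k$ satisfies $\tpw_p(G) = O(k^2 \log k)$, so $\GG$ has underlying treewidth at most $p$.

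For the forward direction, I would argue by contradiction. Suppose $\GG$ is monotone with underlying treewidth $c$ witnessed by a function $f$ (so $\tpw_c(G) \leq f(\tw(G))$ for every $G \in \GG$), yet $\GG$ excludes no fixed topological minor. The key witness will be the graph $G_{c+1,\ell}$ from \cref{LowerBounds}, which by \cref{StandardExamples}\ref{item:SE-1}\ref{item:SE-3} has treewidth exactly $c+1$ and $c$-tree-partition-width strictly greater than $\ell$. For every $\ell$, the no-topological-minor assumption supplies some $\tilde G \in \GG$ containing $G_{c+1,\ell}$ as a topological minor, so a subgraph of $\tilde G$ is a subdivision $G'$ of $G_{c+1,\ell}$, and $G' \in \GG$ by monotonicity. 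Since subdivision preserves treewidth (the base graph $G_{c+1,\ell}$ contains an edge), $\tw(G') = c+1$, and the underlying-treewidth hypothesis gives $\tpw_c(G') \leq f(c+1)$, a constant independent of $\ell$.

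For $c \geq 1$, I would then feed this into \cref{TopologicalMinor} applied to $G_{c+1,\ell}$ and its subdivision $G'$ to obtain $\tpw_c(G_{c+1,\ell}) \leq C$ for some constant $C = C(c,f(c+1))$; combined with the bound $\tpw_c(G_{c+1,\ell}) > \ell$ from \cref{StandardExamples}, this fails as soon as $\ell > C$. The case $c = 0$ is not covered by \cref{TopologicalMinor}, but it is easier to dispatch directly: the assumption together with monotonicity forces $P_n \in \GG$ for every $n$, yet $\tpw_0(P_n) = n$ and $\tw(P_n) = 1$ contradict $\tpw_0(P_n) \leq f(1)$ for large $n$.

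The main obstacle will be to confirm (using monotonicity) that the topological-minor assumption really delivers a subdivision of the specific lower-bound graph $G_{c+1,\ell}$ as a member of $\GG$, and that subdivision preserves treewidth in the regime we need; both steps are standard but require a careful statement. Once they are in place, the contradiction is driven entirely by the clash between the \cref{StandardExamples} lower bound on $\tpw_c(G_{c+1,\ell})$ and the upper bound transported via \cref{TopologicalMinor} from the subdivision $G' \in \GG$.
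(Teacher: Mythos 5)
Your proposal is correct and follows essentially the same route as the paper: the backward direction via \cref{TopoMinorPartition}, and the forward direction by pitting the lower-bound graphs of \cref{StandardExamples} (treewidth $c+1$ with $\tpw_c > \ell$) against \cref{TopologicalMinor}, which transports the bound $\tpw_c(G') \leq f(c+1)$ from a subdivision $G' \in \GG$ back to the base graph, giving a contradiction for large $\ell$. Your explicit handling of the $c=0$ case (not covered by \cref{TopologicalMinor}, which assumes $c \in \NN$) is a minor refinement the paper leaves implicit, but otherwise the argument is the same.
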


\begin{proof}
    ($\Longleftarrow$) Say~$\GG$ excludes some fixed topological minor~$X$ on~$p$ vertices. By \cref{TopoMinorPartition}, every graph~${G \in \GG}$ has $p$-tree-partition-width at most~${O(\tw(G)^2)}$. 
    Thus~$\GG$ has underlying treewidth at most~$p$. 
    
    ($\Longrightarrow$) Say~$\GG$ has underlying treewidth~$c$. 
    That is, there is a function~$f$ such that ${\tpw_c(G) \leq f(\tw(G))}$ for every graph~${G \in \GG}$. 
    For any~${n \in \NN}$, by \cref{StandardExamples}, there is a graph~$H$ of treewidth~${c + 1}$ and~${\tpw_c(H) > n}$. 
    Suppose for the sake of contradiction that some subdivision~$H'$ of~$H$ is in~$\GG$. 
    Thus~${\tpw_c(H')\leq f( \tw(H')) = f(\tw(H)) = f(c+1)}$. 
    By \cref{TopologicalMinor},
    \begin{align*}
        n \;<\; \tpw_c(H) 
        \;\leq\; & 4c^2 12^c\,(c \tpw_c(H') + 1)\tpw_c(H')^{2}\, (\tw(H') + 1)^c \\
        \;\leq\; & 4c^2 12^c\,(cf(c+1) + 1) (f(c + 1))^{2}\, (c+2)^c.
    \end{align*}
    We obtain a contradiction taking~${n \gg f(c+1)}$. 
    Thus no subdivision of~$H$ is in~$\GG$. 
    Since~$\GG$ is monotone, $\GG$ excludes~$H$ as a topological minor, as desired. 
\end{proof}

\section{Excluding a Subgraph}
\label{sec:subgraph}

For a graph $H$, a graph $G$ is \defn{$H$-free} if $G$ contains no subgraph isomorphic to $H$. For a finite set of graphs $\mathcal{H}$, we say that $G$ is \defn{$\mathcal{H}$-free} if $G$ is $H$-free for all $H \in \mathcal{H}$. Let \defn{$\GG_H$} be the class of $H$-free graphs and let \defn{$\GG_{\mathcal{H}}$} be the class of $\mathcal{H}$-free graphs. This section characterises when $\GG_{\mathcal{H}}$ has bounded underlying treewidth, and determines the exact underlying treewidth for  several natural classes. A \defn{spider} is a subdivision of a star and a \defn{spider-forest} is a subdivision of a star-forest. For $s,t\in\NN$ with $s\geq 2$, the \defn{$(s,t)$-spider}, denoted \defn{$S_{s,t}$}, is the $(t-1)$-subdivision of $K_{1,s}$. If $v$ is the centre of $S_{s,t}$, then each component of $S_{s,t}-v$ is called a \defn{leg}.

\begin{thm}
\label{SummaryTheorem}
For all $\ell,n,s,t\in \NN$ where $n,s\geq 3$ and $\ell \geq 2$, and for every finite set $\HH$ of graphs,
\begin{enumerate}[label = \textnormal{(\roman*)}]
\item $\GG_{\HH}$ has bounded underlying treewidth if and only if $\HH$ contains a spider-forest\textnormal{;}
\item the underlying treewidth of $\GG_{P_n}$ equals $\floor{\log n} - 1$\textnormal{;}
\item  the underlying treewidth of $\GG_{\ell\, P_n}$ equals $\floor{\log n}$\textnormal{;}
\item the underlying treewidth of $\GG_{S_{s,t}}$ equals $\floor{\log t} + 1$\textnormal{;}
\item the underlying treewidth of $\GG_{\ell\, S_{s,t}}$ equals $\floor{\log t} + 2$.
\end{enumerate}
\end{thm}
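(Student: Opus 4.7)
The plan is to prove all five parts using \cref{UnderlyingTreewidthDisjointedPartition} and \cref{Main}, together with the standard examples $C_{c,\ell}$ from \cref{StandardExamples}, the Erd\H{o}s--P\'{o}sa-type result \cref{EP-treewidth}, and (for (i)'s reverse direction) the monotone characterization \cref{MonotoneUnderlyingTreewidth}. For the forward direction of (i), any spider-forest $H \in \HH$ with $\ell$ components embeds into some $\ell\, S_{s,t}$ (with $s$ and $t$ bounding the number of legs and their lengths), so $\GG_{\HH} \subseteq \GG_H \subseteq \GG_{\ell\, S_{s,t}}$, which has bounded underlying treewidth by (v). For the reverse direction, assume $\GG_{\HH}$ has bounded underlying treewidth; since $\GG_{\HH}$ is monotone, \cref{MonotoneUnderlyingTreewidth} supplies a topological minor $X$ excluded from $\GG_{\HH}$, so every subdivision of $X$ contains some $H \in \HH$. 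Replacing each edge of $X$ with a path of length $N$ gives a subdivision whose only branch vertices are the original ones of $X$ (pairwise at distance $\geq N$) and whose cycles all have length $\geq N$. For $N$ larger than the order of every graph in $\HH$, any contained $H$ has no component with two branch vertices and no cycle, hence is a spider-forest.

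For the lower bounds in (ii)--(v), the main tool is \cref{StandardExamples}: $\tw(C_{c,\ell}) = c$, $\tpw_{c-1}(C_{c,\ell}) > \ell$, and $C_{c,\ell}$ contains no $P_m$ for $m \geq 2^{c+1}$. For (ii), the choice $c = \floor{\log n} - 1$ gives $2^{c+1} \leq n$, so $C_{c,\ell}$ is $P_n$-free. For (iii), the choice $c = \floor{\log n}$ and the recursion $C_{c,\ell} = \widehat{\ell\, C_{c-1,\ell}}$ work: among $\ell$ disjoint copies of $P_n$ in $C_{c,\ell}$, at most one uses the apex, so the remaining $\ell - 1$ would lie in distinct $P_n$-free components $C_{c-1,\ell}$, a contradiction. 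Parts (iv) and (v) use analogous $S_{s,t}$-free (respectively $\ell\, S_{s,t}$-free) witnesses of the asserted treewidth, constructed from $C_{c,\ell}$ or its variants and exploiting that a spider $S_{s,t}$ contains $P_{t+1}$ (through a single leg) so that bounded path length forces $S_{s,t}$-freeness.

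For the upper bounds of (ii)--(v), we apply \cref{Main} by showing the singleton covering of every $H$-free $G$ is $(c, f)$-disjointed for the corresponding $c$ and a suitable $f$. Consider (ii) with $c = \floor{\log n} - 1$: given sets $B_1, \dotsc, B_c$ of bounded size and a component $X$ of $G - \bigcup_i B_i$, let $\HH$ be the family of connected subgraphs of $X$ meeting $N_G(B_i')$ for every $i$. \Cref{EP-treewidth} yields either a small separator $Q$ or many vertex-disjoint members of $\HH$. In the latter case, a doubling argument iteratively uses the $c$ sets to extend the current path: each $B_i'$ supplies branch points enabling two disjoint continuations of the path through distinct members of $\HH$, at least doubling its length. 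After $c$ steps this produces a $P_n$ in $G$, contradicting $P_n$-freeness. The analogous arguments for (iii)--(v) construct $\ell$ disjoint paths or spider structures from the packing supplied by \cref{EP-treewidth}.

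The main technical obstacle is this doubling construction. Given many disjoint connectors that all meet the $c$ sets, one must carefully interleave routing through the $B_i'$'s and the members of $\HH$ while preserving vertex-disjointness at every step. The parameter choices---$c = \floor{\log n}-1$ in (ii) and $c = \floor{\log t}+1$ in (iv)---are calibrated so that $c$ iterations of doubling produce a path of length $\geq n$ or a spider leg of length $\geq t$, and the $+1$ increments in (iii) and (v) accommodate an extra iteration needed to handle the multiplicity $\ell$ (using a single apex-like pivot vertex as in the lower-bound construction).
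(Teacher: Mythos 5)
Your outline for (i) and for the lower bounds of (ii)--(iii) is essentially sound, but the core of your plan for the upper bounds is a step that fails. You propose to show that the \emph{singleton} covering of every $P_n$-free (resp.\ $S_{s,t}$-free) graph is $(c,f)$-disjointed with $c=\floor{\log n}-1$, by arguing that a large packing of connectors meeting all $c$ neighbourhoods would, via a ``doubling'' construction through the sets $B_i'$, yield $P_n$. Both claims are false. Take $G=K_{c+2,m}$ with the small side $\{b_1,\dots,b_c,u,z\}$ and large side $y_1,\dots,y_m$; its longest path has $2c+5$ vertices, so for $n\geq 2^{c+1}$ and $c\geq 3$ it is $P_n$-free. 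With $B_i=\{b_i\}$, the graph $G-(B_1\cup\dots\cup B_c)$ has a single component $X$ (connected through $u,z$), every $y_j$ is adjacent to every $b_i$, and hence for any $Q$ with $\abs{Q}<m$ some component of $X-Q$ meets every $N_G(B_i')$: the singleton covering is not $(c,d)$-disjointed for any bounded $d$. The same example shows a large packing of connectors inside one component does not force a long path (the packing has size $m$, the longest path $O(c)$), so no amount of care in the ``interleaved routing'' can rescue the doubling step. This is precisely why the paper does not argue in one shot: it proves the upper bounds by induction on $c$ through the auxiliary graph $G^{(\lambda)}$ (\cref{lambdatw,Jlambda2,lambdatpw}), where the covering fed into \cref{Main} is the (inductively obtained) bounded-width partition of $G^{(\lambda)}$ rather than singletons; passing from $G$ to $G^{(\lambda)}$ is what halves the forbidden path/leg length, and the logarithm comes from the number of such iterations, not from a single packing argument. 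In your $K_{c+2,m}$ example the small-side vertices become a clique of $G^{(\lambda)}$, which is exactly the situation the auxiliary graph is designed to absorb.

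There is a second, independent gap in your lower bounds for (iv) and (v). You propose witnesses ``constructed from $C_{c,\ell}$ or its variants'' whose $S_{s,t}$-freeness is forced by bounded path length. Any such witness class excludes $P_{2t+1}$ (the path through two legs of the spider), and by part (ii) of the very theorem a $P_{2t+1}$-free class has underlying treewidth at most $\floor{\log(2t+1)}-1=\floor{\log t}$; so no path-length-bounded family can certify the claimed value $\floor{\log t}+1$ (and likewise $\floor{\log t}+2$ in (v)). The paper needs a different construction, $J_{c,N}$ of \cref{LowerBoundSpider}: a backbone path $P_{N+1}$ with a copy of $(2N-1)\,C_{c-1,N}$ dominated by each consecutive pair of backbone vertices. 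This graph contains arbitrarily long paths but no $S_{3,2^c}$, because any spider centre has at most two backbone neighbours and a third leg would have to live inside a copy of $C_{c-1,N}$, which has no path on $2^c$ vertices; a separate argument (forcing a $(c+2)$-clique in the quotient) shows $\tpw_c(J_{c,N})>N$. The ``$+1$ for multiplicity $\ell$'' in (iii) and (v) is then handled cleanly by \cref{ExcludingSpiderForest} (apex over at most $\ell-1$ deleted copies of $H$ for the upper bound, and $\widehat{j\,G_j}$ for the lower bound), which is close in spirit to what you sketch, but your (iv)/(v) base cases themselves are not reachable by your method.
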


We prove \cref{SummaryTheorem} through a sequence of lemmas.

\begin{lem}
\label{ExcludingSubgraph}
For every finite set $\mathcal{H}$ of graphs, if $\GG_{\mathcal{H}}$ has bounded underlying treewidth, then $\mathcal{H}$ contains a spider-forest.
\end{lem}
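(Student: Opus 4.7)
The plan is to prove the contrapositive: assuming every graph in $\HH$ is not a spider-forest, exhibit, for each $c \in \NN_0$, a family in $\GG_\HH$ of bounded treewidth with unbounded $c$-tree-partition-width. This would witness that the underlying treewidth of $\GG_\HH$ exceeds every $c$. Set $N \coloneqq \max\{\abs{V(H)} \colon H \in \HH\}$ and fix $c \in \NN_0$.

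For the witness family, take $G_\ell \coloneqq G_{c+1,\ell}$ from \cref{StandardExamples} and let $\tilde G_\ell$ be the $N$-subdivision of $G_\ell$. Since subdivision preserves treewidth up to a small additive constant, $\tw(\tilde G_\ell)$ is bounded in terms of $c$ alone. The key step is to verify that $\tilde G_\ell$ is $\HH$-free, via the following claim: every subgraph $F$ of $\tilde G_\ell$ with $\abs{V(F)} \leq N$ is a spider-forest.

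To prove the claim, I would use that every subdivision vertex has degree $2$ in $\tilde G_\ell$, so every branch vertex (degree $\geq 3$) of $F$ must be an original vertex of $G_\ell$; and that any two such branch vertices lying in the same component of $F$ are joined by a path in $F$ passing through at least one full subdivision chain, contributing at least $N$ internal subdivision vertices. If a component of $F$ had $k \geq 2$ branch vertices, then a spanning tree on them would consist of $k-1$ internally disjoint chain-paths in $F$, forcing $\abs{V(F)} \geq k + (k-1)N > N$, a contradiction. Similarly, since $G_\ell$ contains a triangle for $c \geq 1$ (and is a path for $c = 0$), the girth of $\tilde G_\ell$ exceeds $N$, so $F$ is acyclic. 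Hence $F$ is a forest each of whose components has at most one vertex of degree $\geq 3$, i.e., a spider-forest. Since no $H \in \HH$ is a spider-forest but $\abs{V(H)} \leq N$, no such $F$ can be isomorphic to any $H \in \HH$.

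Finally, I would apply \cref{TopologicalMinor} with $G = G_\ell$ and $G' = \tilde G_\ell$ to obtain a polynomial upper bound on $\tpw_c(G_\ell)$ in terms of $\tpw_c(\tilde G_\ell)$ (valid for $c \geq 1$). Combined with $\tpw_c(G_\ell) > \ell$ from \cref{StandardExamples}, this forces $\tpw_c(\tilde G_\ell) \to \infty$ with $\ell$; for $c = 0$, the graph $\tilde G_\ell$ is connected with $\abs{V(\tilde G_\ell)} \geq \ell + 1$, so $\tpw_0(\tilde G_\ell) \geq \ell + 1$ directly. Either way, $\GG_\HH$ does not have underlying treewidth at most $c$, and since $c$ was arbitrary, $\GG_\HH$ has unbounded underlying treewidth. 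The main obstacle is the spider-forest claim, which hinges on the two-pronged counting argument bounding both the cycle length and the number of branch vertices in any small subgraph of $\tilde G_\ell$ in terms of the subdivision parameter $N$.
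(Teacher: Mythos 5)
Your proof is correct and follows essentially the same route as the paper's: argue the contrapositive by taking suitably long subdivisions of the bounded-treewidth family $G_{c,\ell}$ with unbounded underlying treewidth, note that such subdivisions are $\HH$-free because any graph in $\HH$ appearing as a subgraph would have to be a spider-forest, and transfer unboundedness back via \cref{TopologicalMinor}. The only difference is cosmetic: you subdivide with parameter $N = \max_H \abs{V(H)}$ and verify $\HH$-freeness by the ``small subgraphs are spider-forests'' counting, whereas the paper uses the finer parameter $s$ (the maximum over $H \in \HH$ of the minimum of girth and branch-vertex distance), which is the same idea.
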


\begin{proof}
For the sake of contradiction, suppose that $\mathcal{H}$ does not contain a spider-forest. Then each graph $H \in \mathcal{H}$ contains a cycle or two vertices with degree at least $3$ in the same component of $H$. Let $s_H$ be the minimum of the girth of $H$ and the minimum distance between distinct vertices with degree at least $3$ in the same component of $H$. Let $s \coloneqq\max\{s_H\colon H\in \HH\}$ and let $\GG$ be any graph class with unbounded underlying treewidth (such as that in \cref{StandardExampleUnderlyingTreewidth}). Let $\GG'$ be the class of $(s+1)$-subdivisions of graphs in $\GG$. Then $\GG'$ is $H$-free for all $H \in \HH$ and has unbounded underlying treewidth by \cref{TopologicalMinor}, a contradiction.
\end{proof}
\cref{ExcludingSubgraph} proves the necessity of (i) in \cref{SummaryTheorem}.

We now work towards showing $\GG_H$ has bounded underlying treewidth when $H$ is a spider. For a graph $G$ and $\lambda\in \NN$, let \defn{$G^{(\lambda)}$} be the graph with $V(G^{(\lambda)})=V(G)$ and $uv\in E(G^{(\lambda)})$ whenever there are $\lambda$ internally disjoint $(u,v)$-paths in $G$. 

\begin{lem}\label{lambdatw}
For all $k,\lambda \in \NN$ where $\lambda\geq k+1$, if a graph $G$ has treewidth at most $k$, then  $\tw(G^{(\lambda)})\leq k$.
\end{lem}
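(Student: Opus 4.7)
The plan is to take any tree-decomposition of $G$ of width at most $k$ and show that, after a small normalisation, it is already a valid tree-decomposition of $G^{(\lambda)}$. Since $V(G^{(\lambda)}) = V(G)$ and the subtree condition on each vertex is unaffected, the only thing to verify is that every edge $uv$ of $G^{(\lambda)}$ is contained in some common bag.

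The preparatory step is a standard normalisation. Starting from any tree-decomposition $(W_x \colon x \in V(T))$ of $G$ of width at most $k$, repeatedly contract every edge $xy$ of $T$ for which $W_x \subseteq W_y$, keeping the larger bag $W_y$. This operation preserves both the width and the tree-decomposition properties, and the resulting decomposition has the feature that $W_x \not\subseteq W_y$ and $W_y \not\subseteq W_x$ for every edge $xy$ of $T$. Consequently, $\abs{W_x \cap W_y} \leq k$ for every edge $xy$ of $T$, since otherwise $\abs{W_x \cap W_y} = k+1 = \abs{W_x} = \abs{W_y}$ and $W_x = W_y$.

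The main step is a contradiction argument. Fix $uv \in E(G^{(\lambda)})$ and let $T_u, T_v$ be the subtrees of $T$ induced by those nodes whose bags contain $u$ and $v$ respectively. Assume for contradiction that $T_u \cap T_v = \emptyset$, and pick an edge $xy$ of $T$ on the path from $T_u$ to $T_v$ with $T_u$ contained in the component of $T - xy$ containing $x$ and $T_v$ contained in the component containing $y$. Then $u \in W_x \setminus W_y$ and $v \in W_y \setminus W_x$, so $u, v \notin W_x \cap W_y$. The standard separator property of tree-decompositions gives that $W_x \cap W_y$ separates $u$ from $v$ in $G$, so every $(u, v)$-path in $G$ uses at least one vertex of $W_x \cap W_y$, necessarily as an internal vertex. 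Because $uv \in E(G^{(\lambda)})$, there are $\lambda$ internally disjoint $(u,v)$-paths in $G$, and each must meet $W_x \cap W_y$ at a distinct internal vertex. Hence $\abs{W_x \cap W_y} \geq \lambda \geq k + 1$, contradicting the bound $\abs{W_x \cap W_y} \leq k$ from the normalisation step.

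The main obstacle to anticipate is precisely the size of the separator: without the normalisation step, one only obtains $\abs{W_x \cap W_y} \leq k+1$, which matches $\lambda \geq k+1$ and is just barely insufficient. The whole content of the proof lies in observing that in a suitably reduced tree-decomposition of width $k$, adjacent bags share at most $k$ vertices, so $\lambda \geq k+1$ internally disjoint paths cannot be routed through any such separator, forcing $T_u \cap T_v \neq \emptyset$ and thus producing a common bag containing both endpoints of the edge $uv \in E(G^{(\lambda)})$.
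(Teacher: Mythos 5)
Your proposal is correct and takes essentially the same route as the paper: there, too, one checks that a width-$k$ tree-decomposition of $G$ with $W_x \neq W_y$ on every tree edge is already a tree-decomposition of $G^{(\lambda)}$, since an uncovered edge $uv \in E(G^{(\lambda)})$ would produce an adjacent-bag intersection of size at most $k$ separating $u$ from $v$, contradicting the existence of $\lambda \geq k+1$ internally disjoint $(u,v)$-paths. The only cosmetic differences are that the paper's weaker normalisation ($W_x \neq W_y$) already gives $|W_x \cap W_y| \leq k$, and your assertion that $u \in W_x$ need not hold when $x \notin T_u$, but this is harmless since only $u \notin W_y$ and $v \notin W_x$ are used.
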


 \begin{proof}
    Let $\WW\coloneqq (W_x \colon x\in V(T))$ be a tree-decomposition of $G$ with width $k$. We may assume that $W_x\neq W_y$ for each $xy\in E(T)$. We claim that $\WW$ is also a tree-decomposition of $G^{(\lambda)}$, for which it suffices to show that for every edge $vw$ of $G^{(\lambda)}$, $v$ and $w$ are in a common bag of $\WW$. To this end, suppose that $vw\in E(G^{(\lambda)})$ but there is no bag of $\WW$ containing both $v$ and $w$. Then there is an edge $xy\in E(T)$ separating $\{z\in V(T) \colon v\in W_z\}$ and $\{z\in V(T) \colon w\in W_z\}$, which implies that $W_x\cap W_y$ is a set of at most $k$ vertices separating $v$ and $w$ in $G$. On the other hand, since $vw\in E(G^{(\lambda)})$ and $\lambda\geq k+1$, any vertex-set separating $v$ and $w$ must have at least $k+1$ vertices, a contradiction.
\end{proof}



\begin{lem}\label{Jlambda2}
For all $s,t \in \NN$ and $\lambda\geq 1+s+st(2t+1)$, if a graph $G$ contains no $S_{s,2t+1}$ subgraph, then $G^{(\lambda)}$ contains no $S_{s,t}$ subgraph.
\end{lem}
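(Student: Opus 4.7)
I would prove the contrapositive: assuming $G^{(\lambda)}$ contains $S_{s,t}$ as a subgraph with center $v$ and legs $v = u_0^{(i)}, u_1^{(i)}, \ldots, u_t^{(i)}$ for $i = 1,\ldots,s$, I would construct an $S_{s,2t+1}$ in $G$. Let $U = \{u_j^{(i)} : 0 \leq j \leq t,\, 1 \leq i \leq s\}$ denote the vertex set of this spider, so $|U| = 1+st$. The goal is to build $s$ pairwise internally disjoint $v$-paths $P_1, \ldots, P_s$ in $G$, each of length exactly $2t+1$; together these paths give the desired $S_{s,2t+1}$ centered at $v$.

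I would build the paths sequentially, one leg at a time. For leg $i$, I would iteratively select sub-paths $Q_0^{(i)}, Q_1^{(i)}, \ldots$, where $Q_j^{(i)}$ is a $(u_j^{(i)},u_{j+1}^{(i)})$-path in $G$ of length at least $2$ whose internal vertices avoid: (i) all vertices of the previously completed legs (except $v$); (ii) all spider vertices in $U$ except the endpoints $u_j^{(i)}, u_{j+1}^{(i)}$; and (iii) the internal vertices of previously chosen sub-paths $Q_0^{(i)},\ldots,Q_{j-1}^{(i)}$ in leg $i$. Existence of such a sub-path follows from the fact that the edge $u_j^{(i)}u_{j+1}^{(i)}$ in $G^{(\lambda)}$ provides $\lambda$ internally disjoint $(u_j^{(i)},u_{j+1}^{(i)})$-paths in $G$: if $M$ denotes the number of forbidden vertices, each of them is internal to at most one of the $\lambda$ paths, so at least $\lambda - M$ paths avoid the forbidden set internally; discarding the (at most one) direct edge leaves $\lambda - M - 1$ surviving paths of length $\geq 2$.

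After each sub-path I would check whether the concatenation $Q_0^{(i)} + \cdots + Q_j^{(i)}$ already has length $\geq 2t+1$. If so, I would truncate it to its first $2t+1$ edges and set this as $P_i$. If all $t$ sub-paths are built with total length exactly $2t$ (which forces each $Q_j^{(i)}$ to have length exactly $2$), I would extend the concatenation by one edge to a fresh neighbor $z$ of $u_t^{(i)}$, producing a $P_i$ of length $2t+1$. Such a neighbor $z$ exists because the $\lambda$ internally disjoint $(u_{t-1}^{(i)}, u_t^{(i)})$-paths have distinct penultimate vertices, so $u_t^{(i)}$ has at least $\lambda$ neighbours in $G$, more than enough to dodge the forbidden set.

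The main obstacle is controlling the size of the forbidden set $M$ throughout the process. At step $(i,j)$, one has: at most $(s-1)(2t+1)$ vertices from previous legs, at most $1 + st$ vertices from $U$, and crucially at most $2t - j$ internal vertices from previously constructed sub-paths in leg $i$. The last bound is where the stopping rule does its work: since we stop as soon as the running total reaches $2t+1$, the total length of the sub-paths built so far must be $\leq 2t$, hence their combined internal vertex count is $\leq 2t - j$. Summing gives $M = O(st)$, which is comfortably dominated by the stated lower bound $\lambda \geq 1+s+st(2t+1)$, so every greedy choice succeeds; an entirely analogous but slightly simpler count handles the extension step. The delicate point to verify is that truncation preserves internal disjointness from future legs (it does, since we only reserve the vertices of the truncated $P_i$ for later avoidance) and that the length lower bound of $2$ per sub-path, combined with the stopping/extension dichotomy, yields paths of length exactly $2t+1$.
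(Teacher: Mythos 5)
Your argument is correct, and it shares the paper's engine: each edge of the copy of $S_{s,t}$ in $G^{(\lambda)}$ is realised by a path in $G$ chosen greedily from the $\lambda$ internally disjoint paths (each forbidden vertex is internal to at most one candidate, and at most one candidate is the direct edge), and both proofs finish with the same extension step, using the fact that each leaf of the spider has degree at least $\lambda$ in $G$. The genuine difference is how long candidates are controlled. The paper argues by contradiction and uses the hypothesis $S_{s,2t+1}\not\subseteq G$ a second time: no two vertices of $G$ are joined by $s$ internally disjoint paths of length at least $2t+2$, so at most $s-1$ candidates are long, and every edge of $S_{s,t}$ can be replaced by a path of length between $2$ and $2t+1$; this uniform length cap is what bounds the forbidden set, and all $st$ edges are replaced in one round. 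You never invoke the hypothesis inside the construction: sub-paths may be arbitrarily long, and your stop-at-$(2t+1)$/truncation rule (only the truncated prefix of a finished leg is reserved, and a leg still under construction has total reserved length at most $2t$) plays the role of the cap. This yields a direct proof of the contrapositive and in fact a slightly stronger statement (a threshold on $\lambda$ of order $st$, rather than $st(2t+1)$, already suffices), at the price of a sequential leg-by-leg construction with a stopping rule instead of the paper's simpler one-shot replacement. One bookkeeping remark: your stated counts are loose exactly where the inequality is tight, namely at $t=1$. Count the $U$-contribution as $\abs{U}-2=st-1$ (the endpoints $u_j^{(i)},u_{j+1}^{(i)}$ can never be internal vertices of a candidate), note that at $j=0$ no internal vertices of the current leg are yet reserved, and make explicit that the extension vertex $z$ must avoid all of $U$ (so it cannot be a spider vertex of a leg not yet built). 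With these adjustments, $\lambda\geq 1+s+st(2t+1)$ exceeds the forbidden count plus one at every step, for all $t\geq 1$.
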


\begin{proof}
Suppose for contradiction that $S_{s,t}$ is a subgraph of $G^{(\lambda)}$. Since $G$ does not contain $S_{s,2t+1}$ as a subgraph, there is no set of $s$ internally disjoint paths of length at least $2t+2$ in $G$ between any pair of vertices. Hence, since $\lambda\geq 1+s+st(2t+1)$ we may greedily replace each edge $uv$ of $S_{s,t}$ with a $(u,v)$-path in $G$ of length between $2$ and $2t+1$ such that all these paths are internally vertex disjoint from each other, and each of these paths internally avoids $V(S_{s,t})$. This works because the number of vertices to avoid when replacing an edge $uv\in E(S_{s,t})$  is at most $1+(2t+1)(\abs{E(S_{s,t})\setminus \{uv\}})=1+(2t+1)(st-1)$, and there is a collection of $\lambda-s\geq 1+st(2t+1)$ internally disjoint $(u,v)$-paths in $G$ of length between $2$ and $2t+1$. Finally, since each leaf of $S_{s,t}$ has degree at least $\lambda\geq 1+s+st(2t+1)$ in $G$, we can extend each leg of the spider thus constructed by one further vertex by adding a distinct neighbour in $G$ of each leaf of $S_{s,t}$. We obtain $S_{s,2t+1}$ as a subgraph of $G$, a contradiction.
\end{proof}

\begin{lem}\label{lambdatpw}
There exists a function $f$ such that for all $c,k,s,t,\lambda,\gamma\in \NN$, if a graph $G$ has treewidth at most $k$ and no $S_{s,t}$ subgraph, and $\tpw_c(G^{(\lambda)}) \leq \gamma$, then
$\tpw_{c+1}(G) \leq f(c,k,s,t,\lambda,\gamma)$.
\end{lem}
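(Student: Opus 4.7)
The plan is to apply \cref{Main} to $G$ by exhibiting a $(c{+}1, f)$-disjointed $\gamma$-covering of $G$ for a suitable function $f$, yielding the bound $\tpw_{c+1}(G) \leq \max\{12\gamma k, 2(c{+}1)\gamma f(12k)\}$, which depends only on $c, k, s, t, \lambda, \gamma$. Since $\tpw_c(G^{(\lambda)}) \leq \gamma$, \cref{disjointed} gives a $(c, c\gamma)$-disjointed $\gamma$-partition $\beta$ of $G^{(\lambda)}$, which \cref{dtofdisjointed} upgrades to $(c, c\gamma n^c)$-disjointedness in $G^{(\lambda)}$ for every $n \in \NN$. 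Because $V(G) = V(G^{(\lambda)})$, I can view $\beta$ as a $\gamma$-covering of $V(G)$; the task reduces to showing that $\beta$ is $(c{+}1, f)$-disjointed \emph{in $G$}.

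To check this, fix $n \in \NN$, sets $B_1, \dots, B_{c+1} \in \beta[n]$, and a component $X$ of $G - (B_1 \cup \dots \cup B_{c+1})$. Applying the $G^{(\lambda)}$-disjointedness to $B_1, \dots, B_c$ (restricted via \cref{CoveringSubgraph}) supplies a ``base'' separator $Q_0 \subseteq V(X)$ of size at most $c\gamma n^c$ after which every $G^{(\lambda)}$-component of $G^{(\lambda)}[V(X) \setminus Q_0]$ misses $N_{G^{(\lambda)}}(B_i')$ for some $i \leq c$. The role of the extra set $B_{c+1}$ is to absorb the mismatch between $G$ and $G^{(\lambda)}$: ``bad edges'' in $E(G) \setminus E(G^{(\lambda)})$ can merge $G^{(\lambda)}$-components and create $G$-adjacencies to $B_i'$ that are invisible to the $G^{(\lambda)}$-disjointedness.

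Each bad edge $uv$ has $(u,v)$-connectivity strictly less than $\lambda$ in $G$, so Menger's theorem supplies a $(u,v)$-separator of size at most $\lambda - 1$. Combining this with $\tw(G) \leq k$, the $S_{s,t}$-free hypothesis, and the Erd\H{o}s--P\'{o}sa argument of \cref{EP-treewidth} (in the spirit of \cref{XstMinorFreeTreePartition} and \cref{TopoMinorPartition}), I would produce a correction set $Q_1 \subseteq V(X)$ of size polynomial in $n$ with coefficients depending on $c, k, s, t, \lambda, \gamma$ that intersects every internally disjoint path in $X$ which uses a bad edge to reach $N_G(B_i') \setminus N_{G^{(\lambda)}}(B_i')$. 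Failing the existence of such a small $Q_1$, a maximum collection of such internally disjoint paths would be large enough that, once extended within $G$ and concatenated with the small Menger separators guarding the bad edges, it builds an $S_{s,t}$ subgraph of $G$, contradicting the hypothesis. Setting $Q \coloneqq Q_0 \cup Q_1$ then delivers the required $(c{+}1, f)$-disjointedness.

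The main obstacle is the extraction of $S_{s,t}$ in the paragraph above: one must guarantee that the $s$ spider legs are pairwise internally vertex-disjoint and each of length at least $t$, by assembling short bad edges with longer path-extensions in $G$ while respecting the Menger separators surrounding each bad edge. The bounded treewidth limits how ``long'' the extensions can be made locally, and the $S_{s,t}$-freeness dually bounds how many legs can coexist --- a tension that must be balanced exactly as in the spider-avoidance arguments of \cref{Jlambda2}, but now adapted to produce a disjointedness conclusion rather than a direct exclusion. Getting the constants right in this step is routine but tedious, and it determines the final shape of $f$.
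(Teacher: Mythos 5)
Your top-level plan --- view the width-$\gamma$ partition $\beta$ of $G^{(\lambda)}$ as a covering of $G$ (same vertex set), prove it is $(c{+}1,\cdot)$-disjointed \emph{in $G$}, and finish with \cref{Main}/\cref{MainCorollary} --- is exactly the paper's. The gap is in how you verify disjointedness. The decisive case is a component that meets $N_G(B_i')$ for \emph{every} $i\le c+1$; equivalently, one must bound a maximum packing of connected subgraphs of $X$ meeting all $c+1$ neighbourhoods and then apply \cref{EP-treewidth}. Your sketch never does this. Applying the $(c,c\gamma)$-disjointedness of $\beta$ in $G^{(\lambda)}$ (via \cref{disjointed}, \cref{dtofdisjointed}, \cref{CoveringSubgraph}) to $B_1,\dots,B_c$ only controls components of $G^{(\lambda)}$ minus those sets and the sets $N_{G^{(\lambda)}}(B_i')$; neither compares to the $G$-components of $X$ nor to $N_G(B_i')$, since $E(G)$ and $E(G^{(\lambda)})$ are incomparable. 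A single $G$-component may meet unboundedly many $G^{(\lambda)}$-components, so the union of the corresponding separators $Q_y$ has no size bound, and ``$B_{c+1}$ absorbs the mismatch'' is never made concrete: $B_{c+1}$ is an arbitrary test set, and the only way the index $c+1$ helps is by letting you ignore components missing $N_G(B_{c+1}')$ --- which puts you straight back at the packing problem above.

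The proposed repair --- Menger separators for edges of $E(G)\setminus E(G^{(\lambda)})$ plus an Erd\H{o}s--P\'osa alternative whose failure ``builds an $S_{s,t}$'' --- is precisely where the mathematical content should be, and it is not substantiated: a large family of disjoint paths, each using a low-connectivity edge to reach $N_G(B_i')$, has no common centre and no mechanism forcing legs of length $t$, so no spider falls out; also, for adjacent $u,v$ there is no $(u,v)$-separator in $G$ at all (one must delete the edge $uv$ first, getting at most $\lambda-2$ vertices), and \cref{Jlambda2} is not relevant to this lemma. The paper's route is different at exactly this point: after a pigeonhole reduction (cost $\gamma^{c+1}$) to single vertices $x_1,\dots,x_{c+1}$ in distinct parts, it forms the quotient $X'$ of a maximum packing and bounds $\abs{V(X')}$ by showing $X'$ has no path on $st$ vertices (else $s$ disjoint long paths hung off the centre $x_1$ form $S_{s,t}$) and has bounded maximum degree, via a minimal attachment tree $T$: a vertex of $T$ of degree at least $\lambda$ has $\lambda$ internally disjoint paths to every $x_i$, creating a $(c{+}2)$-clique of $G^{(\lambda)}$ with vertices in distinct parts of $\beta$, hence a $(c{+}2)$-clique in the quotient $H$, contradicting $\tw(H)\le c$; if instead $T$ has maximum degree less than $\lambda$, disjoint radius-$t$ balls around $s$ leaves again yield $S_{s,t}$. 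Your proposal uses the hypothesis $\tpw_c(G^{(\lambda)})\le\gamma$ only through disjointedness and never through the quotient $H$, so it has no analogue of this clique step --- which is the mechanism that actually bounds the packing. As written, the argument does not close.
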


\begin{proof}
Let $d \coloneqq st$, $\Delta \coloneqq s\cdot (1+\lambda^{2t})$, $m \coloneqq (1+\Delta^{d})\cdot \gamma^{c+1}$ and $n \coloneqq (k+1)m$. Let $\beta \coloneqq (V_h \colon h\in V(H))$ be an $H$-partition of $G^{(\lambda)}$ with width at most $\gamma$, where $\tw(H) \leq c$. We show that $\beta$ is a $(c+1, n)$-disjointed $\gamma$-covering of $G$. By \cref{MainCorollary}, $G$ has $(c+1)$-tree-partition-width at most
    $f(c,k,s,t,\lambda,\gamma)\coloneqq 2 (c+1)\cdot  n\cdot \gamma (12 k)^c.$

    Consider distinct $V_1,\dotsc, V_{c+1} \in \beta$ and a connected component $X$ of $G -(V_1 \cup\dotsb \cup V_{c+1})$.
    Let $\mathcal{F}$ be the collection of connected subgraphs in $X$ that intersect $N_G(V_i)$ for each $i\in \{1,\dotsc,c+1\}$. Let $\mathcal{R}'$ be a maximum-sized set of vertex-disjoint elements of $\mathcal{F}$. By \cref{EP-treewidth}, it suffices to show that $\abs{\mathcal{R}'}< m$. 
    
    We make two simplifying assumptions. 
    First, we may assume that each $V_{i}$ consists of a single vertex at the cost of a factor of $\gamma^{c+1}$:
    by averaging over all $(c+1)$-tuples in $V_1\times \dotsb\times V_{c+1}$, there is one such tuple $(x_1, \dotsc, x_{c+1})$ and a subset $\mathcal{R}\subset \mathcal{R}'$ such that $\abs{\mathcal{R}'}\leq \gamma^{c+1}\abs{\mathcal{R}}$ and $N_{G}(x_i) \cap V(R) \neq \emptyset$ for each $R\in \mathcal{R}$ and $i\in \{1,\dotsc, c+1\}$. Thus it suffices to show that $\abs{\mathcal{R}}\leq 1+\Delta^{d}$.
    Second, we may assume that $\bigcup\{V(R) \colon R\in\mathcal{R}\}=V(X)$.
    
    Let $X'$ be the quotient graph of $\mathcal{R}$ (with respect to $X$). If $\abs{V(X')} < \lambda$, then we are done. Thus we may assume that $\abs{V(X')}  \geq \lambda$ and so the vertices $\{x_1,\dotsc, x_{c+1}\}$ form a $(c+1)$-clique in $G^{(\lambda)}$. To show that $\abs{\mathcal{R}}=\abs{V(X')}\leq 1+\Delta^{d}$, we use the well-known fact that every graph with diameter less than $d$ and maximum degree $\Delta$ has at most $1+\Delta^{d}$ vertices \cite{MS-EJC05}.
    
    \begin{claim}
    $X'$ contains no path on $d$ vertices and thus has diameter less than $d$.
    \end{claim}
    
    \begin{proof}
    For the sake of contradiction, suppose that $X'$ contains a path $P'=(R_0,\dotsc ,R_{d-1})$ on $d$ vertices. 
    Consider the vertex-disjoint paths $(P_j' \colon j\in \{1,\dotsc, s\})$ in $X'$ where $P_j'\coloneqq  (R_{(j-1)t}, \dotsc, R_{jt})$. 
    For each $j\in \{1,\dotsc,s\}$ fix $u_{j}\in V(R_{(j-1)t})\cap N_G(x_1)$. Then there is a $(u_j,v_j)$-path $P_j$ in $X[\bigcup(V(R) \colon R\in V(P_j'))]$ of length at least $t-1$ for some vertex $v_j\in V(R_{jt})$. Since the paths $(P_j\colon j\in \{1,\dotsc, s\})$ are pairwise vertex-disjoint, it follows that $G[\{x_1\} \cup \bigcup(V(P_j) \colon j \in \{1,\dots,s\})]$ contains $S_{s,t}$ (see \cref{fig:a}), a contradiction.
    \end{proof}
    
    It remains to show that $X'$ has maximum degree less than $\Delta$. For the sake of contradiction, suppose that $X'$ contains a vertex $R'$ with degree at least $\Delta$. 
    Let $R_1,\dotsc,R_{\Delta}$ be  neighbours of $R'$ and for each $i \in \{1,\dotsc,\Delta\}$, let $y_i\in V(R_i)$ be adjacent to some $w_i\in V(R')$. Let $L\coloneqq\{y_i \colon i\in \{1,\dotsc,\Delta\}\}$ and let $R^*$ be obtained from $R'$ by adding $L$ and $\{y_{i}w_{i} \colon i\in \{1,\dotsc,\Delta\}\}$. Let $T$ be a vertex-minimal tree in $R^*$ such that $L\subseteq V(T)$. Then $L$ is precisely the set of leaves of $T$.

    \begin{claim}
    $T$ has maximum degree less than $\lambda$.
    \end{claim}

    \begin{proof}
    For the sake of contradiction, suppose there is a vertex $v\in V(T)$ with degree (in $T$) at least $\lambda$. Then there exists a set of $\lambda$ internally vertex-disjoint paths from $v$ to distinct leaves $y_1,\dotsc, y_\lambda$ in $T$. Then for each $i\in \{1,\dotsc, c+1\}$ and $j\in \{1, \dotsc, \lambda\}$, since $R_j$ is connected, there is an $(x_i,y_j)$-path in $G[V(R_j)\cup \{x_i\}]$. As such, there exists $\lambda$ internally disjoint $(v,x_i)$-paths in $G$ for each $i\in \{1,\dots,c+1\}$ (see \cref{fig:b}). Thus $vx_i\in E(G^{(\lambda)})$. So $\{x_1,\dots,x_{c+1},v\}$ is a $(c+2)$-clique in $G^{(\lambda)}$. However, as each vertex of this clique belongs to a different part in $\beta$, it follows that $H$ contains a $(c+2)$-clique, which contradicts $\tw(H)\leq c$.
    \end{proof}
    Thus, balls of radius $r$ (in $T$) have fewer than $1+\lambda^{r}$ elements. Since $\abs{L}=\Delta= s\cdot(1+\lambda^{2t})$, there are vertices $y_1,\dotsc, y_s\in L$ whose balls of radius $t$ in $T$ are pairwise disjoint. Let $u_i\in V(R_i)\cap N_G(x_1)$ and let $v_i\in V(T)$ be a vertex with distance (in $T$) exactly $t$ from $y_i$. By construction, there exists a set of disjoint paths $(P_i \colon i\in \{1,\dots,s\})$ such that $P_i$ is a $(u_i,v_i)$-path of length at least $t$. Then, $G[\{x_1\}\cup \bigcup(V(P_i) \colon i\in \{1,\dots,s\})]$ contains $S_{s,t}$ (see \cref{fig:c}), a contradiction. Thus $X'$ has maximum degree less than $\Delta$.
\end{proof}

\begin{figure} 
    \captionsetup[subfigure]{labelformat=empty}
	\centering
	\subcaptionbox{$\diam(X')\geq d$\label{fig:a}}{
	\includegraphics{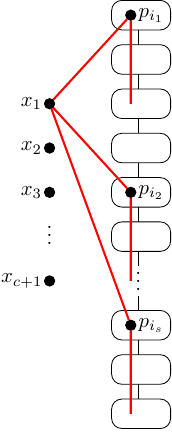}
	}\hspace{3.5em}
	\subcaptionbox{$\Delta(X')\geq \Delta$, $\Delta(T)\geq \lambda$ \label{fig:b}}{
	\includegraphics{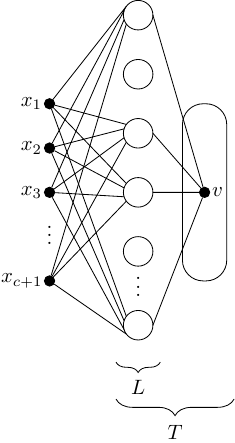}
	}\hspace{3.5em}
	\subcaptionbox{$\Delta(X')\geq \Delta$, $\Delta(T)< \lambda$ \label{fig:c}}{
	\includegraphics{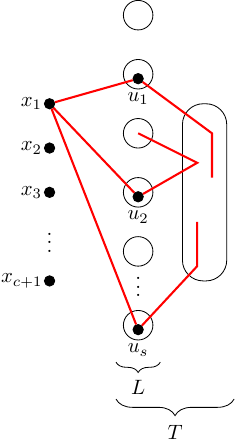}
	}
    \caption{Finding spiders and cliques in the proof of~\cref{lambdatpw}.}
\end{figure}





\begin{lem}\label{ExcludingPath}
For each $n \in \NN$ with $n\geq 3$, the underlying treewidth of $\GG_{P_n}$ equals $\floor{\log{n}} - 1$.
\end{lem}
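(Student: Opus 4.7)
\emph{Proof plan.}

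\textbf{Lower bound.} By \cref{StandardExamples}~\ref{item:SE-1},~\ref{item:SE-3},~\ref{item:SE-7}, the graph~$C_{c,\ell}$ has treewidth~$c$, is $P_n$-free whenever~${n \geq 2^{c+1}}$, and satisfies~${\tpw_{c-1}(C_{c,\ell}) > \ell}$. Taking~${c \coloneqq \floor{\log n} - 1}$, we have~${2^{c+1} \leq n}$ and so~${C_{c,\ell} \in \GG_{P_n}}$ for every~${\ell \in \NN}$. If~$\GG_{P_n}$ had underlying treewidth at most~${c - 1}$ with treewidth-binding function~$f$, then~${\ell \coloneqq f(c)}$ would give~${\tpw_{c-1}(C_{c,\ell}) \leq f(c) = \ell}$, contradicting~\ref{item:SE-3}.

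\textbf{Upper bound.} The plan is to induct on~${n \geq 3}$. For the base case~${n = 3}$, every $P_3$-free graph is a disjoint union of edges and isolated vertices, so components have at most two vertices and the underlying treewidth is~${0 = \floor{\log 3} - 1}$.

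For the inductive step, fix~${n \geq 4}$ and assume the result for all smaller values. Let~$G$ be $P_n$-free with~${\tw(G) \leq k}$, and set~${c \coloneqq \floor{\log n} - 1}$ and~${t \coloneqq \lceil (n - 3)/4 \rceil}$, so~${n^{*} \coloneqq 4t + 3 \in \{n, n+1, n+2, n+3\}}$. Since~${P_{n^{*}} \supseteq P_n}$, the graph~$G$ is $P_{n^{*}}$-free, equivalently $S_{2, 2t + 1}$-free. Choose~$\lambda$ sufficiently large in terms of~$n$ and~$k$ (polynomial). Then \cref{lambdatw} gives~${\tw(G^{(\lambda)}) \leq k}$, and \cref{Jlambda2} gives that~$G^{(\lambda)}$ is $S_{2, t}$-free, i.e., $P_{2t+1}$-free. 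A short case check on~${n \bmod 4}$ shows~${(n^{*} - 1)/2 = 2t + 1 \leq 2^{\floor{\log n}} - 1}$, so~${\floor{\log(2t+1)} - 1 \leq c - 1}$. Applying the inductive hypothesis to~$\GG_{P_{2t+1}}$ gives~$\tpw_{c-1}(G^{(\lambda)})$ bounded in terms of~$k$. Finally, \cref{lambdatpw} (with parameters~${c - 1, k, s = 2, 2t+1, \lambda}$ together with this bound) gives~$\tpw_c(G)$ bounded in terms of~$k$ and~$n$, as required.

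\textbf{Main obstacle.} Nearly all of the structural work is already carried out by \cref{lambdatw,Jlambda2,lambdatpw}, so this is really a bookkeeping argument. The delicate point is that rounding~$n$ up to the nearest integer of the form~$4t + 3$ costs at most~$3$; the case check on~${n \bmod 4}$ is needed precisely to absorb this slack, i.e., to ensure that the parameter~${\floor{\log \cdot} - 1}$ strictly decreases when passing from~$G$ to~$G^{(\lambda)}$.
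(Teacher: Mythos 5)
Your proposal is correct and takes essentially the same route as the paper: the lower bound via the graphs $C_{c,\ell}$ from \cref{StandardExamples}, and the upper bound by iterating \cref{lambdatw,Jlambda2,lambdatpw} so that the excluded path length roughly halves at each step. The only difference is presentational: the paper inducts on $c$ with path lengths $2^{c+1}-1$ (which are automatically of the form $4t+3$, so \cref{Jlambda2} applies without rounding), whereas you induct on $n$ and round up to $4t+3$, absorbing the slack with your check modulo $4$ that $\floor{\log(2t+1)} \leq \floor{\log n}-1$ — which does go through.
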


\begin{proof}
Let $c\coloneqq \floor{\log{n}}$. For the lower bound, \cref{StandardExamples} (ii) and (vii) imply that $\{C_{c-1,\ell}\colon \ell\in \NN\}$ has underlying treewidth at least $c-1$ and is a subset of  $\GG_{P_{n}}$. The lower bound follows.  

We prove the upper bound by induction on $c\geq 1$ with the hypothesis that there is a non-decreasing function $f$ such that every $P_{2^{c+1}-1}$-free graph $G$ has $(c-1)$-tree-partition-width at most $f(\tw(G),c)$. Since $P_n\subseteq P_{2^{c+1}-1}$, the claim follows. We make no attempt to optimise $f$. When $c=1$, $G$ contains no $P_3$ and so each component has at most two vertices. Therefore, $\tpw_0(G)\leq f(\tw(G),1) \coloneqq 2$.

Now assume $c\geq 2$ and the lemma holds for $c-1$. Let $G$ be a graph with no $P_{2^{c+1}-1}$ and let $\lambda \coloneqq \max\{3+(2^{c}-2)(2^{c}-1), \tw(G)+1\}$. By \cref{Jlambda2,lambdatw}, $G^{(\lambda)}$ has treewidth at most $\tw(G)$ and contains no $P_{2^{c}-1}$. By induction, $\tpw_{c-2}(G^{(\lambda)}) \leq \gamma\coloneqq f(\tw(G),c-1)$. By \cref{lambdatpw}, $\tpw_{c+1}(G) \leq f(\tw(G),c)\coloneqq \tilde{f}(c,\tw(G),2,2^{c}-1,\lambda,\gamma)$ where $\tilde{f}$ is from  \cref{lambdatpw}, as required. 
\end{proof}
\cref{ExcludingPath} proves (ii) in \cref{SummaryTheorem}.

To prove (iv) in \cref{SummaryTheorem}, we need the following lower bound result.

\begin{lem}
\label{LowerBoundSpider}
For all $c\in\NN$, there exists an $S_{3,2^c}$-free graph class $\{J_{c,N} \colon N\in \NN\}$ with underlying treewidth $c+1$.
\end{lem}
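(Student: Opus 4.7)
The plan is to construct $J_{c,N}$ recursively so that each graph has bounded treewidth in terms of $c$, is $S_{3,2^c}$-free, and yet the class has $\tpw_c$ unbounded in $N$; together with the matching upper bound treated elsewhere in the section, this gives underlying treewidth exactly $c+1$.

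For the base case $c=1$, I would take $J_{1,N}$ to be an $S_{3,2}$-free graph of bounded treewidth with $\tpw_1$ growing in $N$. For the inductive step, attach $N$ isomorphic copies of $J_{c-1,N}$ to a root-gadget consisting of a small clique of size $O(c)$, via a sharing pattern designed so that paths from root-gadget vertices into each copy have length at most $2^c$ and no single vertex becomes the centre of three internally disjoint length-$2^c$ paths.

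Three properties are then verified in turn. First, a tree-decomposition along the recursive structure, with bags given by the root-gadget together with bags of the copies, yields $\tw(J_{c,N}) = O(c)$. Second, an induction on $c$ bounds the longest path in $J_{c,N}$ by at most $2^{c+1}$ vertices, analogous to \cref{StandardExamples}\ref{item:SE-7} for $C_{c,\ell}$; since $S_{3,2^c}$ contains $P_{2^{c+1}+1}$, this rules out any $S_{3,2^c}$ subgraph. Third, for any $c$-tree-partition of $J_{c,N}$ of width $\ell$, a recursive pigeonhole argument modelled on \cref{StandardExamples}\ref{item:SE-2} exhibits, inside the nested $(c+2)$-clique structure of $J_{c,N}$, a $(c+2)$-clique whose vertices lie in distinct parts. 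The quotient then contains $K_{c+2}$ and therefore has treewidth at least $c+1$, contradicting treewidth at most $c$ once $N$ is large enough relative to $\ell$.

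The main obstacle is the explicit construction. The natural candidates $C_{c+1,N}$ and $G_{c+1,N}$ from \cref{StandardExamples} meet the treewidth and partition-width conditions but fail $S_{3,2^c}$-freeness: the root of $C_{c+1,N}$, for instance, supports three internally disjoint paths of length $2^{c+1}-1 \geq 2^c$ into distinct subtrees, producing an $S_{3,2^c}$ subgraph. The challenge is to truncate the long reach-paths emanating from high-degree vertices (either by restricting connections at each such vertex, or by iterating a more refined local gadget in place of full closure) while preserving the nested clique structure that drives the lower bound on $\tpw_c$.
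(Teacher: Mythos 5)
There is a genuine flaw in your plan, beyond the fact that you never actually produce the construction (which you acknowledge is the unresolved ``main obstacle''). Your second verification step proposes to certify $S_{3,2^c}$-freeness by showing that $J_{c,N}$ has no path on more than $2^{c+1}$ vertices. But this requirement is incompatible with your third step. By \cref{ExcludingPath} (equivalently \cref{SummaryTheorem}~(ii)), the class of $P_{2^{c+1}+1}$-free graphs has underlying treewidth $\floor{\log(2^{c+1}+1)}-1 = c$; concretely, there is a function $f$ such that every $P_{2^{c+1}+1}$-free graph $G$ satisfies $\tpw_c(G)\le f(\tw(G))$. So if your graphs have bounded treewidth (your first step gives $O(c)$) and exclude $P_{2^{c+1}+1}$, then $\tpw_c(J_{c,N})$ is bounded independently of $N$, and the pigeonhole argument in your third step cannot succeed. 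In other words, any correct construction \emph{must} contain arbitrarily long paths; excluding the spider cannot be done by excluding long paths, only by limiting how many long ``legs'' can emanate from a single vertex.

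This is exactly how the paper's construction works, and it is quite different from your recursive root-gadget sketch. Take a path $p_1,\dots,p_{N+1}$ (this spine supplies the unavoidable long paths), and for each edge $p_ip_{i+1}$ attach a copy $X_i$ of $(2N-1)\,C_{c-1,N}$ with both $p_i$ and $p_{i+1}$ dominating $X_i$. Then $\tw(J_{c,N})=c+1$ (so the upper bound on underlying treewidth is immediate, whereas with your weaker bound $O(c)$ you would have to invoke the upper-bound half of \cref{stspider}). For $S_{3,2^c}$-freeness, a would-be centre $v$ has at most two neighbours on the spine, and every other neighbour lies in a component isomorphic to a subgraph of $C_{c-1,N}$; since $C_{c-1,N}$ contains no $P_{2^c}$ (\cref{StandardExamples}~(vii)), at least one of the three legs cannot be accommodated. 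Finally, for $\tpw_c(J_{c,N})>N$ one finds an edge $p_ip_{i+1}$ whose ends lie in distinct parts, a copy of $C_{c-1,N}$ in $X_i$ avoiding both of those parts, and inside it (via \cref{StandardExamples}~(ii)) a $c$-clique with vertices in distinct parts, giving a $(c+2)$-clique in the quotient and contradicting $\tw(H)\le c$ --- this last step matches the spirit of your third step, but it only works because the spider is excluded by restricting branching rather than path length.
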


\begin{proof}
Let $J_{c,N}$ be the graph obtained from a path $P_{N+1}=(p_1,\ldots, p_{N+1})$ where for each edge $p_{i}p_{i+1}\in E(P_{n+1})$, we add a copy $X_{i}$ of $(2N-1)C_{c-1,N}$ and let $p_i$ and $p_{i+1}$ dominate $X_{i}$ (see~\cref{SpiderLB}; this construction also provides a lower bound on the clustered chromatic number~\citep{NSSW19}). To prove the lemma, it suffices to show that $J_{c,N}$ is $S_{3,2^c}$-free, $\tw(J_{c,N})=c+1$ and $\tpw_c(J_{c,N})>N$.

The claimed treewidth of $J_{c,N}$ holds since $\tw((2N-1)C_{c-1,N})=c-1$ (\cref{StandardExamples} (i)), and the two dominant vertices for each copy of $(2N-1)C_{c-1,N}$ give $\tw(J_{c,N})=c+1$. Now suppose, for the sake of contradiction, that $S_{3,2^c}$ is a subgraph of $J_{c,N}$ with centre $v\in V(J_{c,N})$. Observe that $v$ has at most two neighbours $p_i$ and $p_j$ in $P_N$. Moreover, each neighbour of $v$ in $J_{c,N}\setminus \{v,p_i,p_j\}$ is contained in a component that is a subgraph of $C_{c-1,N}$. Since $S_{3,2^c}$ has three legs, one of the legs must be contained in a copy of $C_{c-1,N}$ which contradicts $C_{c-1,N}$ not having a path on $2^{c}$ vertices (\cref{StandardExamples} (vii)).

It remains to show that $\tpw_c(J_{c,N})> N$. For the sake of contradiction, suppose that $J_{c,N}$ has a $c$-tree-partition $\beta\coloneqq(V_h \colon h\in V(H))$ with width at most $N$. Since $\abs{V(P_{N+1})}=N+1$, there exists an edge $p_ip_{i+1}\in E(P_{N+1})$ such that $p_i$ and $p_{i+1}$ belong to different parts in $\beta$. Let $V_i$ and $V_{i+1}$ be such that $p_i\in V_i$ and $p_{i+1}\in V_{i+1}$. Since $\abs{(V_i\cup V_{i+1})\setminus\{p_i,p_{i-1}\}}\leq 2N-2$, there exists a copy of $C_{c-1,N}$ in $X_i$ such that $V(C_{c-1,N})\cap (V_i\cup V_{i+1})=\emptyset$. By \cref{StandardExamples} (ii), $C_{c-1,N}$  contains a $c$-clique whose vertices are in different parts in $\beta$. Together with $p_i$ and $p_{i+1}$, it follows that $J_{c,N}$ contains a $(c+2)$-clique whose vertices are in different parts in $\beta$. Thus $H$ has a $(c+2)$-clique, a contradiction.
\end{proof}

\begin{figure}[ht]
    \centering
    \includegraphics{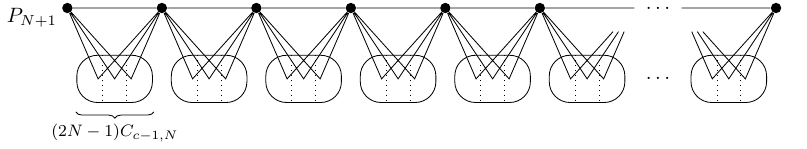}
    \caption{Construction of $J_{c,N}$.}
    \label{SpiderLB}
\end{figure}

\begin{lem}
\label{stspider}
 For all $s,t \in \NN$ where $s\geq 3$, the underlying treewidth of $\GG_{S_{s,t}}$ is $\floor{\log{t}} + 1$.
\end{lem}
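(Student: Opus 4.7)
For the lower bound, we invoke \cref{LowerBoundSpider}, which provides an $S_{3,2^c}$-free graph class $\{J_{c,N}\colon N\in\NN\}$ with underlying treewidth $c+1$. Setting $c\coloneqq\floor{\log t}$, since $s\geq 3$ and $2^c\leq t$, the spider $S_{3,2^c}$ is a subgraph of $S_{s,t}$, so every $S_{3,2^c}$-free graph is $S_{s,t}$-free, giving the inclusion $\{J_{c,N}\colon N\in\NN\}\subseteq\GG_{S_{s,t}}$. Since underlying treewidth is monotone under subclass inclusion, $\GG_{S_{s,t}}$ has underlying treewidth at least $c+1=\floor{\log t}+1$.

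For the upper bound, the plan is to mimic the induction in the proof of \cref{ExcludingPath}, adapted to spiders with $s\geq 3$ legs. We prove by induction on $c\geq 0$ the following statement: there exists a function $f$ such that every $S_{s,2^{c+1}-1}$-free graph $G$ has $(c+1)$-tree-partition-width at most $f(\tw(G),s,c)$. The base case $c=0$ handles $S_{s,1}=K_{1,s}$-free graphs, which have maximum degree at most $s-1$, so \cref{TreePartitionWidthDegree} applies directly. For the inductive step with $c\geq 1$, given an $S_{s,2^{c+1}-1}$-free graph $G$, choose $\lambda\geq\max\{\tw(G)+1,\,1+s+s(2^c-1)(2^{c+1}-1)\}$. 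Then \cref{Jlambda2} ensures that $G^{(\lambda)}$ is $S_{s,2^c-1}$-free, while \cref{lambdatw} gives $\tw(G^{(\lambda)})\leq\tw(G)$. The inductive hypothesis then bounds $\tpw_c(G^{(\lambda)})$ by $f(\tw(G),s,c-1)$, and \cref{lambdatpw} converts this into the required bound on $\tpw_{c+1}(G)$. Finally, since $c=\floor{\log t}$ ensures $t\leq 2^{c+1}-1$ and hence $S_{s,t}\subseteq S_{s,2^{c+1}-1}$ as a subgraph, every $S_{s,t}$-free graph is also $S_{s,2^{c+1}-1}$-free, yielding the desired upper bound $\floor{\log t}+1$.

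The main delicate point is choosing the forbidden spider length in the inductive hypothesis so that the halving afforded by \cref{Jlambda2} aligns precisely with decrementing $c$ by one: taking the forbidden length to be $2^{c+1}-1$ at stage $c$ makes the value $2t+1$ in \cref{Jlambda2} land on $2^c-1$, which is exactly the length handled by the hypothesis at $c-1$. Apart from this bookkeeping and verifying the threshold on $\lambda$, the argument is essentially routine given the machinery already developed in \cref{Jlambda2,lambdatw,lambdatpw}.
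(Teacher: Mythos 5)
Your proof is correct and follows essentially the same route as the paper: the lower bound via \cref{LowerBoundSpider} together with $S_{3,2^c}\subseteq S_{s,t}$, and the upper bound by induction on $c$ using \cref{TreePartitionWidthDegree} for the base case and \cref{Jlambda2,lambdatw,lambdatpw} for the inductive step, with the same choice of forbidden length $2^{c+1}-1$. Your explicit threshold $\lambda\geq\max\{\tw(G)+1,\,1+s+s(2^c-1)(2^{c+1}-1)\}$ is exactly the instantiation the paper intends.
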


\begin{proof}
Let $c \coloneqq \floor{\log{t}}$. For the lower bound, $\GG_{S_{3,2^{c}}}$ has underlying treewidth at least $c+1$ by \cref{LowerBoundSpider}. Since $S_{3,2^{c}}\subseteq S_{s,t}$, it follows that $\GG_{S_{3,2^{c}}}\subseteq \GG_{S_{s,t}}$, which implies the lower bound.

We prove the upper bound by induction on $c\in\NN_0$ with the following hypothesis: There is an increasing function $f$ such that for each $s\in\NN$, every $S_{s,2^{c+1}-1}$-free graph $G$ has $(c+1)$-tree-partition-width at most $f(\tw(G),c,s)$. Since $S_{s,t}\subseteq S_{s,2^{c+1}-1}$, the claim follows. When $c=0$, $G$ has maximum degree at most $s-1$, and $G$ has $1$-tree-partition-width at most $f(\tw(G),0,s) \coloneqq 24(\tw(G)+1)(s-1)$ by \cref{TreePartitionWidthDegree}.

Now assume $c\geq 1$ and the lemma holds for $c-1$. Let $G$ be an $(s,2^{c+1}-1)$-spider-free graph and let $\lambda \coloneqq \max\{1+s+st(2t+1), \tw(G)+1\}$. By \cref{Jlambda2,lambdatw},
$\tw(G^{(\lambda)}) \leq \tw(G)$ and $G^{(\lambda)}$ contains no $S_{s,2^{c}-1}$. By induction, $\tpw_c(G^{(\lambda)}) \leq \gamma\coloneqq f(\tw(G),c, s)$. By \cref{lambdatpw}, $\tpw_{c+1}(G) \leq f(\tw(G),c, s)\coloneqq \tilde{f}(c,\tw(G),s,2^{c+1}-1,\lambda,\gamma)$ where $\tilde{f}$ is from \cref{lambdatpw}, as required. 
\end{proof}

\cref{stspider} proves (iv) in \cref{SummaryTheorem}.

\begin{lem}
\label{ExcludingSpiderForest}
For every connected graph $H$ and $\ell \in \NN$ where $\ell\geq 2$, if $\GG_H$ has underlying treewidth $c$ then $\GG_{\ell \, H}$ has underlying treewidth $c+1$. 
\end{lem}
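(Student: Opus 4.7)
For the upper bound $\tpw_{c+1}(G) \leq f'(\tw(G))$ on $G \in \GG_{\ell H}$, I would apply \cref{EP-treewidth} to the family of copies of $H$ in $G$ (each connected since $H$ is), which has fewer than $\ell$ pairwise disjoint members; this yields a hitting set $Q \subseteq V(G)$ of size at most $(\tw(G)+1)(\ell-1)\log\ell$ such that $G - Q \in \GG_H$. By hypothesis, $G - Q$ admits a $c$-tree-partition $(V_h : h \in V(H'))$ of width at most $f(\tw(G))$, where $\tw(H') \leq c$. Adjoining to $H'$ a new vertex $x_Q$ adjacent to every other vertex (raising the quotient treewidth by exactly one, to at most $c+1$) and setting $V_{x_Q} \coloneqq Q$ produces a $(c+1)$-tree-partition of $G$ of width bounded by a function of $\tw(G)$, giving underlying treewidth at most $c+1$.

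For the lower bound, the key construction will be $\widehat{M G}$ for a suitable $G \in \GG_H$ and large integer $M$: since $H$ is connected and $G$ is $H$-free, every copy of $H$ in $\widehat{M G}$ must contain the unique dominant vertex $v$, so no two such copies can be vertex-disjoint; hence $\widehat{M G} \in \GG_{\ell H}$ for $\ell \geq 2$, with $\tw(\widehat{M G}) = \tw(G) + 1$. Supposing for contradiction that $\GG_{\ell H}$ has underlying treewidth at most $c$, witnessed by some function $f$, I would (when $c \geq 1$) use the exactness hypothesis on $\GG_H$ to pick $G \in \GG_H$ with $\tpw_{c-1}(G) > f(\tw(G)+1) \eqqcolon w$ and set $M \coloneqq w$. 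The assumed $c$-tree-partition of $\widehat{M G}$ of width at most $w$ has quotient $H'$ with $\tw(H') \leq c$; if $V_x$ is the part containing $v$ and $R_i \coloneqq V_x \cap V(G^{(i)})$ for each of the $M$ disjoint copies $G^{(i)}$, then $\sum_i |R_i| \leq |V_x| - 1 \leq w - 1 < M$, so pigeonhole forces some $R_j = \emptyset$. Every part meeting $V(G^{(j)})$ must then lie in $N_{H'}(x)$ (since $v$-adjacency forces it into $N_{H'}[x]$, and $R_j = \emptyset$ excludes $x$ itself), so restricting the partition to $G^{(j)}$ produces a partition of $G$ whose quotient is a subgraph of $H'[N_{H'}(x)]$ with width at most $w$.

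The hard part will be verifying $\tw(H'[N_{H'}(x)]) \leq \tw(H') - 1 \leq c - 1$, which then yields $\tpw_{c-1}(G) \leq w$ and the desired contradiction. I plan to establish this via a Helly argument on a width-$\tw(H')$ tree-decomposition $(B_t)_{t \in V(T)}$ of $H'$: for each edge $yz$ of $H'[N_{H'}(x)]$, the subtrees $T_x, T_y, T_z$ of bags containing $x, y, z$ respectively are pairwise intersecting (by the edges $xy$, $xz$, $yz$) and hence share a common bag by Helly's theorem for subtrees of a tree, so $(B_t \cap N_{H'}(x))_{t \in V(T_x)}$ is a tree-decomposition of $H'[N_{H'}(x)]$ of width at most $\tw(H') - 1$. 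The degenerate case $c = 0$ requires separate handling: take $G \coloneqq K_1 \in \GG_H$ (assuming $|V(H)| \geq 2$), so that $\widehat{M K_1} = K_{1,M} \in \GG_{\ell H}$ is a connected graph of treewidth $1$ with $M + 1$ vertices, contradicting the bounded-component characterisation \cref{c0} for large $M$.
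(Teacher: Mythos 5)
Your proof is correct. The lower bound is essentially the paper's argument: both take an $H$-free graph with large $(c-1)$-tree-partition-width, place a dominant vertex over many disjoint copies (noting that connectivity of $H$ forces every copy of $H$ through the dominant vertex, so the result is $\ell H$-free and its treewidth goes up by exactly one), use pigeonhole to find a copy disjoint from the dominant vertex's part, and then exploit the fact that the quotient restricted to the neighbourhood of that part has treewidth one less than the quotient. The paper phrases this last step via $\tw(\widehat{J'})=\tw(J')+1$ applied to the restricted quotient plus the dominant part, whereas you prove $\tw(H'[N_{H'}(x)])\leq\tw(H')-1$ directly by a Helly argument on a tree-decomposition; these are interchangeable (indeed $H'[N_{H'}[x]]=\widehat{H'[N_{H'}(x)]}$, so the paper's stated fact gives it immediately). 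You also set up the contradiction directly against the assumed treewidth-binding function $f$ rather than first extracting, as the paper does, a sequence $G_j$ of fixed treewidth $k$ with $\tpw_{c-1}(G_j)>j$; both are valid bookkeeping. Where you genuinely diverge is the upper bound: the paper takes a maximal collection of pairwise disjoint copies of $H$ (necessarily fewer than $\ell$ of them), deletes their at most $(\ell-1)\abs{V(H)}$ vertices into the dominant part, and uses $H$-freeness of the remainder; you instead invoke the Erd\H{o}s--P\'osa-type \cref{EP-treewidth} to get a hitting set of size at most $(\tw(G)+1)(\ell-1)\log\ell$. Both are correct; the paper's packing argument is more elementary and gives a dominant part whose size is independent of $\tw(G)$, while yours reuses a lemma already available in the paper at the cost of a treewidth-dependent (but still perfectly adequate) bound. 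Two cosmetic points: as with the paper's own proof, you should take $\max\{f(0),\dots,f(\tw(G))\}$ (or assume $f$ non-decreasing) when bounding the width of the partition of $G-Q$, and your aside that $\abs{V(H)}\geq 2$ is needed in the $c=0$ case is a fair observation --- the statement degenerates for $H=K_1$, which never arises in the paper's applications.
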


\begin{proof}
We first prove the lower bound. Since $\GG_H$ has underlying treewidth $c$, there exists $k\in \NN$ such that for all $j\in \NN$, there is a graph $G_j\in \GG_H$ with $\tw(G_j)=k$ and $\tpw_{c-1}(G_j)>j$. Consider the graph class $\mathcal{J}\coloneqq\{\widehat{j\,G_j} \colon j\in \NN\}.$ Observe that every graph in $\mathcal{J}$ is $\ell \,H$-free, so $\mathcal{J}\subseteq \GG_{\ell \, H}$. Moreover, since adding a dominant vertex increase the treewidth of a graph by $1$, $\tw(\widehat{j\,G_j})=k+1$ for all $j\in \NN$. Finally, consider a $j$-partition $\beta\coloneqq(V_x\colon x \in V(J))$ of $\widehat{j\, G_j}$. At most $j - 1$ copies of $G_j$ contain a vertex in the same part $V_y$ as the dominant vertex $v$. Thus, some copy $G$ of $G_j$ contains no vertex in $V_y$. Consider the sub-partition $\beta'\coloneqq(V_x\cap V(G)\colon x \in V(J'))$ of $G$ where $J'\subseteq J$. Since $\beta'$ has width at most $j$, $J'$ has treewidth at least $c$ and so $J[V(J')\cup \{y\}]$ has treewidth at least $c+1$. Thus $\mathcal{J}$ has underlying treewidth at least $c+1$. 

We now prove the upper bound. Let $G$ be an $\ell \, H$-free graph. Let $A_1,\dots,A_m$ be a maximal set of pairwise disjoint copies of $H$ in $G$. Then $m< \ell$. Let $B \coloneqq G-V(A_1\cup\dots\cup A_m)$. By the maximality of $m$, $B$ is $H$-free. Thus $B$ has a $c$-tree-partition $(V_x \colon x\in V(J'))$ with width at most $\tilde{f}(\tw(B))$ for some function $\tilde{f}$. Let $J$ be the graph obtained from $J'$ by adding a dominant vertex $y$, and let $V_y \coloneqq V(A_1\cup\dots\cup A_n)$. Then $(V_x \colon x\in V(J))$ is a $(c+1)$-tree-partition of $G$ with width at most $f(\tw(G))\coloneqq \max\{(\ell-1)\abs{V(H)},\tilde{f}(1),
\dots,\tilde{f}(\tw(G))\}$, as required.
\end{proof}

\cref{ExcludingPath,stspider,ExcludingSpiderForest} prove \cref{SummaryTheorem} (iii) and (iv). Since every spider-forest is a subgraph of $\ell \, S_{s,t}$ for some  $\ell,s,t \in \NN$, these results also imply the sufficiency of (i) in \cref{SummaryTheorem}.

\section{Excluding an Induced Subgraph}
\label{InducedSubgraph}

For a graph $H$, let \defn{$\II_H$} be the class of graphs with no induced subgraph isomorphic to $H$. This section characterises the graphs $H$ such that $\II_H$ has bounded underlying treewidth, and determines the precise 
underlying treewidth for each such $H$. 

\begin{thm}
\label{InducedSubgraphUnderlyingTreewidth}
For any graph $H$, 
\begin{enumerate}[label = \textnormal{(\roman*)}]
    \item $\II_H$ has bounded underlying treewidth if and only if $H$ is a star-forest\textnormal{;}
    \item if $H$ is a star-forest, then $\II_H$ has underlying treewidth at most $2$\textnormal{;}
    \item $\II_H$ has underlying treewidth at most $1$ if and only if $H$ is a star or each component of $H$ is a path on at most three vertices\textnormal{;}
    \item $\II_H$ has underlying treewidth $0$ if and only if $H$ is a path on at most three vertices, or $E(H) = \emptyset$. 
\end{enumerate}
\end{thm}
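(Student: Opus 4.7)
My plan is to prove the four parts in sequence using the disjointed-partition machinery from \cref{DisjointedCoverings} and the standard examples of \cref{LowerBounds}. The main obstacle is the sufficiency direction for star-forests $H$ with two or more components: naive ``peeling'' of vertex-disjoint induced copies of the components of $H$ does not automatically produce an induced copy of $H$, because disjoint induced copies may have cross edges, and one must combine the peeling with a Ramsey argument on the ``interaction graph'' of these copies to either extract an induced $H$ or bound the collection.

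For sufficiency of (i), equivalently (ii), first handle the star case $H=K_{1,t}$: if $\tw(G)\le k$ and $G$ is $K_{1,t}$-induced-free, then each $G[N(v)]$ has treewidth at most $k-1$ and hence chromatic number at most $k$, so $\alpha(G[N(v)])\ge\deg(v)/k<t$, forcing $\Delta(G)<tk$ and $\tpw(G)=O(k^2 t)$ by \cref{TreePartitionWidthDegree}, so underlying treewidth is at most $1$. For a general star-forest $H=H_1\cup\dots\cup H_\ell$ with largest component $H^\ast=K_{1,t}$, take a maximum family $\mathcal R$ of vertex-disjoint induced copies of $H^\ast$ and Ramsey-classify its members by the type of cross edge between pairs; an independent set of size $\ell$ in the resulting interaction graph realises induced $\ell H^\ast$ and hence induced $H$, a contradiction, while monochromatic cliques of centre-to-centre cross-edge type are bounded via $\omega(G)\le k+1$. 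An iterated Ramsey argument along these lines bounds $\abs{\mathcal R}$ in terms of $k$ and $H$, so $S\coloneqq\bigcup\mathcal R$ is a bounded hitting set with $G-S$ being $H^\ast$-induced-free; the star case bounds $\tpw(G-S)=O(k^2)$, and absorbing $S$ as a single dominating bag gives a $2$-tree-partition of $G$ of width $O(k^2+\abs{V(H)})$. For the unboundedness direction of (i), if $H$ is not a star-forest then some component of $H$ is not a star, and either it is a tree of diameter at least $3$ (hence contains induced $P_4$, so $\{C_{c,\ell}\}\subseteq\II_{P_4}\subseteq\II_H$ has unbounded underlying treewidth by \cref{StandardExamples}~\ref{item:SE-6} and \cref{StandardExampleUnderlyingTreewidth}) or contains a cycle of length at most $\abs{V(H)}$, in which case a long enough subdivision of $C_{c,\ell}$ has girth exceeding $\abs{V(H)}$ and hence is $H$-induced-free, while \cref{TopologicalMinor} preserves unbounded underlying treewidth under subdivision.

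Part (iii) refines (ii). For the ``if'' direction: if $H$ is a star, the star case above already gives underlying treewidth at most $1$; if every component of $H$ lies in $\{K_1,K_2,P_3\}$, the same Ramsey-based extraction yields a bounded $S$ and a component $H_j\in\{K_1,K_2,P_3\}$ such that $G-S$ is $H_j$-induced-free, forcing $G-S$ to be empty, edgeless, or a disjoint union of cliques of size at most $\tw(G)+1$; in every case $G-S$ has components of size at most $\tw(G)+1$, so attaching $S$ as a single dominating bag over these components yields a star-quotient $1$-tree-partition of $G$ of width $\max(\abs{S},\tw(G)+1)$. For the ``only if'' direction, the new case beyond (i) is when $H$ is a star-forest with at least two components, one being $K_{1,t}$ for some $t\ge 3$; then the fan $G_{2,N}=\widehat{P_N}$ has treewidth $2$ and unbounded $\tpw_1$ by \cref{StandardExamples}, and it lies in $\II_H$ since any induced $K_{1,t}$ in the fan must be centred at the apex (every path vertex has at most two non-adjacent neighbours in the fan), and any supposed second component of $H$ would sit inside $P_N$ with each of its vertices adjacent to the apex, so it cannot be disconnected from the $K_{1,t}$ component in the induced subgraph.

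For (iv), the ``if'' direction is immediate: $\II_{P_1}=\emptyset$; $\II_{P_2}$ consists of edgeless graphs; $\II_{P_3}$ consists of disjoint unions of cliques of size at most $\tw(G)+1$; and for $H=\overline{K_m}$, every $G\in\II_H$ has $\alpha(G)<m$ and $\omega(G)\le\tw(G)+1$, so Ramsey's theorem bounds $\abs{V(G)}$. All four cases yield bounded component size. For the ``only if'' direction, if $H$ has an edge and $H\notin\{P_2,P_3\}$, then either $H=K_{1,t}$ with $t\ge 3$ (and every path $P_n$ lies in $\II_H$, with $\tw=1$ and component size $n$) or $H$ is not a star (and every large star $K_{1,n}$ lies in $\II_H$, since the induced subgraphs of $K_{1,n}$ are themselves stars or independent sets, neither of which equals $H$), so arbitrarily large components occur in $\II_H$.
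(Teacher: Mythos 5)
Most of your outline tracks the paper: the necessity direction (induced $P_4$ from a non-star tree component, high-girth subdivisions when a component has a cycle), the single-star case via bounded degree and \cref{TreePartitionWidthDegree}, the fan lower bound for part (iii), and the elementary witnesses ($P_n$, large stars, $K_{2,n}$-type examples, cliques/edgeless graphs) for the thresholds in (iii) and (iv) are all fine. The genuine gap is in the step that both (ii) and the path-component case of (iii) hinge on: bounding the maximum family $\mathcal{R}$ of pairwise disjoint induced copies of the largest star $H^{\ast}=K_{1,t}$. Your plan colours pairs of copies by cross-edge type and invokes Ramsey, but the only monochromatic clique type you actually control with the tool you cite ($\omega(G)\le k+1$) is the centre--centre type. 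A clique of copies whose cross edges are leaf--leaf (or centre--leaf) yields no clique in $G$, since the leaf witnessing the pair $(A_i,A_j)$ may differ from the leaf witnessing $(A_i,A_{j'})$, and ``an iterated Ramsey argument along these lines'' is not an argument. Worse, the two facts you are using at that point (no induced $H$, and $\omega(G)\le k+1$) are provably insufficient: blow-ups of $C_5$ have clique number $2$, contain no induced $2P_3$, yet contain arbitrarily many pairwise disjoint induced copies of $P_3$ every two of which are joined by leaf--leaf or centre--leaf edges. So any bound on $\abs{\mathcal{R}}$ must use bounded treewidth more strongly than through $\omega(G)$ alone, and as written your proof of the central packing bound fails.

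The missing idea is exactly what the paper uses in \cref{ExcludeStarForest}: contract each copy $A_i$ to a single vertex. The resulting ``interaction graph'' $J$ is a minor of $G$, so $\chi(J)\le\tw(J)+1\le\tw(G)+1$, while $\alpha(J)\le\ell-1$ because $\ell$ pairwise non-adjacent copies give an induced $\ell K_{1,t}\supseteq H$; hence $\abs{\mathcal{R}}=\abs{V(J)}\le\chi(J)\alpha(J)\le(\tw(G)+1)(\ell-1)$, with no Ramsey needed. (Equivalently, if you insist on your Ramsey scheme, note that any $m$ copies pairwise joined by cross edges of \emph{any} type yield a $K_m$ minor of $G$, so every monochromatic clique has size at most $\tw(G)+1$; this closes your argument with a Ramsey-type, hence worse, bound.) With this repair, your extraction-plus-dominating-bag construction for (ii), and its $P_3$/$K_2$ analogue for (iii), go through essentially as in the paper's \cref{ExcludeStarForest,NoInducedP3Forest}; the rest of your proposal is correct and, apart from the cycle-versus-$P_4$ case split in the necessity direction and your more explicit handling of the single-star case $K_{1,t}$, $t\ge 3$, in (iv), follows the paper's route.
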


We prove \cref{InducedSubgraphUnderlyingTreewidth} by a sequence of lemmas. 

\begin{lem}
\label{Induced-UTW-Necessary}
If $\II_H$ has bounded underlying treewidth, then $H$ is a star-forest.
\end{lem}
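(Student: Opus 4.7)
The plan is to prove the contrapositive. Assume $H$ is not a star-forest; I will exhibit, for every candidate $c^*\in\NN_0$, a family of graphs in $\II_H$ with bounded treewidth but unbounded $c^*$-tree-partition-width. A graph is a star-forest exactly when it contains neither a cycle nor $P_4$ as a subgraph, so either (a) $H$ contains a cycle, or (b) $H$ is acyclic with a component of diameter at least three, which forces an induced $P_4$ inside $H$. I handle the two cases using the standard-example families $\{C_{c,\ell}\}$ and suitable subdivisions.

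In case (b), since $P_4$ is an induced subgraph of $H$, every graph with no induced $P_4$ also has no induced $H$, so $\II_{P_4}\subseteq\II_H$. By \cref{StandardExamples}\ref{item:SE-6} every $C_{c,\ell}$ lies in $\II_{P_4}$, so $\{C_{c,\ell}\colon c,\ell\in\NN\}\subseteq\II_H$; this family has unbounded underlying treewidth by \cref{StandardExampleUnderlyingTreewidth}, which finishes this case.

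Case (a) is the delicate case, because $C_{c,\ell}$ itself contains triangles and is not in $\II_H$ in general. Let $g$ be the girth of $H$; a shortest cycle is always induced, so $H$ contains $C_g$ as an induced subgraph and $\II_{C_g}\subseteq\II_H$. The idea is to take the $(g-1)$-subdivision $C'_{c,\ell}$ of each $C_{c,\ell}$: every cycle of $C_{c,\ell}$ has its length multiplied by $g$ in $C'_{c,\ell}$, so the girth of $C'_{c,\ell}$ is at least $3g>g$; hence $C'_{c,\ell}$ contains no copy of $C_g$ and thus $C'_{c,\ell}\in\II_{C_g}\subseteq\II_H$. Subdivision preserves treewidth, so $\tw(C'_{c,\ell})=c$. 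For a fixed candidate $c^*\geq 1$, \cref{TopologicalMinor} applied to $G=C_{c^*+1,\ell}$ with subdivision $G'=C'_{c^*+1,\ell}$, together with \cref{StandardExamples}\ref{item:SE-3}, gives
\[
\ell < \tpw_{c^*}(C_{c^*+1,\ell}) \leq 4{c^*}^{2}\,12^{c^*}\bigl(c^*\tpw_{c^*}(C'_{c^*+1,\ell})+1\bigr)\tpw_{c^*}(C'_{c^*+1,\ell})^{2}(c^*+2)^{c^*},
\]
forcing $\tpw_{c^*}(C'_{c^*+1,\ell})\to\infty$ as $\ell\to\infty$. For the remaining case $c^*=0$, the graph $C'_{1,\ell}$ is a connected subdivided star with $g\ell+1$ vertices, so $\tpw_0(C'_{1,\ell})\to\infty$ directly. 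Hence $\II_H$ has underlying treewidth greater than $c^*$ for every $c^*$.

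The main obstacle is in case (a): we need the subdivision to simultaneously (i) stay inside $\II_H$, (ii) keep treewidth bounded, and (iii) retain unbounded $c^*$-tree-partition-width. Parts (i) and (ii) are handled by the girth blowup and treewidth-preservation of subdivisions; (iii) is precisely what \cref{TopologicalMinor} delivers when pulled back across the subdivision.
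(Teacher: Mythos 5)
Your proof is correct, and it reaches the conclusion by a somewhat different route than the paper. The paper first transfers to the subgraph setting via the inclusion $\GG_H \subseteq \II_H$ and invokes \cref{ExcludingSubgraph} to conclude that $H$ is a spider-forest (killing cycles and double branch vertices in one stroke), then uses the induced-$P_4$-free family $\{C_{c,\ell}\}$ exactly as you do in your case (b), and finishes with the observation that a spider-forest with no induced $P_4$ is a star-forest. You instead argue the contrapositive with the clean dichotomy ``cycle or $P_4$ subgraph'' and handle the cycle case directly inside the induced setting: since a shortest cycle $C_g$ is induced in $H$, it suffices to produce $\II_{C_g}$-members of bounded treewidth and unbounded $c^*$-tree-partition-width, which you get by taking $(g-1)$-subdivisions of $C_{c^*+1,\ell}$ (girth blow-up guarantees no $C_g$ at all, hence no induced $C_g$) and pulling the lower bound of \cref{StandardExamples}~(iii) back across the subdivision via \cref{TopologicalMinor}, with the $c^*=0$ case handled separately as \cref{TopologicalMinor} requires $c\geq 1$. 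In effect you reprove the cycle-excluding special case of \cref{ExcludingSubgraph}, with the extra girth observation serving the role that the reduction $\GG_H\subseteq\II_H$ plays in the paper; the paper's route is shorter because it reuses that lemma wholesale, while yours is more self-contained, avoids the notion of spider-forest altogether, and makes explicit which concrete graph families witness the unboundedness in each case. All the steps check out: the characterisation of star-forests, the inducedness of $P_4$ in a forest and of a shortest cycle, the containments $\II_{P_4}\subseteq\II_H$ and $\II_{C_g}\subseteq\II_H$, the treewidth preservation under subdivision, and the asymptotic reading of the inequality from \cref{TopologicalMinor}.
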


\begin{proof}
Suppose that $\II_H$ has underlying treewidth $c$. So $\GG_H$ has underlying treewidth at most $c$ (since $\GG_H \subseteq \II_H$). \Cref{ExcludingSubgraph} implies that $H$ is a spider-forest. By \cref{StandardExampleUnderlyingTreewidth}, $\{C_{c, \ell} \colon c, \ell \in \NN\}$ has unbounded underlying treewidth. Hence, $H$ is an induced subgraph of some $C_{c, \ell}$. By \cref{StandardExamples}, $C_{c, \ell}$ has no induced $P_{4}$ and so $H$ contains no induced $P_{4}$. Every spider-forest with no induced $P_{4}$ is a star-forest. So $H$ is a star-forest. 
\end{proof}

\cref{Induced-UTW-Necessary} proves the necessity of (i) in \cref{InducedSubgraphUnderlyingTreewidth}.

\begin{lem}
\label{ExcludeStar}
If $H$ is a star $K_{1,s}$, then $\II_H$ has underlying treewidth at most $1$. 
\end{lem}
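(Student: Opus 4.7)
My plan is to reduce to the bounded-degree case and then apply \cref{TreePartitionWidthDegree}.

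The key observation is that a $K_{1,s}$-induced-free graph has no large independent set in the neighbourhood of any vertex. More precisely, if $G$ is $K_{1,s}$-induced-free, then for every $v \in V(G)$ the induced subgraph $G[N(v)]$ satisfies $\alpha(G[N(v)]) \leq s-1$ (since an independent set of size $s$ in $N(v)$ together with $v$ would yield an induced $K_{1,s}$). Now suppose additionally that $\tw(G) \leq k$. Then $G$, and hence every induced subgraph of $G$, is $k$-degenerate, so $\chi(G[N(v)]) \leq k+1$. Using ${\abs{V(G')} \leq \chi(G')\alpha(G')}$ for the graph $G' \coloneqq G[N(v)]$, we obtain
\[
    \deg_G(v) \;=\; \abs{N(v)} \;\leq\; \chi(G[N(v)]) \cdot \alpha(G[N(v)]) \;\leq\; (k+1)(s-1).
\]
Hence $G$ has maximum degree at most $(k+1)(s-1)$.

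Applying \cref{TreePartitionWidthDegree} with treewidth parameter $k+1$ and $\Delta \coloneqq (k+1)(s-1)$, we conclude that
\[
    \tpw(G) \;\leq\; 24(k+1)(k+1)(s-1) \;=\; O(k^2 s).
\]
Since $s$ is fixed and this bound depends only on $\tw(G)$, it follows that the treewidth-binding function $f(k) \coloneqq 24(k+1)^{2}(s-1)$ witnesses that $\II_H$ has underlying treewidth at most $1$.

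The argument is essentially immediate once one notices the neighbourhood-degeneracy bound, so there is no real obstacle; the main content is the observation that $\II_{K_{1,s}}$-membership plus bounded treewidth forces bounded maximum degree, after which \cref{TreePartitionWidthDegree} does all the work.
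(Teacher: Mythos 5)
Your proof is correct and follows essentially the same route as the paper: bound $\alpha(G[N(v)])\leq s-1$ by the excluded induced star, bound $\chi(G[N(v)])$ via degeneracy/treewidth, multiply to get bounded maximum degree in terms of $\tw(G)$, and finish with \cref{TreePartitionWidthDegree}. The only (harmless) difference is a constant: the paper uses $\chi(G[N(v)])\leq\tw(G)$ rather than $\tw(G)+1$, giving a marginally smaller treewidth-binding function.
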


\begin{proof}
Say $G\in\II_H$. 
For each vertex $v\in V(G)$, if $G_v \coloneqq G[N_G(v)]$ then $\alpha(G_v)\leq s-1$ and $\chi(G_v)\leq \tw(G_v)+1\leq\tw(G)$, implying $\deg_G(v)= \abs{V(G_v)} \leq\chi(G_v)\alpha(G_v)\leq \tw(G)(s-1)$. 
By \cref{TreePartitionWidthDegree}, $\tpw(G)\leq 24\tw(G)^2(s-1)$. Hence $\II_H$ has underlying treewidth at most 1. 
\end{proof}

\begin{lem}
\label{ExcludeStarForest}
If $H$ is a star-forest, then $\II_H$ has underlying treewidth at most $2$. 
\end{lem}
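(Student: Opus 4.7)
The plan is to isolate a bounded ``root set'' whose removal leaves a graph with no induced $K_{1,s}$, and then stack this set on top of the tree-partition supplied by \cref{ExcludeStar}.

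Write $H = K_{1,s_1} \cup \dots \cup K_{1,s_\ell}$ with $s \coloneqq \max_i s_i$, and fix $G \in \II_H$ with $\tw(G) \leq k$. Take a maximal family $A_1, \dots, A_m$ of pairwise vertex-disjoint induced copies of $K_{1,s}$ in $G$, and set $B \coloneqq V(A_1) \cup \dots \cup V(A_m)$. By maximality, $G - B$ contains no induced $K_{1,s}$, so \cref{ExcludeStar} applied to $G - B$ yields a $T$-partition (for some tree $T$) of width at most $w = O(k^2 s)$.

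The key step is to bound $m$. Form the auxiliary graph $X$ with vertex set $\{A_1, \dots, A_m\}$, where $A_i A_j \in E(X)$ iff some edge of $G$ has one endpoint in $V(A_i)$ and the other in $V(A_j)$. Since each $A_i$ is connected (being a star), contracting each $A_i$ to a single vertex exhibits $X$ as a minor of $G$; hence $\tw(X) \leq k$ and so $\alpha(X) \geq m/(k+1)$. If $\alpha(X) \geq \ell$, choose pairwise nonadjacent $A_{i_1}, \dots, A_{i_\ell}$ in $X$ and retain, within each $A_{i_j}$, the center together with exactly $s_j$ of its $s$ leaves. Because each $A_{i_j}$ is an induced $K_{1,s}$ in $G$ and there are no $G$-edges between the chosen $A_{i_j}$'s, their union is an induced copy of $H$ in $G$, contradicting $G \in \II_H$. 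Therefore $m < \ell(k+1)$ and $|B| < \ell(k+1)(s+1)$.

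To assemble the $2$-tree-partition of $G$, let $T'$ be obtained from $T$ by adding a new node $r$ adjacent to every node of $T$; a tree with a dominating vertex has treewidth at most $2$, so $\tw(T') \leq 2$. Declare the bag at $r$ to be $B$ and keep the other bags unchanged. Then every edge of $G$ lies inside a bag or between bags whose nodes are adjacent in $T'$: edges within $B$ sit inside $V_r$, edges from $B$ to $V(G) \setminus B$ use the new edges $rx$ of $T'$, and edges within $V(G) \setminus B$ are handled by the tree-partition of $G - B$. The width is $\max\{|B|, w\} = O(k^2 s \ell)$, so $\II_H$ has underlying treewidth at most $2$.

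The main obstacle is the bound on $m$; the clean resolution is the observation that the containment graph $X$ is genuinely a minor of $G$, so the standard treewidth/chromatic-number inequality yields a large independent set in $X$, which then lifts to a forbidden induced $H$ via the fact that each $K_{1,s_i}$ is an induced substar of $K_{1,s}$.
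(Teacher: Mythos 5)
Your proof is correct and follows essentially the same route as the paper: a maximal packing of disjoint induced $K_{1,s}$'s, contracting them to a minor and using $\abs{V}\leq\chi\cdot\alpha$ together with treewidth-bounded chromatic number and the "$\ell$ independent stars yield an induced $H$" observation to bound the packing, then applying \cref{ExcludeStar} to the remainder and attaching the packed vertices as a single bag at a dominant node of the tree. The only differences are cosmetic (explicit constants versus the paper's $24\tw(G)^2(s-1)$ bound).
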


\begin{proof}
Say $H$ has $\ell$ components, each with at most $s$ leaves. Let $G\in \II_H$. Let $A_1,\dots,A_n$ be a maximal set of pairwise disjoint induced $K_{1,s}$ subgraphs in $G$. Define $J$ to be the graph obtained from $G[V(A_1\cup\dots\cup A_n)]$ by contracting each $A_i$ into a vertex $v_i$. Then $n\leq\chi(J)\alpha(J)\leq (\tw(J)+1)(\ell-1)\leq(\tw(G)+1)(\ell-1)$ since $J$ is a minor of $G$. Thus $\abs{V(A_1\cup \dots\cup A_n)} \leq (\tw(G)+1)(\ell-1)(s+1)$.
Let $B \coloneqq G-V(A_1\cup\dots\cup A_n)$. By the maximality of $n$, $B$ contains no induced $K_{1,s}$. As in the proof of \cref{ExcludeStar}, $B$ has a tree-partition $(V_x \colon x\in V(T))$ of width at most $24\tw(G)^2(s-1)$. Let $H$ be the graph obtained from $T$ by adding a dominant vertex 
$y$, and let $V_y \coloneqq V(A_1\cup\dots\cup A_n)$. Then $(V_h \colon h\in V(H))$ is a 2-tree-partition of $G$ with width at most $\max\{24\tw(G)^2(s-1),(\tw(G)+1)(\ell-1)(s+1)\}$.
Hence $\II_H$ has underlying treewidth at most 2. 
\end{proof}

\cref{ExcludeStarForest} proves (ii) and the sufficiency of (i) in \cref{InducedSubgraphUnderlyingTreewidth}.

\begin{lem}
\label{NoInducedP3Forest}
If $H$ is a forest in which each component is a path on at most three vertices, then $\II_H$ has underlying treewidth at most $1$. 
\end{lem}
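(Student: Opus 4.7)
The plan is to adapt the proof of \cref{ExcludeStarForest}, replacing the dominant-vertex construction (which inflates the quotient's treewidth by~$1$) with a genuine tree-partition. The extra leverage comes from the fact that every component of $H$ has at most three vertices, so the ``boundary'' set will be small enough to serve as a single bag.

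First I would observe that $H$ is a star-forest, since $P_1 = K_{1,0}$, $P_2 = K_{1,1}$, and $P_3 = K_{1,2}$. Write the components as $K_{1,s_1}, \dotsc, K_{1,s_\ell}$ and set $s \coloneqq \max_i s_i \in \{0,1,2\}$. The case $s = 0$ is trivial: $H$ is edgeless, so every $G \in \II_H$ has fewer than $\ell$ vertices. So assume $s \in \{1,2\}$ and let $G \in \II_H$. Following \cref{ExcludeStarForest}, I would choose a maximal collection $A_1, \dotsc, A_n$ of pairwise vertex-disjoint induced copies of $K_{1,s}$ in $G$, and set $Z \coloneqq V(A_1 \cup \dotsb \cup A_n)$ and $B \coloneqq G - Z$.

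Next I would bound $|Z|$. Form the minor $J$ of $G$ by taking $G[Z]$ and contracting each $A_i$ to a vertex $v_i$. The key inequality is $\alpha(J) \leq \ell - 1$: an independent set $\{v_{i_1}, \dotsc, v_{i_\ell}\}$ in $J$ would correspond to pairwise non-adjacent sets $A_{i_1}, \dotsc, A_{i_\ell}$ in $G$, and since each $K_{1,s_j}$ is an induced subgraph of $K_{1,s}$, I could pick an induced $K_{1,s_j}$ inside $A_{i_j}$ for each $j$ and obtain an induced copy of $H$, a contradiction. Combined with $\chi(J) \leq \tw(J) + 1 \leq \tw(G) + 1$, this yields $n \leq (\tw(G)+1)(\ell-1)$, hence
\[
|Z| \;\leq\; (s+1)(\tw(G)+1)(\ell-1) \;\leq\; 3(\tw(G)+1)(\ell-1).
\]

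The final step is where the size-three hypothesis pays off. By maximality of the $A_i$, the graph $B$ has no induced $K_{1,s}$: if $s = 1$, then $B$ is edgeless; if $s = 2$, then $B$ is $P_3$-induced-free, hence a disjoint union of cliques, each of order at most $\omega(G) \leq \tw(G)+1$. I would then take $T$ to be the star with centre~$z$ and one leaf $y_K$ for each component $K$ of $B$, and define the $T$-partition of $G$ by $V_z \coloneqq Z$ and $V_{y_K} \coloneqq V(K)$. Every edge of $G$ lies within $Z$, within some $V(K)$, or between $Z$ and some $V(K)$, so this is a legitimate tree-partition; its width is at most $\max\{|Z|,\, \tw(G)+1\} \leq 3(\tw(G)+1)(\ell-1)$, a function of $\tw(G)$. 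Hence $\tpw(G)$ is bounded in terms of $\tw(G)$, and $\II_H$ has underlying treewidth at most~$1$. I do not anticipate a serious obstacle: the only delicate point is the $\alpha(J) \leq \ell - 1$ inequality, and it relies on every component of $H$ being an induced subgraph of $K_{1,s}$, which the hypothesis guarantees.
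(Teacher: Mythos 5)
Your proposal is essentially the paper's own proof: a maximal collection of pairwise vertex-disjoint induced copies of a small star/path, the contracted minor $J$ with $\alpha(J)\leq \ell-1$ and $\chi(J)\leq \tw(J)+1\leq \tw(G)+1$ to bound the number of copies, and then a star-shaped tree-partition whose centre carries the union of the copies and whose leaves carry the components of the remainder (which are cliques, hence of order at most $\tw(G)+1$), exactly as in the proof of \cref{ExcludeStarForest} adapted to avoid the extra dominant vertex. The only slip is your $s=0$ aside: an edgeless $H$ on $\ell$ vertices does not force $\abs{V(G)}<\ell$ (any complete graph is $H$-free), but this is harmless, since $\abs{V(G)}\leq \chi(G)\alpha(G)\leq(\tw(G)+1)(\ell-1)$ bounds the order of $G$ in terms of its treewidth, as in \cref{Induced-UTW0}; alternatively, the paper sidesteps the case split altogether by always packing induced copies of $P_3$ (of which $P_1$ and $P_2$ are induced subgraphs), so the same argument runs uniformly for every $H$ covered by the statement.
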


\begin{proof}
Say $H$ has $k$ components. 
Let $G\in \II_H$.
Let $A_1,\dots,A_n$ be a maximal set of pairwise disjoint induced $P_3$ subgraphs in $G$. Let $J$ be the minor of $G$ obtained from $G[ V(A_1\cup\dots\cup A_n)]$ by contracting each $A_i$ into a vertex. Thus $n = \abs{V(J)}\leq\chi(J)\alpha(J)\leq(\tw(J)+1)(k-1)\leq(\tw(G)+1)(k-1)$. Let $B_1,\dots,B_m$ be the components of $G-V(A_1\cup\dots\cup A_n)$. By the maximality of $n$, each $B_i$ has no induced $P_3$ subgraph, so $B_i$ is a complete subgraph and $\abs{V(B_i)}\leq\tw(G)+1$. Let $T$ be the star with centre $x$ and leaves $y_1,\dots,y_m$. Let $V_x \coloneqq V(A_1\cup\dots\cup A_n)$ and $V_{y_j} \coloneqq V(B_j)$. So  $(V_h \colon h\in V(T))$ is a tree-partition of $G$ with width at most $\max\{3(k-1)(\tw(G)+1),\tw(G)+1\}$. Hence
 $\II_H$ has underlying treewidth at most 1. 
\end{proof}

\begin{lem}\label{Induced-UTW2}
If $H$ is a star-forest with at least two components and at least one component of $H$ has at least three leaves, then $\II_H$ has underlying treewidth $2$.
\end{lem}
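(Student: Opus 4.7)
The upper bound is immediate from \cref{ExcludeStarForest}, which already asserts that any star-forest $H$ gives $\II_H$ underlying treewidth at most $2$. My plan for the matching lower bound is to exhibit a subclass of $\II_H$ that witnesses underlying treewidth at least $2$, namely the fan class $\{G_{2,\ell} \colon \ell \in \NN\}$ from \cref{LowerBounds}. By \cref{StandardExamples}~\ref{item:SE-1} and~\ref{item:SE-3}, each $G_{2,\ell}$ has treewidth exactly $2$ while $\tpw_1(G_{2,\ell}) > \ell$, so it will suffice to verify that every $G_{2,\ell}$ belongs to $\II_H$.

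To check this, I will use the structure of $G_{2,\ell} = \widehat{\ell\, P_{\ell+1}}$: it consists of $\ell$ vertex-disjoint copies of $P_{\ell+1}$ together with a single dominant vertex $u$ adjacent to every other vertex. Suppose for contradiction that $H$ occurs as an induced subgraph of $G_{2,\ell}$. Since $H$ has at least two components, $u$ cannot lie in this induced copy; otherwise $u$ would be adjacent to every other vertex of the copy and force $H$ to be connected. Therefore $H$ would have to embed as an induced subgraph into $G_{2,\ell} - u$, which is a disjoint union of paths. But the assumption that some component of $H$ has at least three leaves means $H$ contains a vertex of degree at least $3$, which is impossible in a disjoint union of paths. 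This contradiction yields $G_{2,\ell} \in \II_H$, completing the lower bound.

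I do not anticipate any serious obstacle: the argument is a short structural observation combined with the lower-bound class from \cref{StandardExamples}. The crux is the clean interaction between the two hypotheses on $H$ --- the "at least two components" condition is precisely what keeps the dominant vertex out of any induced copy of $H$, while the "at least three leaves" condition is precisely what obstructs the remainder from embedding into the disjoint union of paths $G_{2,\ell} - u$.
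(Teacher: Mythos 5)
Your proof is correct and follows essentially the same route as the paper: the upper bound is quoted from \cref{ExcludeStarForest}, and the lower bound shows the fan graphs $G_{2,\ell}$ contain no induced copy of $H$ (you use the two-components hypothesis to exclude the dominant vertex and the three-leaves hypothesis to rule out an embedding in the remaining disjoint paths, whereas the paper applies the same two facts in the opposite order), then invokes \cref{StandardExamples} exactly as the paper does.
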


\begin{proof}
Consider a star $S$ with at least three leaves that is a subgraph of a fan $F$ with dominant vertex $v$. Since $F-v$ has maximum degree $2$, $v$ is in $S$. Since every vertex of $F$ is adjacent to $v$, $H$ is not an induced subgraph of $F$. By \cref{StandardExamples}, the class of fan graphs has underlying treewidth 2. Thus $\II_H$ has underlying treewidth at least 2, with equality by \cref{ExcludeStarForest}. 
\end{proof}

\cref{ExcludeStar,NoInducedP3Forest,Induced-UTW2} prove (iii) in \cref{InducedSubgraphUnderlyingTreewidth}.

\begin{lem}
\label{Induced-UTW0}
If $E(H)=\emptyset$ or $H$ is a path on at most three vertices, then $\II_H$ has underlying treewidth $0$.
\end{lem}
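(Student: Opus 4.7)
By \cref{c0}, a graph class has underlying treewidth~$0$ if and only if there is a function~$f$ such that every component of every graph~$G$ in the class has at most~${f(\tw(G))}$ vertices. The plan is to verify this condition for each of the listed~$H$ by a short case analysis.

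First, suppose~${E(H) = \emptyset}$, so~$H$ is isomorphic to the edgeless graph on~${n \coloneqq \abs{V(H)}}$ vertices. Then every graph~${G \in \II_H}$ satisfies~${\alpha(G) \leq n - 1}$, and so
\begin{equation*}
    \abs{V(G)} \leq \chi(G)\,\alpha(G) \leq (\tw(G) + 1)(n - 1),
\end{equation*}
using the standard bound~${\chi(G) \leq \tw(G) + 1}$ recalled in \cref{Basics}. Hence each component of~$G$ has at most~${(\tw(G) + 1)(n - 1)}$ vertices, and we may take~${f(k) \coloneqq (k + 1)(n - 1)}$.

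Now suppose~$H$ is a path on at most three vertices. If~${H = P_1}$, then~$\II_H$ contains only the edgeless graph on no vertices and the claim is trivial. If~${H = P_2 = K_2}$, then every graph in~$\II_H$ is edgeless, so each component is a single vertex. If~${H = P_3}$, then a graph has no induced~$P_3$ if and only if each of its components is a complete graph; each such component is a clique, and hence has at most~${\tw(G) + 1}$ vertices. In every case the component size is bounded by a function of~$\tw(G)$, so~\cref{c0} gives underlying treewidth~$0$. The main (and only) point requiring care is the observation that the component-size bound~${f(\tw(G))}$ is precisely what \cref{c0} demands, which is why the result fits the underlying-treewidth-$0$ framework.
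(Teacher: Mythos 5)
Your proof is correct and follows essentially the same route as the paper: for edgeless $H$ you use $\abs{V(G)} \leq \chi(G)\alpha(G) \leq (\tw(G)+1)(\abs{V(H)}-1)$, and for a short path $H$ you use that the components are cliques of size at most $\tw(G)+1$, exactly as in the paper's proof. The only cosmetic difference is that you pass through \cref{c0} to conclude, while the paper writes down the $0$-tree-partitions directly; these are equivalent.
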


\begin{proof}
First suppose that $E(H)=\emptyset$. 
For each $G\in\II_H$, 
$\{V(G)\}$ is a 0-tree-partition of $G$ with width $\abs{V(G)}\leq \chi(G)\alpha(G) \leq (\tw(G)+1)(\abs{V(H)}-1)$.
Hence $\II_H$ has underlying treewidth 0.
Now suppose $H$ is a path on at most three vertices.
For each $G\in \II_H$, if $G_1,\dots,G_k$ are the connected components of $G$, then each $G_i$ is a complete graph, implying $\{V(G_1),\dots,V(G_k)\}$ is a 0-tree-partition of $G$ with width $\max_i\abs{V(G_i)} = \tw(G)+1$.
\end{proof}

\begin{lem}
\label{Induced-UTW-AtLeast1}
If $H$ is a star-forest with at least one edge and at least two components, then $\II_H$ has underlying treewidth at least $1$. 
\end{lem}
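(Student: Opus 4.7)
The plan is to exhibit, for each $n \in \NN$, a graph in $\II_H$ that has treewidth $1$ but a connected component of size $n+1$. By the characterisation after \cref{c0}, this will show that the underlying treewidth of $\II_H$ is not $0$, hence at least $1$.

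The candidate family is simply the stars $\{K_{1,n} : n \in \NN\}$. Each $K_{1,n}$ is a tree, so has treewidth $1$, and its unique non-trivial component has $n+1$ vertices, so component sizes are unbounded uniformly in $\tw$.

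The key step is to verify that $K_{1,n} \in \II_H$ for every $n$. I would analyse the induced subgraphs of $K_{1,n}$: let $v$ be the centre and $L$ the set of leaves. Any induced subgraph $F$ either excludes $v$, in which case $F$ is an independent set (a subset of $L$), or includes $v$, in which case every leaf in $F$ is adjacent to $v$ and $F \cong K_{1,m}$ for some $m \leq n$. In either case $F$ is either edgeless or a connected star. By hypothesis, $H$ has at least one edge, so $H$ is not edgeless, and $H$ has at least two components, so $H$ is not connected. Hence $H$ is neither edgeless nor a connected star, so $H$ cannot appear as an induced subgraph of $K_{1,n}$.

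Finally, suppose for contradiction that $\II_H$ has underlying treewidth $0$. By \cref{c0} applied with \cref{UnderlyingTreewidthDisjointedPartition} (or directly the consequence of \cref{c0} noted after it), there is a function $f$ such that every component of every $G \in \II_H$ has at most $f(\tw(G))$ vertices. Applied to $G = K_{1,n}$, this gives $n+1 \leq f(1)$ for all $n \in \NN$, a contradiction. Therefore $\II_H$ has underlying treewidth at least $1$. The only subtle point is the induced-subgraph analysis of $K_{1,n}$, but this is straightforward once split by whether the centre is included.
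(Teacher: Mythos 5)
Your proof is correct and follows essentially the same route as the paper: both exhibit a family of graphs in $\II_H$ with bounded treewidth but connected components of unbounded size, and then invoke the characterisation of underlying treewidth $0$ (bounded component size as a function of treewidth) to conclude. The only difference is the witness family — you use the stars $K_{1,n}$ (treewidth $1$, with the case analysis on whether the centre is in the induced subgraph), whereas the paper uses $K_{2,n}$ (treewidth at most $2$, noting that every vertex is adjacent to an endpoint of every edge); both verifications that $H$ is not an induced subgraph are equally routine.
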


\begin{proof}
Let $G \coloneqq K_{2,n}$. Then for every edge $vw$ of $G$, every vertex $x\in V(G)\setminus\{v,w\}$ is adjacent to $v$ or $w$. Thus $G\in \II_H$. Since $G$ is connected, $\tpw_0(G)=n+2$. Since $\tw(G)\leq 2$, $\II_H$ has underlying treewidth at least $1$.
\end{proof}

\cref{Induced-UTW0,Induced-UTW-AtLeast1} prove (iv) in \cref{InducedSubgraphUnderlyingTreewidth}.

We finish this section with an open problem. For any set $\XX$ of graphs, let $\II_\XX$ be the class of graphs $G$ such that no induced subgraph of $G$ is isomorphic to a graph in $\XX$. For which sets $\XX$ does $\II_\XX$ have bounded underlying treewidth? Consider the case when $\XX$ is finite. If some star-forest is in $\XX$, then $\II_\XX$ has bounded underlying treewidth by \cref{ExcludeStarForest}. If no star-forest is in $\XX$ and $\II_\XX$ still has bounded underlying treewidth, then some spider-forest is in $\XX$ (by \cref{ExcludingSubgraph}), and some graph for which every component is the closure of a rooted tree is also in $\XX$ (by \cref{StandardExampleUnderlyingTreewidth}). In related work, \citet{LR22} proved a graph class defined by finitely many excluded induced subgraphs has bounded treewidth if and only if it excludes a complete graph, a complete bipartite graph, a tripod (a forest in which every connected component has at most three leaves), and the line graph of a tripod.

\section{Graph Drawings}
\label{GraphDrawings}

A graph is \defn{$k$-planar} if it has a drawing in the plane with at most~$k$ crossings on each edge, where we assume that no three edges cross at the same point. 
Of course, the class of $0$-planar graphs is the class of planar graphs, which has underlying treewidth~3 (\cref{planarunderlying}). 
However, 1-planar graphs behave very differently. 
It is well-known that every graph has a 1-planar subdivision: 
take an arbitrary drawing of~$G$ and for each edge~$e$ add a subdivision vertex between consecutive crossings on~$e$. 
Since the class of 1-planar graphs is monotone, \cref{MonotoneUnderlyingTreewidth} implies that the class of 1-planar graphs has unbounded underlying treewidth. 

By restricting the type of drawing, we obtain positive results. 
A \defn{circular drawing} of a graph~$G$ positions each vertex on a circle in the plane, and draws each edge as an arc across the circle, such that no two edges cross more than once. 
A graph is \defn{outer $k$-planar} if it has a circular drawing such that each edge is involved in at most~$k$ crossings. 
The outer 0-planar graphs are precisely the outer-planar graphs, which have treewidth~$2$. 
We show below that for each~${k \in \NN}$, the class of outer $k$-planar graphs has underlying treewidth~$2$. 
In fact, we prove a slightly more general result. 
A graph is \defn{weakly outer $k$-planar} if it has a circular drawing such that whenever two edges~$e$ and~$f$ cross, $e$ or~$f$ crosses at most~$k$ edges. 
Clearly every outer $k$-planar graph is weakly outer $k$-planar.

\begin{thm}
\label{OuterkPlanarthm}
    Every weakly outer $k$-planar graph~$G$ has $2$-tree-partition-width~${O(k^3)}$. 
\end{thm}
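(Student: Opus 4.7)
The plan is to apply the disjointed covering framework of Section~\ref{DisjointedCoverings}, specifically \cref{MainCorollary} with~${c = 2}$ and~${\ell = 1}$ (the singleton partition). We establish two facts about every weakly outer $k$-planar graph~$G$: (a)~${\tw(G) = O(k)}$, and (b)~the singleton partition of~$G$ is~${(2, \tw(G)+1)}$-disjointed. Together, these yield~${\tpw_2(G) \leq 4(\tw(G)+1)(12\,\tw(G))^2 = O(k^3)}$ by \cref{MainCorollary}.

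For the treewidth bound~(a), the key structural observation is that whenever two edges cross in the drawing, at least one of them is \emph{light}, i.e.\ involved in at most~$k$ crossings. Consequently, two \emph{heavy} edges (those involved in more than~$k$ crossings) never cross, so the heavy edges form an outerplanar subgraph of~$G$. To produce an $O(k)$-sized balanced separator, we take a balanced light chord~${e = uv}$: the set~${\{u, v\}}$ together with the endpoints of the at most~$k$ edges crossing~$e$ is a vertex separator of size at most~${2k + 2}$ whose removal splits~$G$ into two weakly outer $k$-planar sub-drawings on fractionally smaller vertex sets. A careful argument combining this balanced-separator property with the outerplanar backbone of heavy edges then yields a tree-decomposition of width~$O(k)$.

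For the disjointedness~(b), fix~${v_1, v_2 \in V(G)}$ and a component~$X$ of~${G - \{v_1, v_2\}}$, and set~${S_i \coloneqq N_G(v_i) \cap V(X)}$ for~${i \in \{1, 2\}}$. It suffices to find~${Q \subseteq V(X)}$ with~${\abs{Q} \leq \tw(G)+1}$ such that~${X - Q}$ has no $S_1$-$S_2$ path, because then each component of~${X - Q}$ is disjoint from~$S_1$ or from~$S_2$. Let~$Q'$ be a minimum vertex separator of~$v_1$ from~$v_2$ in~${G - v_1 v_2}$ (the edge removed if present); by the standard fact that a minimum $u$-$v$ vertex separator in any graph~$H$ has at most~${\tw(H)+1}$ vertices, we get~${\abs{Q'} \leq \tw(G)+1}$. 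Set~${Q \coloneqq Q' \cap V(X)}$. Any $S_1$-$S_2$ path in~${X - Q}$ would extend (via edges to~$v_1$ and~$v_2$ at its ends) to a $v_1 v_2$-path in~${G - v_1 v_2 - Q'}$, contradicting the choice of~$Q'$.

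The main obstacle is~(a): establishing the $O(k)$ treewidth bound without incurring a~$\log n$ factor from naive balanced-separator recursion. This will require a more careful structural argument, likely exploiting the outerplanar heavy-edge backbone in parallel with the light-chord balanced-separator argument, or directly constructing a tree-decomposition of width~$O(k)$ from the circular drawing.
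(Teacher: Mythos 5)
Your overall architecture matches the paper's: bound $\tw(G)$ by $O(k)$, show the singleton partition is $(2,O(k))$-disjointed, and feed both into \cref{MainCorollary} with $\ell=1$ to get $O(k^3)$. However, your step (b) rests on a false lemma. It is not a standard fact that a minimum $u$-$v$ vertex separator has at most $\tw(H)+1$ vertices: in $K_{2,m}$ the two vertices of degree $m$ are nonadjacent, every separator between them must contain all $m$ common neighbours, yet $\tw(K_{2,m})=2$. The correct statement (implicit in the proof of \cref{lambdatw}) is conditional: \emph{if no bag of a width-$w$ tree-decomposition contains both $u$ and $v$}, then some adjacent-bag intersection of size at most $w$ separates them; when $u$ and $v$ do share a bag (equivalently, when they are joined by many internally disjoint paths) no such bound follows. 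So your argument does not yield a $(2,\tw(G)+1)$-disjointed partition, and the disjointedness bound must instead come from the geometry of the circular drawing. This is exactly what the paper does: for two vertices $v_i,v_j$ it selects, from the drawing, two edges $e_r,e_s$ near $v_j$ that are each involved in at most $k$ crossings (using the weak outer $k$-planarity condition), and takes $Q$ to be their crossing partners' endpoints plus four special vertices, giving $\abs{Q}\leq 4k+4$; any $N(v_i)$--$N(v_j)$ path avoiding $Q$ would have to cross $e_r$ or $e_s$, so the singleton partition is $(2,4k+4)$-disjointed. To repair your proof you would need an argument of this kind (equivalently, a drawing-based bound of $O(k)$ on the number of internally disjoint $v_i$-$v_j$ paths); the treewidth of $G$ alone cannot supply it.

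Your step (a) is also left unfinished — you flag the $O(k)$ treewidth bound as the main obstacle — but this part is actually the easy one: it is a known result, cited in the paper as \citet[Prop.~8.5]{WT07}, that every weakly outer $k$-planar graph has treewidth at most $3k+11$, so that gap is closed by citation. The genuine missing content is the disjointedness argument, which you believed was complete.
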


\begin{proof}
    \citet[Prop.~8.5]{WT07} proved that any weakly outer $k$-planar graph $G$ has ${\tw(G) \leq 3k+11}$. 
    Thus, by \cref{MainCorollary}, it suffices to show that the singleton partition of~$G$ is ${(2,4k+4)}$-disjointed. 
    Let~${v_1,\dots, v_n}$ be the cyclic ordering of~${V(G)}$ given by the drawing. 
    Addition is taken modulo~$n$ in this proof. 
    If~${n \leq 2}$, then the claim holds trivially, so assume that~${n \geq 3}$. 
    We may assume that~${v_iv_{i+1} \in E(G)}$ for all~${i \in \{1,\dots,n\}}$. 
    
    Consider distinct~${v_i,v_j \in V(G)}$. 
    If~${j \neq i+1}$, then let~$v_r'$ be the clockwise-closest vertex to~$v_i$ that is adjacent to~$v_j$ and let~$v_s'$ be the anti-clockwise-closest vertex to~$v_i$ that is adjacent to~$v_j$. 
    If~${v_r'v_j}$ is involved in more than~$k$ crossings, then let~$v_r$ be the anticlockwise-closest vertex to~$v_j$ that is incident to an edge~$e_r$ that crosses~$v_r'v_j$. 
    Otherwise~${v_r'v_j}$ is involved in at most~$k$ crossings so let~${v_r \coloneqq v_r'}$ and~${e_r \coloneqq v_r'v_j}$. 
    Similarly, if~${v_s'v_j}$ is involved in more than~$k$ crossings, then let~$v_s$ be the clockwise-closest vertex to~$v_j$ that is incident to an edge~$e_s$ that crosses~${v_s'v_j}$. 
    Otherwise let~${v_s \coloneqq v_s'}$ and~${e_s \coloneqq v_s'v_j}$. 
    Let~$R$ be the set of vertices that are incident to edges that cross~$e_r$ and let~$S$ be the set of vertices incident to edges that cross~$e_s$. 
    Let~${Q \coloneqq (R \cup S \cup \{v_r,v_s,v_r',v_s'\}) \setminus \{v_i,v_j\}}$. 
    The setup so far is illustrated in \cref{OuterkPlanar}, with vertices in~$Q$ circled. 
    We have~${\abs{Q} \leq 4k+4}$, since~$e_r$ and~$e_s$ are involved in at most~$k$ crossings. 
    
    Let ${a \in N(v_i) \setminus (Q\cup\{a,b\})}$, ${b \in N(v_j) \setminus (Q \cup \{a,b\})}$, and $P$ be an ${(a,b)}$-path in~${G - \{v_i,v_j\}}$. 
    Since~$G$ is weakly outer $k$-planar, if~$P$ does not contain~$v_r$ or~$v_s$ as an internal vertex, then it contains an edge that crosses~$e_r$ or~$e_s$. 
    Thus~${V(P) \cap Q \neq \emptyset}$ and hence~$Q$ separates~${N(v_i)}$ from~${N(v_j)}$ in~${G - \{v_i,v_j\}}$. 
    As such, $G$ is ${(2,4k+4)}$-disjointed.
\end{proof}

\begin{figure}[ht]
    \centering
    \includegraphics{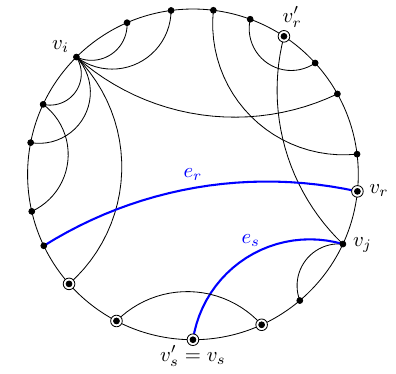}
    \caption{Setup to define~$Q$ (circled vertices) with~${k = 2}$.}
    \label{OuterkPlanar}
\end{figure}

\cref{OuterkPlanarthm} implies the next result, where the lower bound holds since~$G_{2,\ell}$ from \cref{StandardExamples} is outerplanar.

\begin{thm}
    \label{OuterkPlanarUnderlyingTreewidth}
    For every fixed~${k \in \NN}$, the underlying treewidth of the class of weakly outer $k$-planar graphs equals~2, with constant treewidth-binding function. 
\end{thm}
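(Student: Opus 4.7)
The plan is to use \cref{OuterkPlanarthm} for the upper bound and the family $\{G_{2,\ell} \colon \ell \in \NN\}$ from \cref{StandardExamples} for the matching lower bound.

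For the upper bound, \cref{OuterkPlanarthm} already says that every weakly outer $k$-planar graph $G$ has $\tpw_2(G) \leq c k^3$ for some absolute constant $c$. Since this bound depends only on $k$ and not on $\tw(G)$, the underlying treewidth of the class of weakly outer $k$-planar graphs is at most $2$, and moreover the treewidth-binding function can be taken to be the constant function $f(\cdot) \equiv c k^3$.

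For the lower bound, I would proceed by contradiction. Suppose that the underlying treewidth of the class of weakly outer $k$-planar graphs is at most $1$. By definition, there is a function $f \colon \NN_0 \to \NN$ such that every weakly outer $k$-planar graph $G$ satisfies $\tpw_1(G) \leq f(\tw(G))$. Now set $\ell \coloneqq f(2)$ and consider $G_{2, \ell}$. By \cref{StandardExamples}\ref{item:SE-4}, $G_{2, \ell}$ is outerplanar, so it admits a circular drawing with no crossings whatsoever, and is therefore weakly outer $k$-planar for every $k \in \NN$. By \cref{StandardExamples}\ref{item:SE-1}, $\tw(G_{2, \ell}) = 2$, and by \cref{StandardExamples}\ref{item:SE-3}, $\tpw_1(G_{2, \ell}) > \ell$. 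Combining these yields
\begin{equation*}
    f(2) = \ell < \tpw_1(G_{2, \ell}) \leq f(\tw(G_{2, \ell})) = f(2),
\end{equation*}
a contradiction. Hence the underlying treewidth is at least $2$, matching the upper bound.

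There is no real obstacle here: both bounds follow immediately from results already established earlier in the paper. The only thing to check is that the upper bound witness from \cref{OuterkPlanarthm} indeed gives a treewidth-binding function that is constant (in the treewidth parameter), which it does because the $O(k^3)$ estimate is independent of $\tw(G)$ for each fixed $k$.
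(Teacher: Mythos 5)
Your proof is correct and follows essentially the same route as the paper: the upper bound and the constant treewidth-binding function come directly from \cref{OuterkPlanarthm} (the $O(k^3)$ bound is independent of $\tw(G)$), and the lower bound uses that the outerplanar graphs $G_{2,\ell}$ from \cref{StandardExamples} are weakly outer $k$-planar with $\tw(G_{2,\ell}) = 2$ and $\tpw_1(G_{2,\ell}) > \ell$.
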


A graph~$D$ is a \defn{planarisation} of a graph~$G$ if~$G$ has a drawing in the plane and~$D$ is the graph obtained by replacing each crossing point with a dummy vertex. Observe that~$G$ is a topological minor of~${D \boxtimes K_2}$. 
As such, we have the following consequence of \cref{cTreePartitionWidthTreewidth,TopologicalMinor,planarunderlying}.

\begin{cor}
    \label{TopologicalMinorConsequence}
    For every graph~$G$, if~$D$ is a planarisation of~$G$ where~${\tw(D) < k}$ then~$G$ has $3$-tree-partition-width~${O(k^{9})}$. 
\end{cor}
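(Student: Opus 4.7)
The plan is to transfer a product-structure bound from the planar graph $D$ up to $D\boxtimes K_2$, restrict to the subdivision of $G$ that sits inside $D\boxtimes K_2$, and then pass back to $G$ itself via \cref{TopologicalMinor}. The three ingredients cited in the statement (\cref{cTreePartitionWidthTreewidth}, \cref{TopologicalMinor}, \cref{planarunderlying}) correspond exactly to these three steps, with the remark just before the statement (namely that $G$ is a topological minor of $D\boxtimes K_2$) supplying the bridge between the planar world and $G$.

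In detail: since $D$ is planar with $\tw(D) < k$, \cref{planarunderlying} produces a graph $H$ with $\tw(H) \leq 3$ and some $\ell = O(k^2 \log k)$ such that $D$ is contained in $H \boxtimes K_\ell$. Since $K_\ell \boxtimes K_2 = K_{2\ell}$, the graph $D \boxtimes K_2$ is contained in $H \boxtimes K_{2\ell}$, giving $\tpw_3(D \boxtimes K_2) \leq 2\ell = O(k^2 \log k)$. Using that $G$ is a topological minor of $D \boxtimes K_2$, fix a subgraph $G' \subseteq D \boxtimes K_2$ isomorphic to a subdivision of $G$. Restricting the $H$-partition of $D \boxtimes K_2$ to $V(G')$ (a manoeuvre that is immediate from the definition of an $H$-partition) shows $\tpw_3(G') \leq \tpw_3(D\boxtimes K_2) = O(k^2\log k)$, and \cref{cTreePartitionWidthTreewidth} then gives $\tw(G') \leq 4\tpw_3(G') - 1 = O(k^2 \log k)$.

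Finally, applying \cref{TopologicalMinor} with $c = 3$ to the subdivision $G'$ of $G$ yields
\[
\tpw_3(G) \leq 4\cdot 9 \cdot 12^3 \,(3\tpw_3(G')+1)\,\tpw_3(G')^2\,(\tw(G')+1)^3 = O\bigl(\tpw_3(G')^3\,\tw(G')^3\bigr) = O(k^{12}\log^6 k),
\]
which is well within the claimed bound $O(k^{15}\log^6 k)$. There is no real obstacle: the only non-trivial ingredient is the observation from the preceding paragraph that $G$ embeds topologically in $D\boxtimes K_2$ (at each dummy vertex, the two crossing edges of $G$ are routed through the two distinct copies of that vertex in the product, keeping them internally disjoint), after which the proof is a matter of composing the three cited bounds.
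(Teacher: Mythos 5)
Your proposal is correct and is essentially the paper's own (unwritten) proof: the paper derives this corollary exactly by composing \cref{planarunderlying}, the observation that $G$ is a topological minor of $D \boxtimes K_2$, \cref{cTreePartitionWidthTreewidth}, and \cref{TopologicalMinor}, just as you do. Your careful bookkeeping in fact yields the slightly stronger bound $O(k^{12}\log^6 k)$, which of course implies the stated $O(k^{15}\log^6 k)$.
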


As a special case of \cref{TopologicalMinorConsequence}, consider a drawing of a graph~$G$ with at most~$k$ crossings per edge. 
If~$G$ has radius~$r$, then the planarisation of~$G$ has radius at most~${r(k+1)}$ and thus has treewidth at most~${3r(k+1)}$ \cite[(2.7)]{RS-III}. 
Thus we have the following:

\begin{cor}
    \label{radiuskplanar}
    If $G$ is a $k$-planar graph with radius $r$, then $G$ has $3$-tree-partition-width $O((rk)^{9})$.
\end{cor}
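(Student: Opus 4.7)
The plan is to combine \cref{TopologicalMinorConsequence} with a well-known bound on the treewidth of planar graphs of small radius, essentially formalising the reasoning already sketched in the paragraph preceding the statement. The main ingredients are already laid out: we only need to verify the radius bound on the planarisation, apply the Robertson--Seymour bound, and then quote \cref{TopologicalMinorConsequence}.

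First I would fix a drawing of~$G$ in the plane with at most~$k$ crossings per edge, and let~$D$ be the resulting planarisation. Each edge~$e$ of~$G$ becomes a path in~$D$ with at most~$k$ internal (dummy) vertices, so any walk of length~$\ell$ in~$G$ corresponds to a walk of length at most~$(k+1)\ell$ in~$D$. Since~$G$ has radius~$r$, there is a vertex~$v \in V(G)$ within distance~$r$ of every other vertex of~$G$; the same vertex~$v$ (regarded as a vertex of~$D$) is within distance~$(k+1)r$ of every original vertex of~$D$, and each dummy vertex lies on some subdivided edge and is therefore within distance~$(k+1)r+ \tfrac{1}{2}(k+1) \leq (k+1)(r+1)$ of~$v$. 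Hence the radius of~$D$ is~$O(rk)$.

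Next I would invoke the result of \citet[(2.7)]{RS-III} that a planar graph of radius~$r'$ has treewidth at most~$3r'$. Since~$D$ is planar and has radius~$O(rk)$, this gives~$\tw(D) = O(rk)$.

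Finally, I would apply \cref{TopologicalMinorConsequence} with the bound~$\tw(D) < k' \coloneqq O(rk)$, obtaining that~$G$ has $3$-tree-partition-width~$O((k')^{15}\log^6 k') = O((rk)^{15}\log^6(rk))$, as required. There is no real obstacle: the only small thing to be careful about is that the radius bound must cover dummy vertices as well as original vertices, which adds at most a constant factor that is absorbed into the~$O(\cdot)$.
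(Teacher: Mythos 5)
Your proposal is correct and follows essentially the same route as the paper: bound the radius of the planarisation by $O(rk)$ (the paper states $r(k+1)$, handling dummy vertices just as you do up to an additive term absorbed in the $O(\cdot)$), apply the Robertson--Seymour bound that planar graphs of radius $r'$ have treewidth at most $3r'$, and then invoke \cref{TopologicalMinorConsequence}. No gaps.
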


\subsection*{Acknowledgements} This research was initiated at the \href{https://www.matrix-inst.org.au/events/structural-graph-theory-downunder-ll/}{Structural Graph Theory Downunder II} workshop at the Mathematical Research Institute MATRIX (March 2022).

{
\fontsize{10pt}{11pt}
\selectfont
\let\oldthebibliography=\thebibliography
\let\endoldthebibliography=\endthebibliography
\renewenvironment{thebibliography}[1]{%
\begin{oldthebibliography}{#1}%
\setlength{\parskip}{0.25ex}%
\setlength{\itemsep}{0.25ex}%
}{\end{oldthebibliography}}
 \bibliographystyle{DavidNatbibStyle}
 \bibliography{DavidBibliography}
}

\newpage
\appendix
\section{Omitted Proofs}
\label{Omitted}

Here we provide complete proofs of results mentioned above.

\cTreePartitionWidthTreewidth*

\begin{proof}
    Let ${(V_h \colon h \in V(H))}$ be a $c$-tree-partition of~$G$ with width ${\tpw_c(G)}$. 
    Let ${(W_x \colon x \in V(T))}$ be a tree-decomposition of~$H$ of width $c$. 
    For each~${x \in V(T)}$, let ${U_x \coloneqq \bigcup\{V_h \colon h \in W_x\}}$. 
    We now show that ${(U_x \colon x \in V(T))}$ is a tree-decomposition of $G$. 
    For each edge $vw$ of $G$, if ${v \in V_h}$ and~${w \in V_j}$, then~${h = j}$ or~${hj \in E(H)}$; 
    in both cases, there exists~${x \in V(T)}$ such that ${j,h \in W_x}$, implying ${v,w \in U_x}$. 
    For each vertex~$v$ of~$G$, if~${v \in V_h}$, then ${\{x \in V(T) \colon v \in U_x\}} = {\{ x \in V(T) \colon h \in W_x \}}$; 
    since the latter set induces a subtree of~$T$, so does the former. 
    Thus~${(U_x \colon x \in V(T))}$ is a tree-decomposition of~$G$. 
    By construction, the width is at most~${(c+1) \tpw_c(G) - 1}$. 
\end{proof}

\citet[(8.7)]{RS-V} state that the following lemma is a ``standard result in hypergraph theory''.

\begin{lem}\label{EP-subtrees}
    Let $\FF$ be a set of subtrees of a tree $T$. For every non-negative integer $\ell$, either there are $\ell + 1$ vertex-disjoint trees in $\FF$ or there is a set $S \subseteq V(T)$ of size at most $\ell$ such that $T' \cap S \neq \emptyset$ for all $T' \in \FF$.
\end{lem}

\begin{proof}
Let $G$ be the intersection graph
of $\FF$. If $\alpha(G)\geq\ell+1$ then 
there are $\ell + 1$ vertex-disjoint trees in $\FF$. So we may assume that $\alpha(G) \leq \ell$. Since $G$ is chordal and thus perfect, there is a partition $X_1,\dots,X_\ell$ of $V(G)$ into cliques in $G$. For each $i$, the subtrees in $X_i$ pairwise intersect. By the Helly property, there is a node $x_i$ in every subtree in $X_i$. Thus $S\coloneqq \{x_1,\dots,x_\ell\}$ is the desired set.
\end{proof}

\cref{EP-treewidth} follows from \cref{EP-subtrees}.

\EPtreewidth*

\begin{proof}
    Let $(T, \WW)$ be a tree-decomposition of $G$ with width $\tw(G)$. For each $H \in \HH$, let $T_H$ be the subtree of $T$ induced by $\{x \in V(T) \colon V(H) \cap W_x \neq \emptyset\}$. Since each $H$ is connected, each $T_H$ is a subtree of $T$. Let $\FF = \{T_H \colon H \in \HH\}$. If $\FF$ contains $\ell + 1$ vertex-disjoint trees, then the corresponding $\ell + 1$ graphs in $\HH$ are vertex-disjoint. Hence, by \cref{EP-subtrees}, there is a set $S \subseteq V(T)$ of size at most $\ell$ such that $T_H \cap S \neq \emptyset$ for all $H \in \HH$. Let $Q = \cup_{x \in S} W_x$. This has size at most $\ell(\tw(G) + 1)$ and $Q \cap V(H) \neq \emptyset$ for all $H \in \HH$, as required.
\end{proof}

\begin{prop}
    \label{MinorClosedClass0}
    A minor-closed class~$\GG$ has underlying treewidth~$0$ if and only if~$\GG$ has clustered chromatic number~$1$. 
\end{prop}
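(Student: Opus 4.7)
The plan is to prove the two directions separately. The reverse direction is immediate: if $\GG$ has clustered chromatic number~$1$, then there exists $\ell$ such that every graph $G \in \GG$ has all connected components of size at most~$\ell$. By \cref{c0}, this gives $\tpw_0(G) \leq \ell$ for every $G \in \GG$, and the constant binding function $f \equiv \ell$ witnesses that $\GG$ has underlying treewidth~$0$.

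For the forward direction, suppose $\GG$ has underlying treewidth~$0$ with binding function~$f$, and set $s \coloneqq f(1)$. Since every subgraph of a graph is a minor of it, $\GG$ is closed under subgraphs. Any connected graph $C \in \GG$ satisfies $\abs{V(C)} = \tpw_0(C) \leq f(\tw(C))$ by \cref{c0}. Apply this to $P_n$ and $K_{1,n}$, both connected and of treewidth~$1$: if $P_n \in \GG$ then $n \leq s$, and if $K_{1,n} \in \GG$ then $n+1 \leq s$. By monotonicity, no graph in $\GG$ contains a path on more than $s$ vertices as a subgraph, and no graph in $\GG$ has a vertex of degree exceeding~$s-1$.

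These two exclusions combine to give a uniform bound on component size. Every connected component of any $G \in \GG$ has maximum degree at most $\Delta \coloneqq s-1$ and diameter at most $s-1$ (a pair of vertices at distance $d$ lies on a shortest path of $d+1$ vertices, which is in~$G$). A breadth-first search from any vertex then bounds the size of any component by
\[
    N \;\coloneqq\; 1 + \Delta + \Delta(\Delta-1) + \dots + \Delta(\Delta-1)^{s-2},
\]
a quantity depending only on~$s$ and not on~$G$. Hence every graph in $\GG$ has components of size at most~$N$, which is precisely the statement that $\GG$ has clustered chromatic number~$1$.

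There is no real obstacle here: the key observation is that the single value $f(1)$ simultaneously forces bounded degree and bounded component diameter, whence the uniform size bound follows from a standard counting argument. Minor-closedness enters only through its mild consequence that $\GG$ is monotone.
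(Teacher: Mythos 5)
Your proof is correct, and the backward direction coincides with the paper's. The forward direction, however, takes a different (and less direct) route than the paper. The paper simply takes a spanning tree~$T$ of any non-trivial component of a graph in~$\GG$: since~$\GG$ is minor-closed, ${T \in \GG}$, and~$T$ is connected with~${\tw(T) = 1}$, so~${\abs{V(T)} = \tpw_0(T) \leq f(1)}$ and the component itself has at most~${f(1)}$ vertices, giving clustering~${\max\{1, f(1)\}}$ in one step. You instead apply the same principle only to the special trees~$P_n$ and~$K_{1,n}$, deduce that graphs in~$\GG$ have maximum degree at most~${f(1)-1}$ and components of diameter at most~${f(1)-1}$, and then invoke a breadth-first-search (Moore-type) count. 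This is valid — your use of \cref{c0}, of monotonicity of minor-closed classes, and of the bounds ${n \leq f(1)}$ and ${n+1 \leq f(1)}$ for $P_n$ and $K_{1,n}$ of treewidth~$1$ is all sound, modulo trivial edge cases such as ${f(1) = 1}$ or degenerate terms in the Moore sum, which do not affect the conclusion — but it buys nothing: the clustering bound you obtain is exponential in~${f(1)}$ rather than~${f(1)}$ itself, and the underlying mechanism (small-treewidth connected witnesses inside a large component must lie in~$\GG$ and hence be small) is exactly the paper's, applied to paths and stars instead of to an arbitrary spanning tree. So the proof stands, but the paper's spanning-tree argument is the cleaner instance of the same idea.
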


\begin{proof}
    Say~$\GG$ has clustered chromatic number~$1$. 
    Then there exists~${\ell \in \NN}$ such that every connected graph in~$\GG$ has at most~$\ell$ vertices. Hence~$\GG$ has underlying treewidth~$0$ with treewidth-binding function~${f(k) = \ell}$. 

    Conversely, suppose that~$\GG$ has underlying treewidth~$0$ with treewidth-binding function~$f$, and consider a spanning tree~$T$ of a non-trivial component of a graph in~$\mathcal{G}$. 
    Since~${T \in \mathcal{G}}$ and~${\tw(T) = 1}$, we have~${\abs{V(T)} \leq f(1)}$.
    As such, every non-trivial component of any graph in~$\GG$ has at most~${f(1)}$ vertices and thus~$\GG$ has clustered chromatic number at most~$1$ with clustering~${\max\{1,f(1)\}}$. 
\end{proof}
 
\begin{prop}
    \label{MinorClosedClass1}
    A minor-closed class~$\GG$ has underlying treewidth at most~$1$ if and only if~$\GG$ has clustered chromatic number at most~$2$.
\end{prop}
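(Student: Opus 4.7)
The plan is to prove the two directions separately; both reduce to a uniform bound on tree-partition-width.

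For the forward direction, suppose~$\GG$ is minor-closed with underlying treewidth at most~$1$ and treewidth-binding function~$f$. The first step is to show~$\GG$ has bounded treewidth. If not, then by the grid-minor theorem $\GG$ contains arbitrarily large grid minors, and since $\GG$ is minor-closed and every planar graph is a minor of a sufficiently large grid, $\GG$ contains every planar graph. In particular the fan~$G_{2,\ell}$ from~\cref{StandardExamples} lies in~$\GG$ for every~$\ell$. But~${\tw(G_{2,\ell}) = 2}$ and~${\tpw_1(G_{2,\ell}) > \ell}$, contradicting~${\tpw_1(G_{2,\ell}) \leq f(2)}$ once~${\ell > f(2)}$. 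So there is a constant~$k$ with~${\tw(G) \leq k}$ for all~${G \in \GG}$, and setting~${L \coloneqq \max\{f(0),\dots,f(k)\}}$ gives the uniform bound~${\tpw_1(G) \leq L}$ for all~${G \in \GG}$. For any such~$G$, fix a tree-partition~${(V_x \colon x \in V(T))}$ of~$G$ of width at most~$L$, properly $2$-colour the bipartite tree~$T$, and colour each vertex of~$G$ by the colour of its part. Any monochromatic path in~$G$ stays inside a single part (since adjacent parts of~$T$ receive distinct colours), so every monochromatic component has at most~$L$ vertices, giving clustered chromatic number at most~$2$ with clustering~$L$.

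For the backward direction, the plan is to appeal to existing structural work. By a combination of \citet{DO96}, who characterised minor-closed classes of bounded tree-partition-width, and \citet{NSSW19}, who studied clustered colouring of minor-closed classes, any minor-closed class with clustered chromatic number at most~$2$ has uniformly bounded tree-partition-width: there is a constant~$L$ with~${\tpw_1(G) \leq L}$ for every~${G \in \GG}$. Taking the constant treewidth-binding function~${k \mapsto L}$ then gives underlying treewidth at most~$1$.

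The main obstacle is the backward direction: inferring bounded tree-partition-width from bounded clustered chromatic number within a minor-closed class is not elementary and rests on structural theorems about excluded-minor classes. In contrast, the forward direction is a short chain: the grid-minor theorem forces bounded treewidth (otherwise fans furnish a contradiction), and bipartiteness of the underlying tree then yields the required $2$-colouring.
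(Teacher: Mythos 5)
Your proposal is correct and takes essentially the same route as the paper: the forward direction uses the fans $G_{2,\ell}$ together with their planarity (via the grid-minor theorem) to bound the treewidth of $\GG$, then $2$-colours each graph via the bipartition of the tree in a bounded-width tree-partition, and the backward direction appeals to \citet{NSSW19} (exclusion of the $k$-fan, $k$-fat path and $k$-fat star) combined with the characterisation of \citet{DO96}, just as the paper does. The only cosmetic difference is that you phrase the backward direction as a uniform tree-partition-width bound (which implicitly also uses that excluding a planar fan bounds the treewidth of a minor-closed class), whereas the paper only needs a bound as a function of treewidth; the cited results support either formulation.
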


\begin{proof}
    Say~$\GG$ has clustered chromatic number at most~$2$. 
    By a result of \citet{NSSW19}, for some~${k \in \NN}$, the $k$-fan, the $k$-fat path and the $k$-fat star are not in~$\GG$. 
    By a result of \citet{DO96}, $\GG$ has underlying treewidth at most~$1$. 
    Conversely, suppose that~$\GG$ has underlying treewidth~$1$ with treewidth-binding function~$f$. 
    By \cref{StandardExamples}, some fan graph is not in~$\GG$. 
    Since fan graphs are planar, $\GG$ has bounded treewidth, say at most~$k$. 
    Hence every graph in~$\mathcal{G}$ is contained in~${T \times K_{\ell}}$ for some tree~$T$ and~${\ell \coloneqq \max\{f(1),\dots,f(k)\}}$, and is therefore 2-colourable with clustering~${\ell}$.
\end{proof}

\begin{prop}
    \label{ClusteredApplication}
    Every minor-closed graph class~$\GG$ with underlying treewidth~$c$ has clustered chromatic number at most~${2(c+1)}$.
\end{prop}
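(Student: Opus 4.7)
The plan is to combine the cited \citet{DDOSRSV04} result—that every proper minor-closed class admits a $2$-colouring in which each colour class induces a subgraph of bounded treewidth—with the underlying treewidth hypothesis on~$\GG$. First I would note that~$\GG$ must exclude some complete graph as a minor: otherwise~$\GG$ would contain all complete graphs, hence (being minor-closed) all graphs, but the graphs~$G_{c+1,\ell}$ from \cref{StandardExamples} show the class of all graphs has unbounded underlying treewidth, contradicting the hypothesis. So~$\GG$ is a proper minor-closed class, and \citet{DDOSRSV04} supplies an integer~${k_0 = k_0(\GG)}$ such that every~${G \in \GG}$ admits a partition~${V(G) = A_1 \cup A_2}$ with ${\tw(G[A_i]) \leq k_0}$ for each~${i \in \{1,2\}}$.

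Next I would use the hypothesis. Since~$\GG$ is minor-closed, each induced subgraph~${G_i \coloneqq G[A_i]}$ is in~$\GG$. As~$\GG$ has underlying treewidth~$c$ with treewidth-binding function~$f$, and~${\tw(G_i) \leq k_0}$, we obtain~${\tpw_c(G_i) \leq f(k_0)}$. Thus~$G_i$ admits an $H_i$-partition of width at most~${\ell \coloneqq f(k_0)}$ for some graph~$H_i$ with~${\tw(H_i) \leq c}$; in particular~${\chi(H_i) \leq c+1}$.

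Finally I would assemble the colouring: properly ${(c+1)}$-colour each~$H_i$, lift this to a colouring of~$G_i$ by giving every vertex the colour of its part, and use disjoint palettes of~${c+1}$ colours on~$G_1$ and on~$G_2$, for~${2(c+1)}$ colours total. Every monochromatic component of this colouring is contained in a single part of the $H_i$-partition of the corresponding~$G_i$, so has at most~${\ell = f(k_0)}$ vertices, witnessing clustered chromatic number at most~${2(c+1)}$.

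The only non-routine input is the \citet{DDOSRSV04} theorem, which is quoted as a black box; the rest of the argument is a short gluing step. The one subtlety worth being careful about is that the clustering constant~${f(k_0)}$ must be independent of~$G$, which it is because~$k_0$ depends only on the minor-closed class~$\GG$ (not on~$G$) and~$f$ is the treewidth-binding function of~$\GG$.
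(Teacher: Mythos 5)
Your proof is correct and follows essentially the same route as the paper's: apply the \citet{DDOSRSV04} two-colouring into bounded-treewidth parts, note each part stays in~$\GG$ by minor-closure, and then use the underlying-treewidth hypothesis to ${(c+1)}$-colour each part with bounded clustering, using disjoint palettes. The only cosmetic differences are that the paper takes the clustering bound to be~${\max\{f(0),\dots,f(w)\}}$ rather than~${f(k_0)}$ (guarding against a non-monotone treewidth-binding function, since~${\tw(G[A_i])}$ may be smaller than~$k_0$), and your preliminary check that~$\GG$ is a proper minor-closed class is a detail the paper leaves implicit.
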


\begin{proof}
    Let~$f$ be the treewidth-binding function for~$\GG$. 
    \citet{DDOSRSV04} showed that, for some~${w = w(\GG)}$, for every graph~${G \in \GG}$ there is a partition~${V_1,V_2}$ of~${V(G)}$ such that~${\tw(G[V_i]) \leq w}$ for~${i \in \{1,2\}}$; see \citep{DJMMUW20} for an alternative proof based on graph products. 
    Since~$\GG$ is minor-closed, ${G[V_1], G[V_2] \in \GG}$. 
    Since~$\GG$ has underlying treewidth at most~$c$, for some graphs~$H_1$ and~$H_2$ of treewidth at most~$c$, we have~${G[V_1]}$ is contained in~${H_1 \boxtimes K_\ell}$ and~${G[V_2]}$ is contained in~${H_2 \boxtimes K_\ell}$ where~${\ell \coloneqq \max\{f(0),\dots,f(w)\}}$. 
    Thus~${G[V_1]}$ and~${G[V_2]}$ can both be~${c+1}$ coloured with clustering~${\ell}$. 
    Using distinct colours for~${G[V_1]}$ and~${G[V_2]}$, the graph~$G$ can be~${2(c+1)}$ coloured with clustering~${\ell}$, as required. 
\end{proof}

\begin{prop}
    \label{K1tMinorPartition}
    For~${t \in \NN}$, every $K_{1,t}$-minor-free graph~$G$ has tree-partition-width~${O(t)}$. 
\end{prop}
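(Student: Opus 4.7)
The plan is to prove the stronger statement $\tpw(G) \leq t-1$, directly constructing a path-like tree-partition from BFS layers, thereby bypassing the quadratic loss one would get from invoking \cref{TreePartitionWidthDegree} together with any treewidth/degree bound on $K_{1,t}$-minor-free graphs.

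The key observation is the following: if $G$ is connected and $K_{1,t}$-minor-free, then starting BFS from any root $r$ gives layers $L_0 = \{r\}, L_1, L_2, \dots$ with $\abs{L_i} \leq t-1$ for all $i \geq 1$. Indeed, suppose some $L_i$ contains distinct vertices $v_1, \dots, v_t$, and let $S \coloneqq L_0 \cup \dots \cup L_{i-1}$. The BFS tree truncated at depth $i-1$ spans $S$, so $S$ is connected. Each $v_j$ has its BFS parent in $L_{i-1} \subseteq S$, and the singletons $\{v_1\}, \dots, \{v_t\}$ are pairwise disjoint from $S$ and from each other. Hence $(S, \{v_1\}, \dots, \{v_t\})$ is a model of $K_{1,t}$ as a minor of $G$, contradicting the hypothesis.

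Given this, I would form a tree-partition as follows. For each connected component $C$ of $G$, pick a root $r_C$ and compute BFS layers $L^C_0, L^C_1, \dots, L^C_{d_C}$. Take a path $P^C$ with nodes $x^C_0, x^C_1, \dots, x^C_{d_C}$, and build $T$ by attaching all the paths $P^C$ at a common auxiliary node $z$. Set $V_{x^C_i} \coloneqq L^C_i$ and $V_z \coloneqq \emptyset$. Since every edge of $G$ lies in some component $C$ and, by the BFS property, joins two vertices either within the same $L^C_i$ or in consecutive layers $L^C_i, L^C_{i+1}$, the family $(V_x \colon x \in V(T))$ is a valid $T$-partition of $G$. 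The width equals $\max\{1, t-1\} = t-1$ for $t \geq 2$, and for $t=1$ the graph $G$ is edgeless and trivially has tree-partition-width $1$.

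There is no real obstacle here: the only nontrivial point is the layer-size bound, which is a single direct application of the minor definition; the rest is routine assembly. Since $K_{t-1}$ itself is $K_{1,t}$-minor-free and has tree-partition-width $t-1$, the bound $\tpw(G) \leq t-1$ is tight, so $O(t)$ cannot be improved beyond the multiplicative constant.
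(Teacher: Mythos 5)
Your proof is correct, and it takes a genuinely different route from the paper. The paper's proof first observes that a connected $K_{1,t}$-minor-free graph has no spanning tree with $t$ leaves, then invokes a cited result to conclude that $G$ is a subdivision of a graph on at most $10t$ vertices, and finally builds the tree-partition using the subdivision technique from \cref{monotonesubdivision}, yielding width at most $10t$. Your BFS-layering argument is self-contained and more elementary: the observation that a layer $L_i$ of size $t$ together with the connected union $S = L_0\cup\dots\cup L_{i-1}$ (each $v_j$ having its BFS parent in $L_{i-1}\subseteq S$) gives a $K_{1,t}$-minor model is exactly right, and the layered path-partition per component (glued at an auxiliary node, or simply by joining the component paths directly if one prefers to avoid an empty part) is a valid tree-partition of width $\max\{1,t-1\}$. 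So you get the better bound $\tpw(G)\leq t-1$, and in fact the stronger conclusion that each component admits a path-partition of this width. One small slip in your closing remark: $\tpw(K_{t-1}) = \ceil{(t-1)/2}$, not $t-1$ (all vertices of a complete graph must lie in at most two adjacent bags of the tree, and two bags suffice), so that example does not show $t-1$ is tight; it does still give an $\Omega(t)$ lower bound, so your conclusion that $O(t)$ is best possible up to the multiplicative constant stands.
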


\begin{proof}
    We may assume that~$G$ is connected. 
    Observe that~$G$ has no spanning tree with~$t$ leaves. 
    It follows that~$G$ is a subdivision of a graph~$H$ on at most~${10t}$ vertices; see \citep{DJS01}. 
    Using the method in the proof of \cref{monotonesubdivision}, it is easy to construct a tree-partition of~$G$ with width at most~${10t}$. 
\end{proof}

\begin{prop}
\label{K2tTopoMinorPartition}
    For~${t \in \NN}$, every $K_{2,t}$-topological minor-free graph~$G$ of treewidth~$k$ has $2$-tree-partition-width~${O(t^2k)}$. 
\end{prop}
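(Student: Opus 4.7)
The plan is to apply \cref{Main} to the singleton partition $\beta \coloneqq \{\{v\} \colon v \in V(G)\}$ with $c = 2$ and $\ell = 1$. By \cref{Main}, it suffices to show that $\beta$ is $(2, f)$-disjointed for some function $f$ with $f(n) = O(t^2 n)$; this immediately yields $\tpw_2(G) \leq \max\{12k,\, 4 f(12k)\} = O(t^2 k)$.

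To verify the disjointedness condition, fix $n \in \NN$ and sets $B_1, B_2 \in \beta[n]$ with $|B_1|, |B_2| \leq n$, and let $X$ be a component of $G - (B_1 \cup B_2)$. Set $A \coloneqq N_G(B_1) \cap V(X)$ and $B \coloneqq N_G(B_2 \setminus B_1) \cap V(X)$; the task is to find $Q \subseteq V(X)$ of size $O(t^2 n)$ meeting every $A$--$B$ path in $X$, equivalently so that each component of $X - Q$ is disjoint from $A$ or from $B$. By Menger's theorem (this is precisely where we diverge from the Erd\H{o}s--P\'osa tool \cref{EP-treewidth} used for the other $K_{s,t}$ results), the minimum size of such a separator equals the maximum number $M$ of pairwise vertex-disjoint $A$--$B$ paths in $X$; it suffices to bound $M$.

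The key observation is that each such disjoint path $P$ with endpoints $a \in A, b \in B$ extends in $G$---via edges $u a$ and $b w$ for suitably chosen $u \in B_1 \cap N_G(a)$ and $w \in (B_2 \setminus B_1) \cap N_G(b)$---to a $(u, w)$-path of length at least $2$. Distinct disjoint paths in $X$ produce internally vertex-disjoint extensions, and since $G$ is $K_{2,t}$-topological-minor-free, for any pair $(u, w)$ at most $t - 1$ internally disjoint $(u, w)$-paths of length at least $2$ can exist in $G$: selecting one internal vertex from each of $t$ such paths would exhibit a $K_{2,t}$-subdivision with branch vertices $u, w$ and those internal vertices.

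The main obstacle is sharpening the naive pigeonhole bound $M \leq (t-1) \cdot |B_1| \cdot |B_2 \setminus B_1| \leq (t-1) n^2$, which only delivers $f(n) = O(tn^2)$ and hence $\tpw_2(G) = O(tk^2)$. To reach the target $M = O(t^2 n)$ one uses a nested application of Menger: fix $u \in B_1$ and bound the number of pairwise vertex-disjoint paths in $X$ from $N_G(u) \cap V(X)$ to $B$ by $O(t^2)$, then sum over the $|B_1| \leq n$ choices of $u$. The per-$u$ bound is the technical heart of the argument; it must leverage the $K_{2,t}$-topological-minor-free assumption not only for pairs $(u, w)$ with $w \in B_2 \setminus B_1$ but also for auxiliary pairs arising within $X$, exploiting Menger's theorem applied now inside $X$ restricted to a suitable neighborhood of $u$. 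Without this refined step---and with the weaker Erd\H{o}s--P\'osa \cref{EP-treewidth} in place of Menger---one picks up an additional $\log k$ factor and the linear dependence on $k$ is lost.
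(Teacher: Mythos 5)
Your reduction via \cref{Main} to showing that the singleton partition is $(2,f)$-disjointed with $f(n)=O(t^2n)$, the use of Menger's theorem to turn a bound on the maximum number $M$ of disjoint $(A,B)$-paths into the separator $Q$, and the observation that at most $t-1$ disjoint paths can be charged to a fixed pair $(u,w)$ with $u\in B_1$, $w\in B_2\setminus B_1$ (else a $K_{2,t}$-subdivision with branch vertices $u,w$) all match the paper's argument. But the step that actually delivers $M=O(t^2n)$ is precisely the step you leave unproved, and the route you sketch for it fails. There is no bound of $O(t^2)$, for a fixed $u\in B_1$, on the number of pairwise disjoint paths in $X$ from $N_G(u)\cap V(X)$ to $B$: take $G$ to be the fan with centre $u$ dominating a path $a_1a_2\dots a_n$, attach to each $a_i$ a pendant path $a_i b_i w_i$, and set $B_1=\{u\}$, $B_2=\{w_1,\dots,w_n\}$. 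Any two vertices of degree at least $3$ in this graph (namely $u$ and the $a_i$) are joined by at most two internally disjoint paths of length at least two, so $G$ is $K_{2,3}$-topological-minor-free; yet the single component $X$ of $G-(B_1\cup B_2)$ contains the $n$ pairwise disjoint paths $a_ib_i$ from $N_G(u)\cap V(X)$ to $B$. The per-pair bound $t-1$ only controls paths charged to one pair $(u,w)$, and a single $u$ can be paired with $\Omega(n)$ distinct $w\in B_2\setminus B_1$, so no ``nested Menger'' argument can cap the per-$u$ count independently of $n$.

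The missing idea is a global count of the charged pairs via the extremal function for excluded $K_{2,t}$-subdivisions. Let $\HH$ be a maximum family of pairwise disjoint $(A,B)$-paths, charge each $P\in\HH$ to a pair $(x_1,x_2)$ with $x_1\in B_1$ and $x_2\in B_2\setminus B_1$ adjacent to the ends of $P$, and let $G'$ be the graph on vertex set $S\coloneqq B_1\cup B_2$ whose edges are the charged pairs. Since the paths in $\HH$ are pairwise disjoint, $G'$ is a topological minor of $G$, hence itself $K_{2,t}$-topological-minor-free, and therefore $\abs{E(G')}\leq \gamma t\abs{S}$ for an absolute constant $\gamma$ \citep{ReedWood16}. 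Combined with your per-pair bound this gives $\abs{\HH}\leq (t-1)\gamma t\abs{S}=O(t^2n)$, and Menger then yields the separator of size $O(t^2n)$ exactly as you intend. Note that in the fan example the count is charged against $\abs{B_2}$, which is why the bound must be taken against $\abs{S}$ rather than vertex-by-vertex over $B_1$. So your proposal is correct in outline and uses the same high-level tools as the paper, but it is incomplete at its technical heart, and the specific repair you propose (a per-$u$ bound of $O(t^2)$) is false.
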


\begin{proof}
    By \cref{Main} it suffices to show that the singleton partition of~$G$ is $(2,f)$-disjointed, where~${f(n) \in O(t^2n)}$. 
    To this end, 
    let~${S_1,S_2}$ be subsets of~${V(G)}$ of size at most~$n$, let~${S \coloneqq S_1 \cup S_2}$, let~${S'_1 \coloneqq S_1}$ and let~${S'_2 \coloneqq S_2 \setminus S_1}$. 
    Let~${A_i \coloneqq N_G(S'_i) \setminus S}$. 
    Let~$\HH$ be a maximum-sized set of pairwise disjoint $({A_1,A_2)}$-paths in~${G-S}$. 
    For each path~${P \in \HH}$ there are vertices~${x_1 \in S'_1}$ and~${x_2 \in S'_2}$ respectively adjacent to the end-vertices of~$P$. 
    Charge~$P$ to~${(x_1,x_2)}$. 
    Since~$G$ is $K_{2,t}$-topological-minor-free, at most~${t-1}$ paths in~$\HH$ are charged to each pair~${(x_1,x_2)}$ with~${x_i \in S'_i}$. 
    Let~$G'$ be the graph with vertex-set~$S$, where~${x_1x_2 \in E(G')}$ if some path in~$\HH$ is charged to~${(x_1,x_2)}$. 
    Thus~$G'$ is a topological-minor of~$G$, and~$G'$ is $K_{2,t}$-topological-minor-free. 
    Thus~${\abs{E(G')} \leq \gamma t \abs{S}}$ for some absolute constant~$\gamma$; see \citep{ReedWood16}. 
    Hence~${\abs{\HH} \leq \gamma t^2\abs{S}}$. 
    Let~${f(n) \coloneqq \gamma t^2 2 n}$. 
    By Menger's Theorem, there is a set~${Q \subseteq V(G-S)}$ of size at most~${\gamma t^2\abs{S} \leq f(n)}$, such that there is no ${(A_1,A_2)}$-path in $G-(S\cup Q)$. 
    So every component~$Y$ of~${G-(S\cup Q)}$ satisfies~${Y \cap N_{G}(S'_i) = \emptyset}$ for some~${i \in \{1,2\}}$. 
    Hence the singleton partition of~$G$ is $(2,f)$-disjointed.
\end{proof}

\begin{prop}
    \label{K3tTopoMinorPartition}
    For~${t \in \NN}$, every $K_{3,t}$-topological minor-free graph~$G$ of treewidth~$k$ has $3$-tree-partition-width~${O(t^3 k^2)}$.
\end{prop}

\begin{proof}
    By \cref{Main} it suffices to show that the singleton partition of~$G$ is ${(3,f)}$-disjointed, 
    where~${f(n) \in O(k t^3 n)}$. To this end,
    let~$S_1, S_2, S_3$ be subsets of~${V(G)}$ of size at most~$n$, let~${S \coloneqq S_1 \cup S_2 \cup S_3}$, and for~${i \in \{1,2,3\}}$ let~${S'_i \coloneqq S_i \setminus (S_1 \cup \dots \cup S_{i-1})}$. 
    Let~$\HH$ be the set of connected subgraphs~$H$ of~${G-S}$ such that~${V(H) \cap N_G(S'_i) \neq \emptyset}$ for each~${i \in \{1,2,3\}}$. 
    
    Consider any set~$\JJ$ of pairwise vertex-disjoint graphs in~$\HH$. 
    Assign subgraphs of~$\JJ$ to pairs of vertices in~$S$ as follows. 
    Initially, no subgraphs of~$\JJ$ are assigned. 
    If there is an unassigned subgraph~${H \in \JJ}$ adjacent to vertices~${x \in S'_i}$ and~${y \in S'_j}$, for some distinct~${i,j \in \{1,2,3\}}$, and no subgraph in~$\JJ$ is already assigned to~${\{x,y\}}$, then assign~$H$ to~${\{x,y\}}$. 
    Repeat this operation until no more subgraphs in~$\JJ$ can be assigned. 
    
    Let~$\JJ_1$ and~$\JJ_2$ be the sets of assigned and unassigned subgraphs in~$\JJ$ respectively. 
    Let~$G'$ be the graph obtained from~$G$ as follows: 
    for each~${H \in \JJ_1}$ assigned to~$\{x,y\}$, contract an ${(x,y)}$-path through~$H$ down to a single edge~$xy$. 
    Delete any remaining vertices not in~$S$. 
    Thus~$G'$ is a topological minor of~$G$ with~${V(G') = S}$ and~${\abs{E(G')} \geq \abs{\JJ_1}}$. 
    Consider a subgraph~${H \in \JJ_2}$. 
    Since~${H \in \HH}$ there are neighbours~${v_1,v_2,v_3}$ of~$H$ with~${v_i \in S'_i}$ for each~${i \in \{1,2,3\}}$. 
    Since~${H \in \JJ_2}$, the graph~$G'[\{v_1,v_2,v_3\}]$ is a triangle (otherwise~$H$ could have been assigned to some non-adjacent~$v_i$ and~$v_j$). 
    Charge~$H$ to~${(v_1,v_2,v_3)}$. 
    
    Suppose that~${X_1,\dots,X_t}$ are distinct subgraphs in~$\JJ_2$ charged to $(v_1,v_2,v_3)$ for some triangle~${G'[\{v_1,v_2,v_3\}]}$ with~${v_i \in S'_i}$. 
    Fix~${j \in \{1,\dots,t\}}$. 
    Let~$w_i$ be a neighbour of~$v_i$ in~$X_j$. 
    Let~$Y_j$ be a minimal connected subgraph of~$X_j$ including~${w_1,w_2,w_3}$. 
    Thus~$Y_j$ is a tree, in which each leaf is one of~$w_1$, $w_2$ or~$w_3$, which are not necessarily distinct vertices. 
    Hence~$Y_j$ is a subdivision of a star with at most three leaves. 
    If~$Y_j$ has exactly three leaves, then let~$x_j$ be the central vertex of this star. 
    Otherwise~$Y_j$ is a path. 
    In this case, without loss of generality, the endpoints of~$Y_j$ are~$w_1$ and~$w_2$ and we set~${x_j \coloneqq w_3}$. 
    
    Now, ${Y_1 \cup \dots \cup Y_t}$ together with the edges~${v_1w_1}$, ${v_2w_2}$ and~${v_3w_3}$ form a $K_{3,t}$-topological minor with branch vertices~${v_1,v_2,v_3,x_1,\dots,x_t}$. 
    This contradiction shows that there are at most~${t-1}$ subgraphs in~$\JJ_2$ charged to~$(v_1,v_2,v_3)$ for each triangle~${G'[\{v_1,v_2,v_3\}]}$ with~${v_i \in S'_i}$. 
    
    Since $G'$ is $K_{3,t}$-topological-minor-free, $\abs{E(G')} \leq \gamma t \abs{S}$ for some constant $\gamma$; see \citep{ReedWood16}. 
    Thus ${\abs{\JJ_1} \leq \gamma t \abs{S}}$. 
    Moreover, $G'$ is $2\gamma t$-degenerate. 
    Every $d$-degenerate graph on~$n$ vertices has at most $\binom{d}{2}n$ triangles~\citep[Lemma 18]{Wood16}.
    Thus $\abs{\JJ_2} \leq (t-1) \binom{2\gamma t}{2}\abs{S}$. 
    Hence $\abs{\JJ} = \abs{\JJ_1} + \abs{\JJ_2} < 2 \gamma^2 t^3 \abs{S} \leq 6 \gamma^2 t^3n$. 
    Define~$f(n) \coloneqq (k+1) 6 \gamma^2 t^3 n$. By \cref{EP-treewidth}, there is a set~${Q \subseteq V(G-S)}$ of size at most~${f(n)}$ such that no subgraph of~${G-(S\cup Q)}$ is in~$\HH$. 
    So every component~$Y$ of~${G-(S\cup Q)}$ satisfies~${Y \cap N_{G}(S'_i) = \emptyset}$ for some~${i \in \{1,2,3\}}$. 
    Hence the singleton partition of~$G$ is $(3,f)$-disjointed.
\end{proof}

\JournalArxiv{}{
\section{Other Underlying Parameters}
\label{UnderlyingParameters}

It is natural to consider other underlying parameters\footnote{A graph parameter is a function $\theta$ such that $\theta(G)\in \REALS$ for every graph $G$ and $\theta(G_1)=\theta(G_2)$ for all isomorphic graphs $G_1$ and $G_2$}. For a graph parameter $\theta$ and a graph class $\GG$, the \defn{underlying $\theta$} of $\GG$ is the minimum $c\in\NN$ such that, for some function $f$, every graph $G\in \GG$ is contained in $H\boxtimes K_{f(\theta(G))}$ for some graph $H$ where $\theta(H)\leq c$. Equivalently, $G$ has an $H$-partition of width at most $f(\theta(G))$ where $\theta(H)\leq c$. 

First we consider some parameters closely related to treewidth. 
A \defn{path-decomposition} is a $P$-decomposition for any path~$P$. 
The \defn{pathwidth $\pw(G)$} of a graph~$G$ is the minimum width of a path-decomposition of $G$. The \defn{bandwidth $\bw(G)$} of a graph $G$ is the minimum integer $k$ such that there is a linear order $v_1,\dots,v_n$ of $V(G)$ such that $\abs{i - j} \leq k$ for each edge $v_i v_j\in E(G)$. 
It is well-known that $\tw(G)\leq\pw(G)\leq\bw(G)$ for every graph~$G$. 
The \defn{treedepth} $\td(G)$ of a graph $G$ is the minimum vertex-height of a rooted tree $T$ such that $G$ is a subgraph of the closure of $T$, where the \defn{vertex-height} of a rooted tree is the maximum number of vertices in a root--leaf path. It is well-known that $\tw(G)\leq\pw(G)\leq\td(G)-1$ for every graph~$G$. 

Some underlying parameters are trivial. For example, it is well-known that every graph of bandwidth $k$ is contained in $P \boxtimes K_k$ where $P$ is a path (with bandwidth 1). Thus the class of all graphs has underlying bandwidth $1$. 
Since connected graphs with bounded treedepth and bounded degree have a bounded number of vertices, any class of graphs with bounded maximum degree has underlying treedepth 0. 

\cref{TreewidthUnderlyingTreewidth} shows that the underlying treewidth of the class of graphs of treewidth at most $k$ equals $k$. Similarly, the underlying treewidth of the class of pathwidth $k$ graphs equals $k$, and the underlying treewidth of the class of treedepth $k+1$ graphs equals $k$ (since $\pw(C_{c, \ell}) = \td(C_{c, \ell}) -1= c$ and $\tw(G)\leq\pw(G)\leq\td(G)-1$ for every graph $G$).

We now show that trees have unbounded underlying pathwidth and unbounded underlying treedepth. This is implied by the following result, since the pathwidth of a tree is at most its treedepth minus 1.

\begin{prop}\label{UnderlyingPathwidth}
For all $h\in\NN_0$ and $t\in\NN$, there exists a tree $T$ with radius at most $h$ such that if $T$ is contained in $H \boxtimes K_t$ then $\pw(H)\geq h$ and $\td(H)\geq h+1$. 
\end{prop}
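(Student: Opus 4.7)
The plan is to construct $T$ as a sufficiently branchy rooted tree of radius exactly $h$, and to prove by induction on $h$ that any $H$-partition of $T$ with parts of size at most $t$ forces $H$ to contain, as a topological minor, the complete ternary tree $Y_h$ of depth $h$ (where $Y_0$ is a single vertex and, for $h\geq 1$, $Y_h$ has a root with three subtrees each a copy of $Y_{h-1}$). The standard fact $\pw(Y_h)=h$, combined with the inequality $\td(G)\geq\pw(G)+1$ for every graph $G$ and the minor-monotonicity of $\pw$, then gives $\pw(H)\geq h$ and $\td(H)\geq h+1$.

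Construction. Set $s_0:=1$. For $h\geq 1$, let $T_h$ consist of a root $r$ adjacent to the roots of $N_h$ disjoint copies of $T_{h-1}$, where $N_h:=3(t-1)s_{h-1}+t+2$ and $s_h:=1+N_h\,s_{h-1}$. Then $T_h$ has radius exactly $h$.

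Strengthened inductive claim. For every $H$-partition $(V_x:x\in V(H))$ of $T_h$ of width at most $t$, the graph $H$ contains $Y_h$ as a topological minor whose root is the part $x_0$ containing $r$, and all of whose vertices lie in the image $\{y\in V(H):V_y\cap V(T_h)\neq\emptyset\}$. The base case $h=0$ is immediate. For the step, let $r_1,\ldots,r_{N_h}$ be the children of $r$ and $T^{(i)}$ the copy of $T_{h-1}$ rooted at $r_i$. Since $|V_{x_0}|\leq t$, at most $t-1$ indices lie in $I_0:=\{i:V_{x_0}\cap V(T^{(i)})\neq\emptyset\}$. For every $i\notin I_0$ the partition restricts to an $H^{(i)}$-partition of $T^{(i)}$ of width at most $t$, where $H^{(i)}:=H[\{y:V_y\cap V(T^{(i)})\neq\emptyset\}]$ does not contain $x_0$. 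By induction, $H^{(i)}$ contains a topological $Y_{h-1}$ minor rooted at $x_i$ (the part of $r_i$) and entirely inside $V(H^{(i)})$.

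Main step. We select three indices $i_1,i_2,i_3\notin I_0$ for which the sets $V(H^{(i_1)}),V(H^{(i_2)}),V(H^{(i_3)})$ are pairwise disjoint; then the three topological $Y_{h-1}$ minors, together with the edges $x_0x_{i_j}$ from $x_0$ to their roots, assemble into a topological $Y_h$ minor of $H$ rooted at $x_0$. The main obstacle, and the reason the branching factor must be large, is ensuring this disjointness: a single part $V_y$ may a priori intersect several subtrees $T^{(i)}$ at once. To bound this sharing, form the overlap graph $O$ on $\{1,\ldots,N_h\}\setminus I_0$ with $ij\in E(O)$ iff $V(H^{(i)})\cap V(H^{(j)})\neq\emptyset$. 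Each part $V_y\in V(H^{(i)})$ contains at least one vertex of $T^{(i)}$ and at most $t-1$ further vertices of $T_h$, which lie in at most $t-1$ distinct other subtrees; hence $\deg_O(i)\leq (t-1)|V(H^{(i)})|\leq (t-1)s_{h-1}$. The greedy bound $\alpha(O)\geq |V(O)|/(\Delta(O)+1)$ then gives an independent set in $O$ of size at least $(N_h-(t-1))/((t-1)s_{h-1}+1)\geq 3$ by the choice of $N_h$, providing the desired three indices and completing the induction.
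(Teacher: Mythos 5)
Your proposal is correct and follows essentially the same route as the paper's proof: a recursively built tree with large branching factor, an induction showing that any width-$t$ quotient $H$ must contain the complete ternary tree of height $h$ rooted at the part of the root, and a conclusion via $\pw$ of the ternary tree together with $\td \geq \pw + 1$. The only difference is bookkeeping in the selection of three subtrees with pairwise disjoint part-sets (your overlap-graph/independent-set count versus the paper's three-stage greedy exclusion with branching factor $(2n+1)t$), and that you record the ternary tree as a topological minor rather than a subgraph; both variants are fine.
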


\begin{proof}
Let $(X_h,\tilde{x})$ be the complete ternary tree with height $h$ rooted at $\tilde{x}$. We prove the following induction hypothesis: for all $h\in\NN_0$ and $t\in\NN$, there exists a rooted tree $(T_h,r)$ where $\dist_{T_h}(v,r)\leq h$ for all $v \in V(T_{h})$ such that for every $H$-partition $(V_x \colon x\in V(H))$ of $T_h$ with width at most $t$, $H$ contains $(X_h,\tilde{x})$ as a subgraph with $r\in V_{\tilde{x}}$. Since $\pw(X_h)=h$ and
$\td(X_h)=h+1$, this implies the claim. 
    
We proceed by induction on $h$. For $h=0$, the claim holds trivially by setting $(T_0,r)$ to be a single vertex $r$. 
    
Now suppose $h>0$. Let $(T_{h-1},r')$ be given by the induction hypothesis and let $n = \abs{V(T_{h-1})}$. Let $m=(2n+1)t$ and let $(T_{h},r)$ be obtained by taking $m$ copies $((T_{h-1,1},r_1),\dots, (T_{h-1,m},r_{m}))$ of $(T_{h-1},r')$ plus a vertex $r$ along with the edges $rr_i$ for all $i\in \{1,\dots,m\}$. Since $\dist_{T_{h-1,i}}(v_i,r_i)\leq h-1$ for all $v_i\in V(T_{h-1,i})$, we have $\dist_{T_{h}}(v,r)\leq h$ for all $v\in V(T_{h})$. 
    
    Let $(V_x \colon x\in V(H))$ be an $H$-partition of $T_h$ with width at most $t$. Let $\tilde{x}\in V(H)$ be such that $r\in V_{\tilde{x}}$. Let $X \coloneqq \{i \in \{1,\dots,m\} \colon V(T_{h-1,i})\cap V_{\tilde{x}}=\emptyset\}$. Since $\abs{V_{\tilde{x}}} \leq t$ it follows that $\abs{X} \geq m-(t-1)\geq 2nt+1$. Choose any $j\in X$ and let $A\coloneqq\{x\in V(H) \colon V_x\cap V(T_{h-1,j})\neq \emptyset\}$. Let $Y\coloneqq\{i \in X \colon V(T_{h-1,i})\cap (\bigcup_{x\in A}V_x)=\emptyset\}$. Since $\abs{A}\leq n$, it follows that $\abs{Y} \geq \abs{X}-nt\geq nt+1$. Choose any $k\in Y$ and let $B\coloneqq\{x\in V(H) \colon V_x\cap V(T_{h-1,k})\neq \emptyset\}$. Let $Z \coloneqq \{i \in Y \colon V(T_{h-1,i}) \cap (\bigcup_{x\in B} V_x) = \emptyset\}$. As before, $\abs{Z} \geq \abs{Y}-nt\geq 1$. Choose any $\ell \in Z$ and let $C \coloneqq \{x \in V(H) \colon V_x\cap V(T_{h-1,\ell})\neq \emptyset\}$. 
    
    By construction, $\{r\}, A,B$ and $C$ are pairwise disjoint. Let $(i,I)\in \{(j,A),(k,B),(\ell,C)\}$. Since $(V_x\cap V(T_{h-1,i}) \colon x\in I)$ is a partition of $T_{h-1,i}$, it follows by induction that $H[I]$ contains $(H_{h-1,i},x_{i})$ as a subgraph where $r_{i}\in V_{x_{i}}$. Thus $H[ \{\tilde{x}\}\cup A \cup B \cup C]$ contains the desired complete ternary tree.
\end{proof}

We now consider the underlying chromatic number of a graph class, which is closely related to its clustered chromatic number (defined in \cref{LowerBounds}). 

\begin{thm}\label{uchi-cchi}
Let $\GG$ be a graph class.
\begin{enumerate}[label = \textnormal{(\roman*)}]
    \item The underlying chromatic number of $\GG$ is at most the clustered chromatic number of $\GG$.
    \item There is a function $f$ such that if $\GG$ has underlying chromatic number at most $c$, then every $G \in \GG$ can be $c$-coloured with clustering $f(\chi(G))$. 
    \item In particular, if $\GG$ has bounded chromatic number, then the underlying chromatic number of $\GG$ is equal to the clustered chromatic number of $\GG$.
\end{enumerate}
\end{thm}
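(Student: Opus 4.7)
The three parts are tightly linked and each follows from unpacking the definitions, so the plan is really just to verify that the $H$-partition/colouring dictionary behaves as expected.

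For part~(i), suppose that $\GG$ has clustered chromatic number $c$, witnessed by some $\ell \in \NN$: every $G \in \GG$ admits a $c$-colouring with clustering $\ell$. Such a colouring gives an $H$-partition of $G$ of width at most $\ell$ whose quotient $H$ is obtained by taking one vertex per monochromatic component and joining two vertices if the corresponding components have an edge between them; since the colour classes are independent sets, $\chi(H) \leq c$. Thus $G$ is contained in $H \boxtimes K_\ell$ with $\chi(H) \leq c$, and the constant function $\chi(G) \mapsto \ell$ serves as the chromatic-binding function. Hence the underlying chromatic number is at most~$c$.

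For part~(ii), suppose the underlying chromatic number of $\GG$ is at most $c$ with binding function $f$. Given $G \in \GG$, fix an $H$-partition $(V_x \colon x \in V(H))$ of $G$ of width at most $f(\chi(G))$ with $\chi(H) \leq c$, and take any proper $c$-colouring $\phi$ of $H$. Colour each $v \in V_x$ with $\phi(x)$: this is a proper $c$-colouring of $G$ because parts with the same $\phi$-value are non-adjacent in $H$ and therefore carry no edges of $G$ between them. Consequently every monochromatic component lies entirely inside a single part $V_x$, so each monochromatic component has at most $f(\chi(G))$ vertices. This proves~(ii) with the same function~$f$.

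Part~(iii) is then immediate: if $\chi(G) \leq k$ for all $G \in \GG$, then the conclusion of~(ii) gives a $c$-colouring with clustering at most $f(k)$, a constant, so the clustered chromatic number of $\GG$ is at most the underlying chromatic number. Combined with~(i), equality holds.

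There is no serious obstacle here; the only point worth double-checking in the write-up is the verification in~(ii) that monochromatic components cannot spread across multiple parts, which is exactly the defining property of an $H$-partition together with the properness of~$\phi$.
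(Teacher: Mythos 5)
Your proposal is correct and takes essentially the same route as the paper: both directions are just the dictionary between clustered colourings and partitions with $c$-colourable quotients — for (i) the quotient on monochromatic components gives $H$ with $\chi(H)\leq c$ and constant width, for (ii) a proper colouring of $H$ is pulled back to the parts, and (iii) follows by combining the two under bounded $\chi$. One small slip to fix in the write-up: in (ii) the induced colouring of $G$ is generally \emph{not} proper (a part $V_x$ may contain edges of $G$), but this is harmless since clustered colourings are not required to be proper — the only fact you use, correctly, is that distinct parts of the same colour carry no edges between them, so every monochromatic component lies inside one part; and in (iii) the clustering bound should be $\max\{f(1),\dots,f(k)\}$ rather than $f(k)$ unless $f$ is assumed nondecreasing.
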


\begin{proof}
We first prove (i). If $\GG$ has unbounded clustered chromatic number, then there is nothing to prove. Otherwise we may suppose that $\GG$ has clustered chromatic number $c$. Then for some $k \in \NN$ every graph $G \in \GG$ is $c$-colourable with clustering $k$. So each $G \in \GG$ is contained in $H \boxtimes K_k$ for some complete $c$-colourable graph $H$. This proves (i).

Now suppose that $\GG$ has underlying chromatic number $c$. That is, there is a function $f$ such that every $G \in \GG$ is contained in $H \boxtimes K_{f(\chi(G))}$ for some $c$-colourable $H$. A $c$-colouring of $H$ determines a $c$-colouring of $G$ with clustering $f(\chi(G))$. This proves (ii).

Now suppose that $\GG$ has bounded chromatic number, $\chi(\GG)$. Then $\GG$ has bounded underlying chromatic number $c$. By (ii), every $G \in \GG$ can be $c$-coloured with clustering $\max\{f(1), \dotsc, f(\chi(\GG))\}$ and so $\GG$ has clustered chromatic number at most $c$. Together with (i) this proves (iii).
\end{proof}

Despite (i) and (iii), the underlying and clustered chromatic numbers of a graph class are not necessarily equal. Trivially, the class of complete graphs has underlying chromatic number $1$ but unbounded clustered chromatic number. Our next result gives a variety of classes with unbounded underlying chromatic number.

A \defn{blow-up} of a graph $G$ is a graph obtained by replacing each vertex $v$ of $G$ by an independent set $I_v$ and each edge $uv$ by a complete bipartite graph between $I_u$ and $I_v$. Let $G(t)$ be the blow-up of $G$ where $\abs{I_v} = t$ for each $v\in V(G)$.
As an example, complete $r$-partite graphs are exactly the blow-ups of $K_r$. A family of graphs $\GG$ is \defn{closed under taking blow-ups} if for every $G \in \GG$ every blow-up of $G$ is also in $\GG$.

\begin{thm}\label{uchi-blowups}
    For every graph class $\GG$, if $\GG$ is closed under taking blow-ups and has unbounded chromatic number, then $\GG$ has unbounded underlying chromatic number.
\end{thm}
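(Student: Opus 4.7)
The plan is to argue by contradiction, leveraging the clustered-colouring characterisation in \cref{uchi-cchi}~(ii). Suppose $\GG$ has underlying chromatic number at most some $c \in \NN$. Then there is a function $f$ such that every $G \in \GG$ admits a proper $c$-colouring with clustering at most $f(\chi(G))$. I will show that this forces $\chi(G) \leq c$ for every $G \in \GG$, contradicting the unboundedness of the chromatic number on $\GG$.

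Given $G \in \GG$, set $k \coloneqq \chi(G)$ and choose $t \coloneqq c\bigl(f(k)+1\bigr)$. Since $\GG$ is closed under taking blow-ups, $G(t) \in \GG$, and since replacing each vertex by an independent set and each edge by a complete bipartite graph does not change chromatic number, $\chi(G(t)) = k$. Hence there is a proper $c$-colouring $\phi$ of $G(t)$ in which every monochromatic component has at most $f(k)$ vertices.

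The heart of the argument is to extract a proper $c$-colouring $\psi$ of $G$ from $\phi$ by pigeonhole. For each $v \in V(G)$, the $t$ copies of $v$ in $G(t)$ form an independent set, so some colour $\psi(v) \in \{1,\dots,c\}$ is used on at least $\lceil t/c \rceil \geq f(k)+1$ of them. I claim $\psi$ is a proper colouring: if $uv \in E(G)$ and $\psi(u) = \psi(v)$, then in $G(t)$ each of the $\geq f(k)+1$ copies of $u$ coloured $\psi(u)$ is adjacent (via the blow-up definition) to each of the $\geq f(k)+1$ copies of $v$ coloured $\psi(v)$, so these vertices form a monochromatic complete bipartite subgraph, hence lie in a single monochromatic component of $\phi$ of size at least $2\bigl(f(k)+1\bigr) > f(k)$. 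This contradicts the clustering bound, so $\psi(u) \neq \psi(v)$ and $\chi(G) \leq c$.

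Since this holds for every $G \in \GG$, we conclude $\chi(\GG) \leq c$, contradicting the hypothesis. The only substantive step is the observation that a complete bipartite join between two equal-colour classes merges them into a single monochromatic component; the rest is pigeonhole and the choice $t \coloneqq c(f(k)+1)$. No obstacle is anticipated beyond verifying the constant, which is why this approach works cleanly for any graph class closed under blow-ups.
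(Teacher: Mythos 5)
Your proof is correct and takes essentially the same route as the paper: blow up each vertex enough to pigeonhole a popular colour for it, and observe that an edge of $G$ between two large equal-coloured sides of the blow-up would merge them into a single monochromatic component exceeding the clustering bound, so the extracted $c$-colouring of $G$ is proper. One wording fix: the colouring supplied by \cref{uchi-cchi}~(ii) is an \emph{improper} $c$-colouring with bounded clustering, not a proper one, but since your argument only uses the clustering bound nothing breaks.
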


\begin{proof}
    Suppose that $\GG$ has underlying chromatic number at most $c$. Since $\GG$ has unbounded chromatic number it contains some graph $H$ with $k \coloneqq \chi(H) > c$. Every blow-up of $H$ has chromatic number $k$. By \cref{uchi-cchi} (ii), every blow-up of $H$ can be $c$-coloured with clustering $f(k)$.
    Consider $G \coloneqq H(c\,f(k))$ and denote its parts by $V_1, \dotsc, V_{\abs{H}}$. As $\GG$ is closed under taking blow-ups, $G \in \GG$. Consider a $c$-colouring of $G$. Since $\abs{V_i} = c\,f(k)$, there exists a monochromatic subset $V'_i \subset V_i$ of size $f(k)$. The colouring of $G[V'_1 \cup \dotsb \cup V'_{\abs{V(H)}}]$ corresponds to a $c$-colouring of $H$. Since $\chi(H) > c$, there exists $i, j$ such that $V'_i$, $V'_j$ are adjacent parts and have the same colour. Then $V'_i \cup V'_j$ is a monochromatic component of size $2f(k) > f(k)$. Hence, $G$ has no $c$-colouring with clustering $f(k)$, which is a contradiction.
\end{proof}

Recall that a graph is \defn{$H$-free} if it contains no (not necessarily induced) subgraph isomorphic to $H$ and we denote the class of $H$-free graphs by $\GG_H$. For general $H$, the class $\GG_H$ is not necessarily closed under taking blow-ups (for example, $K_3$ is $C_5$-free but $K_3(2)$ is not). However, $\GG_H$ does contain a very natural subfamily that is closed under taking blow-ups. Recall that a function $\varphi \colon V(H) \to V(G)$ is a \defn{graph homomorphism} if it is edge-preserving. There is a graph homomorphism $H \to G$ if and only if $H$ is a subgraph of some blow-up of $G$. Let $\GG_{H \not\to}$ be the class of graphs \defn{$H$-homomorphism-free graphs}: those graphs $G$ for which there is no homomorphism from $H$ to $G$. This is, equivalently, the class of graphs whose blow-ups are all $H$-free. Now $\GG_{H\not\to}$ is closed under taking blow-ups and is a subfamily of $\GG_H$. 

\begin{thm}\label{uchi-Hhomfree}
    For any graph $H$, the class $\GG_{H\not\to}$ of $H$-homomorphism-free graphs has unbounded underlying chromatic number if and only if $H$ is not bipartite.
\end{thm}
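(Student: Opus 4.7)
My plan is to split into two directions based on whether $H$ is bipartite.

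For the easy direction, suppose $H$ is bipartite. If $H$ has at least one edge then $H$ admits a homomorphism to $K_2$, which composed with the inclusion $K_2 \hookrightarrow G$ gives a homomorphism $H \to G$ whenever $G$ contains an edge. Hence $\GG_{H\not\to}$ consists only of edgeless graphs, which have chromatic number at most $1$ and so trivially have underlying chromatic number at most $1$ (take the quotient in the product decomposition to be $G$ itself, with $K_\ell = K_1$). If $H$ is edgeless then every nonempty graph admits a constant homomorphism from $H$, so $\GG_{H\not\to}$ contains only the empty graph, and the underlying chromatic number is~$0$.

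For the harder direction, suppose $H$ is not bipartite and let $g$ be its odd girth. Since $\GG_{H\not\to}$ is closed under taking blow-ups (as noted in the paper), by \cref{uchi-blowups} it suffices to show that $\GG_{H\not\to}$ has unbounded chromatic number. The key observation is that a homomorphism $H \to G$ sends an odd cycle of length $g$ in $H$ to a closed walk of odd length $g$ in $G$, which must contain an odd cycle of length at most $g$. Hence every graph $G$ of odd girth greater than $g$ lies in $\GG_{H\not\to}$. I then invoke the classical theorem of Erdős that, for every $k$, there is a graph of chromatic number at least $k$ and girth greater than $g$. These graphs all lie in $\GG_{H\not\to}$ and exhibit unbounded chromatic number.

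There is no substantial obstacle here. The only points requiring any thought are the standard fact that an odd closed walk contains an odd cycle of no greater length (which yields the odd-girth criterion for being $H$-homomorphism-free) and the appeal to the Erdős high-girth, high-chromatic-number construction. The bipartite case is essentially immediate once one observes the composition $H \to K_2 \to G$.
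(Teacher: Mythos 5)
Your proof is correct and follows essentially the same route as the paper: in the bipartite case compose $H \to K_2 \to G$ to reduce to edgeless graphs, and in the non-bipartite case use that graphs of girth exceeding the (odd) girth of $H$ lie in $\GG_{H\not\to}$, then combine the Erd\H{o}s high-girth, high-chromatic-number construction with \cref{uchi-blowups}. The only difference is cosmetic: you treat the edgeless-$H$ subcase explicitly and phrase the criterion via odd girth, which the paper glosses over but does not need.
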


\begin{proof}
    If $H$ is bipartite, then there is a homomorphism $H \to K_2$ and so $\GG_{H\not\to}$ consists of the graphs with no edges and hence has underlying chromatic number 1. If $H$ is not bipartite, then $H$ contains some odd cycle. Let the length of this odd cycle be $\ell$. Any graph with girth greater than $\ell$ is in $\GG_{H\not\to}$. There are graphs of arbitrarily large chromatic number and girth~\citep{Erdos59}. Thus $\GG_{H\not\to}$ has unbounded chromatic number, and so, by \cref{uchi-blowups}, unbounded underlying chromatic number.
\end{proof}

Since $\GG_{H\not\to}$ is a subfamily of $\GG_H$, \cref{uchi-Hhomfree} immediately implies that $\GG_H$ has unbounded underlying chromatic number if $H$ is not bipartite. In fact, we can do better.

\begin{thm}\label{uchi-Hfree}
For any graph $H$, the class $\GG_H$ of $H$-free graphs has bounded underlying chromatic number if and only if $H$ is a forest.
\end{thm}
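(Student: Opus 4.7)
The plan is to prove each direction separately, with \cref{uchi-cchi}(ii) supplying the bridge between ``bounded underlying chromatic number'' and ``$c$-colouring with monochromatic components of size controlled by $\chi$''.

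For the $(\Leftarrow)$ direction, I would first prove the stand-alone claim that, for any forest $H$, every $H$-free graph $G$ satisfies $\chi(G) \leq |V(H)| - 1$. Granting this, the trivial identification $G \subseteq G \boxtimes K_1$ (taking the quotient graph to be $G$ itself) certifies that $\GG_H$ has underlying chromatic number at most $|V(H)| - 1$, since $\chi(G) \leq |V(H)| - 1$ is a uniform bound and the constant function $f \equiv 1$ suffices. The chromatic claim goes by induction on $|V(H)|$. When $H$ is a single tree on $k$ vertices, the standard greedy embedding shows that every graph of minimum degree at least $k - 1$ contains $H$, so $H$-free graphs are $(k-2)$-degenerate and hence $(k-1)$-colourable. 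When $H = T \sqcup F'$ with $T$ a component and $F'$ a non-empty forest, either $G$ is itself $F'$-free, in which case the induction hypothesis applies, or $G$ contains a copy of $F'$ on some vertex set $V'$; in the latter case $G - V'$ must be $T$-free (otherwise $G$ would contain $F$), so $\chi(G - V') \leq |V(T)| - 1$ by the tree case, yielding $\chi(G) \leq (|V(T)| - 1) + |V'| = |V(H)| - 1$.

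For the $(\Rightarrow)$ direction, I would argue by contradiction. Suppose $H$ contains a cycle and $\GG_H$ has underlying chromatic number at most $c$, with associated clustering function $f$ furnished by \cref{uchi-cchi}(ii). Invoke Erdős's 1959 theorem to obtain a graph $G_0$ with $\chi(G_0) \geq 2c + 1$ and girth strictly greater than $\max\{f(2c+1),\, |V(H)|\}$; pass to an induced subgraph $G \subseteq G_0$ with $\chi(G) = 2c + 1$, which inherits the girth lower bound. Since the girth of $G$ exceeds $|V(H)|$ while $H$ contains a cycle of length at most $|V(H)|$, the graph $G$ is $H$-free and thus lies in $\GG_H$. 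Applying \cref{uchi-cchi}(ii), $G$ admits a $c$-colouring in which each monochromatic component has at most $f(2c+1)$ vertices; combined with the girth lower bound, every such component contains no cycle and is therefore a tree, hence $2$-colourable. Refining the $c$-colouring by $2$-colouring each tree component yields a proper $2c$-colouring of $G$, contradicting $\chi(G) = 2c + 1$.

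The only delicate moment is the inductive case-split for a disconnected forest in the $(\Leftarrow)$ direction, which must combine the tree degeneracy bound with the observation that removing an embedded copy of $F'$ leaves a $T$-free remainder; this is routine. No substantial obstacle is anticipated, and the two directions fit together cleanly through the clustering interpretation of underlying chromatic number established in \cref{uchi-cchi}.
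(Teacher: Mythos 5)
Your proposal is correct and follows essentially the same route as the paper: the forward direction uses the folklore forest-embedding/degeneracy bound to get bounded chromatic (hence underlying chromatic) number, and the reverse direction combines Erd\H{o}s's high-girth high-chromatic graphs with the clustering reformulation in \cref{uchi-cchi}(ii) so that small monochromatic components are acyclic and $2$-colourable. The only cosmetic differences are that the paper handles forests in one line rather than by induction on components, and factors your contradiction argument through the intermediate \cref{uchi-highgchi}.
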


This will follow from the following more general result.

\begin{thm}\label{uchi-highgchi}
Let $\GG$ be a graph class such that, for infinitely many $k$, $\GG$ contains graphs with chromatic number $k$ and arbitrarily high girth. Then $\GG$ has unbounded underlying chromatic number.
\end{thm}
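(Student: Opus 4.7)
The plan is to exploit the link between underlying chromatic number and clustered colouring spelled out in the proof of \cref{uchi-cchi}(ii): if $G\subseteq H\boxtimes K_\ell$ with $\chi(H)\leq c$, then pulling back a proper $c$-colouring $\varphi$ of $H$ along $(h,x)\mapsto\varphi(h)$ gives a $c$-colouring of $G$ in which two same-coloured vertices $(h_1,x_1),(h_2,x_2)$ can only be adjacent when $h_1=h_2$. Each monochromatic component of $G$ therefore lies inside a single fibre $\{h\}\times V(K_\ell)$ and has at most $\ell$ vertices.

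I would then argue by contradiction. Suppose $\GG$ has underlying chromatic number at most $c$ with chromatic-binding function $f$. Using the hypothesis, pick any $k$ from the given infinite set with $k>2c$, and then a graph $G\in\GG$ with $\chi(G)=k$ and girth strictly greater than $f(k)$ (both choices are available since the set is infinite and graphs of chromatic number $k$ have arbitrarily high girth). By assumption $G$ embeds in $H\boxtimes K_{f(k)}$ for some $H$ with $\chi(H)\leq c$, and the pull-back colouring above gives a $c$-colouring of $G$ whose monochromatic components each have at most $f(k)$ vertices.

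The finishing step is purely local. Because the girth of $G$ exceeds the size of any monochromatic component, each such component is acyclic, so each of the $c$ colour classes induces a forest and is properly $2$-colourable. Refining each class into two subclasses produces a proper $2c$-colouring of $G$, contradicting $\chi(G)=k>2c$.

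The only step that feels at all delicate is verifying that monochromatic components of $G$ really do live in single $K_{f(k)}$-fibres of $H\boxtimes K_{f(k)}$; this is the one place where the structure of the strong product actually matters, and it follows directly from its definition. Everything else---choosing $k>2c$, pushing the girth above $f(k)$, and the forest-refinement at the end---is routine.
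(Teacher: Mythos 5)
Your proof is correct and follows essentially the same route as the paper: pick $k>2c$ and $G\in\GG$ with $\chi(G)=k$ and girth exceeding $f(k)$, obtain a $c$-colouring with clustering $f(k)$, and refine the acyclic monochromatic components to get a proper $2c$-colouring, a contradiction. The only cosmetic difference is that you re-derive the clustered-colouring consequence of bounded underlying chromatic number (the fibre argument) inline, whereas the paper simply cites \cref{uchi-cchi}(ii).
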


\begin{proof}
    Suppose that $\GG$ has underlying chromatic number at most $c$. By \cref{uchi-cchi}~(ii), there is a function $f$ such that every $G \in \GG$ can be $c$-coloured with clustering $f(\chi(G))$. By the hypothesis for $\GG$, there is some $k > 2c$, and some $G \in \GG$ with $\chi(G) = k$ and girth of $G$ greater than $f(k)$. As $G \in \GG$, one can $c$-colour $G$ with clustering at most $f(\chi(G)) = f(k)$. Since the girth of $G$ is greater than $f(k)$, every monochromatic component is acyclic and thus $2$-colourable. But then $G$ is $(2c)$-colourable, which is a contradiction.
\end{proof}

\begin{proof}[Proof of \cref{uchi-Hfree}]
    First suppose that $H$ is a forest. It is folklore that every graph with minimum degree at least $t$ contains every forest on $t + 1$ vertices. In particular, every graph in $\GG_H$ is $(\abs{V(H)} - 1)$-degenerate, and so $\GG_H$ has chromatic number (and underlying chromatic number) at most $\abs{V(H)}$. 
    Now assume that $H$ contains a cycle of length $\ell$. Let $\GG$ be the class of graphs with girth greater than $\ell$, so $\GG \subseteq \GG_{H}$. There are graphs of arbitrarily high chromatic number and girth~\citep{Erdos59}, so, by \cref{uchi-highgchi}, $\GG$ has unbounded underlying chromatic number.
\end{proof}

The graphs $C_{c, \ell}$ and $G_{c, \ell}$ that we defined in \cref{LowerBounds} also provide useful lower bounds.

\begin{lem}\label{uchi-lower}
The classes $\{G_{c, \ell} \colon \ell \in \NN\}$ and $\{C_{c, \ell} \colon \ell \in \NN\}$ have underlying chromatic number $c + 1$.
\end{lem}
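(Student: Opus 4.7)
The plan is to establish the matching upper and lower bounds separately, both reducing quickly to facts already collected in \cref{StandardExamples}.

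For the upper bound, I will exploit the fact that \cref{StandardExamples}~\ref{item:SE-1} gives $\tw(G_{c,\ell}) = \tw(C_{c,\ell}) = c$, so $\chi(G_{c,\ell}), \chi(C_{c,\ell}) \leq c+1$. Since every graph is (trivially) contained in itself times $K_1$, taking $H \coloneqq G$ and the constant binding function $f \equiv 1$ immediately shows that each class has underlying chromatic number at most $c+1$.

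For the lower bound, I will argue by contradiction: suppose the class $\{G_{c,\ell} \colon \ell \in \NN\}$ has underlying chromatic number at most $c$, witnessed by a binding function $f$. Because $\chi(G_{c,\ell}) \leq c+1$ for every $\ell$, the supremum $m \coloneqq \max\{f(k) \colon 1 \leq k \leq c+1\}$ is finite. I will then choose $\ell \coloneqq m$, so $G_{c,\ell}$ embeds in $H \boxtimes K_m$ for some $H$ with $\chi(H) \leq c$; equivalently, $G_{c,\ell}$ has an $H$-partition of width at most $\ell$. Applying \cref{StandardExamples}~\ref{item:SE-2} to this $\ell$-partition yields a $(c+1)$-clique of $G_{c,\ell}$ whose vertices lie in distinct parts. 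The corresponding $c+1$ indexing vertices form a clique in $H$, forcing $\chi(H) \geq c+1$, contradicting $\chi(H) \leq c$. The identical argument with $C_{c,\ell}$ in place of $G_{c,\ell}$ handles the second class.

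There is no real obstacle: the only subtle point is that $f$ is allowed to depend on $\chi(G)$ rather than being a uniform constant, but this is dealt with in one line by noting that the chromatic numbers of the graphs in each class are uniformly bounded by $c+1$, so only finitely many values of $f$ ever matter. The whole proof is essentially an application of \cref{StandardExamples}~\ref{item:SE-1} (for the upper bound) and \cref{StandardExamples}~\ref{item:SE-2} (for the lower bound).
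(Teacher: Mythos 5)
Your proof is correct, and it takes a more direct route than the paper's. The paper's own argument first computes $\chi(G_{c,\ell}) = \chi(C_{c,\ell}) = c+1$ by induction, then invokes \cref{uchi-cchi}~(iii) to equate the underlying chromatic number of a class of bounded chromatic number with its clustered chromatic number, and finally cites the known fact that these classes have clustered chromatic number $c+1$ (equivalently, that $G_{c,\ell}$ and $C_{c,\ell}$ are not $c$-colourable with clustering $\ell$). You bypass both \cref{uchi-cchi} and the clustered-colouring literature: your upper bound is the trivial containment $G \subseteq G \boxtimes K_1$ together with $\chi(G) \leq \tw(G)+1 = c+1$ from \cref{StandardExamples}~\ref{item:SE-1}, and your lower bound re-proves the needed non-colourability directly from \cref{StandardExamples}~\ref{item:SE-2}: a width-$\ell$ $H$-partition places the vertices of some $(c+1)$-clique of $G_{c,\ell}$ (or $C_{c,\ell}$) in distinct parts, whose indices are pairwise adjacent in $H$, forcing $K_{c+1} \subseteq H$ and hence $\chi(H) \geq c+1$. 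Your one-line treatment of the binding function (taking the maximum of $f$ over the finitely many values $1,\dots,c+1$ that $\chi$ can take on the class, then choosing $\ell$ to be that maximum) is exactly the point the paper handles inside \cref{uchi-cchi}~(ii)--(iii), and it is sound. The two arguments share the same combinatorial core---the clique-forcing property of these graphs under bounded-width partitions---but yours is self-contained within lemmas proved in the paper, whereas the paper's is shorter given the clustered-colouring machinery it has already set up.
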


\begin{proof}
First note that for any graph $G$, $\ell G$ has chromatic number $\chi(G)$ and $\widehat{\ell G}$ has chromatic number $\chi(G) + 1$. A simple induction then gives $\chi(G_{c, \ell}) = \chi(C_{c, \ell}) =  c + 1$ and so $\{G_{c, \ell} \colon \ell \in \NN\}$ and $\{C_{c, \ell} \colon \ell \in \NN\}$ have bounded chromatic number. Hence, by \cref{uchi-cchi}~(iii), their underlying chromatic numbers are the same as their clustered chromatic numbers. But these clustered chromatic numbers are $c + 1$.
\end{proof}

\begin{lem}
\label{LineGraphs}
Line graphs have unbounded underlying chromatic number.
\end{lem}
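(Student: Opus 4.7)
The plan is to combine \cref{uchi-cchi}(ii) with a high-girth regularity argument to derive a contradiction. Suppose for the sake of contradiction that the class of line graphs has underlying chromatic number at most $c$. Then by \cref{uchi-cchi}(ii) there is a function $f$ such that every line graph $L(G)$ admits a $c$-colouring with clustering at most $f(\chi(L(G))) = f(\chi'(G))$. Unpacking the line-graph structure, such a colouring is exactly a partition $E(G) = E_1 \sqcup \dotsb \sqcup E_c$ with the property that every connected subgraph of $G$ formed by edges from a single $E_i$ has at most $f(\chi'(G))$ edges, because monochromatic components of $L(G)$ correspond precisely to the maximal edge-connected pieces of each $E_i$.

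The key observation I would use is that if $G$ has girth strictly greater than $k$, then every connected subgraph of $G$ with at most $k$ edges is a tree: no cycle of length at most $k$ can fit inside it. Consequently each $E_i$ in the partition above is a forest on $|V(G)|$ vertices and contributes at most $|V(G)| - 1$ edges. This sparsity bound is what will collide with the edge count of a sufficiently dense $G$.

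To pin down the contradiction, I would fix $c \geq 1$, set $D \coloneqq 2c$, and (after replacing $f$ by a non-decreasing upper bound) let $k \coloneqq f(D+1)$. By the Erd\H{o}s--Sachs theorem there exists a $D$-regular graph $G$ with girth strictly greater than $k$. Writing $n \coloneqq |V(G)|$, we have $|E(G)| = Dn/2 = cn$, while Vizing's theorem gives $\chi(L(G)) = \chi'(G) \leq D + 1$, so the permitted clustering is at most $f(D+1) = k$. Summing the forest bound over the $c$ parts then forces $cn = |E(G)| \leq c(n-1) < cn$, a contradiction. Hence line graphs have unbounded underlying chromatic number.

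The main conceptual step is the translation, via \cref{uchi-cchi}(ii), from a product-structure statement about line graphs to a purely combinatorial edge-partition problem on the source graphs. After that, everything is driven by the tension between regularity (which forces many edges) and high girth (which forces each colour class to be a forest); the only ingredient external to the paper is the classical existence of high-girth regular graphs, which I do not expect to be a genuine obstacle.
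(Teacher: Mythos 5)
Your proposal is correct and follows essentially the same route as the paper: reduce via the underlying-to-clustered-colouring implication to a $c$-colouring of $E(G)$ with bounded monochromatic components, then take a $2c$-regular Erd\H{o}s--Sachs graph of girth exceeding the clustering bound (using $\chi(L(G)) = \chi'(G) \leq \Delta+1$) and derive a contradiction from the girth-versus-edge-count tension. The only cosmetic difference is the final accounting: you show every colour class is a forest and sum the bounds $|E_i| \leq n-1$, whereas the paper pigeonholes to one colour class with at least $n$ edges and exhibits a monochromatic cycle of length at least the girth.
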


\begin{proof}
The following argument is adapted from \citep{ADOV03,HST03}. Suppose for the sake of contradiction that there exists $c$, such that for every graph $G$, $L(G)$ is contained in $H\boxtimes K_{f( \chi(L(G) )}$, where $\chi(H)\leq c$. Since $\chi(L(G))\in\{\Delta(G),\Delta(G)+1\}$, this is equivalent to saying
$L(G)$ is contained in $H\boxtimes K_{f( \Delta(G))}$.
Say $G$ is $\Delta$-regular with $n$ vertices and girth $g$, where $\Delta= 2c$ and $g>f(\Delta)$. (\citet{ES63} proved that such graphs exist.)\ So $L(G)$ is $(2\Delta-2)$-regular with $\Delta n/2$ vertices. By assumption,
$L(G)$ is $c$-colourable with clustering $f(\Delta)$.
Some colour class $X$ has at least $\Delta n / 2c= n$ vertices of $L(G)$. In $G$, $X$ corresponds to a set of at least $n$ edges. Hence $X$ contains a cycle in $G$, which has length at least $g$.
Therefore $f(\Delta)\geq g$, which is a contradiction. So line graphs have unbounded underlying chromatic number.
\end{proof}

Recall that $\II_H$ denotes the class of graphs with no induced subgraph isomorphic to $H$.

\begin{thm}
$\II_H$ has bounded underlying chromatic number if and only if $H$ is a path on at most three vertices or $H$ is an independent set.
\end{thm}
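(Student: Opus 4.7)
The plan is to prove both directions using results from earlier in the section, together with the basic monotonicity that $\II_A \subseteq \II_H$ whenever $A$ is an induced subgraph of $H$; consequently bounded underlying chromatic number of $\II_H$ transfers to every such $\II_A$.

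For the $(\Longleftarrow)$ direction I would handle each permitted form of $H$ separately and show in every case that the underlying chromatic number is at most $1$. If $E(H)=\emptyset$ with $\abs{V(H)}=s$, then any $G \in \II_H$ has $\alpha(G) < s$ and hence $\abs{V(G)} \leq (s-1)\chi(G)$, so $G$ sits inside $K_1 \boxtimes K_{(s-1)\chi(G)}$. If $H = P_3$, then each $G \in \II_H$ is a disjoint union of cliques of size at most $\chi(G)$, so $G$ sits inside $\overline{K_n} \boxtimes K_{\chi(G)}$ for some $n$. The cases $H=P_1,P_2$ are trivial. In every case the quotient is edgeless, so has chromatic number $1$.

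For $(\Longrightarrow)$, assume $\II_H$ has bounded underlying chromatic number. I would run four reductions. First, since $\GG_H \subseteq \II_H$, \cref{uchi-Hfree} forces $H$ to be a forest. Second, by \cref{uchi-lower} the family $\{C_{c,\ell} \colon c,\ell \in \NN\}$ has unbounded underlying chromatic number, so it cannot lie inside $\II_H$; hence some $C_{c,\ell}$ contains $H$ as an induced subgraph, and \cref{StandardExamples}(vi) then forces $H$ to have no induced $P_4$. A forest with no induced $P_4$ has each component of diameter at most $2$, so $H$ is a disjoint union of stars. Third, if some component is a star with at least three leaves, then $K_{1,3}$ is an induced subgraph of $H$; since all line graphs are claw-free they lie in $\II_{K_{1,3}} \subseteq \II_H$, contradicting \cref{LineGraphs}. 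Hence every component of $H$ is one of $P_1,P_2,P_3$. Fourth, if $H$ has at least two components and at least one edge, then $K_2 + K_1$ is an induced subgraph of $H$; since $\overline{K_2 + K_1} = P_3$, the class $\II_{K_2+K_1}$ consists of complements of disjoint unions of cliques, i.e., the complete multipartite graphs. This family is closed under blow-ups (a blow-up of a complete multipartite graph is again complete multipartite) and has unbounded chromatic number (it contains every $K_n$), so by \cref{uchi-blowups} it has unbounded underlying chromatic number, a contradiction. These four reductions leave only the allowed forms: $H$ is a single $P_1$, $P_2$ or $P_3$, or $H$ has no edges.

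The main obstacle is step four: one has to correctly identify $\II_{K_2+K_1}$ with the class of complete multipartite graphs via complementation, and then verify both hypotheses of \cref{uchi-blowups}. The other three reductions are routine applications of obstructions established earlier --- $\GG_H$-theory, the closure-of-tree family $\{C_{c,\ell}\}$, and line graphs.
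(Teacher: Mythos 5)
Your proposal is correct and follows essentially the same route as the paper: the forward direction uses the same four obstructions (Theorem~\ref{uchi-Hfree} via $\GG_H\subseteq\II_H$, the $P_4$-free graphs $C_{c,\ell}$ with Lemma~\ref{uchi-lower}, claw-free line graphs with Lemma~\ref{LineGraphs}, and complete multipartite graphs with Theorem~\ref{uchi-blowups}), and the converse is the same clique-component/independence-number argument. Your explicit identification of $\II_{K_2+K_1}$ with the complete multipartite graphs via complementation is a mild elaboration of the paper's step but changes nothing essential.
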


\begin{proof}
First suppose that $\II_H$ has bounded underlying chromatic number.  
Note that $\GG_{H} \subseteq \II_{H}$, so $\GG_{H}$ has bounded underlying chromatic number, and so $H$ is a forest by \cref{uchi-Hfree}. By \cref{uchi-lower}, the class $\{C_{c, \ell} \colon c, \ell \in \NN\}$ has unbounded underlying chromatic number. By \cref{StandardExamples}~(vi), no graph in this class contains an induced $P_4$. Thus $H$ has no induced $P_4$. Hence each component of $H$ is a star. Since line graphs contain no induced $K_{1,3}$, \cref{LineGraphs} implies that $H$ contains no  $K_{1,3}$. So each component of $H$ is a path on at most three vertices. 
Suppose $H$ contains the disjoint union of $K_1$ and $K_2$. Then complete multipartite graphs (that is, blow-ups of complete graphs) contain no induced $H$, and $\II_H$ has unbounded underlying chromatic number by \cref{uchi-blowups}. Thus $H$ does not contain the disjoint union of $K_1$ and $K_2$ as an induced subgraph. It follows that $H$ is a path on at most three vertices or a graph with no edges. 

For the converse direction, first suppose $H$ is a path with at most three vertices. Then every component of every graph in $\II_H$ is a complete graph, and $\II_H$ has underlying chromatic number $1$. Now suppose $H$ has $k$ vertices and no edges. Then $\alpha(G)\leq k-1$ for every $G\in\II_H$, implying $G$ is contained in  $K_{(k-1)\chi(G)}$. Hence $\II_H$ has underlying chromatic number $1$. 
\end{proof}

}
\end{document}